\newcommand{\proj}{\operatorname{proj}}
 \newcommand{\sublim}{\operatornamewithlimits{\longrightarrow}}
\providecommand{\norm}[1]{\lVert#1 \rVert}
\numberwithin{equation}{section}
\newtheorem{theorem}{Theorem}[section]
\newtheorem{lemma}[theorem]{Lemma}
\newtheorem{Def}[theorem]{Definition} 
\newtheorem{prop}[theorem]{Proposition}
\newtheorem{cor}{Corollary}[section]
\newtheorem{remark}[theorem]{Remark}
\author{Jean-Paul Daniel}
\title[A game interpretation of the Neumann problem for fully nonlinear equations]{A game interpretation of the Neumann problem for fully nonlinear parabolic and elliptic equations} 
 \date{\today} 
\begin{document}
\newcommand{\R}{\mathbb{R}}
\newcommand{\Q}{\mathbb{Q}}
\newcommand{\N}{\mathbb{N}}
\newcommand{\C}{\mathcal{C}}
\newcommand{\W}{\mathbb{W}}
\newcommand{\E}{\mathbb{E}}
\newcommand{\p}{\mathbb{P}}
\newcommand{\1}{\mathbbm{1}}
\newcommand{\eps}{\varepsilon}
\newcommand{\vphi}{\varphi}
\newcommand{\obl}{\varsigma}
\newcommand{\ds}{\displaystyle}
\newcommand{\udl}{\underline}
\newcommand{\ol}{\overline}
\maketitle

\begin{abstract} 
We provide a deterministic-control-based interpretation for a broad class of fully nonlinear parabolic and elliptic PDEs with 
continuous Neumann boundary conditions in a smooth domain.
We construct families of two-person games depending on a small parameter~$\eps$ which extend those proposed by Kohn and Serfaty 
\cite{kohns}. These new games treat a Neumann boundary condition by introducing some specific rules near the boundary. 
 We show that the value function converges, in the viscosity sense, to the solution of the PDE as $\eps$ tends to zero.
Moreover, our construction allows us to treat both the oblique and the mixed type Dirichlet-Neumann boundary conditions. 
 \end{abstract}

\maketitle

\section{Introduction}

In this paper, we  propose a deterministic control interpretation, via ``two persons repeated games'', for a broad class of fully nonlinear 
equations of elliptic  or parabolic type with a continuous Neumann boundary condition in a smooth (not necessarily bounded)  domain.
 In their seminal paper \cite{kohns}, Kohn and Serfaty focused on the one hand on the whole space case in the parabolic setting 
and on the other hand on the Dirichlet problem in the elliptic framework. 
They construct a monotone and consistent difference approximation of the operator from the dynamic programming principle associated to the game.

Our motivation here is to adapt their approach to the Neumann problem in both settings. Furthermore, once this issue is solved, 
we will see how the oblique or the mixed type Dirichlet-Neumann boundary problem can also be treated by this analysis. 
  We consider equations in a domain $\Omega \subset \R^N$ having the form
\begin{equation}
 -u_t+f(t,x,u,Du,D^2u)=0  \label{def_eq_par}
\end{equation}
or
\begin{equation}
 f(x,u,Du,D^2u)+\lambda u=0 , \label{def_eq_el}
\end{equation}
where $f$ is elliptic in the sense that $f$ is monotone in its last variable, subject to the Neumann boundary condition  
\begin{equation}\label{def_cd_neumann}
 \frac{\partial u}{\partial n}=h. 
\end{equation}
As in \cite{kohns}, the class of functions $f$ considered is large, including those that are non-monotone in the $u$ argument and degenerate in the $D^2u$ argument.
We make the same hypotheses on the continuity, growth, and $u$-dependence of $f$ imposed in \cite{kohns}. They are recalled at the end of the section. 
 In the stationary setting \eqref{def_eq_el}, we focus on the Neumann problem, solving the equation in a domain 
$\Omega$ with \eqref{def_cd_neumann} at $\partial \Omega$. 
In the time-dependent setting \eqref{def_eq_par}, we address the Cauchy problem, solving the equation 
with \eqref{def_cd_neumann} at $\partial \Omega$ for $t<T$ and $u=g$ at terminal time $t=T$. 
The PDEs and boundary conditions are always interpreted in the ``viscosity sense'' (Section~\ref{convergence} presents a review of this notion).

Our games have two opposite players, Helen and Mark, who always make decisions rationally and deterministically. 
The rules depend on the form of the equation, but there is always a small parameter $\eps$, which governs
the spatial step size and (in time-dependent problems) the time step.  Helen's goal is to optimize her 
worst-case outcome.  
 When $f$ is independent of $u$, we shall characterize her value function $u^\eps$ by the dynamic programming principle. 
 If $f$ depends also on $u$, the technicality of ours arguments requires to introduce a level-set formulation since the uniqueness of the viscosity solution 
 is no longer guaranteed. 
 The score $U^\eps$ of Helen now depends on a new parameter $z \in \R$. In the parabolic setting, it is 
 defined by an induction backward in time given by 
 \begin{equation*} 
\forall z\in \R, \quad  U^\eps(x,z,t)=\max_{p,\Gamma}  \min_{\Delta \hat x} U^\eps(x+\Delta x, z+\Delta z, t+\Delta t),
 \end{equation*}
endowed with the final-time condition $ U^\eps(x,z,t)=g(x)-z$. The max on $p$, $\Gamma$ and the min on $\Delta \hat x$ are given by 
some constrains depending on the rules of the game and some powers of $\eps$. 
This dynamic programming principle is similar to the one given in \cite[Section 2.3]{kohns}. 
In that case, our value functions $u^\eps$ of interest are defined through the 0-level set of $U^\eps$ with respect to $z$ 
as the maximal and the minimal solutions of $U^\eps(x,z,t)=0$. 
They satisfy two dynamic programming inequalities 
(for the details of our games and the definition of Helen's value function, see Section~\ref{games_presentation}). 

Roughly speaking, our main result states that
\begin{align*}
& \limsup_{\eps \rightarrow 0} u^\eps \text{ is a viscosity subsolution of the PDE, and  } \\
& \liminf_{\eps \rightarrow 0} u^\eps \text{ is a viscosity supersolution of the PDE. }
\end{align*}
For the general theory of viscosity solutions to fully nonlinear equations with Neumann (or oblique) boundary condition 
the reader is referred to \cite{user_s_guide,barles_fully_o2,ishii_oblique1991}.
As for the Neumann boundary condition, its relaxation in the viscosity sense was first proposed by Lions \cite{lions_neumann_type}.

Our result is most interesting when the PDE has a comparison principle, i.e. when every subsolution must lie below any supersolution. 
 For such equations, we conclude that $\lim u^\eps$ exists and is the unique viscosity solution of the PDE. 
In the case when $f$ is continuous in all its variable, there are already a lot of comparison and existence results 
for viscosity solutions of second order parabolic PDEs with general Neumann type boundary conditions. 
We refer for this to \cite{barles_fully_o2,barles_quasilinear_elliptic,lions_neumann_type,ishii_oblique1991} and references therein. 
For homogeneous Neumann conditions, Sato \cite{sato_interface} has obtained such a comparison principle for certain parabolic PDEs.

We are interested here in giving a game interpretation for fully nonlinear parabolic and elliptic equations with a Neumann condition.  
Applications of the Neumann condition 
to deterministic optimal control and differential games theory in \cite{lions_neumann_type} rely much on a reflection process, the 
solution of the deterministic Skorokhod problem. 
Its properties in differents situations are studied in many articles such as \cite{tanaka_reflecting,lions_sznitman,dupuis_ishii}. 
The case of the Neumann problem for  the motion by mean curvature was studied by  Giga and Liu \cite{giga_liu_billiard}.
There, a billiard game was introduced to extend the interpretation made by Kohn and Serfaty \cite{kohns_mcm} via the game of Paul and Carol. 
It was based on  the natural idea that a homogeneous Neumann condition will be well-modeled by a reflection on the boundary.
Liu also applies this billiard dynamics to study some first order Hamilton-Jacobi equations 
with Neumann or oblique boundary conditions \cite{liu_billiard_control_1order}. 
Nevertheless, in our case, if we want to give a billiard interpretation with a bouncing rule
which can send the particle far from the boundary, we can only manage to solve the homogeneous case. 
This is not too surprising because the reflection across $\partial \Omega$ is precisely associated to a homogeneous Neumann condition.

Another approach linked to the Neumann condition is  to proceed by penalization on the dynamics. 
For a bounded convex domain, Lions, Menaldi and Sznitman \cite{lions_menaldi_sznitman} construct a sequence of stochastic differential equations 
with a term in the drift coefficients that 
strongly penalizes the process from leaving the domain.
Its solution converges towards a  diffusion process which reflects across the boundary with respect to the normal vector.
Barles and Lions \cite{barles_lions_oblique} also treat  the oblique case by precisely establishing the links
between some approximated processes and the  elliptic operators associated to the original oblique stochastic dynamics. 

Instead of a billiard, our approach here proceeds by a suitable penalization on the dynamics depending on the Neumann boundary condition. 
 It will be favorable to one player or the other according to its sign. We modify the rules of the game only in a small neighborhood of the boundary. 
The particle driven by the players  can leave the domain but then it is projected within. 
This particular move, combined with a proper weight associated to the Neumann boundary condition, gives the required penalization.  
Outside this region, the usual rules are conserved. Therefore the previous analysis within $\Omega$ done by Kohn and Serfaty can be preserved.  
We focus all along this article on the changes near the boundary and their consequences on the global convergence theorem.
In this context, the modification of the rules of the original game introduces many additional difficulties intervening at the different steps of the proof.
Most of all, they are due to the geometry of the domain or the distance to the boundary. 
As a result, our games seem like a natural adaptation of the games proposed by Kohn and Serfaty by permitting to solve 
an inhomogeneous Neumann condition $h$ depending on $x$ on the boundary. 
We only require $h$ to be continuous and uniformly bounded, 
the domain to be $C^2$ and to satisfy some natural geometric conditions in order to ensure the well-posedness of our games.
Moreover our approach can easily be extended both to the oblique and the mixed Neumann-Dirichlet boundary conditions in both parabolic and elliptic settings.
Our games can be compared to those proposed in \cite{kohns} for the elliptic Dirichlet problem: 
if the particle crosses the boundary, the game is immediately stopped and Helen receives a bonus $b(x_F)$ where $b$ corresponds to
 the Dirichlet boundary condition and $x_F$ is the final position. 
Meanwhile, our games cannot stop unexpectedly, no matter the boundary is crossed or not.

Our games, like the ones proposed by Kohn and Serfaty, are deterministic but closely related to a recently developed 
stochastic representation due to Cheridito, Soner, Touzi and Victoir \cite{cheridito_soner_touzi_victoir} 
(their work uses a backward stochastic differential equation, BSDE, whose structure depends on the form of the equation). 

Another interpretation is to look our games as a numerical scheme whose solution is an approximation of a solution of a certain PDE. 
This aspect is classical and has already been exploited in several contexts. 
We mention the work of Peres, Schramm, Sheffield and Wilson~\cite{peres_schramm_sheffield_wilson} who showed that the infinity Laplace equation 
describes the continuum limit of the value function of a two-player, random-turn game called $\eps$\textit{-step tug-of-war}. 
In related work, Armstrong, Smart and Sommersille \cite{armstrong_smart_sommersille} obtained existence, uniqueness and stability results 
for  an infinity Laplace equation with mixed Dirichlet-Neumann boundary terms by comparing solutions of the PDE to subsolutions and supersolutions 
of a certain finite difference scheme, by following a previous work of Armstrong and Smart for the Dirichlet case \cite{armstrong_smart_tow}.

This paper is organized as follows:
\begin{itemize}
 \item Section~\ref{games_presentation} presents the two-person games that we associate with the PDEs \eqref{def_eq_par} and \eqref{def_eq_el}, 
motivating and stating our main results.
 The section starts with a simple case before adressing the general one. 
Understanding our games is still easy, though the technicality of our proofs is increased. 
Since $f$ depends on $u$, the game determines a pair of value functions $u^\eps$ and $v^\eps$. 
Section~\ref{heuristic_derivation} gives a formal argument linking the principle of dynamic programming  
to the PDE in the limit $\eps \rightarrow 0$ 
 and giving the optimal strategies for Helen that will be essential to obtain consistency at Section~\ref{consistance}.
 \item Section~\ref{convergence} addresses the link between our game and the PDE with full rigor.
The proofs of convergence follow the background method of Barles and Souganidis \cite{barles_souga_cv_schemes}, 
 i.e. they use the stability, monotonicity and consistency  of the schemes provided by our games. 
Their theorem states that if a numerical scheme is monotone, stable, and consistent, then the associated 
``lower semi-relaxed limit'' is a viscosity supersolution and the associated  ``upper semi-relaxed limit'' is a viscosity subsolution. 
 The main result in Section~\ref{convergence} is a specialization of their theorem in our framework: 
if $v^\eps$ and $u^\eps$ remain bounded as $\eps \rightarrow 0$ then the lower relaxed semi-limit of $v^\eps$ is a viscosity supersolution  
and the upper relaxed semi-limit of $u^\eps$ is a viscosity subsolution.  
We also have  $v^\eps \leq u^\eps$ with no extra hypothesis in the parabolic setting, or if $f$ is monotone in $u$ in the elliptic setting. 
If the PDE has a comparison principle (see \cite{barles_souga_cv_schemes}) then it follows that 
$\lim u^\eps=\lim v^\eps$ exists and is the unique viscosity solution of the PDE.

 \item The analysis in Section \ref{convergence} shows that consistency and stability imply convergence. 
Sections~\ref{consistance} and \ref{stability} provide the required consistency and stability results. 
The new difficulties due to the penalization corresponding to the Neumann condition arise here. 
The main difficulty is to control the degeneration of the consistency estimate obtained in \cite{kohns}
with respect to the penalization. 
Therefore we will mainly focus on the consistency estimates whereas the needed changes for stability 
will be simply indicated.
 
 \item Section~\ref{generalizations} describes the games associated on the one hand to the oblique problem 
 in the parabolic setting 
and on the other hand to the mixed type Dirichlet-Neumann boundary conditions in the elliptic framework. 
By combining the results associated to the game associated to the Neumann problem in Section~\ref{games_presentation} 
with the ideas already presented in \cite{kohns}, we can obtain the results of convergence. 
\end{itemize}

\textbf{Notation: } The term domain will be reserved for a nonempty,  connected, and open subset of $\R^N$.
If $x,y \in \R^N$,  $\ds \left\langle x,y \right\rangle$ denotes the usual Euclidean inner product 
and $\norm{x}$  the Euclidean length of $x$.
If $A$ is a $N \times N$ matrix, $\norm{A}$ denotes the operator norm 
$\ds \norm{A} = \sup_{\norm{x} \leq 1} \norm{Ax}$. $\mathcal{S}^N$ denotes the set of symmetric $N \times N$  matrices 
and $E_{ij}$ the $(i,j)$-th matrix unit, the matrix whose only nonzero element is equal to 1 and occupies the $(i,j)$-th position.

Let $\mathcal{O}$ be a domain in $\R^N$ and $C^k_b(\mathcal{O})$ be the vector space of $k$-times continuously differentiable functions $u$:  
$\mathcal{O} \rightarrow \R$, such that all the partial derivatives of $u$ up to order $k$ are bounded on $\mathcal{O}$. 
For a domain $\Omega$, we define 
\begin{equation*}
C^k_b(\overline \Omega)= \left\{ u \in L^\infty (\ol \Omega) : \exists  \mathcal{O} \supset  \ol \Omega, \mathcal{O} \text{ domain}, 
\exists v \in C^k_b(\mathcal{O})\text{ s.t. }  u=v_{|\ol \Omega} \right\}.
\end{equation*}
It is equipped with the norm $\norm{\cdot}_{C^k_b(\overline \Omega)}$ given by 
$\ds \norm{\phi}_{C^k_b(\overline \Omega)} =\sum_{i=0}^k\norm{D^i\phi}_{L^\infty(\overline \Omega)}$.

If $\Omega$ is a  smooth domain, say $C^2$, the distance function to $\partial \Omega$ is denoted by $d= d(\cdot, \partial \Omega)$, 
and we recall that, for all $x \in \partial \Omega$, the outward normal $n(x)$ to $\partial \Omega$ at $x$ is given by $n(x) = - Dd(x)$.

Observe that, if $\partial \Omega$ is assumed to be bounded and at least of class $C^{2}$,
any $x\in \R^N$ lying in a sufficiently small neighborhood of the boundary admits a unique projection onto
$\partial \Omega$, denoted by  
\begin{equation*}
 \bar{x} = \proj_{\partial \Omega}(x).
\end{equation*}
In particular, the vector $x- \bar x$ is parallel to $n(\bar x)$.
The projection onto $\ol  \Omega$ will be denoted by $\proj_{\overline \Omega}$. When it is well-defined, it can be decomposed as
\begin{equation*}
\proj_{\overline \Omega} (x) =
 \begin{cases}
 \proj_{\partial \Omega}(x),  & \text{ if } x \notin \Omega,   \\
  x,                         & \text{ if } x\in \Omega .
 \end{cases} 
\end{equation*}
For each $a>0$, we define $ \Omega(a)=\{x\in \overline \Omega, d(x) < a\}$. 
We recall the following classical geometric condition (see e.g. \cite{evans_pde}). 
\begin{Def}[Interior ball condition] 
The domain $\Omega$ satisfies the interior ball condition at $x_0 \in \partial \Omega$ if there exists an open ball $B\subset \Omega$ with $x_0\in \partial B$. 
\end{Def}

We close this introduction by listing our main hypotheses on the form of the PDE. 
First of all we precise some hypotheses on the domain $\Omega$. 
Throughout this article, $\Omega$ will denote a $C^2$-domain.  
In the unbounded case, we impose the following slightly stronger condition than the interior ball condition.
\begin{Def}[Uniform interior/exterior ball condition]  \label{unif_int_ball_cd}
The domain $\Omega$ satisfies the uniform interior ball condition if there exists $r>0$ such that for all $x \in \partial \Omega$ there exists an open ball $B\subset \Omega$ 
with $x\in \partial B$ and radius~$r$. 
Moreover, the domain $\Omega$ satisfies the uniform exterior ball condition if $\R^N \backslash \ol \Omega$ satisfies the uniform interior ball condition.
\end{Def}

We observe that the uniform interior ball condition implies the interior ball condition 
and that both the uniform interior and exterior ball conditions hold automatically for a $C^2$-bounded domain.

The Neumann boundary condition $h$ is assumed to be continuous and uniformly bounded on $\partial \Omega$. 
Similarly, in the parabolic framework, the final-time data $g$ is supposed to be continuous and uniformly bounded on $\ol \Omega$.

The real-valued function $f$ in \eqref{def_eq_par} is defined on $\R \times \ol \Omega \times \R \times \R^N \times \mathcal{S}^N$. 
 It is assumed throughout to be a continuous function of all its variables, and also that 
\begin{itemize}
 \item $f$ is monotone in $\Gamma$ in the sense that 
\begin{equation} \label{ellipticity_f} 
f(t,x,z,p,\Gamma_1+\Gamma_2) \leq f(t,x,z,p,\Gamma_1) \quad \text{ for } \Gamma_2 \geq 0.
\end{equation}
\end{itemize}
In the time-dependent setting \eqref{def_eq_par} we permit $f$ to grow linearly in $|z|$ (so solutions can grow exponentially, but cannot blow up). However
 we require uniform control in $x$ (so solutions remain bounded as $\norm{x} \rightarrow \infty$ with $t$ fixed). In fact we assume that
\begin{itemize}
 \item  $f$ has at most linear growth in $z$ near $p=0$, $\Gamma=0$, in the sense that for any $K$ we have
\begin{equation}  \label{lin_grow}
|f(t,x,z,p,\Gamma)|\leq C_K(1+|z|) , 
\end{equation}
for some constant $C_K \geq 0$, for all $x\in \ol \Omega$ and $t,z \in \R$,  when $\norm{(p,\Gamma)}\leq K$.
 \item $f$ is locally Lipschitz in $p$ and $\Gamma$ in the sense that for any $K$ we have
\begin{equation} \label{loc_lip_p_Gamma}
|f(t,x,z,p,\Gamma)-f(t,x,z,p',\Gamma')|  \leq C_K(1+|z|) \norm{(p,\Gamma)-(p',\Gamma')},       
\end{equation}
for some constant $C_K \geq 0$, for all $x \in \ol \Omega$ and $t,z\in \R$, when $\norm{(p,\Gamma)}+\norm{(p',\Gamma')} \leq K$.
\item $f$ has controlled growth with respect to $p$ and $\Gamma$, in the sense that for some constants $q, r\geq 1$, $C>0$, we have
\begin{equation}  \label{cont_growth_p_Gamma}
|f(t,x,z,p,\Gamma)|\leq  C(1+|z|+\norm{p}^q+ \norm{\Gamma}^r),    
\end{equation}
for all $t,x,z,p$ and $\Gamma$.
\end{itemize}

In the stationary setting \eqref{def_eq_el} our solutions will be uniformly bounded. 
To prove the existence of such solutions we need the discounting to be sufficiently large. We also need analogues of 
\mbox{\eqref{loc_lip_p_Gamma}--\eqref{cont_growth_p_Gamma}}
but they can be local in $z$ since $z$ will ultimately be restricted to a compact set. In fact, we assume that
\begin{itemize}
 \item  There exists $\eta>0$ such that for all $K\geq 0$, there exists $C_K^\ast>0$ satisfying 
\begin{equation} \label{est_f_el_neu}
 |f(x,z,p, \Gamma)| \leq (\lambda -\eta) |z|+ C_{K}^\ast,    
\end{equation}
for all $x\in \ol \Omega$, $z\in \R$, when $\norm{(p,\Gamma)} \leq K$; here $\lambda$ is the coefficient of $u$ in the equation \eqref{def_eq_el}.
\item  $f$ is locally Lipschitz in $p$ and $\Gamma$ in the sense that for any $K$ and $L$ we have
\begin{equation}  \label{loc_lip_p_Gamma_el}
|f(x,z,p,\Gamma)-f(x,z,p',\Gamma')|  \leq C_{K,L} \norm{(p,\Gamma)-(p',\Gamma')}, 
\end{equation}
for some constant $C_{K,L} \geq 0$, for all $x \in \ol  \Omega$, when $\norm{(p,\Gamma)}+\norm{(p',\Gamma')} \leq K$ and $|z|\leq L$.
\item $f$ has controlled growth with respect to $p$ and $\Gamma$, in the sense that for some constants $q, r\geq 1$ and for any $L$ we have
\begin{equation} \label{cont_growth_p_Gamma_el}
|f(x,z,p,\Gamma)|\leq  C_L(1+ \norm{p}^q+ \norm{\Gamma}^r),     
\end{equation}
for some constant $C_L \geq 0$, for all $x$, $p$ and $\Gamma$, and any  $|z|\leq L$.
\end{itemize}

\section{The games}
\label{games_presentation}

This section present our games. We begin by dealing with the linear heat equation. 
Section~\ref{def_eq_par} adresses the time-dependent problem depending non linearly on $u$; our main rigorous result for the time-dependent setting 
is stated here (Theorem~\ref{theo_cv_par_neu}). 
Section~\ref{def_eq_el} discusses the stationary setting and states our main rigorous result for that case (Theorem~\ref{theo_cv_el_neu}).

\subsection{The linear heat equation} 

This section offers a deterministic two-persons game approach to the linear heat equation in one space dimension. 
More precisely, let $a<c$ and $\Omega=]a,c[$.
 We consider the linear heat equation on $\Omega$ with continuous final time data $g$ 
 and Neumann boundary condition $h$  given by
\begin{equation}\label{heat_eq_1D}
 \begin{cases}
 u_t+u_{xx}=0 ,                            & \text{for }x \in \Omega \text{ and }  t<T,  \\ 
\dfrac{\partial u}{ \partial n}(x,t)=h(x), & \text{for }x \in \partial \Omega=\{a,c\} \text{ and }  t<T, \\
 u(x,T)=g(x)  ,                            & \text{for } x \in \ol \Omega \text{ and } t=T. 
\end{cases}
\end{equation}
Our goal is to capture, in the simplest possible setting, how a homogeneous Neumann condition can be retrieved through a repeated deterministic game. 
The game discussed here shares many features with the ones we will introduce in Sections \ref{def_jeu_parabolique}--\ref{rules_el_game}, 
though it is not a special case. In particular, it allows to understand the way we need to modify the rules of the pioneering games 
proposed by Kohn and Serfaty in \cite{kohns} in order to model the Neumann boundary condition.

There are two players, we call them Mark and Helen. A small parameter $\eps>0$ is fixed as are the final time $T$,  
``Helen's payoff'' (a continuous function $g$: $[a,c] \rightarrow \R$) and a ``coupon profile'' close to the boundary (a function $h$: $\{a,c\} \rightarrow \R)$.  
The state of the game is described by its ``spatial position'' $x\in \ol \Omega$ and ``Helen's score'' $y\in \R$. We suppose the game begins at time $t_0$. 
Since time steps are increments of $\eps^2$, it is convenient to assume that $T-t_0=K\eps^2$, for some $K$.
 
When the game begins, the position can have any value $x_0 \in \ol \Omega$; Helen's initial score is $y_0=0$. The rules are as follows: if, at time $t_j=t_0+j\eps^2$, 
the position is $x_j$  and Helen's score is $y_j$, then 
\begin{itemize}
 \item Helen chooses a real number $p_j$. 
 \item After seeing Helen's choice, Mark chooses $b_j=\pm 1$ which gives an intermediate position $\hat x_{j+1}=x_j+\Delta \hat x_j$ where 
\begin{equation*}
\Delta \hat x_j=\sqrt{2} \eps b_j \in \R.
\end{equation*}
 This position $\hat x_{j+1}$ determines the next position $x_{j+1}=x_j + \Delta x_j$ at time $t_{j+1}$ by the rule 
\begin{equation*}
x_{j+1}=\proj_{\overline \Omega}(\hat x_{j+1}) \in \overline \Omega , 
\end{equation*}
and Helen's score changes to 
\begin{equation}\label{Helen_score_heat} 
y_{j+1}=y_j+ p_j \Delta \hat x_j- \norm{ x_{j+1} - \hat x_{j+1}} h(x_j+\Delta x_j) . 
\end{equation}
 \item The clock moves forward to $t_{j+1}=t_j+\eps^2$ and the process repeats, stopping when $t_K=T$.
 \item At the final time $t_K=T$ a bonus $g(x_K)$ is added to Helen's score, where $x_K$ is the final-time position. 
\end{itemize}
\begin{remark} To give a sense to \eqref{Helen_score_heat} for all $\Delta x_j$, 
 the function $h$, which is defined only on $\{a,c\}$,  can be extended on $]a,c[$ by any function $\Omega \rightarrow \R$ since
 $\norm{ x_{j+1} - \hat x_{j+1}}$ is different from zero if and only if $\hat x_{j+1}\notin \ol \Omega$.
Moreover, by comparing the two moves $\Delta \hat x_j$ and $\Delta x_j$,  
 it is clear that $\norm{ x_{j+1} - \hat x_{j+1}}= \norm{\Delta x_j - \Delta \hat x_j }$.
\end{remark}
Helen's goal is to maximize her final score, while Mark's goal is to obstruct her. We are interested in Helen's ``value function'' 
$u^\eps(x_0,t_0)$, defined formally as her maximum worst-case final score starting from $x_0$ at time $t_0$. 
It is determined by the dynamic programming principle
\begin{equation}\label{dpp_heat_equation}
u^{\eps}(x,t_j)=\max_{p\in \R} \min_{b=\pm 1} \left[ u^{\eps}(x+\Delta x,t_{j+1})   - p \Delta \hat x + \norm{\Delta \hat x - \Delta x} h(x+\Delta x) \right],
\end{equation}
where $\Delta \hat x = \sqrt{2} \eps b$ and $\Delta x= \proj_{\overline \Omega}(x+\Delta \hat x) -x$, associated with the final-time condition 
\begin{equation*}
 u^\eps(x,T)=g(x).
\end{equation*}
Evidently, if $t_0=T - K\eps^2$ then
\begin{equation}\label{rec_dpp_heat}
u^\eps(x_0,T_0) = \max_{p_{0} \in \R}  \min_{b_{0}=\pm 1}  \cdots  \max_{p_{K-1} \in \R}  \min_{b_{K-1}=\pm 1}
\left\{g(x_K) + \sum_{j=0}^{K-1}  - \sqrt{2} \eps b_j p_j +    
 \norm{\Delta \hat x_j - \Delta x_j} h(x_j+\Delta x_j)  \right\}, 
\end{equation}
where $\Delta \hat x_j = \sqrt{2} \eps b_j$ and $\Delta x_j= \proj_{\overline \Omega}(x_j+\Delta \hat x_j) -x_j$.
In calling this Helen's value function, we are using an established convention from the theory of discrete-time, two person games (see e.g.~\cite{friedman}). 

By introducing the operator $L_\eps$ defined by
\begin{equation}\label{op_chaleur1d}
L_{\eps}  [x,\phi]  =  \max_{p\in \R}   \min_{b= \pm 1} 
 \left[\phi \left( x +\Delta x \right) - p   \Delta \hat{x} + \norm{\Delta \hat x - \Delta x}  h (x+\Delta x )\right],
\end{equation}
where $\Delta \hat x = \sqrt{2} \eps b$ and $\Delta x= \proj_{\overline \Omega}(x+\Delta \hat x) -x$, 
the dynamic programming principle \eqref{dpp_heat_equation} can be written in the form 
\begin{equation}\label{dpp_heat_eq_op}
 u^{\eps}(x,t)=L_{\eps}  [x, u^{\eps}(\cdot,t_{}+\eps^2)]. 
\end{equation}
We now formally argue that $u^\eps$ should converge as $\eps \rightarrow 0$ to the solution of the linear heat equation~\eqref{heat_eq_1D}. 
The procedure for formal passage from the dynamic programming principle to the associated PDE is familiar: 
 we suppress the dependence of $u^\eps$ on $\eps$ and we assume $u$ is smooth enough to use the Taylor expansion. The first step leads to
\begin{equation}\label{dpp_heat_eq_formal}
 u^{}(x,t)\approx L_{\eps}  [x, u^{}(\cdot,t_{}+\eps^2)].
\end{equation}
For the second step we need to compute $L^\eps$ for a $C^2$-function $\phi$.  By the Taylor expansion
\begin{align*}
\phi(x+\Delta x) & =\phi(x)+ \phi_x(x) \Delta x +\frac{1}{2} \phi_{xx}(x)  (\Delta x)^2 +O(\eps^{3}) \\
          & =\phi(x)+ \phi_x(x) \Delta \hat x + \norm{\Delta \hat x-\Delta x} \phi_x(x) n(\ol x)+\frac{1}{2} \phi_{xx}(x) (\Delta x)^2 +O(\eps^{3}), 
\end{align*}
where $\ol x= \proj_{\partial \Omega}(x)$, $\Delta \hat x - \Delta x=\norm{\Delta \hat x - \Delta x}  n(\ol x)$ with $n$ defined on $\partial \Omega $ by 
$n(x)=1$ if $x=c$ and $n(x)= - 1$ if $x=a$.  
Substituting this expression in \eqref{op_chaleur1d}, we deduce that for all $C^2$-function $\phi$, 
\begin{equation} \label{heat_op1}
L_{\eps}  [x,\phi]=\phi(x)    
 +\max_{p\in \R}   \min_{b= \pm 1} 
  \left[(\phi_x - p) \Delta \hat{x}  +\frac{1}{2} \phi_{xx} (\Delta x)^2  + \norm{\Delta \hat x - \Delta x} \big\{ 
 h(x+\Delta x )  -  n(\ol x)\phi_x \big\}  \right]+o(\eps^2).
 \end{equation}
It remains to compute the max min. 
If $d(x)>\sqrt{2} \eps$, we always have $\Delta x=\Delta \hat x=\sqrt{2}\eps b$, so that the boundary is never crossed and we retrieve 
the usual situation detailed in \cite[Section~2.1]{kohns}: Helen's optimal choice is $p=\phi_x$ and $L_{\eps}[x,\phi] = \phi(x) +\eps^2 \phi_{xx}(x)+o(\eps^2)$.
If $d(x)<\sqrt{2} \eps$, we still have  $\Delta \hat x=\sqrt{2} b \eps$ but there is a change: 
 if the boundary is crossed, $\Delta x= d(x)$ and $\norm{\Delta \hat x - \Delta x}=\sqrt{2} \eps-d(x)$.
Suppose that Helen has chosen $p\in \R$. Considering the min in \eqref{heat_op1}, Mark only has two possibilities $b\in \{\pm 1\}$. 
More precisely, suppose that $x$ is close to $c$ so that $\ol x=c$ and $n(\ol x)=1$; the case when $x$ is close to $a$ is strictly parallel. 
If Mark chooses $b=1$, the associated value is 
\begin{equation*}
V_{p,+}= \sqrt{2}(\phi_x - p) \eps +\frac{1}{2} \phi_{xx} d^2(x)  + (\sqrt{2}  \eps- d(x)) ( h(c) - \phi_x ),    
\end{equation*}
while if Mark chooses $b=-1$, the associated value is 
\begin{equation*}
V_{p,-} = - \sqrt{2}(\phi_x - p)  \eps + \phi_{xx} \eps^2.  
\end{equation*}
To determine his strategy, Mark compares $V_{p,-}$ to $ V_{p,+}$. He chooses $b=-1$ if $V_{p,-}< V_{p,+}$, i.e. if 
\begin{align*}
\sqrt{2}(\phi_x - p) \eps  +\frac{1}{2} \phi_{xx} d^2(x)  + (\sqrt{2} \eps- d(x)) ( h(c)-\phi_x)> -\sqrt{2}(\phi_x - p) \eps + \phi_{xx} \eps^2,
 \end{align*}
that we can rearrange into
\begin{equation*}
2 \sqrt{2}(\phi_x - p) \eps > \phi_{xx} \left(\eps^2- \frac{d^2(x)}{2}\right) - (\sqrt{2} \eps- d(x)) \left[ h(c)  - \phi_x \right].
\end{equation*}
This last inequality yields an explicit condition on the choice of $p$ previously made by Helen
\begin{equation}\label{def_p_opt_heat} 
p < p_\text{opt}  
:= \phi_x + \frac{1}{2} \left(1- \frac{d(x)}{\sqrt{2}\eps} \right) \left[ h(c)-\phi_x \right]+  \frac{1}{2\sqrt{2}} \phi_{xx} 
\left(1- \frac{d^2(x)}{2\eps^2}\right)\eps .
\end{equation}
Meanwhile Mark chooses $b=1$ if $V_{p,+}< V_{p,-}$, which leads to the reverse inequality $p > p_\text{opt}$. 
The situation when $V_{p,+}= V_{p,-}$ obviously corresponds to $p = p_\text{opt} $.
We deduce that
\begin{equation*}
L_\eps[x,\phi] = \max \left[ \max_{p\leq p_\text{opt}} V_{p,-}, V_{p_\text{opt},-},  \max_{p\geq p_\text{opt}} V_{p,+}  \right].
\end{equation*}
Helen wants to optimize her choice of $p$. 
The functions $V_{p,+}$ and  $V_{p,-}$ are both affine on $\phi_x-p$. 
The first one is decreasing  while the second is increasing with respect to $p$. 
As a result, we deduce that Helen's optimal choice is $p=p_\text{opt}$ as defined in \eqref{def_p_opt_heat} 
and $L_\eps[x,\phi] = V_{p_\text{opt},+} =  V_{p_\text{opt},-} $.
We notice that Helen behaves optimally by becoming indifferent to Mark's choice; our games will not always conserve this feature, 
 which was observed in \cite{kohns}. Finally, for all $C^2$-function~$\phi$, we have 
\begin{equation} \label{lem_cons_heat_neu}  
 L_\eps[x,\phi]=  \phi(x)  \\
+ \begin{cases}
 \ds   \dfrac{\eps}{\sqrt{2}} \left(1- \frac{d(x)}{\sqrt{2}\eps}\right) \left[ h(\ol x)  - n (\ol x) \phi_x(x)  \right] 
+ \dfrac{\eps^2}{2} \phi_{xx}(x) \left(1+ \frac{d^2(x)}{2\eps^2} \right) +o(\eps^2),  & \text{if } d(x)\leq \sqrt{2} \eps,  \\ 
 \ds   \eps^2  \phi_{xx}(x) +o(\eps^2),  & \text{if } d(x)\geq \sqrt{2} \eps.
\end{cases}
 \end{equation} 
Since $u$ is supposed to be smooth, the Taylor expansion on $t$ yields that $u(\cdot, t+\eps^2)=u(\cdot,t)+ u_t(\cdot,t) \eps^2+o(\eps^2)$ 
and we formally derive the PDE by plugging \eqref{lem_cons_heat_neu}  in \eqref{dpp_heat_eq_formal}. This gives
\begin{equation}\label{op_heat_order2}
0\approx \eps^2 u_t+
\begin{cases}
\ds  \dfrac{\eps}{\sqrt{2}} \left(1- \frac{d(x)}{\sqrt{2}\eps}\right) \left[ h(\ol x)  - n (\ol x) u_x   \right] 
+ \dfrac{\eps^2}{2} u_{xx} \left(1+ \frac{d^2(x)}{2\eps^2} \right) +o(\eps^2) , & \text{if } d(x)\leq \sqrt{2} \eps ,  \\ 
\ds  \eps^2  u_{xx} +o(\eps^2) ,   & \text{if } d(x)\geq \sqrt{2} \eps .
\end{cases}
\end{equation}
If $x \in \Omega$, for $\eps$ small enough, the second alternative in \eqref{op_heat_order2} is always valid
 so that we deduce from the $\eps^2$-order terms in \eqref{op_heat_order2} that $u_t+u_{xx}=0$.
If $x$ is on the boundary $\partial \Omega$, then  $d(x)=0$, $\ol x=x$ and the first possibility in~\eqref{op_heat_order2} is always satisfied. 
We observe that the $\eps$-order term is predominant since $\eps \gg \eps^2$. By dividing by $\eps$ and 
letting $\eps \rightarrow 0$, we obtain $h(x)-u_x(x) \cdot n(x)=0$.

Now we present a financial interpretation of this game. Helen plays the role of a hedger or an investor, 
while Mark represents the market. The position $x$ is a stock price 
which evolves in $\ol \Omega$ as a function of time $t$, starting at $x_0$ at time $t_0$ and the boundary $\partial \Omega$ plays 
the role of barriers which additionally determine a coupon when the stock price crosses $\partial \Omega$. 
The small parameter $\eps$ determines both the stock price increments $\Delta \hat x \leq \sqrt{2} \eps$ and the time step $\eps^2$. 
Helen's score keeps track of the profits and losses generated by her hedging activity.

Helen's situation is as follows: she holds an option that will pay her $g(x(T))$ at time $T$ ($g$ could be negative). 
Her goal is to hedge this position by buying or selling the stock at each time increment. She can borrow and lend money without paying or collecting any interest, 
and can take any (long or short) stock position she desires. 
At each step, Helen chooses a real number $p_j$ (depending on $x_j$ and $t_j$), then adjusts her portfolio so it contains  $-p_j$ units of stock 
(borrowing or lending to finance the transaction, so there is no change in her overall wealth). Mark sees Helen's choice. Taking it into account, 
he makes the stock go up or down (i.e. he chooses $b_j= \pm 1$), trying to degrade her outcome. The stock price changes from 
$x_j$ to $x_{j+1}=\proj_{\ol \Omega}(x_j+\Delta \hat x_j)$, and Helen's wealth changes by
$-\sqrt{2} \eps b_j p_j + \norm{\Delta \hat x_j - \Delta x_j} h(x_j+\Delta x_j)$ (she has a profit if it is positive, a loss if it is negative).
The term $\norm{\Delta \hat x_j - \Delta x_j} h(x_j+\Delta x_j)$ is a coupon that will be produced 
only if the special event $\Delta \hat x_j \notin \Omega$ happens.
The hedger must take into account the possibility of this new event. The hedging parameter $p_j$ is modified close to the boundary 
but the hedger's value function is still independent from the variations of the market. 
At the final time Helen collects her option payoff $g(x_K)$. 
If Helen and Mark both behave optimally at each stage, then we deduce by \eqref{rec_dpp_heat} that 
\begin{equation*}
 u^\eps(x_0,t_0)+ \sum_{j=0}^{K-1} \sqrt{2} \eps b_j p_j - \norm{\Delta \hat x_j - \Delta x_j} h(x_j+\Delta x_j)=g(x_K).
\end{equation*}
Helen's decisions are in fact identical to those of an investor hedging an option with payoff $g(x)$ and coupon $h(x)$ 
if the underlying asset crosses the barrier $\partial \Omega$ in a binomial-tree market with $\Delta \hat x =\sqrt{2} \eps$ at each timestep.

\subsection{General parabolic equations}

\label{def_jeu_parabolique}

This section explains what to do when $f$ depends on $Du$, $D^2u$ and also on $u$. 
We also permit dependence on $x$ and $t$, so we are now discussing a fully-nonlinear (degenerate) parabolic equation of the form
\begin{equation} \label{eq_neumann_evol}
\begin{cases}
  \partial_t u - f(t,x,u,Du,D^2u)=0,  &  \text{ for } x\in \Omega \text{ and }t<T , \\ 
  \left\langle D u(x,t), n(x) \right\rangle=h(x), &  \text{ for } x\in \partial \Omega \text{ and }t<T , \\ 
  u(x,T)  =g(x),   &\text{ for } x \in  \overline \Omega, 
\end{cases}
\end{equation}
where $\Omega$ is a $C^2$-domain satisfying both the uniform interior and exterior ball conditions and the boundary condition $h$ and the final-time data $g$ 
are uniformly bounded, continuous, depending only on $x$. 

There are two players, Helen and Mark; a small parameter $\eps$ is fixed.
Since the PDE is to be solved in $\Omega$, Helen's final-time bonus $g$ is now a function of $x\in \ol \Omega$ 
and Helen's coupon profile $h$ is a function of $x\in \partial \Omega$.
The state of the game is described by its spatial position $x\in \overline \Omega$ and Helen's debt $z\in \R$. 
Helen's goal is to minimize her final debt, while Mark's is to obstruct her.

The rules of the game depend on three new parameters, $\alpha, \beta, \gamma >0$ whose presence represents no loss of generality. Their role 
 will be clear in a moment. The requirements
\begin{equation} \label{condition_pas}
\alpha<1/3, 
\end{equation}
and
\begin{equation}\label{cd_coeff_gen}
\alpha+\beta<1,\qquad  2\alpha+\gamma<2, \qquad \max(\beta q, \beta r)<2 , 
\end{equation}
will be clear in the explanation of the game. However, the proof of 
convergence in Section \ref{convergence} and consistency in Section \ref{consistance}  needs more: there we will require
\begin{equation}\label{cd_coeff_classiq}
\gamma<1-\alpha, \quad \beta(q-1)<\alpha+1, \quad \gamma(r-1)<2 \alpha, 
\quad \gamma r<1+\alpha.
\end{equation}
These conditions do not restrict the class of PDEs we consider, since for any $q$ and $r$ 
there exist $\alpha$, $\beta$ and $\gamma$ with  the desired properties.

Using the language of our financial interpretation: 
\begin{enumerate}
 \item[a)] First we consider $U^\eps(x,z,t)$, Helen's optimal wealth at time $T$, if initially at time $t$ the stock price is $x$ 
and her wealth is $-z$. 
 \item[b)] Then we define $u^\eps(x,t)$ or $v^\eps(x,t)$ as, roughly speaking, the initial debt Helen should have at time $t$ 
to break even at time $T$.
\end{enumerate}
The  proper  definition of $U^\eps(x,z,t)$ involves a game similar to that of Section \ref{heat_eq_1D}. 
The rules are as follows: if at time $t_j=t_0+j \eps^2$, the position is $x_j$ and Helen's debt is $z_j$, then
\begin{enumerate}
 \item Helen chooses a vector $p_j \in \R^N$ and a 
 matrix $\Gamma_j \in \mathcal{S}^N$, restricted by
\begin{equation}
 \norm{p_j}\leq \eps^{-\beta}, \norm{\Gamma_j}\leq \eps^{-\gamma} \label{p_beta_gamma_new}.
\end{equation}
 \item Taking Helen's choice into account, Mark chooses the stock price $x_{j+1}$ so as to degrade Helen's outcome. 
Mark chooses an intermediate point $\hat x_{j+1}=x_j +\Delta \hat x_j \in \R^N$ such that 
\begin{equation}
\left\|\Delta \hat x_j\right\| \leq \eps^{1-\alpha}.
\label{moving1_new}
\end{equation}
This position $\hat x_{j+1}$ determines the new position $x_{j+1}=x_j+\Delta x_j \in \overline \Omega$ at time $t_{j+1}$ by the rule
\begin{equation}
x_{j+1}= \proj_{\overline \Omega} (\hat x_{j+1}).  
\label{moving2_new} 
\end{equation}
\item Helen's debt changes to 
\begin{equation}
z_{j+1}=z_j +  p_j\cdot \Delta \hat x_j +\frac{1}{2}  \left\langle \Gamma_j \Delta \hat x_j,\Delta \hat x_j \right\rangle
+\eps^2 f(t_j,x_j,z_j,p_j,\Gamma_j) - \norm{\Delta \hat x_j - \Delta x_j}  h(x_j+\Delta x_j ) .
\label{Helens_debt}
\end{equation}
\item The clock steps forward to $t_{j+1}=t_j+\eps^2$  and the process repeats, stopping when $t_K=T$. 
At the final time Helen receives $g(x_K)$ from the option.
\end{enumerate}

This game is well-posed for all $\eps>0$ small enough. 
As mentioned in the introduction, the uniform exterior ball condition holds automatically for a $C^2$-bounded domain.
In this case,  by compactness of $\partial \Omega$, there exists $\eps_\ast>0$ such that 
$\proj_{\overline \Omega} $ is well-defined for all $x\in \Omega$ such that $d(x) \leq \eps_\ast$. 
It can be noticed that an unbounded $C^2$-domain, even with bounded curvature, does not generally satisfy this condition.  
Since the domain $\Omega$ satisfy the uniform exterior ball condition given by Definition \ref{unif_int_ball_cd} 
for a certain $r$, the projection is well-defined on the tubular neighborhood $\{x\in \R^N  \backslash \Omega, d(x)< r/2\}$  of the boundary.

\begin{figure}[t]
\begin{center}
 \scalebox{0.7}{
\includegraphics{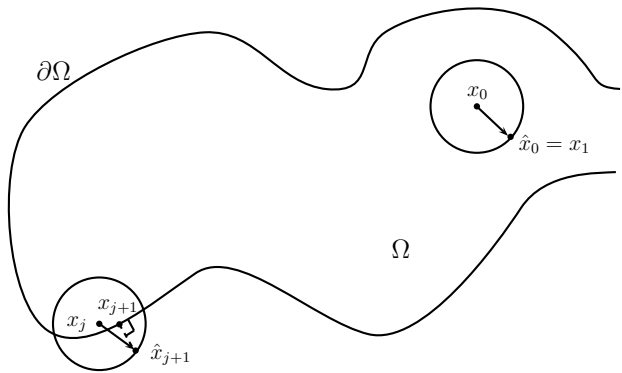}  
 }
\end{center}
\caption{Rules of the game, near the boundary and inside the domain.} 
\label{full_domain_rules_game} 
\end{figure}

\begin{remark}\label{extension_h}
 To give a sense to \eqref{Helens_debt} for all $\Delta x_j$, the function $h$ which is defined only on the boundary 
can be extended on $\ol \Omega $ by any function $\Omega \rightarrow \R$ since $\norm{x_{j+1} - \hat x_{j+1}}$ is different from zero if and only if 
$\hat x_{j+1} \notin \ol \Omega$. Moreover, by comparing $\Delta \hat x_j$ and $\Delta x_j$, one gets the relation 
\begin{equation*}
x_{j+1}=\hat x_{j+1} + \Delta x_j - \Delta \hat x_j. 
\end{equation*}
If $\hat x_{j+1}\in \Omega$, then $x_{j+1}= \hat x_{j+1}$ and the rules of the usual game \cite{kohns} are retrieved. 
Figure \ref{full_domain_rules_game} presents the two geometric situations for the choice for Mark: $B(x, \eps^{1-\alpha}) \subset \Omega$ or not. 
\end{remark}
Helen's goal is to maximize her worst-case score at time $T$, and Mark's is to work against her. Her  value function is 
\begin{equation} \label{}
 U^\eps(x_0,z_0,t_0) =\max_{\text{Helen's choices}} \left[ g(x_K) - z_K \right].
\end{equation}
It is characterized by the dynamic programming principle
\begin{equation}  \label{dpp_U}
U^\eps(x,z,t_j) = \max_{p,\Gamma} \min_{\Delta \hat x} U^{\eps} (x+\Delta x, z+\Delta z, t_{j+1}) 
\end{equation}
together with the final-time condition $U^\eps(x,z,T)= g(x) - z$. 
Here $\Delta \hat x$ is $\hat x_{j+1} - x_j$, $\Delta x$ is determined by
\begin{equation}\label{exp_delta_x}
 \Delta x=  x_{j+1} - x_j= \proj_{\ol \Omega} (x_j+\Delta \hat x_j) - x_j, 
\end{equation}
 and $\Delta z=z_{j+1} - z_j$ is given by \eqref{Helens_debt}, and the optimizations are constrained by \eqref{p_beta_gamma_new} and \eqref{moving1_new}. 
It is easy to see that the max/min is achieved and  is a continuous function of $x$ and $z$ at each discrete time
(the proof is by induction backward in time, like the argument sketched in \cite{kohns}).

When $f$ depends on $z$,  the function $z\mapsto U^\eps(x,z,t)$ can be nonmonotone, so we must distinguish between the minimal and maximal debt with 
which Helen breaks even at time $T$. 
\label{def_u_v_eps_par}
Thus, following \cite{cheridito_soner_touzi_victoir}, we define
\begin{equation}\label{def_u_eps_par}
u^\eps(x_0,t_0)=\sup\{z_0 :  U^\eps(x_0,z_0,t_0)\geq 0 \} 
\end{equation}
and
\begin{equation}\label{def_v_eps_par}
v^\eps(x_0,t_0)=\inf \{z_0 :  U^\eps(x_0,z_0,t_0)\leq 0 \}, 
\end{equation}
with the convention that the empty set has $\sup= - \infty$ and $\inf= \infty$. 
Clearly $v^\eps \leq u^\eps$, and $u^\eps(x,T) = v^\eps(x,T)=g(x)$. 
Since the definitions of $u^\eps$ and $v^\eps$ are implicit, these functions can not be characterized by a dynamic programming principle. 
However we still have two ``dynamic programming inequalities''.
\begin{prop} 
 If $u^\eps(x,t)$ is finite then
\begin{multline}\label{dyn_prog_ineq_sub_new}
u^\eps(x,t) \leq \sup_{p,\Gamma} \inf_{\Delta \hat x} \left[u^\eps(x+\Delta x ,t+\eps^2) \right.  \\ 
 - \left.  
  \left(p\cdot \Delta \hat x  +\frac{1}{2}  \left\langle \Gamma \Delta \hat x ,\Delta \hat x \right\rangle
+\eps^2 f(t,x, u^\eps(x,t),p ,\Gamma) -  \norm{\Delta \hat x - \Delta x} h(x+\Delta x)\right)  \right].
\end{multline}
Similarly, if $v^\eps(x,t)$ is finite then
\begin{multline}\label{dyn_prog_ineq_super_new}
v^\eps(x,t) \geq \sup_{p,\Gamma} \inf_{\Delta \hat x} \left[v^\eps(x+\Delta x ,t+\eps^2)\right.   \\
 - \left.    \left(p\cdot \Delta \hat x 
+\frac{1}{2}  \left\langle \Gamma \Delta \hat  x,\Delta \hat x \right\rangle
+\eps^2 f(t,x, v^\eps(x,t),p ,\Gamma) - \norm{\Delta \hat x - \Delta x} h(x+\Delta x)  \right) \right] .
\end{multline}
The sup and inf are constrained by \eqref{p_beta_gamma_new} and \eqref{moving1_new} 
 and $\Delta x$ is determined by \eqref{exp_delta_x}.
\end{prop}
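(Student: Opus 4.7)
The plan is to view both inequalities as one-sided versions of the dynamic programming principle \eqref{dpp_U} that survive after projecting onto the $\{U^\eps = 0\}$ level sets of $U^\eps(x,\cdot,t)$. I would start from the remark already noted after \eqref{dpp_U}: for each fixed discrete time, the map $z \mapsto U^\eps(x,z,t)$ is continuous, so the sets $\{z : U^\eps(x,z,t) \geq 0\}$ and $\{z : U^\eps(x,z,t) \leq 0\}$ are closed in $\R$. Whenever $u^\eps(x,t)$ (resp.\ $v^\eps(x,t)$) is finite, the supremum in \eqref{def_u_eps_par} (resp.\ the infimum in \eqref{def_v_eps_par}) is therefore attained, so that $U^\eps(x,u^\eps(x,t),t) \geq 0$ and $U^\eps(x,v^\eps(x,t),t) \leq 0$. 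Compactness of the admissible sets defined by \eqref{p_beta_gamma_new}--\eqref{moving1_new}, together with continuity of $U^\eps$, will also guarantee that every inner or outer extremum appearing below is achieved.

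For \eqref{dyn_prog_ineq_sub_new}, I set $z_0 := u^\eps(x,t)$ and use $U^\eps(x,z_0,t) \geq 0$. Applying \eqref{dpp_U} and choosing $(p_0,\Gamma_0)$ that achieve the outer maximum, one obtains $U^\eps(x+\Delta x,\, z_0 + \Delta z,\, t+\eps^2) \geq 0$ for every admissible $\Delta \hat x$, where $\Delta z$ is computed from \eqref{Helens_debt} with $(z_j,p_j,\Gamma_j) = (z_0,p_0,\Gamma_0)$. By the sup-definition \eqref{def_u_eps_par} applied at time $t + \eps^2$, this forces $z_0 + \Delta z \leq u^\eps(x+\Delta x, t+\eps^2)$. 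The crucial point is that since $z_0 = u^\eps(x,t)$, the $f$-contribution inside $\Delta z$ is exactly $\eps^2 f(t,x,u^\eps(x,t),p_0,\Gamma_0)$, matching the expression on the right-hand side of \eqref{dyn_prog_ineq_sub_new}. Rearranging, then taking the infimum over admissible $\Delta \hat x$ and the supremum over admissible $(p,\Gamma)$, yields \eqref{dyn_prog_ineq_sub_new}.

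The supersolution-type inequality \eqref{dyn_prog_ineq_super_new} is obtained symmetrically. Setting $z_0 := v^\eps(x,t)$, one has $U^\eps(x,z_0,t) \leq 0$; by \eqref{dpp_U} this reads $\max_{p,\Gamma}\min_{\Delta \hat x} U^\eps(x + \Delta x, z_0 + \Delta z, t+\eps^2) \leq 0$, hence for every admissible $(p,\Gamma)$ there exists an admissible $\Delta \hat x^\ast = \Delta \hat x^\ast(p,\Gamma)$ achieving the inner minimum, with $U^\eps(x + \Delta x^\ast, z_0 + \Delta z^\ast, t+\eps^2) \leq 0$. Definition \eqref{def_v_eps_par} applied at time $t+\eps^2$ then gives $z_0 + \Delta z^\ast \geq v^\eps(x + \Delta x^\ast, t+\eps^2)$, so $z_0 \geq \inf_{\Delta \hat x}[v^\eps(x+\Delta x, t+\eps^2) - \Delta z]$ for this $(p,\Gamma)$; taking the supremum over $(p,\Gamma)$ produces \eqref{dyn_prog_ineq_super_new}. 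There is no real obstacle in either direction: the entire content is the interplay between \eqref{dpp_U} and the level-set definitions \eqref{def_u_eps_par}--\eqref{def_v_eps_par}, and the only point requiring any care is the attainment of the suprema and infima, handled by the continuity and compactness observations of the first paragraph.
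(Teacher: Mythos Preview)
Your proof is correct and follows essentially the same route as the paper's: set $z_0=u^\eps(x,t)$, use continuity of $U^\eps$ to get $U^\eps(x,z_0,t)\ge 0$ (the paper in fact notes $=0$, but only the inequality is needed), apply the dynamic programming identity \eqref{dpp_U} to find $(p_0,\Gamma_0)$ with $U^\eps(x+\Delta x,z_0+\Delta z,t+\eps^2)\ge 0$ for all admissible $\Delta\hat x$, and then invoke \eqref{def_u_eps_par} at time $t+\eps^2$; the $v^\eps$ case is the symmetric argument, which the paper only sketches and you have written out.
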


\begin{proof}
The argument follows the same lines as the proof of the dynamic programming inequalities given in \cite[Proposition 2.1]{kohns}. 
For sake of completeness we give here the details. 
To prove~\eqref{dyn_prog_ineq_sub_new}, consider $z=u^\eps(x,t)$. By the definition of $u^\eps$ (and remembering that $U^\eps$ is continuous) we have
$U^\eps(x,z,t)=0$. Hence writing \eqref{dpp_U}, we have
\begin{equation*}
0=\max_{p,\Gamma} \min_{\Delta \hat x} U^{\eps} \left(x+\Delta x, z+  p\cdot \Delta \hat x +\frac{1}{2}  \langle \Gamma \Delta \hat x, \Delta \hat x \rangle
+\eps^2 f(t,x,z,p, \Gamma) - \norm{\Delta \hat x - \Delta x} h(x+\Delta x), t+\eps^2 \right). 
\end{equation*}
We conclude that there exist $p, \Gamma$ (constrained by \eqref{p_beta_gamma_new})  such that for all $\Delta \hat x$ constrained by \eqref{moving1_new},
determining $\Delta x$ by \eqref{exp_delta_x}, we have
\begin{equation*}
U^{\eps}\left(x+\Delta x, z+ p\cdot \Delta \hat x +\frac{1}{2}  \langle \Gamma \Delta \hat x, \Delta \hat x \rangle
+\eps^2 f(t,x,z,p, \Gamma) - \norm{\Delta \hat x - \Delta x} h(x+\Delta x)   , t+\eps^2\right) \geq 0.
\end{equation*}
By the definition of $u^\eps$ given by \eqref{def_u_eps_par}, this implies that
\begin{equation*}
z+ p\cdot \Delta \hat x +\frac{1}{2}  \langle \Gamma \Delta \hat x, \Delta \hat x \rangle
+\eps^2 f(t,x,z,p, \Gamma) - \norm{\Delta \hat x - \Delta x} h(x+\Delta x)
\leq u^{\eps}(x+\Delta x, t+\eps^2).
\end{equation*}
In other words, there exist $p, \Gamma$ such that for every $\Delta \hat x$, determining $\Delta x$ by \eqref{exp_delta_x},
\begin{equation*}
z \leq u^{\eps}(x+\Delta x, t+\eps^2) - \left( p\cdot \Delta \hat x +\frac{1}{2}  \langle \Gamma \Delta \hat x, \Delta \hat x \rangle
+\eps^2 f(t,x,z,p, \Gamma) - \norm{\Delta \hat x - \Delta x} h(x+\Delta x) \right).
\end{equation*}
Recalling that $z=u^\eps(x,t)$ and passing to the inf and sup, we get \eqref{dyn_prog_ineq_sub_new}. 
The proof of \eqref{dyn_prog_ineq_super_new} follows exactly the same lines. 
\end{proof}

To define viscosity subsolutions and supersolutions, we shall follow the Barles and Perthame procedure~\cite{barles_perthame1},
let us recall the upper and lower relaxed semi-limits defined for $(t,x)\in [0,T]\times \overline{\Omega}$ as
\begin{equation}\label{def_bp_subsup_visc}
 \bar{u}(x,t) =\limsup_{\substack{  y \rightarrow x,  y  \in \overline \Omega  \\ t_j \rightarrow t   \\  \eps \rightarrow 0}} u^\eps(y,t_j)   \quad
\text{ and }  \quad
 \underline v(x,t) =\liminf_{\substack{  y \rightarrow x,  y  \in \overline \Omega  \\ t_j \rightarrow t   \\  \eps \rightarrow 0}} v^\eps(y,t_j) ,
\end{equation}
where the discrete times are $t_j=T-j\eps^2$.
We shall show, under suitable hypotheses, that $\udl v$ and $\ol u$ are respectively viscosity super and subsolutions of \eqref{eq_neumann_evol}. 
Before stating our rigorous result in Section~\ref{rigorous_result_par},  
the next section presents the heuristic derivation of the PDE \eqref{eq_neumann_evol} through the optimal strategies of Helen and Mark.

\subsubsection{Heuristic derivation of the optimal player strategies}
\label{heuristic_derivation}
We now formally show that $u^\eps$ should converge as $\eps \rightarrow 0$ to the solution of \eqref{eq_neumann_evol}. Roughly speaking, 
the PDE~\eqref{eq_neumann_evol} is the formal Hamilton Jacobi Bellman equation associated to the two-persons game presented at the beginning of the 
present section.
The procedure for formal derivation from the dynamic programming principle to a corresponding PDE is classical: we assume $u^\eps$ and $v^\eps$ coincide and 
are smooth to use Taylor expansion, suppress the dependence of $u^\eps$ and $v^\eps$ on $\eps$ and finally make $\eps \rightarrow 0$.
That has already been done for $x$ far from the boundary in \cite[Section~2.2]{kohns} for $f$ depending only on $(Du,D^2u)$. 
We now suppose that $x$ is close enough of the boundary so that $\hat x$ can be nontrivial. 
By assuming $u^\eps= v^\eps$ as announced and suppressing the dependence of $u^\eps$ 
on $\eps$, the two dynamic programming inequalities \eqref{dyn_prog_ineq_sub_new} and 
\eqref{dyn_prog_ineq_super_new} give the programming equality 
\begin{multline}
 \label{dyn_prog_eq_formal}
u(x,t) \approx \sup_{p,\Gamma} \inf_{\Delta \hat x} \left[u(x+\Delta x ,t+\eps^2) \right. \\
- \left. \left(p\cdot \Delta \hat x 
+\frac{1}{2}  \left\langle \Gamma \Delta \hat x ,\Delta \hat x \right\rangle
+\eps^2 f(t,x, u(x,t),p ,\Gamma) -  \norm{\Delta \hat x - \Delta x} h(x+\Delta x) \right)  \right]. 
\end{multline}
Remembering that $\Delta \hat x$ is small, if $u$ is assumed to be smooth, we obtain 
\begin{align*}
u (  x +\Delta x,t+\eps^2)&  + \norm{\Delta \hat x - \Delta x}  h(x+\Delta x )  \\
&  \approx u(x,t)+\eps^2  u_t + Du\cdot \Delta x +\frac{1}{2} \left\langle D^2u \Delta x, \Delta x \right\rangle 
  + \norm{\Delta \hat x - \Delta x}  h(x+\Delta x ) \\
& \approx  u(x,t)+\eps^2  u_t + Du  \cdot \Delta \hat x + \norm{\Delta \hat x - \Delta x} \left\{ h(x+\Delta x )
 - Du \cdot n(x+\Delta x) \right\}  +\frac{1}{2} \left\langle D^2u \Delta x, \Delta x \right\rangle , 
\end{align*}
since the outer normal can be expressed by 
$\ds n(x+\Delta x)=  -  \frac{\Delta x - \Delta \hat x}{\norm{\Delta \hat x - \Delta x}}$ if $\hat x \notin \Omega$. 
Substituting this computation in \eqref{dyn_prog_eq_formal}, and rearranging the terms, we get 
\begin{multline} \label{pb_heur_gen}
0 \approx \eps^2  u_t+ \max_{p,\Gamma   }   \min_{\Delta \hat x} 
\left[ (Du - p)\cdot \Delta \hat x  + \norm{\Delta \hat x - \Delta x} \{ h(x+\Delta x ) - Du \cdot n(x+\Delta x) \}  \right. \\
\left.   +\frac{1}{2} \left\langle D^2u  \Delta x, \Delta x \right\rangle  - \frac{1}{2} \left\langle \Gamma \Delta \hat x, \Delta \hat  x \right\rangle 
  - \eps^2 f \left(t, x, u,  p,\Gamma \right) \right],
\end{multline}
where $u$, $Du$, $D^2u$ are evaluated at $(x,t)$. We have ignored the upper bounds in \eqref{p_beta_gamma_new} since they allow $p$, $\Gamma$
to be arbitrarily large in the limit $\eps \rightarrow 0$ (we shall of course be more careful in Section~\ref{consistance}).

If the domain $\Omega$ does not satisfy the uniform interior ball condition, $\Omega$ can present an infinity number of 
``neck pitchings'' of neck size arbitrarily small. 
To avoid this situation, the uniform interior ball condition is used to impose a strictly positive lower bound  on these necks. 
If $x$ is supposed to be extremely close to the $C^2$-boundary and \mbox{$\norm{\Delta \hat x} \leq \eps^{1-\alpha}$}, 
 the boundary looks like a hyperplane orthogonal to the outer normal vector $n(\bar x)$, where  
$\bar x$ is the projection of $x$ on the boundary $\partial \Omega$ (see Figure \ref{drawing_formal_derivation}). 
By Gram-Schmidt process, we can find some vectors $e_2, \cdots, e_N$ such that $(e_1=n(\bar x), e_2, \cdots, e_N)$ form 
 an orthonormal basis of $\R^N$. In this basis, denote
\begin{equation} \label{decomp_p_Gamma}
 p=p_1 n(\bar x) + \widetilde p \quad \text{ and } \quad 
\Gamma= \left(\left\langle \Gamma e_i,e_j \right\rangle \right)_{1\leq i,j\leq N} = 
\left(
\begin{array}{c|ccc}
\Gamma_{11} &  \cdots & (\Gamma_{1i})_{2 \leq i\leq N} &  \cdots   \\
    \hline  
  \vdots &  & &   \\
 (\Gamma_{i1})_{2 \leq i\leq N}  &   &  \widetilde \Gamma  &   \\
  \vdots  &  &&
\end{array}
\right),  
\end{equation}
 where $ p_1\in \R $,  $\widetilde p\in V^\perp=\text{span}(e_2, \cdots, e_N)$ 
  and $\widetilde \Gamma=\left(\left\langle \Gamma e_i,e_j \right\rangle \right)_{2 \leq i,j\leq N}\in \mathcal{S}^{N-1}$.

\begin{figure} [t] 
\begin{center}
 \scalebox{0.8}{
\includegraphics{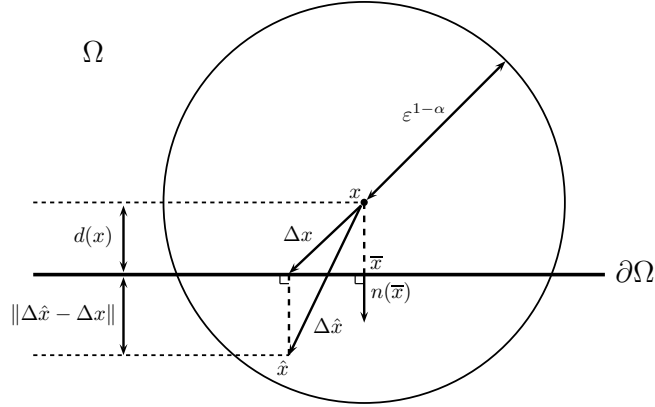}
 }
\end{center}
\caption{Formal derivation for $x$ near the boundary $\partial \Omega$, notation: $\bar x =\proj_{\partial \Omega}(x)$.} 
\label{drawing_formal_derivation}
\end{figure}

Let us focus on the Neumann penalization term in \eqref{pb_heur_gen} denoted by 
\begin{equation*}
 P(x)= \norm{\Delta \hat x - \Delta x} m(\Delta x) \quad \text{ with }  \quad
 m(\Delta x)= 
\begin{cases}
h(x+\Delta x) - Du(x)\cdot n(x+\Delta x),  & \text{ if } \hat x \notin \ol \Omega, \\
\tilde m(\Delta x) , & \text{ if } \hat x \in \ol \Omega, 
\end{cases}
\end{equation*}
where $m(\Delta x)$ is extended for $\hat x \in \ol \Omega$ by any function $\tilde m(\Delta x)$ (see Remark \ref{extension_h}). 
This contribution is favorable to Helen, $P(x)>0$, if $m(x)>0$, or to Mark, $P(x)<0$, if $m(x)<0$,  and its size depends on the magnitude of the vector 
$\Delta \hat x - \Delta x$. 
Our formal derivation is local and essentially geometric, in the sense that our target is to determine the optimal choices for Helen by considering all the 
moves $\Delta \hat x$ that Mark can choose.
By continuity of $h$ and smoothness of $u$, the function $m(\Delta x)$ 
is close to $m=h(\bar x) - Du(x)\cdot n(\bar x)$ if $\hat x \notin \ol \Omega$. 
We shall assume here that $m(\Delta x)$, which serves to model the Neumann boundary condition, is locally constant on the boundary and equal to $m$. 
This hypothesis corresponds in the game to assume that in a small neighborhood, crossing the boundary is always favorable to one player.
 In order to focus only on the geometric aspects, this approach seems formally appropriate 
since it freezes the dependence of $p(x)$ on $m(x)$ by eliminating the difficulties linked to the local variations of $m(x)$  like the change of sign.

Hence, it is sufficient to examine 
\begin{equation}\label{heuristic_opt_n_gene}
 \max_{p,\Gamma   }   \min_{\Delta \hat x} 
\left[  (Du - p)\cdot \Delta \hat x + \norm{\Delta \hat x - \Delta x} m  + \frac{1}{2} \left\langle D^2u  \Delta x, \Delta x \right\rangle 
 - \frac{1}{2} \left\langle \Gamma \Delta \hat x, \Delta \hat  x \right\rangle 
  - \eps^2 f \left(t, x, u ,  p,\Gamma \right) \right]. 
\end{equation}

The formal proof will be performed in three steps.

\textbf{Step 1:} To determine the optimal choice for Helen of $p$, 
we consider the $\eps$-order optimization problem $\mathcal{M}$ obtained from \eqref{heuristic_opt_n_gene} by neglecting the second $\eps$-order terms 
\begin{equation}\label{heuristic_opt1}
\mathcal{M}= \max_{p} \min_{\Delta \hat x}   \left[ (Du - p) \cdot \Delta \hat x +\norm{\Delta \hat x - \Delta x} m   \right].
\end{equation}
By writing $\Delta \hat x= (\Delta \hat x)_1 n(\ol x) + \widetilde{\Delta \hat x}$ with $\widetilde{\Delta \hat x}\in V^\perp$
 and observing that $\norm{\Delta \hat x - \Delta x} $ depends only on $(\Delta \hat x)_1$, 
we decompose the max min \eqref{heuristic_opt1} into 
  \begin{align*}
\mathcal{M} & =   \max_{p_1,\widetilde p  }   \min_{\Delta \hat x} \left[  (\widetilde {Du}  -  \widetilde p)\cdot \widetilde {\Delta \hat x} +
(Du_1-p_1)(\Delta \hat x)_1+\norm{\Delta \hat x - \Delta x}  \right]   \\
& =  \max_{p_1 }   \min_{|( \Delta \hat x)_1 |  \leq  \eps^{1-\alpha}} \left[  (Du_1-p_1)(\Delta \hat x)_1+  \norm{\Delta \hat x - \Delta x}  m 
+ \max_{\widetilde p } \min_{\norm{\widetilde{\Delta \hat x}} \leq \sqrt{ \eps^{2-2\alpha} - |(\Delta \hat x)_1|^2} } 
(\widetilde {Du}  -  \widetilde p)\cdot \widetilde {\Delta \hat x}  \right].
  \end{align*}
Noticing that the choices of $\widetilde p$ and $p_1$  are independent from each other, we can successively 
solve the optimization problems. First of all, in order to choose $\widetilde p$,  let us determine 
\begin{equation*}
\widetilde {\mathcal{M}}= \max_{\widetilde p } \min_{\norm{\widetilde{\Delta \hat x}} \leq \sqrt{ \eps^{2-2\alpha} - |(\Delta \hat x)_1|^2} } 
(\widetilde {Du}  -  \widetilde p)\cdot \widetilde {\Delta \hat x}.
\end{equation*}
If $\Delta \hat x = \pm \eps^{1-\alpha} n(\ol x)$, $\widetilde {\Delta \hat x}=0$ and the min is always zero: Helen's choice is irrelevant.
Otherwise, Helen should take $ \widetilde  p= \proj_{V^\perp} Du = \widetilde {Du} $, 
since otherwise Mark can make this max min strictly negative and minimal by choosing 
$\widetilde {\Delta \hat x} = -  
\sqrt{ \eps^{2-2\alpha} - |(\Delta \hat x)_1|^2} \frac{(Du - p)_{V^{\perp}}}{\norm{Du- p}}$ with $\Delta \hat x \neq \pm \eps^{1-\alpha} n(\ol x)$.
Thus Helen chooses $\widetilde p=\widetilde {Du}$, $\widetilde {\mathcal{M}}=0$ and  $\mathcal{M}$ reduces to
\begin{equation}\label{heuristic_opt1n}
\mathcal{M}= \max_{p_1} \min_{\Delta \hat x}   \left[ ((Du)_1 - p_1)  (\Delta \hat x)_1 +\norm{\Delta \hat x - \Delta x} m   \right].
\end{equation}
To determine the remaining coordinate $p_1=p\cdot n(\ol x)$ of $p$, we now consider 
the optimization problem \eqref{heuristic_opt1n} by restricting the possible choices made by Mark to the moves $\Delta \hat x$ which belong 
to the subspace $V=\R n(\bar x)$. 
Since $\norm{\Delta \hat x}\leq \eps^{1-\alpha}$ and $\Delta \hat x \in V$, 
we use the parametrization  $\Delta \hat x=\lambda \eps^{1-\alpha} n(\bar x)$, $\lambda \in [-1,1]$. 
If $\hat x \in \Omega$, the boundary is not crossed and $\norm{\Delta x-\Delta \hat x}=0$, 
while if $\hat x \notin \Omega$ the boundary is crossed and $\norm{\Delta x - \Delta \hat x}=\lambda \eps^{1-\alpha} - d(x)$. 
The intermediate point $\hat x = \bar x \in \partial \Omega$ separating the two regions corresponds to
 $\lambda_0= \frac{d(x)}{\eps^{1-\alpha}}$ and $\norm{\Delta x - \Delta \hat x}=0$. 
As a result, to compute the min in \eqref{heuristic_opt1n}, we  shall distinguish these two regions 
by decomposing the global minimization problem into two minimization problems respectively on each region
\begin{equation}\label{decompose_M}
\mathcal{M}= \max_{s_p} \kappa(s_p) \quad \text{ with } \quad  \kappa(s_p) =\min(\mathcal{M}_1(s_p), \mathcal{M}_2(s_p)), 
\end{equation}
where $s_p= (Du - p) \cdot n(\bar x)$ and 
\begin{align}
 & \mathcal{M}_1(s_p)= \min_{\lambda_0 \leq \lambda_1 \leq 1 }M_1 (\lambda_1)  \quad \text{ with } \quad M_1(\lambda_1)= 
(s_p+ m)\eps^{1-\alpha}  \lambda_1 - d(x) m, \label{heur_optimization_M1} \\
 & \mathcal{M}_2(s_p) =\min_{-1 \leq \lambda_2 \leq \lambda_0}M_2(\lambda_2) \quad \text{ with } \quad M_2(\lambda_2)=   s_p \eps^{1-\alpha} \lambda_2.
\label{heur_optimization_M2}
\end{align}
For fixed $p$, the functions defining $M_1$ and $M_2$ are affine and can easily be minimized separately:
\begin{itemize}
 \item If $s_p+m\geq 0$, $\mathcal{M}_1(s_p)$ is attained for $\lambda_1=\lambda_0$ and $\mathcal{M}_1(s_p)=d(x)s_p$.
 \item If $s_p+m <0$, $\mathcal{M}_1(s_p)$ is attained for $\lambda_1=1$ and $\mathcal{M}_1(s_p)=\eps^{1-\alpha} s_p +(\eps^{1-\alpha} - d(x))m$.  
 \item If $s_p \geq 0$, 
$ \mathcal{M}_2(s_p)$ is attained for $\lambda_2=- 1$ and  $ \mathcal{M}_2(s_p) = - \eps^{1-\alpha}s_p $.
 \item If $s_p< 0$ , $ \mathcal{M}_2(s_p)$ is attained for $\lambda_2= \lambda_0$ and  $ \mathcal{M}_2(s_p)=d(x)s_p$. 
\end{itemize}
Geometrically, $\lambda \in\{-1,1,\lambda_0\}$  corresponds to three particular moves: 
$\Delta \hat x=\pm \eps^{1-\alpha} n(\bar x)$ and $\Delta \hat x= d(x) n(\bar x)$. 
We are going to distinguish several cases to compute the max min according to the sign of $s_p$ and $m$. First of all, let us assume that 
$m$ is positive.  
\begin{enumerate}[label=\textup{ }{(C\arabic*)}\textup{ },ref={(C\arabic*)}] 
 \item \label{c1_heur} If $s_p \geq 0$ then $s_p + m \geq 0$  and the optimal choices are $(\lambda_1,\lambda_2)=(\lambda_0,-1) $. 
It remains to minimize between \eqref{heur_optimization_M1} and \eqref{heur_optimization_M2}. 
Taking into account that $d(x) \leq \eps^{1-\alpha}$ and $s_p \geq 0$, we get by the definition of $\kappa(s_p)$ given by \eqref{decompose_M} that
$\kappa(s_p) = \min \{ d(x) s_p, - \eps^{1-\alpha} s_p \} = -\eps^{1-\alpha} s_p$.
 \item \label{c2_heur} If $-m\leq s_p < 0$ then  $(\lambda_1,\lambda_2)=(\lambda_0,\lambda_0)$ and $\kappa(s_p) =\mathcal{M}_1(s_p)=\mathcal{M}_2(s_p)=d(x) s_p$.   
 \item \label{c3_heur} If $s_p<-m<0$ then $(\lambda_1,\lambda_2)=(1,\lambda_0)$ and 
$\mathcal{M}_1(s_p)= \eps^{1-\alpha} s_p +(\eps^{1-\alpha} - d(x)) m$  and $\mathcal{M}_2(s_p)=d(x) s_p$. 
By multiplying the inequality $s_p<-m<0$ by $(\eps^{1-\alpha} - d(x))$, we get 
\begin{equation*}
\kappa(s_p) = \min \{ \eps^{1-\alpha} s_p +(\eps^{1-\alpha} - d(x)) m  , d(x) s_p  \} = d(x) s_p.
\end{equation*}
\end{enumerate}
By combining cases \ref{c1_heur}--\ref{c3_heur}, we conclude that if $m>0$,
\begin{equation*}
\kappa(s_p)=
\begin{cases}
 \eps^{1-\alpha} s_p +(\eps^{1-\alpha}-d(x)) m, & \text{if }s_p \leq -m , \\ 
 d(x) s_p,                                      & \text{if } -m \leq s_p \leq 0,  \\
- \eps^{1-\alpha} s_p,                          & \text{if }s_p \geq 0  .
\end{cases}
\end{equation*}
The max of $\kappa$ is  zero and reached at the unique value $s_p=Du\cdot n (\ol x) -p_1=0$. 
Since $\tilde p=\widetilde {Du}$ by the previous analysis, we conclude in \eqref{decomp_p_Gamma} that if $m> 0$, Helen's optimal choice is $p=Du$.

Let us now suppose that $m$ is negative. 
\begin{enumerate}[resume*]
 \item \label{c4_heur} If $s_p < 0$ then $s_p+m < 0$ and the optimal choices are $(\lambda_1,\lambda_2)=(1,\lambda_0)$. 
By the definition of $\kappa(s_p)$ given by \eqref{decompose_M}, we obtain
 \begin{equation}  \label{min_1_m_neg}
\kappa(s_p)=\min \{ \eps^{1-\alpha} s_p+(\eps^{1-\alpha}-d(x))m, d(x)s_p \} = \eps^{1-\alpha} s_p  +(\eps^{1-\alpha}-d(x))m.   
\end{equation}
 \item \label{c5_heur} If $s_p \geq -m > 0$ then $(\lambda_1, \lambda_2)=(\lambda_0,-1)$  and $\mathcal{M}_1(s_p)=d(x) s_p$
  and $\mathcal{M}_2(s_p)=-\eps^{1-\alpha} s_p$. By the definition of $\kappa(s_p)$ given by \eqref{decompose_M}, we obtain
\mbox{$\kappa(s_p) = \min \left\{  d(x) s_p ,  - \eps^{1-\alpha} s_p \right\}  = - \eps^{1-\alpha} s_p$}.
 \item \label{heur_case_very_neg} If $0< s_p   < -m$, then $(\lambda_1, \lambda_2)=(1,-1)$ and $\mathcal{M}_1(s_p)=\eps^{1-\alpha}s_p +(\eps^{1-\alpha}-d(x)) m$
and $\mathcal{M}_2(s_p)=-\eps^{1-\alpha} s_p$. By the definition of $\kappa(s_p)$ given by \eqref{decompose_M}, we obtain
\begin{equation*} 
\kappa(s_p)= \min \left\{ \eps^{1-\alpha}s_p +(\eps^{1-\alpha}-d(x)) m , -\eps^{1-\alpha}s_p \right\} .
\end{equation*}
The target for Helen is to maximize this minimum with respect to $s_p$.
Both functions intervening in the minimum are affine: the first one is affine, strictly increasing and is equal to $(\eps^{1-\alpha}-d(x)) m<0$ 
for $s_p=0$  and to $d(x)m>0$   for  $s_p=-m$  whereas 
the second function is linear and strictly decreasing and is equal to $m\eps^{1-\alpha}<0$ for  $s_p=-m$. 
As a result, there is a unique $s^\ast$ such that these two functions are equal and this value precisely
 realizes the max of $\kappa$ on $[0, -m]$. Thus, the best that Helen can hope corresponds to  
$ \eps^{1-\alpha} s^\ast +(\eps^{1-\alpha}-d(x)) m =  - \eps^{1-\alpha}   s^\ast $. This gives
\begin{equation*}
s^\ast= (Du - p) \cdot n(\bar x )= - \frac{1}{2}\Big(1 -  \frac{d(x)}{\eps^{1-\alpha}} \Big) m.
\end{equation*}
We immediately check that $s^\ast \in \left[0,-\frac{m}{2}\right]$, which implies the condition $s^\ast+m \leq \frac{1}{2}m< 0$. 
Thus,  $\ds \max_{s_p \in[0,-m]} \kappa (s_p)= \frac{1}{2} (\eps^{1-\alpha}-d(x)) m$ is greater than the minimum obtained in \eqref{min_1_m_neg}.
\end{enumerate}
By combining cases \ref{c4_heur}--\ref{heur_case_very_neg}, we conclude that if $m \leq 0$,
\begin{equation*}
\kappa(s_p)=
\begin{cases}
 \eps^{1-\alpha} s_p +(\eps^{1-\alpha} - d(x)) m,  & \text{if }s_p < s^\ast , \\ 
- \eps^{1-\alpha} s_p,                             & \text{if }s_p \geq s^\ast.
\end{cases}
\end{equation*}
The max of $\kappa$ is equal to $\kappa(s^\ast)$  and reached for $s_p=Du\cdot n (\ol x) -p_1=s^\ast$.

Let us give an intermediate conclusion: if $m>0$, Helen chooses $p=Du$ whereas if $m \leq  0$, she chooses   
\begin{equation} \label{opt_choice_p1}
 p  = Du   +  \frac{m}{2} \left( 1-\frac{d(x)}{\eps^{1-\alpha}} \right)  n(\bar x ).
\end{equation}

\textbf{Step 2:} We are now going to take into account the second order terms in $\eps$ in the optimization problem. 
If $m \geq 0$, once Helen has chosen $p=Du$, the optimization problem \eqref{heuristic_opt_n_gene} reduces to computing
\begin{equation} \label{heuristic_opt_n_m+1}
 \max_{\Gamma   }   \min_{\Delta \hat x} 
\left[ \norm{\Delta \hat x - \Delta x} m   + \frac{1}{2} \left\langle D^2u  \Delta x, \Delta x \right\rangle 
 - \frac{1}{2} \left\langle \Gamma \Delta \hat x, \Delta \hat  x \right\rangle   - \eps^2 f \left(t, x, u ,  Du,\Gamma \right) \right]  .
\end{equation}
Mark is going to choose $\Delta \hat x\cdot n(\ol x) \leq 0$, because otherwise
the first $\eps$-order quantity $\norm{\Delta \hat x - \Delta x} m  $ will be favorable to Helen. 
 Then considering $\Delta \hat x\cdot n(\ol x) \leq 0$, we have $\Delta \hat x=\Delta x$ and 
by symmetry of the quadratic form associated to $ (D^2u - \Gamma )$, the optimization problem \eqref{heuristic_opt_n_gene} reduces to  
\begin{multline}\label{heuristic_opt_n_m+2}
\max_{\Gamma   }   \min_{\Delta \hat x\cdot n(\ol x) \leq 0} 
\left[ \frac{1}{2} \left\langle (D^2u - \Gamma )\Delta \hat x, \Delta \hat x \right\rangle  - \eps^2 f \left(t, x, u , Du,\Gamma \right) \right]  
\\
=  \eps^2  \max_{\Gamma   }   \min_{\Delta \hat x} 
\left[ \frac{1}{2}  \eps^{-2}  \left\langle (D^2u - \Gamma )\Delta \hat x, \Delta \hat x \right\rangle  -f \left(t, x, u,  Du,\Gamma \right) \right].
\end{multline}
Helen should choose $\Gamma\leq D^2u$, since otherwise Mark can drive $\eps^{-2}\langle (D^2u -\Gamma) \Delta \hat x, \Delta \hat x\rangle$
to $-\infty$ by a suitable choice of $\Delta \hat x$. Thus, the min attainable by Mark is zero and is at least realized for the choice $\Delta \hat x=0$. 
Helen's maximization reduces to 
\begin{equation*}
\max_{\Gamma\leq D^2u}[u_t-f(t,x,u,Du,\Gamma)].
\end{equation*} 
Since the PDE is parabolic, i.e. since $f$ satisfies \eqref{ellipticity_f}, Helen's optimal choice is $\Gamma=D^2u$ and \eqref{pb_heur_gen} reduces formally to 
$u_t- f(t,x,u,Du, D^2u)=0$.
 
If $m<0$, Helen must now choose $\Gamma$. In fact, 
we are going to see that the choice of $p_1=p\cdot n(\ol x)$ obtained at \eqref{opt_choice_p1} can be slightly improved
by taking into account the additional terms containing $D^2u$ and $\Gamma$. 
Suppose Helen chooses $p$ such that $(p-Du)_{|V^\perp}=0$ (notice that our first order  computation \eqref{opt_choice_p1} fulfills this condition)
and Mark chooses a move $\Delta \hat x^\ast$ realizing the minimum on $\Delta \hat x$ in \eqref{heuristic_opt_n_gene}. 
We consider two cases depending on~$\Delta \hat x^\ast$. 

\textbf{Case a:} if $\Delta \hat x^\ast \in V^\perp$, we can restrain the minimization problem to the moves $\Delta \hat x$ 
which belong to $V^\perp$, $\Delta \hat x=\Delta x $. 
Thus, the optimization problem \eqref{heuristic_opt_n_gene} reduces to computing
\begin{equation*} 
\mathcal{M}_{V^\perp}= \eps^2   \max_{\Gamma} \min_{\substack{\Delta \hat x \in V^\perp  \\ \norm{\Delta \hat x} \leq \eps^{1-\alpha}}} \left[
\frac{1}{2} \eps^{-2} \langle (D^2u - \widetilde \Gamma) \Delta  x, \Delta  x \rangle - f \left(t, x, u,  p,\Gamma \right) \right], 
\end{equation*}
where $\widetilde \Gamma= \Gamma_{|V^\perp}$. Helen should choose $\widetilde \Gamma \leq \widetilde {D^2u}$, since otherwise 
Mark can drive $\eps^{-2}\langle (D^2u - \widetilde \Gamma) \Delta  x, \Delta  x \rangle$ to $-\infty$ 
by a suitable choice of $\Delta \hat x$. By repeating the same argument of ellipticity of $f$ already used for $m> 0$, 
Helen's optimal choice is $\widetilde \Gamma = \widetilde {D^2u}$.

\textbf{Case b:} if $\Delta \hat x^\ast \notin V^\perp$, there exists an unit vector $v$ orthogonal to $n(\bar x)$ such that
 $\Delta \hat x^\ast \in \text{span} (n (\bar x), v)$. Thus, we restrain 
the minimization problem on $\Delta \hat x$ given by \eqref{heuristic_opt_n_gene} to the moves $\Delta \hat x$ which belong to the disk
$D=\text{span} (n (\bar x), v) \cap B(\eps^{1-\alpha})$. 
This gives the optimization problem $\mathcal{M}_D$ given by 
\begin{equation}\label{heuristic_opt_n_m-4} 
\mathcal{M}_D=  \max_{p_1, \Gamma   }   \min_{\Delta \hat x \in D}  \Big[(Du-p)_1 (\Delta \hat x)_1 +\norm{\Delta \hat x - \Delta x}m   
+ \frac{1}{2} \left\langle D^2u \Delta  x, \Delta  x \right\rangle
- \frac{1}{2} \left\langle  \Gamma \Delta \hat x, \Delta \hat x \right\rangle  - \eps^2 f \left(t, x, u,  p ,\Gamma \right) \Big]
\end{equation}
by taking into account that $\widetilde p = \widetilde {Du}$ and $\widetilde \Gamma = \widetilde {D^2u}$.  
Neglecting $-\eps^2 f(t,x,z,p, \Gamma)$, we want to compute
$\ds  \max_{s_p,\Gamma   }   \min_{\Delta \hat x} \mathcal{N}(s_p, \Gamma, \Delta \hat x)$ with 
\begin{equation}\label{heuristic_opt_n_2}
\mathcal{N}(s_p, \Gamma, \Delta \hat x) =  s_p ( \Delta \hat x)_1
+ \norm{\Delta \hat x - \Delta x} m  + \frac{1}{2} \left\langle D^2u  \Delta x, \Delta x \right\rangle 
 - \frac{1}{2} \left\langle \Gamma \Delta \hat x, \Delta \hat  x \right\rangle .
\end{equation}
Since  $\norm{\Delta  \hat x }\leq \eps^{1-\alpha}$, we parametrize the disk $D$ by 
$\Delta \hat x= \lambda \eps^{1-\alpha} n(\bar{x})+ \mu \eps^{1-\alpha} v$ with $\lambda^2+\mu^2 \leq 1 $. 
Notice that the calculation of $\Gamma_{1v}:= \langle \Gamma n(\bar x), v\rangle$  for 
 $v$ orthogonal to $n(\bar x)$ implies the computation  of $(\Gamma n(\bar x))_{|V^\perp}$.
If Mark chooses $\Delta \hat x$ such that $\lambda\geq \lambda_0$ for which the boundary is crossed, 
\begin{multline}\label{formal_ord2_lambda_neg_mu}
\mathcal{N}(s_p, \Gamma, \Delta \hat x) = (s_p +m)\eps^{1-\alpha} \lambda  - d(x) m  
 +\frac{1}{2} d^2(x)  (D^2u)_{11}  - \frac{1}{2} \lambda^2 \eps^{2-2\alpha} \Gamma_{11} \\
-  \mu  \left(\frac{d(x)}{\eps^{1-\alpha}} (D^2u)_{1v}  - \lambda \Gamma_{1v} \right) \eps^{2-2\alpha}, 
\end{multline}
whereas for $\Delta \hat x$ such that $\lambda\leq \lambda_0$ for which the boundary is not crossed, 
 \begin{equation}\label{formal_ord2_lambda_pos_mu}
  \mathcal{N}(s_p, \Gamma, \Delta \hat x) =   s_p \eps^{1-\alpha} \lambda   + \frac{1}{2} \lambda^2 ((D^2u)_{11}
    - \Gamma_{11})  \eps^{2-2\alpha}  -  \lambda \mu  ((D^2u)_{1v}  - \Gamma_{1v} ) \eps^{2-2\alpha}. 
  \end{equation}
For fixed $\lambda $, Mark will always choose $\mu$ so that the last term is negative and maximal which leads to 
\begin{equation*}
\mu= 
\begin{cases}
 \text{sgn}(\frac{d(x)}{\eps^{1-\alpha}} (D^2u)_{1v}- \lambda \Gamma_{1v}) \sqrt{1-\lambda^2}, & \text{if } \lambda_0 \leq \lambda \leq 1, \\
 \text{sgn}(\lambda ((D^2u)_{1v}  - \Gamma_{1v} )) \sqrt{1-\lambda^2},                          & \text{if } - 1 \leq \lambda < \lambda_0.
\end{cases}
\end{equation*}
The min of $\mathcal{N}(s_p, \Gamma, \Delta \hat x)$ 
on $\mu$ depends only on $\lambda = \Delta \hat x \cdot n(\bar x)$ and will be denoted below by $\mathcal{N}(s_p, \Gamma, \lambda)$. 
By virtue of \eqref{formal_ord2_lambda_neg_mu} and \eqref{formal_ord2_lambda_pos_mu} it corresponds, for $\lambda \geq \lambda_0$, to 
\begin{equation*}\label{formal_ord2_lambda_neg}
\mathcal{N}(s_p, \Gamma, \lambda) =  (s_p +m)\eps^{1-\alpha} \lambda  
- d(x) m  +\frac{1}{2} d^2(x)  (D^2u)_{11}  
 - \frac{1}{2} \lambda^2 \eps^{2-2\alpha} \Gamma_{11}  
 - \sqrt{1-\lambda^2}  \Big| \frac{d(x)}{\eps^{1-\alpha}} (D^2u)_{1v}  - \lambda \Gamma_{1v} \Big| \eps^{2-2\alpha} \notag
\end{equation*}
and, for  $\lambda \leq \lambda_0$, to
\begin{equation*}\label{formal_ord2_lambda_pos}
\mathcal{N}(s_p, \Gamma, \lambda) =
 s_p \eps^{1-\alpha} \lambda  + \frac{1}{2} \lambda^2 ((D^2u)_{11}  - \Gamma_{11})  \eps^{2-2\alpha} 
-  |\lambda| \sqrt{1-\lambda^2} \Big|(D^2u)_{1v}  - \Gamma_{1v} \Big| \eps^{2-2\alpha} .
\end{equation*}
The second order terms containing $D^2u$ and $\Gamma$ being a little perturbation for $\eps>0$ small enough compared to $(\eps^{1-\alpha} - d(x))m$ 
for $d(x)\ll \eps^{1-\alpha}$, it is sufficient to consider the case \ref{heur_case_very_neg}
which led to \eqref{opt_choice_p1} and $(\lambda_1, \lambda_2)=(1,-1)$ corresponding 
to $\Delta \hat x= \pm \eps^{1-\alpha} n(\ol x)$.
Therefore, we are going to compare the moves close to the optimal choices $\Delta \hat x= \pm \eps^{1-\alpha} n(\ol x)$ previously obtained by 
considering only the first terms in the Taylor expansion. 
More precisely, we may assume $\lambda \approx \pm 1$, which leads to making the change of variables
$\lambda_1= 1-\rho_1$, $\lambda_2= -1+ \rho_2$  and take $\ds \rho_i \sublim_{\eps\rightarrow 0} 0$  for  $i=1,2$.
After some computations, we get a Taylor expansion in $\rho_i$, $i=1,2$, in the form
\begin{multline}  \label{formal_dev_rho1}
\mathcal{N}  (s_p, \Gamma, 1- \rho_1) =
(s_p +m)\eps^{1-\alpha}   - d(x) m +\frac{1}{2} d^2(x)  (D^2u)_{11}   - \frac{1}{2} \eps^{2-2\alpha} \Gamma_{11} \\  
- \sqrt{2\rho_1} \Big| \frac{d(x)}{\eps^{1-\alpha}} (D^2u)_{1v}  -  \Gamma_{1v} \Big| \eps^{2-2\alpha}  
  +\rho_1 \eps^{1-\alpha} \Big[- (s_p +m)  +  \eps^{1-\alpha} \Gamma_{11}   \Big] +O(\eps^{2-2\alpha} \rho_1^{3/2}),
\end{multline}
and
\begin{multline}\label{formal_dev_rho2}
\mathcal{N}  (s_p, \Gamma, -1+ \rho_2) = -s_p \eps^{1-\alpha} - \frac{1}{2} ((D^2u)_{11}- \Gamma_{11})  \eps^{2-2\alpha} \\
- \sqrt{2\rho_2} \Big|(D^2u)_{1v}  - \Gamma_{1v} \Big| \eps^{2-2\alpha}
 + \rho_2 \eps^{1-\alpha} (  s_p   - ((D^2u)_{11}  - \Gamma_{11})  \eps^{1-\alpha} )  +O(\eps^{2-2\alpha} \rho_2^{3/2}).  
\end{multline}
%
First of all, we are now going to focus on the 0-order terms on the $\rho_1,\rho_2$ variables.
 Dropping the next terms corresponds to the two particular moves $\Delta \hat x= \pm \eps^{1-\alpha} n(\bar x)$ ($\rho_1=\rho_2=0 $).   
For fixed $\Gamma$, since these terms containing $\Gamma$ have the same contributions, we can omit the dependence of
$\mathcal{N}(\cdot, \Gamma,1)$ and $\mathcal{N}(\cdot, \Gamma, - 1)$ on $\Gamma$.
Then, by repeating the same arguments already used,  there exists  a unique $s_2^\ast$ 
 realizing the max of $\ds \min_{}(\mathcal{N}(\cdot, \Gamma,1),\mathcal{N}(\cdot, \Gamma,-1) )$ for which both functions are equal.  
After some calculations, we find
\begin{equation}\label{formal_choice_s}
s_2^\ast= -\frac{1}{2}\left(1-\frac{ d(x)}{\eps^{1-\alpha}}\right) m 
 + \frac{1}{4} \left( \eps^{1-\alpha}- \frac{d^2(x)}{\eps^{1-\alpha}} \right) (D^2u)_{11}.
\end{equation}
If $m< 0$, Helen will finally choose 
\begin{equation}\label{formal_choice_popt}
p_\text{opt}(x) = Du  + \left[ \frac{1}{2}\left(1-\frac{ d(x)}{\eps^{1-\alpha}}\right) m 
 - \frac{1}{4} \left( \eps^{1-\alpha}- \frac{d^2(x)}{\eps^{1-\alpha}} \right) (D^2u)_{11}
\right] n(\bar x).
\end{equation}
To complete the analysis on the 0-order terms on the $\rho_1,\rho_2$ variables, we are now going to see how Helen must choose 
$\Gamma_{11}$. By conserving only the 0-order terms, we obtain 
\begin{equation*}
\mathcal{M}_D  \approx \frac{1}{2}(\eps^{1-\alpha} - d(x) ) m 
+  \max_{\Gamma_{11}}  
 \left[    \frac{1}{4} (\eps^{2-2\alpha}+ d^2(x)) (D^2u)_{11} -\frac{1}{2} \eps^{2-2\alpha}  \Gamma_{11}
 - \eps^2 f \left(t, x,  u,  p_{\text{opt}},\Gamma \right) \right]. 
\end{equation*}
Since $\Gamma_{11}$ cannot counterbalance the first order term, the $\Gamma_{11}$-term and the second order terms are gathered.
Helen wants to make the best choice, so she is going to choose $ \Gamma_{11} $ such that
\begin{equation*}
\frac{1}{4} (\eps^{2-2\alpha}+ d^2(x)) (D^2u)_{11} -\frac{1}{2} \eps^{2-2\alpha}  \Gamma_{11}  \geq 0 .
\end{equation*}
By ellipticity of $f$, Helen will choose $\Gamma_{11} $  such that this upper bound on $\Gamma_{11}$ is attained. She takes    
\begin{equation}\label{Gamma_opt_nn}
\Gamma_{11} = \frac{1}{2} \left(1+ \frac{d^2(x)}{\eps^{2-2\alpha}}\right) (D^2u)_{11}.
\end{equation}
It remains to determine $\Gamma_{1v}$. By plugging the optimal choices $s_2^\ast$, corresponding to $p_\text{opt}$,  and $\Gamma_{11}$, 
respectively given by \eqref{formal_choice_s} and \eqref{Gamma_opt_nn} in \eqref{formal_dev_rho1}--\eqref{formal_dev_rho2}, we have
\begin{align*}  
\mathcal{N}  (s_2^\ast, \Gamma, 1- \rho_1) 
& 
= \frac{1}{2} (\eps^{1-\alpha} - d(x) )m  - \sqrt{2\rho_1} \Big| \frac{d(x)}{\eps^{1-\alpha}} (D^2u)_{1v} - \Gamma_{1v} \Big| \eps^{2-2\alpha}   
   -(s_2^\ast +m) \rho_1 \eps^{1-\alpha}  +O(\eps^{2-2\alpha} \rho_1), \\
\mathcal{N} (s_2^\ast, \Gamma, -1 + \rho_2) 
&
=   \frac{1}{2} (\eps^{1-\alpha} - d(x) )m - \sqrt{2\rho_2} \left|(D^2u)_{1v} -\Gamma_{1v} \right| \eps^{2-2\alpha} 
 + s_2^\ast \rho_2 \eps^{1-\alpha}  +O(\eps^{2-2\alpha} \rho_2).
\end{align*}
Dropping the $O(\eps^{2-2\alpha} \rho_i)$ terms and noticing that $s_2^\ast>0$ and $ - (s_2^\ast +m)>0 $ for $\eps$ small enough, 
the two minimization problems $\ds \min_{\rho_i} \mathcal{N}  (s_2^\ast, \Gamma, 1- \rho_i)$, $i\in \{1,2\}$ for Mark reduce to find
\begin{align*}
\min_{0< \rho \leq 1}  f(\rho), \quad \text{ where }  f(\rho) = a \sqrt{\rho}+b\rho,    
\end{align*}
with $a<0<b$. Differentiating $f$, the minimum of $f$ is attained at $\ds \sqrt{\rho^\ast} =  - \frac{a}{2b}$. 
We can notice that this computation is equivalent to formally differentiating the Taylor expansion of 
\eqref{formal_ord2_lambda_neg_mu}--\eqref{formal_ord2_lambda_pos_mu}. 
Conserving the predominant terms and dropping the next terms, the minimum of $\mathcal{N}  (s_2^\ast, \Gamma, 1- \rho_1)$ 
and $\mathcal{N}  (s_2^\ast, \Gamma, -1+ \rho_2)$ are respectively attained at  
\begin{equation*}
\sqrt{\rho_{1}^\ast } \simeq \frac{1}{\sqrt{2}}  \frac{ |\frac{d}{\eps^{1-\alpha}}(D^2u)_{1v}-\Gamma_{1v} | }{|s_2^\ast+m|} \eps^{1-\alpha}
\quad \text{ and } \quad  
\sqrt{\rho_2^\ast} \simeq \frac{1}{\sqrt{2}}\frac{|(D^2u)_{1v}-\Gamma_{1v}|}{|s_2^\ast|} \eps^{1-\alpha}  . 
\end{equation*}
Assuming formally that these approximations are in fact equalities, we obtain 
\begin{align}
\mathcal{N} (s_2^\ast, \Gamma, 1- \rho_{1}^\ast) & = \frac{1}{2} (\eps^{1-\alpha} - d(x) )m  
  -\frac{1}{2}  \left|\frac{d(x)}{\eps^{1-\alpha}}(D^2u)_{1v}  - \Gamma_{1v} \right|^2   \frac{\eps^{3-3\alpha}}{|s_2^\ast+m|}  +O(\eps^{4-4\alpha}),
\label{gamma1v1}  \\
\mathcal{N} (s_2^\ast, \Gamma, -1+\rho_2^\ast) & =  \frac{1}{2} (\eps^{1-\alpha} - d(x) )m  
         -\frac{1}{2}  \left|(D^2u)_{1v}  - \Gamma_{1v} \right|^2 \frac{\eps^{3-3\alpha}}{|s_2^\ast|} +O(\eps^{4-4\alpha}). 
         \label{gamma1v2}
\end{align}
Helen now has to choose $\Gamma_{1v}$ such that 
$\min \{ \mathcal{N} (s_2^\ast, \Gamma, 1 - \rho_{1}^\ast),\mathcal{N}(s_2^\ast,\Gamma, -1+\rho_{2}^\ast) \}$ is maximal.
We could compute the optimal value of $\Gamma_{1v}$ on the $\eps^{3-3\alpha}$-terms. However, it is not very useful. 
Since $m$ is a constant and  $\eps^{3-3\alpha} \ll \eps^2$ by \eqref{condition_pas}, the $\eps^{3-3\alpha}$-terms are negligible 
compared to $- \eps^2 f(t,x,u, p_\text{opt}, \Gamma)$ that we have omitted until now. 
For instance Helen can fix $\Gamma_{1v}$ such that one of the two terms depending on $\Gamma_{1v}$ in \eqref{gamma1v1} and \eqref{gamma1v2} is equal to zero: 
$\Gamma_{1v} = (D^2u)_{1v}$ or $ \Gamma_{1v} = \frac{d(x)}{\eps^{1-\alpha}}(D^2u)_{1v}$. 
The two choices are equivalent because Mark can reverse his move $\Delta \hat x$. For sake of simplicity, we assume Helen chooses  $\Gamma_{1v} = (D^2u)_{1v}$.
It is worth noticing that this expansion holds if $m$ is far from zero and we shall modify our arguments very carefully in Section \ref{consistance}
 when $m$ is negative but small with respect to a certain power of $\eps$.

Thus, if $m<0$, Helen will choose
\begin{equation} \label{formal_gamma_opt_n}
\Gamma_\text{opt}(x) = D^2 u + \left[ \frac{1}{2}   \left(-1+\frac{d^2 (x) }{\eps^{2-2\alpha}} \right)  (D^2 u)_{11} \right] E_{11}.
\end{equation}
 Unlike the usual game \cite{kohns}, when  Helen chooses $p$ and $\Gamma$ optimally, 
she does not become indifferent to Mark's choice of $\Delta \hat x$. More precisely, 
it depends  on the projection of $\Delta \hat x $ with respect to $n(\bar x)$.  Our games always have this feature.

\textbf{Step 3:} Now let us go back to the original optimization problem \eqref{pb_heur_gen}. If $m=0$, by letting $\eps \rightarrow 0$, we get 
$h(x) - Du(x)\cdot n(x)=0$.  Otherwise, \eqref{pb_heur_gen} formally reduces  to
\begin{equation} \label{eq_formal_final}
0\approx \eps^2 u_t+
\begin{cases}
\ds  \frac{1}{2}(\eps^{1-\alpha} - d(x) ) m  -  \eps^2 f (t, x,  u,  p_{\text{opt}}(x),\Gamma_\text{opt}(x))+o(\eps^2), 
& \text{if } d(x)\leq \eps^{1-\alpha} \text{ and } m < 0, \\ 
\ds  - \eps^2 f (t, x,u,Du,D^2u),  & \text{if } d(x)\geq \eps^{1-\alpha} \text{ or } m > 0, 
\end{cases}
\end{equation}
with $p_\text{opt}$ and $\Gamma_\text{opt}$ respectively defined by \eqref{formal_choice_popt} and \eqref{formal_gamma_opt_n}. 
If $x\in \Omega$, for $\eps$ small enough, the second relation in \eqref{eq_formal_final} is always valid
 so that we deduce from the $\eps^2$-order terms in \eqref{eq_formal_final} that $u_t- f(t,x,u,Du,D^2u)=0$.
If $x\in \partial \Omega$, $d(x)=0$ and we distinguish the cases $m>0 $ and $m< 0$. 
If $m>0$, one more time the second relation in \eqref{eq_formal_final} is always valid
 so that  $u_t- f(t,x,u,Du,D^2u)=0$. 
Otherwise, if $m< 0$, the first relation in \eqref{eq_formal_final} is always satisfied. 
We observe that the $\eps$-order term is predominant since $\eps^{1-\alpha} \gg \eps^2$. By dividing by $\eps^{1-\alpha}$ and 
letting $\eps \rightarrow 0$, we obtain $ m = 0$ that leads to a contradiction since we assumed $m< 0$. 
Therefore,  we have formally shown that on the boundary $h(x) - Du(x)\cdot n(x)=0$ or $u_t- f(t,x,u,Du,D^2u)=0$.

\subsubsection{Main parabolic result}

\label{rigorous_result_par}

We shall show, under suitable hypotheses, that $\ol u$ and $\udl v$ are respectively viscosity sub and supersolutions. 
A natural question is to compare $\overline u$ and $\underline v$. 
This is a global question, which we can answer only if the PDE has a comparison principle. Such a principle asserts that 
if $u$ is a subsolution and $v$ is a supersolution then $u\leq v$. 
If the PDE has such a principle then it follows that $\overline u \leq \underline v$.  
The opposite inequality is immediate from the definitions, so it follows that $\overline u= \underline v$, and we get a viscosity solution of the PDE. 
It is in fact the unique viscosity solution, since the comparison principle implies uniqueness.

\begin{theorem}\label{theo_cv_par_neu}
Consider the final-value problem \eqref{eq_neumann_evol} where $f$ satisfies \eqref{ellipticity_f}--\eqref{cont_growth_p_Gamma}, 
 $g$  and $h$ are continuous, uniformly bounded, and $\Omega$ is a $C^2$-domain satisfying both the uniform interior and exterior ball conditions. 
Assume the parameters $\alpha$, $\beta$, $\gamma$  satisfy  \eqref{condition_pas}-\eqref{cd_coeff_classiq}. 
Then $\overline u$ and $\underline v$ are uniformly bounded on $\ol \Omega \times [t_\ast, T]$ 
for any $t_\ast<T$, and they are respectively a viscosity subsolution and a viscosity supersolution of \eqref{eq_neumann_evol}. 
If the PDE has a comparison principle (for uniformly bounded solutions),  then it follows that $u^\eps$ and $v^\eps$ 
converge locally uniformly to the unique viscosity solution of \eqref{eq_neumann_evol}. 
\end{theorem}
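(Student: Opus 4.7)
The plan is to apply the Barles--Souganidis convergence framework for monotone, stable, and consistent approximation schemes. Viewing the dynamic programming inequalities \eqref{dyn_prog_ineq_sub_new}--\eqref{dyn_prog_ineq_super_new} as defining a scheme $S^\eps$ through Helen's value function, I would first verify monotonicity: if $\phi \leq \psi$ pointwise then the max-min in \eqref{dyn_prog_ineq_sub_new} is a nondecreasing function of its ``unknown future'' argument, since the debt update \eqref{Helens_debt} does not involve the future value. Second, I would establish stability by constructing explicit super- and subsolutions of the discrete game. Because $f$ has at most linear growth in $z$ by \eqref{lin_grow} and both $g$ and $h$ are uniformly bounded, I would test barriers of the form $\pm \bigl(A + B(T-t)\bigr)$, possibly corrected by a boundary layer term of the form $\chi(d(x))$ with $\chi'(0)$ absorbing $\norm{h}_\infty$, and show by induction backward in time that $U^\eps$ lies between such barriers. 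The resulting uniform bound on $U^\eps$ translates, via the level-set definitions \eqref{def_u_eps_par}--\eqref{def_v_eps_par}, into a uniform bound on $u^\eps, v^\eps$ on $\overline\Omega\times[t_\ast,T]$; the ordering $v^\eps \leq u^\eps$ is immediate by construction.

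The third and central ingredient is consistency. I would plug a smooth test function $\phi$ into $S^\eps$ and perform a careful Taylor expansion, distinguishing the interior regime $d(x)\geq \eps^{1-\alpha}$ from the boundary regime $d(x)<\eps^{1-\alpha}$. In the interior the projection is inactive and the Kohn--Serfaty estimates transfer verbatim, giving an error $o(\eps^2)$ under \eqref{cd_coeff_classiq} and yielding consistency with the operator $\partial_t\phi - f(t,x,\phi,D\phi,D^2\phi)$. Near $\partial \Omega$ the formal analysis of Section \ref{heuristic_derivation} dictates Helen's near-optimal strategies $(p_\text{opt},\Gamma_\text{opt})$ given by \eqref{formal_choice_popt}--\eqref{formal_gamma_opt_n}, and I would split on the sign of $m=h(\bar x)-D\phi\cdot n(\bar x)$: for $m>0$ the penalization term is favorable and one recovers the interior equation after dividing by $\eps^2$; for $m<0$ the $\eps^{1-\alpha}$-scale penalization dominates and, after dividing by $\eps^{1-\alpha}$ and letting $\eps\to 0$, produces the boundary condition $\partial_n\phi=h$; the transitional region $|m|=O(\eps^{\sigma})$ must be handled by combining both contributions.

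The hard part will be making the boundary consistency estimate quantitative and uniform in $d(x)$ while keeping all error terms subcritical. Specifically, the $O(\eps^{2-2\alpha}\rho_i^{3/2})$ remainders in \eqref{formal_dev_rho1}--\eqref{formal_dev_rho2}, the $\eps^{3-3\alpha}$ correction coming from the choice of $\Gamma_{1v}$ in \eqref{gamma1v1}--\eqref{gamma1v2}, the curvature contributions from approximating $\partial\Omega$ by its tangent hyperplane (controlled via the uniform interior/exterior ball conditions), and the growth of $f$ through \eqref{loc_lip_p_Gamma}--\eqref{cont_growth_p_Gamma} applied to $\norm{p_\text{opt}}\leq \eps^{-\beta}$ and $\norm{\Gamma_\text{opt}}\leq \eps^{-\gamma}$ must all be shown to be negligible with respect to $\min(\eps^2,\eps^{1-\alpha}|m|)$. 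This is exactly where the asymmetric inequalities \eqref{cd_coeff_classiq} (rather than the weaker \eqref{cd_coeff_gen}) are used. I expect this to require a case-by-case bookkeeping according to whether $d(x)$ lies above or below various thresholds (typically $\eps,\eps^{1-\alpha}$, and a scale determined by $|m|$), and to be the main technical obstacle of the argument.

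With monotonicity, stability, and consistency in hand, Barles--Souganidis yields that $\overline u$ is a viscosity subsolution and $\underline v$ a viscosity supersolution of \eqref{eq_neumann_evol}, the Neumann condition being understood in the Lions relaxed sense: at a boundary maximum of $\overline u - \varphi$ either $\partial_t\varphi - f(\cdot)\leq 0$ or $\langle D\varphi,n\rangle - h\leq 0$ holds, this dichotomy reflecting the two regimes identified in \eqref{eq_formal_final}. If \eqref{eq_neumann_evol} admits a comparison principle for uniformly bounded sub/supersolutions, then $\overline u \leq \underline v$ on $\overline\Omega\times[t_\ast,T]$; since $\underline v \leq \overline u$ is immediate from $v^\eps\leq u^\eps$ and the definitions \eqref{def_bp_subsup_visc}, one obtains $\overline u = \underline v$, which is the unique viscosity solution, and standard arguments upgrade the semi-relaxed convergence to local uniform convergence of $u^\eps$ and $v^\eps$.
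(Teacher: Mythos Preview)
Your proposal is correct and follows essentially the same route as the paper: the theorem is obtained by combining stability (Proposition~\ref{stability_para_prop}, where the barrier is $(B+\norm{\psi}_\infty)s^{T-t}\pm\psi(x)$ with $\psi\in C^2_b(\overline\Omega)$ satisfying $\partial_n\psi=\norm{h}_\infty+1$, exactly the boundary-layer correction you anticipate) with the Barles--Souganidis argument (Proposition~\ref{cv_par}) resting on the consistency estimates of Section~\ref{consistance}. The only refinement you should build in is that the pointwise quantity $m=h(\bar x)-D\phi\cdot n(\bar x)$ must be replaced by the extremal values $m^x_\eps[\phi]$ and $M^x_\eps[\phi]$ of \eqref{m_eps_par}--\eqref{M_eps_par} over the small ball, and the boundary case analysis is organized into four regimes via two extra scale parameters $\rho,\kappa$ satisfying \eqref{def_nul}--\eqref{relation_gamma_nul_tilde}; this is precisely the ``case-by-case bookkeeping'' you flag as the main obstacle.
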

This theorem is an immediate consequence of Propositions \ref{cv_par} and \ref{stability_para_prop}. 

In this theorem, we require the domain $\Omega$ to be $C^2$.
This assumption is crucial for the proof of Proposition~\ref{cv_par} case~\ref{cas_cv_par2} 
corresponding to the convergence at the final-time in the viscosity sense (see Remark~\ref{hyp_mini_domain}). 
It can also be noticed that it is this part of  Proposition~\ref{cv_par} which allows to use a comparison principle for the  parabolic PDE. 
On the other hand, since the game already requires the uniform interior and exterior ball conditions, the domain $\Omega$ is in fact at least $C^{1,1}$. 
It remains an open question to overcome the analysis in this case.

As mentioned in \cite{kohns}, some sufficient conditions for the PDE to have a comparison result can be found in Section~4.3 of \cite{cheridito_soner_touzi_victoir}.
In our framework, we can emphasise on the comparison principle obtained by Sato \cite[Theorem 2.1]{sato_interface} 
for a fully nonlinear parabolic equation with a homogeneous condition. 
The reader is also referred to the introduction for other references about comparison and existence results.  
Note that most comparison results require $f(t,x,z,p, \Gamma)$ to be nondecreasing in $z$.

We close this section with the observation that if $U^\eps(x,z,t)$ is a strictly decreasing function of $z$ then $v^\eps(x,t)=u^\eps(x,t)$. 
A sufficient condition for this to hold is that $f$ be nondecreasing in $z$:

\begin{lemma}\label{lem_dec_U}
Suppose $f$ is non-decreasing in $z$ in the sense that 
\begin{equation*} 
f(t,x,z_1,p, \Gamma)\geq f(t,x,z_0,p, \Gamma) \quad \text{ whenever }z_1>z_0.
\end{equation*}
Then $U^\eps$ satisfies 
\begin{equation*} 
U^\eps(x,z_1,t_j)\leq U^\eps(x,z_0,t_j) - (z_1-z_0)  \quad \text{ whenever }z_1>z_0,
\end{equation*}
at each discrete time $t_j=T - j\eps^2$. In particular, $U^\eps$ is strictly decreasing in $z$ and $v^\eps= u^\eps$.
\end{lemma}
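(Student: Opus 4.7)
The plan is to proceed by backward induction on the discrete time $t_j = T - j\eps^2$, leveraging both the dynamic programming principle \eqref{dpp_U} and the monotonicity of $f$ in $z$. The base case $j=0$ is immediate, since the terminal condition $U^\eps(x,z,T) = g(x)-z$ satisfies the target bound with equality.

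For the inductive step, I would fix $z_0 < z_1$ and an arbitrary admissible triple $(p,\Gamma,\Delta \hat x)$. A key observation is that the constraints \eqref{p_beta_gamma_new}--\eqref{moving1_new} are $z$-independent, so Helen and Mark have access to exactly the same set of moves at both debt levels. Writing the debt increment starting from level $z$ as
\begin{equation*}
\Delta z(z) = p\cdot \Delta \hat x + \tfrac{1}{2}\langle \Gamma \Delta \hat x, \Delta \hat x\rangle + \eps^2 f(t_j,x,z,p,\Gamma) - \|\Delta \hat x - \Delta x\|\, h(x+\Delta x),
\end{equation*}
the monotonicity of $f$ in $z$ yields $\Delta z(z_1) \geq \Delta z(z_0)$, and therefore
\begin{equation*}
(z_1 + \Delta z(z_1)) - (z_0 + \Delta z(z_0)) \geq z_1 - z_0 > 0.
\end{equation*}
Applying the inductive hypothesis at the common new spatial position $x+\Delta x$ and time $t_{j+1}$ gives
\begin{equation*}
U^\eps(x+\Delta x, z_1+\Delta z(z_1), t_{j+1}) \leq U^\eps(x+\Delta x, z_0+\Delta z(z_0), t_{j+1}) - (z_1-z_0),
\end{equation*}
a pointwise inequality between two functions of $(p,\Gamma,\Delta \hat x)$ on the common admissible set. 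Taking $\min_{\Delta \hat x}$ preserves it, then so does $\max_{p,\Gamma}$; plugging the result into \eqref{dpp_U} delivers $U^\eps(x,z_1,t_j) \leq U^\eps(x,z_0,t_j) - (z_1-z_0)$ and closes the induction.

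The conclusion $v^\eps = u^\eps$ then follows quickly. Strict monotonicity of $z\mapsto U^\eps(x,z,t_j)$ is immediate from the bound, and combined with the continuity of $U^\eps$ in $z$ (noted just after \eqref{dpp_U}) and with the divergence $U^\eps(x,z,t_j) \to \mp \infty$ as $z \to \pm\infty$ (a further direct consequence of the bound), the function admits a unique zero $z^\ast$. The sets entering \eqref{def_u_eps_par} and \eqref{def_v_eps_par} reduce to $(-\infty,z^\ast]$ and $[z^\ast,\infty)$ respectively, hence $u^\eps(x,t_j) = z^\ast = v^\eps(x,t_j)$.

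The only point requiring any care—and hardly an obstacle—is the verification that the $\min$-$\max$ in \eqref{dpp_U} is taken over a set independent of $z$, which is what allows the pointwise comparison of integrands to survive both optimizations. Once this is noted, the rest is a routine unwinding of the DPP and of the implicit definitions \eqref{def_u_eps_par}--\eqref{def_v_eps_par}.
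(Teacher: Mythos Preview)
Your proof is correct and follows essentially the same approach as the paper's, which defers the details to \cite[Lemma~2.4]{kohns}: backward induction on the discrete times via the dynamic programming principle \eqref{dpp_U}, using that the admissible set of moves is $z$-independent and that the Neumann term $-\norm{\Delta \hat x - \Delta x}\,h(x+\Delta x)$ enters $\Delta z$ identically at both debt levels. You have simply written out in full the argument the paper invokes by reference, including the final identification $u^\eps=v^\eps$ via the unique zero of $z\mapsto U^\eps(x,z,t_j)$.
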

\begin{proof} The whole space case is provided in \cite[Lemma 2.4]{kohns}. 
 For our game, it suffices to add  $- \norm{\Delta \hat x - \Delta x} h(x+\Delta x)$ in 
the expressions of $\Delta z_0$  and $\Delta z_1$  defined in the proof of \cite[Lemma~2.4]{kohns}.  The rest of the proof remains unchanged.
\end{proof}

\subsection{Nonlinear elliptic equations}
\label{rules_el_game}

This section explains how our game can be used to solve stationary problems with Neumann boundary conditions. The framework is similar to the parabolic case, 
but one new issue arises: we must introduce discounting as in \cite{kohns}, to be sure Helen's value function is finite. Therefore we focus on 
\begin{equation}\label{eq_neumann_el}
\begin{cases}
   f(x, u,Du,D^2u)+ \lambda u =0,  &  \text{ in } \Omega, \\ 
  \left\langle D u, n \right\rangle=h,  &  \text{ on } \partial \Omega ,  
\end{cases}
\end{equation}
where $\Omega$ is a domain with $C^2$-boundary and satisfies both the uniform interior and exterior ball condition presented in the introduction.
The constant $\lambda$ (which plays the role of an interest rate) must be positive, and large enough so that \eqref{est_f_el_neu} holds. 
Notice that if $f$ is independent of $z$ then any $\lambda$ will do.

We now present the game. The main difference with Section \ref{def_jeu_parabolique} is the presence of discounting. 
The boundary condition $h$ is assumed to be a bounded continuous function on $\partial \Omega$. 
Besides the parameters $\alpha$, $\beta$, $\gamma$ introduced previously,   
in the stationary case we need two new parameters, $m$ and $M$, and a $C_b^2(\ol \Omega)$-function $\psi$ such that
\begin{equation}\label{def_psi_h_el}
 \frac{\partial \psi}{\partial n}=\norm{h}_\infty +1 \quad \text{ on } \partial \Omega.
\end{equation}
It suffices to construct $\psi_1$ such that it is $C_b^2(\ol \Omega)$  and satisfies $\frac{\partial \psi_1}{\partial n}=1$ on the boundary.
Then we can define $\psi$ by  $\psi=( \norm{h}_\infty +1)\psi_1$.
 The existence and construction of such a function $\psi_1$ for a $C^2$-domain $\Omega$ satisfying the uniform interior ball condition 
is discussed at the end of this section.  

 From $m$ and $\psi$ we construct a function $\chi$ defined by 
\begin{equation}\label{def_chi_el}
 \chi(x)=m+ \norm{\psi}_{L^\infty(\ol \Omega)} +\psi(x).
\end{equation}
Both $m$ and $M$ are positive constants, which also yield that $\chi$ is positive. 
$M$ serves to cap the score, and the function $\chi$ determines what happens when the cap is reached.
We shall in due course choose $m$ such that $m + 2 \norm{\psi}_{L^\infty} =M-1$ and require that $M$ is sufficiently large. 
Like  the choices of $\alpha$, $\beta$, $\gamma$, the parameters $M$, $m$ and the function $\psi$ 
are used to define the game but they do not influence the resulting PDE.
As in Section~\ref{def_jeu_parabolique}, we proceed in two steps:
\begin{enumerate}[label=$\bullet$]
 \item First we introduce $U^\eps(x,z)$, the optimal worst-case present value of Helen's wealth if the initial 
stock price is $x$ and her initial wealth is $-z$.
 \item Then we define $u^\eps(x)$ and $v^\eps(x)$ as the maximal and minimal initial debt Helen should have at time $t$ to break even upon exit.
\end{enumerate}
The definition of $U^\eps(x,z)$ for $x\in \ol \Omega$ involves a game similar to that of the last section:
\begin{enumerate}
 \item Initially, at time $t_0=0$, the stock price is $x_0=x$ and Helen's debt is $z_0=z$. 
 \item Suppose, at time $t_j=j\eps^2$, the stock price is $x_j$ and Helen's debt is $z_j$ with $|z_j|<M$. Then Helen chooses a vector $p_j \in \R^N$ and 
a matrix $\Gamma_j \in \mathcal{S}^N$, 
restricted in magnitude by \eqref{p_beta_gamma_new}. Knowing these choices, Mark determines the next stock price
$ x_{j+1}=x_j+\Delta x$ so as to degrade Helen's outcome. 
The increment $\Delta x$ allows to model the reflection exactly as in the previous subsections.
Mark chooses an intermediate point $\hat x_{j+1}=x_j +\Delta \hat x_j \in \R^N$ such that 
\begin{equation*} 
\left\|\Delta \hat x_j\right\| \leq \eps^{1-\alpha}.
\end{equation*}
This position $\hat x_{j+1}$ determines the new position $x_{j+1}=x_j +\Delta x_j$ at time $t_{j+1}$ by 
\begin{equation*}  
x_{j+1}= \proj_{\overline \Omega} (\hat x_{j+1}). 
\end{equation*}
Helen experiences a loss at time $t_j$ of 
\begin{equation} \label{Helen_loss_el}
\delta_j =  p_j\cdot \Delta \hat x_j +\frac{1}{2}  \left\langle \Gamma_j \Delta \hat x_j,\Delta \hat x_j \right\rangle
+\eps^2 f(x_j,z_j,p_j,\Gamma_j) - \norm{\Delta \hat x_j - \Delta x_j}  h(x_j+\Delta x_j ) .
\end{equation}
As a consequence, her time $t_{j+1}=t_j+\eps^2$ debt becomes 
\begin{equation*}
z_{j+1}=e^{\lambda \eps^2} (z_j+\delta_j),  
\end{equation*}
where the factor $e^{\lambda \eps^2}$ takes into account her interest payments.
\item  If $z_{j+1} \geq M$, then the game terminates, and Helen pays a ``termination-by-large-debt penalty'' worth  
$e^{\lambda \eps^2}(   \chi(x_j) -\delta_j)$ at time $t_{j+1}$. 
Similarly, if  $z_{j+1} \leq  - M$, the the game terminates, and Helen receives a ``termination-by-large-wealth bonus'' worth 
$e^{\lambda \eps^2}(\chi(x_j) +\delta_j)$ at time $t_{j+1}$. If the game  stops this way we call $t_{j+1}$ the ``ending index'' $t_K$. 
\item If the game has not terminated then Helen and Mark repeat this procedure at time $t_{j+1}=t_j+\eps^2$. 
If the game never stops the ``ending index'' $t_K$ is $+\infty$.
\end{enumerate}
Helen's goal is a bit different from before, due to the presence  of discounting: she seeks 
to maximize the minimum present value of her future income, using the discount factor of $e^{-j\lambda \eps^2}$  for income received at time 
$t_j$. 
 If the game ends by capping at time $t_K$ with $z_K \geq M$, then the present value of her income is
\begin{align*}
U^\eps(x_0,z_0) & =  - z_0 -\delta_0 - e^{-\lambda \eps^2} \delta_1 - \cdots - e^{-(K-1) \lambda \eps^2} \delta_{K-1} 
 - e^{-(K-1) \lambda \eps^2} ( \chi(x_{K-1}) -\delta_{K-1})       \\
 &  = e^{-(K-1) \lambda \eps^2} (-z_{K-1}- \chi(x_{K-1})).
\end{align*}
Similarly, if the game ends by capping at time $t_K$ with $z_K \leq  - M$,  then the present value of her income is
\begin{align*}
U^\eps(x_0,z_0) & =  - z_0 -\delta_0 - e^{-\lambda \eps^2} \delta_1 - \cdots - e^{-(K-1) \lambda \eps^2} \delta_{K-1} 
 + e^{-(K-1) \lambda \eps^2} (\chi(x_{K-1}) + \delta_{K-1})       \\
 &  = e^{-(K-1) \lambda \eps^2} (-z_{K-1}+ \chi(x_{K-1}) ). 
\end{align*}
If the game never ends (since $z_j$ and $\chi(x_j)$ are uniformly bounded),  
we can take $K=\infty$ in the preceding formula to see that the present value of her income is $0$. 

To get a dynamic programming   characterization of $U^\eps$, we observe that if $|z_0|<M$ then 
\begin{equation*}
U^{\eps}(x_0,z_0)=\sup_{p, \Gamma} \inf_{\Delta \hat x }
 \begin{cases}
 e^{-\lambda \eps^2} U^{\eps}(x_1,z_1),   & \text{if } |z_1|<  M  , \\ 
 -z_0 - \chi(x_0) ,                       & \text{if } z_1 \geq  M, \\
 -z_0 + \chi(x_0) ,                       & \text{if } z_1 \leq -M.
\end{cases}
\end{equation*}
Since the game is stationary (nothing distinguishes time 0), the associated dynamic programming principle is that for $|z| <M$,
\begin{equation}\label{dpp_U_el}
U^{\eps}(x,z)=\sup_{p, \Gamma} \inf_{\Delta \hat x }
 \begin{cases}
 e^{-\lambda \eps^2} U^{\eps}(x',z'),    & \text{if } |z'|<  M  , \\ 
 -z - \chi(x),                           & \text{if } z' \geq  M, \\
 -z + \chi(x),                           & \text{if } z' \leq -M,
\end{cases}
\end{equation}
where  $x'=\proj_{\ol \Omega}(x+\Delta \hat x)$ and $z'=e^{\lambda \eps^2}(z+\delta)$, with $\delta$ defined as in \eqref{Helen_loss_el}. 
Here $p$, $\Gamma$ and $\Delta \hat x$ are constrained as usual by \eqref{p_beta_gamma_new}--\eqref{moving1_new}, 
and we write $\sup/\inf$ rather than $\max/\min$ since it is no longer 
clear that the optima are achieved (since the right-hand side is now a discontinuous function of $p$, $\Gamma$ and $\Delta \hat x$).
The preceding discussion defines $U^\eps$ only for $|z|<M$; it is natural to extend the definition to all $z$ by
\begin{equation*}
  U^{\eps}(x,z) =
\begin{cases}
 -z - \chi(x), & \text{ for } z\geq M,\\ 
 -z + \chi(x),  & \text{ for } z\leq  - M,
\end{cases}
\end{equation*}
which corresponds to play being  ``capped'' immediately. Notice that when extended this way, 
$U^\eps$~is strictly negative for $z\geq M$ and strictly positive for $z\leq -  M$.

The definitions of $u^\eps$ and $v^\eps$ are slightly different from those in Section \ref{def_u_v_eps_par}: 
\begin{align}
u^\eps(x_0) & =\sup   \{z_0\, :\, U^\eps(x_0,z_0)> 0 \} \label{def_subelr} ,  \\
v^\eps(x_0) & =\inf   \{z_0\, :\, U^\eps(x_0,z_0)< 0 \}  \label{def_supelr} .
\end{align}
The change from Section \ref{def_u_v_eps_par} is that the inequalities in \eqref{def_u_eps_par}--\eqref{def_v_eps_par} are strict.

\begin{prop}\label{ineq_dyn_prog_el}
 Let $m_1$, $M$ be two constants such that $0<m_1<M$. 
Then whenever $x\in \ol \Omega$ and $-m_1 \leq u^{\eps}(x) <M$ we have 
\begin{multline} \label{dyn_prog_ineq_sub_new_el}
u^\eps(x) \leq \sup_{p,\Gamma} \inf_{\Delta \hat x} \left[e^{ - \lambda \eps^2} u^\eps(x+\Delta x) \right.  \\
\left.     - \left(p\cdot \Delta \hat x +\frac{1}{2}  \left\langle \Gamma \Delta \hat x ,\Delta \hat x \right\rangle
+\eps^2 f(x, u^\eps(x),p ,\Gamma) -  \norm{\Delta \hat x - \Delta x} h(x+\Delta x) \right)  \right] , 
\end{multline}
for $\eps$ small enough (depending on $m_1$ and the parameters of the game but not on $x$). 
Similarly, if $x\in \ol \Omega$ and $-M<v^\eps(x)<m_1$ then for $\eps$ small enough
\begin{multline} \label{dyn_prog_ineq_super_new_el}
v^\eps(x) \geq \sup_{p,\Gamma} \inf_{\Delta \hat x} \left[e^{ - \lambda \eps^2} v^\eps(x+\Delta x)   \right. \\
\left.  - \left(p\cdot \Delta \hat x +\frac{1}{2}  \left\langle \Gamma \Delta \hat  x,\Delta \hat x \right\rangle
+\eps^2 f(x, v^\eps(x),p ,\Gamma) - \norm{\Delta \hat x - \Delta x} h(x+\Delta x)  \right) \right] .
\end{multline}
As usual, the sup and inf are constrained by \eqref{p_beta_gamma_new} and \eqref{moving1_new}  
and $\Delta x$ is determined by \eqref{exp_delta_x}.
\end{prop}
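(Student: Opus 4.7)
The plan is to follow the template of the parabolic dynamic programming inequalities (see Proposition 2.1 of \cite{kohns} and its analogue in Section~\ref{def_jeu_parabolique}), adapted to handle the new discontinuity introduced by the capping branches in \eqref{dpp_U_el}. Since \eqref{dyn_prog_ineq_sub_new_el} and \eqref{dyn_prog_ineq_super_new_el} are symmetric, I describe the subsolution inequality in detail.

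First I fix $x\in\ol\Omega$ with $-m_1\leq u^\eps(x)<M$ and a small $\eta>0$. By the supremum definition \eqref{def_subelr} there exists $z_0\in(u^\eps(x)-\eta,u^\eps(x)]$ with $U^\eps(x,z_0)>0$; for $\eta$ small and $m_1<M$ this forces $|z_0|<M$, so the dynamic programming principle \eqref{dpp_U_el} applies. It produces a pair $(p,\Gamma)$ satisfying \eqref{p_beta_gamma_new} such that, writing $W(p,\Gamma,\Delta\hat x)$ for the three-case right-hand side of \eqref{dpp_U_el}, one has
\[\inf_{\Delta\hat x}W(p,\Gamma,\Delta\hat x)\geq\tfrac{1}{2}U^\eps(x,z_0)>0.\]
The key step is then to rule out the two capping branches. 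Using \eqref{p_beta_gamma_new}, \eqref{moving1_new}, \eqref{est_f_el_neu} and \eqref{cd_coeff_gen}, the loss $\delta$ defined by \eqref{Helen_loss_el} obeys a uniform bound $|\delta|\leq C_\ast\eps^{\theta}$ with $\theta>0$ and $C_\ast$ depending only on the game parameters (not on $x$). If $z'=e^{\lambda\eps^2}(z_0+\delta)\leq-M$ were reachable, then $z_0\leq-Me^{-\lambda\eps^2}+C_\ast\eps^{\theta}$; combined with $z_0\geq-m_1$ this gives $M-m_1\leq M\lambda\eps^2+C_\ast\eps^{\theta}$, impossible for $\eps$ small since $M>m_1$. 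If $z'\geq M$ were reached by some admissible $\Delta\hat x$, the corresponding value $W=-z_0-\chi(x)\leq m_1-m$ would be strictly negative provided the game parameter $m$ satisfies $m>m_1$, a condition arranged through \eqref{def_chi_el}--\eqref{def_psi_h_el} by choosing $M$ large enough relative to $m_1$ and $\norm{\psi}_{L^\infty}$; this contradicts $W\geq\tfrac{1}{2}U^\eps(x,z_0)>0$. Hence $|z'|<M$ for every admissible $\Delta\hat x$.

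With capping excluded, the DPP collapses to $U^\eps(x',z')\geq e^{\lambda\eps^2}\cdot\tfrac{1}{2}U^\eps(x,z_0)>0$ for every admissible $\Delta\hat x$, with $x'=\proj_{\ol\Omega}(x+\Delta\hat x)$. The definition \eqref{def_subelr} applied at $x'$ then forces $z'\leq u^\eps(x')$, i.e.,
\[z_0\leq e^{-\lambda\eps^2}\,u^\eps(x+\Delta x)-\delta(p,\Gamma,\Delta\hat x).\]
Taking the infimum over $\Delta\hat x$ for this fixed $(p,\Gamma)$, then the supremum over $(p,\Gamma)$, and finally letting $\eta\to0$ so that $z_0\to u^\eps(x)$, yield \eqref{dyn_prog_ineq_sub_new_el}. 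For \eqref{dyn_prog_ineq_super_new_el} one mirrors the argument: pick $z_0\in[v^\eps(x),v^\eps(x)+\eta)$ with $U^\eps(x,z_0)<0$ via \eqref{def_supelr}, use the DPP to select for every admissible $(p,\Gamma)$ some $\Delta\hat x$ with $W<0$, exclude capping in the symmetric way ($z'\geq M$ is empty for $\eps$ small since $z_0\leq m_1+\eta<M$, while the branch $z'\leq-M$ gives $W=-z_0+\chi(x)\geq-m_1+m>0$ under $m>m_1$, hence is never chosen by Mark), and conclude from $U^\eps(x',z')<0\Rightarrow z'\geq v^\eps(x')$. The main obstacle, absent in the parabolic setting, is precisely the capping analysis of the second paragraph: it is what forces the threshold on $\eps$ to depend on $m_1$ (through $M-m_1$) and requires the game parameters $m$ and $M$ to be fixed large enough relative to $m_1$, which is always compatible with the construction \eqref{def_chi_el}.
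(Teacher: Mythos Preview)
Your approach mirrors the paper's: pick $z_0$ close to $u^\eps(x)$ with $U^\eps(x,z_0)>0$, apply the DPP \eqref{dpp_U_el}, rule out both capping branches so that $U^\eps(x',z')>0$ and hence $z'\leq u^\eps(x')$, and pass to the limit $z_0\to u^\eps(x)$. The structure is correct.

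There is, however, a genuine gap in your capping-above argument. You bound $-z_0-\chi(x)$ using only the a priori constraint $z_0>-m_1-\eta$, obtaining $-z_0-\chi(x)< m_1+\eta-m$, which forces the extra hypothesis $m>m_1$. But the proposition is stated for \emph{any} $m_1\in(0,M)$, and in the paper's principal application (Proposition~\ref{stability_prop_el}) one takes $m_1=M-1$ while $m=M-1-2\|\psi\|_{L^\infty}<m_1$; your condition fails precisely where the result is needed. The paper instead exploits the branch condition $z'\geq M$ itself: from $z'=e^{\lambda\eps^2}(z_0+\delta)\geq M$ one gets $-z_0\leq -Me^{-\lambda\eps^2}+|\delta|$, hence
\[
-z_0-\chi(x)\ \leq\ -Me^{-\lambda\eps^2}+|\delta|-m\ <\ 0
\]
for $\eps$ small, using only $M,m>0$ and $|\delta|\to 0$. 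Replacing your estimate by this one removes the spurious restriction and proves the proposition as stated.

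A minor point: the uniform bound $|\delta|\leq C_\ast\eps^\theta$ on the $\eps^2 f$-contribution requires the growth condition \eqref{cont_growth_p_Gamma_el} (together with \eqref{cd_coeff_gen}), not \eqref{est_f_el_neu}, since the admissible $(p,\Gamma)$ are only controlled by the $\eps$-dependent bounds \eqref{p_beta_gamma_new}.
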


\begin{proof} 
We shall focus on \eqref{dyn_prog_ineq_sub_new_el}; the proof for \eqref{dyn_prog_ineq_super_new_el} follows exactly the same lines.  
Since $-m_1 \leq u^{\eps}(x)<M$, there is a sequence $z^k \uparrow u^{\eps}(x) $ such that $U^\eps(x,z^k)>0$.
Since $u^{\eps}(x)$ is bounded away from $-M$, we may suppose that $z^k$ also remains bounded away from $-M$. 
Dropping the index $k$ for simplicity of notation, consider any such $z=z^k$. 
 The fact that $U^\eps(x,z)>0$ tells us that the right-hand side of the dynamic programming principle~\eqref{dpp_U_el}
 is positive. So there exist $p$, $\Gamma$ constrained by \eqref{p_beta_gamma_new} such that for any $\Delta \hat x$ satisfying \eqref{moving1_new},
\begin{equation} \label{dyn_ineq_0}
0 < \sup_{p, \Gamma}\inf_{\Delta \hat x }
 \begin{cases}
 e^{-\lambda \eps^2} U^{\eps}(x',z'),  & \text{if } |z'|<  M  , \\ 
 -z- \chi(x),                          & \text{if } z' \geq  M, \\
 -z+ \chi(x),                          & \text{if } z' \leq -M,
\end{cases}
\end{equation}
where  $x'=\proj_{\overline \Omega}(x+\Delta \hat x)$ and $z'=e^{\lambda \eps^2}(z+\delta)$. 
Capping above, the alternative $z'\geq M$, cannot happen, since otherwise we compute
\begin{equation*}
-z- \chi(x) = - e^{-\lambda \eps^2}z' - \delta - \chi(x)
            \leq - M   e^{-\lambda \eps^2} - \delta - m \leq - \delta - m< 0,  
\end{equation*}
for $\eps$ small enough because $\delta $ is bounded by a positive power of $\eps$. This sign is a contradiction to our assumption~\eqref{dyn_ineq_0}.
If $\eps$ is sufficiently small, capping below (the alternative $z'\leq - M$) cannot occur either, because 
$z$ is bounded away from $-M$ and $\delta $ is bounded by a positive power of $\eps$. Therefore only the first case can take place 
\begin{equation*}
0<U^{\eps}(x+\Delta x, e^{\lambda \eps^2}(z+\delta)),  
\end{equation*}
whence by the definition of $u^\eps$ given by \eqref{def_subelr}, we deduce that
\begin{equation*}
u^\eps(x+\Delta x) \geq e^{\lambda \eps^2} (z+\delta). 
\end{equation*}
Thus, we have shown the existence of $p$, $\Gamma$ such that  for every $\Delta \hat x$,
\begin{equation} \label{eq_z_el}
z\leq  e^{ - \lambda \eps^2} u^\eps(x+\Delta x)  
- \left( p \cdot \Delta \hat x +\frac{1}{2} \langle \Gamma \Delta \hat x, \Delta \hat x \rangle
+\eps^2 f(x,z,p, \Gamma) - \norm{\Delta \hat x - \Delta x} h(x+\Delta x) \right).
\end{equation}
Recalling that $z=z^k \uparrow u^\eps(x)$, we pass to the limit on both sides of \eqref{eq_z_el}, with $p$, $\Gamma$ held fixed, to see that
\begin{equation*}
u^\eps(x) \leq  e^{-\lambda \eps^2} u^\eps(x+\Delta x)  - \left(  p\cdot \Delta \hat x +\frac{1}{2} \langle  \Gamma \Delta \hat x, \Delta \hat x \rangle 
+\eps^2 f(x,u^{\eps}(x),p, \Gamma) - \norm{\Delta \hat x - \Delta x} h(x+\Delta x) \right). 
\end{equation*}
Since this is true for some $p$, $\Gamma$ and for every $\Delta \hat x$, we have established \eqref{dyn_prog_ineq_sub_new_el}. 
\end{proof}

The PDE \eqref{eq_neumann_el} is the formal Hamilton-Jacobi-Bellman equation associated with the dynamic programming inequalities 
\eqref{dyn_prog_ineq_sub_new_el}--\eqref{dyn_prog_ineq_super_new_el}, by the usual Taylor expansion, if one accepts $-M<v^\eps \approx  u^\eps <M$.
Rather than giving that heuristic argument which is quite similar to the one proposed in the parabolic setting, we now state our 
main result in the stationary setting, which follows from the results in Sections \ref{consistance} and \ref{stability}.  
It concerns the upper and lower relaxed semi-limits, defined  for any $x \in \overline \Omega$,  by
\begin{equation} \label{def_bp_subsup_visc_el}
  \overline u(x)= \limsup_{\substack{y\rightarrow x \\ \eps \rightarrow 0}} u^{\eps}(y)   \quad \text{ and }    \quad  
  \underline v(x)= \liminf_{\substack{y\rightarrow x \\ \eps \rightarrow 0}} v^{\eps}(y), 
\end{equation}
with the convention that $y$ approaches $x$ from $\ol \Omega$ (since $u^\eps$ and $v^\eps$ are defined on $\ol \Omega$).

\begin{theorem}\label{theo_cv_el_neu}
 Consider the stationary boundary value problem \eqref{eq_neumann_el} where $f$ satisfies \eqref{ellipticity_f} and 
\eqref{est_f_el_neu}--\eqref{cont_growth_p_Gamma_el}, $g$ and  $h$ are continuous, uniformly bounded, 
 and $\Omega$ is a $C^2$-domain satisfying both the uniform interior and exterior ball conditions. 
Assume the parameters of the game $\alpha$, $\beta$, $\gamma$ fulfill \eqref{condition_pas}--\eqref{cd_coeff_classiq}, 
$\psi \in C^2(\ol \Omega)$ satisfies~\eqref{def_psi_h_el}, $\chi \in C^2(\ol \Omega)$ is defined by \eqref{def_chi_el}, 
 $M$ is sufficiently large  and  $m=M-1 -2 \norm{\psi}_{L^\infty(\ol \Omega)}$. 
Then $u^\eps$ and $v^\eps$ are well-defined when $\eps$ is sufficiently small, and they satisfy 
$ |u^\eps | \leq \chi$ and  $ |v^\eps| \leq \chi$.  
Their relaxed semi-limits $\overline u$ and $\underline v$ are respectively a viscosity subsolution and a viscosity supersolution of \eqref{eq_neumann_el}. 
If in addition we have $\underline v \leq \overline u$ and the PDE has a comparison principle, then it follows that $u^\eps$ and $v^\eps$ 
converge locally uniformly in $\ol \Omega$ to the unique viscosity solution of \eqref{eq_neumann_el}.
\end{theorem}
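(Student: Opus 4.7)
The plan is to apply the Barles--Souganidis machinery in the two-valued version sketched in the introduction: show that the scheme encoded by the dynamic programming inequalities \eqref{dyn_prog_ineq_sub_new_el}--\eqref{dyn_prog_ineq_super_new_el} is monotone, stable and consistent with \eqref{eq_neumann_el} in the viscosity sense, then read off that $\ol u$ is a subsolution and $\udl v$ a supersolution. Monotonicity is built into the game by construction: the right-hand sides of \eqref{dyn_prog_ineq_sub_new_el}--\eqref{dyn_prog_ineq_super_new_el} are nondecreasing in the neighboring values of $u^\eps$ and $v^\eps$. The genuine work is therefore (i) the well-posedness of $u^\eps,v^\eps$ together with the bound $|u^\eps|,|v^\eps|\leq\chi$, which is the stability statement, and (ii) consistency with both the interior PDE and the Neumann condition, from which the sub/super\-solution properties follow by the standard test-function argument.

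For stability I would use the auxiliary function $\chi$ of \eqref{def_chi_el} as an explicit barrier. Its key feature, $\partial\psi/\partial n=\|h\|_\infty+1>|h|$ on $\partial\Omega$, provides a strictly favorable boundary penalty, while \eqref{est_f_el_neu} with $\lambda-\eta>0$ controls the interior contribution from $f$. A direct comparison inside the dynamic programming principle \eqref{dpp_U_el} shows $U^\eps(x_0,\chi(x_0))\leq 0$ and, symmetrically, $U^\eps(x_0,-\chi(x_0))\geq 0$, so that by the definitions \eqref{def_subelr}--\eqref{def_supelr} one obtains $|u^\eps|,|v^\eps|\leq\chi$; the calibration $m=M-1-2\|\psi\|_{L^\infty(\ol\Omega)}$ with $M$ large then prevents capping from being triggered before the penalty has time to act. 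This puts Proposition~\ref{ineq_dyn_prog_el} at our disposal with $m_1=\|\chi\|_\infty$.

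Consistency is the content of Section~\ref{consistance}, a rigorous version of the heuristic calculation of Section~\ref{heuristic_derivation}. Given a smooth test function $\phi$ with $\ol u-\phi$ attaining a strict local maximum at $x_0$ and approximating sequences $\eps_k\to 0$, $x_k\to x_0$ with $u^{\eps_k}(x_k)\to\ol u(x_0)$, I would insert $\phi$ into \eqref{dyn_prog_ineq_sub_new_el}. Expanding by Taylor under the constraints \eqref{condition_pas}--\eqref{cd_coeff_classiq} on the scaling exponents and dividing by the dominant power of $\eps$ produces in the limit either $f(x_0,\phi(x_0),D\phi(x_0),D^2\phi(x_0))+\lambda\phi(x_0)\geq 0$ (when $x_0\in\Omega$, or when $x_0\in\partial\Omega$ and the boundary quantity $m=h(x_0)-D\phi(x_0)\cdot n(x_0)$ is nonnegative) or $h(x_0)-D\phi(x_0)\cdot n(x_0)\geq 0$ (when $x_0\in\partial\Omega$ and $m<0$). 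This is exactly the relaxed Neumann viscosity subsolution condition of \cite{lions_neumann_type,user_s_guide}; the supersolution property for $\udl v$ is the mirror image. Once $\ol u$ and $\udl v$ are identified as sub- and supersolutions, the extra assumption $\udl v\leq\ol u$ combined with a comparison principle forces $\ol u=\udl v$, and the uniform bound $|u^\eps|,|v^\eps|\leq\chi$ together with a standard Dini-type argument upgrades pointwise convergence to locally uniform convergence on $\ol\Omega$.

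The main obstacle in this program is the boundary consistency, specifically the regime where $m=h(\bar x)-D\phi(x)\cdot n(\bar x)$ is negative but of order a small power of $\eps$. In that regime the formal expansions \eqref{formal_dev_rho1}--\eqref{formal_dev_rho2} degenerate, since their denominators $|s_2^\ast|$ and $|s_2^\ast+m|$ shrink, so the $\eps$-uniform consistency estimate fails unless one isolates a carefully tuned boundary layer, splits the analysis according to the relative sizes of $d(x)$, $|m|$ and the powers of $\eps$ appearing in \eqref{cd_coeff_classiq}, and matches the estimates across the transitions. The conditions \eqref{cd_coeff_classiq} on $\alpha,\beta,\gamma$ are exactly what make this layer analysis feasible; controlling the degeneration of the Kohn--Serfaty interior estimate against this penalization is where the bulk of the technical effort will go.
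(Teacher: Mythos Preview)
Your overall plan is exactly the paper's: the theorem is recorded there as an immediate consequence of the stability result (Proposition~\ref{stability_prop_el}) and the convergence result (Proposition~\ref{cv_el}), and you have correctly located the real difficulty in the boundary-layer consistency analysis when $h(\bar x)-D\phi(x)\cdot n(\bar x)$ is small and negative.

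There is, however, one genuine missing ingredient in your stability sketch. In the stationary setting the game has no terminal time, so $U^\eps$ is \emph{not} given by a finite backward induction; its very existence is part of what has to be proved. Your sentence ``a direct comparison inside the dynamic programming principle \eqref{dpp_U_el} shows $U^\eps(x_0,\chi(x_0))\leq 0$'' presupposes that $U^\eps$ exists and satisfies \eqref{dpp_U_el}. The paper handles this by passing to $V^\eps=U^\eps+z$, rewriting \eqref{dpp_U_el} as a fixed-point equation $V^\eps=R_\eps[\cdot,\cdot,V^\eps]$, and showing (Proposition~\ref{stability_el}) that $R_\eps$ is an $e^{-\lambda\eps^2}$-contraction on $L^\infty$ which preserves the set $F_\chi=\{|\phi|\leq\chi\}$. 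The barrier computation you describe is precisely Lemma~\ref{pt_fix_estimate}, namely $Q_\eps[x,z,\pm\chi]\lessgtr\pm\chi(x)$, and it is what shows $R_\eps$ maps $F_\chi$ into itself; but by itself it does not construct $U^\eps$. You should add the contraction argument explicitly.

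A minor point: the signs in your consistency paragraph are garbled. For the subsolution $\ol u$, the limiting interior inequality is $f(x_0,\ol u(x_0),D\phi(x_0),D^2\phi(x_0))+\lambda\,\ol u(x_0)\leq 0$, not $\geq 0$; and at the boundary the logic is the one in the proof of Proposition~\ref{cv_el}: assume $\langle D\phi(x_0),n(x_0)\rangle>h(x_0)$ (otherwise the relaxed boundary condition is already satisfied), and then the consistency estimates force the interior inequality, via a contradiction if one lands in case~\ref{cas_el_tres_neg} of Proposition~\ref{cons_new_sub_el}. Your ``$m<0$ gives $h-D\phi\cdot n\geq 0$'' is self-contradictory as written.
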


This is an immediate consequence of Propositions~\ref{cv_el} and \ref{stability_prop_el}. 
A sufficient condition for $\udl v \leq \bar u$ is that $f$ is nondecreasing in $z$. 
As mentioned in \cite{kohns}, 
sufficient conditions for the PDE to have a comparison principle can be found for example in Section 5 of 
\cite{user_s_guide}, and (for more results) in~\cite{barles_busca}--\cite{barles_rouy}.

\medskip
Let us now go back to the existence and the construction of $\psi_1\in C_b^2(\ol \Omega)$ such that $\frac{\partial \psi_1}{\partial n}= 1$ on~$\partial \Omega$, that 
we will need at various points of the paper.
If $\Omega$ is of class $C^2$ and satisfies the uniform interior ball condition of Definition \ref{unif_int_ball_cd} for a certain $r$, 
 $d$ is $C^2$ on $\Omega(3r/4)$ and an explicit suitable function is 
\begin{equation}\label{def_fonction_bord}
\psi_1(x) =\begin{cases}   \exp\left[  - \frac{d(x)}{1 - \frac{d(x)}{r/2} }\right],  &  \text{ if } d(x)<r/2, \\
                            0 ,                                                      & \text{ if } d(x)\geq r/2.
            \end{cases}
\end{equation}
It is clear that $\text{supp } \psi_1 \subset \Omega(r/2)$, $\psi_1(\ol \Omega) \subset [0,1] $ and  $\psi_1$ is $C^2$ on $\Omega(r/2)$.
Then,  for all $x$ such that $d(x)=\frac{r}{2}$, $D\psi_1$ and $D^2\psi_1$ are continuous at $x$.
Thus $\psi_1$ is  $C^2$ on $\ol \Omega$. 
It is easy to check that the two first derivatives of $\psi_1$ are also bounded and that $\frac{\partial \psi_1}{\partial n} = 1$ on the boundary.
Hence, the function $\psi_1$ defined by \eqref{def_fonction_bord} has all the desired properties.

\begin{remark}\label{est_schauder}
If $\Omega$ is a domain with  $C^{2,\alpha}$-boundary where $\alpha>0$, the Schauder theory \cite[Theorem~6.31]{gilbarg_trudinger_reprint} 
 ensures the solution $\psi$ of the elliptic problem 
\begin{equation*}
\begin{cases}
 \Delta \psi  - \psi =0 , &  \text{ in } {\Omega}   ,          \\
\dfrac{\partial \psi}{\partial n}=   \norm{h}_{L^\infty} +1,  & \text{ on } \partial \Omega,   \\ 
\end{cases}
 \end{equation*}
is $C^{2,\alpha}(\ol \Omega)$. 
In addition, the estimate  $\norm{\psi}_{C^{2,\alpha}(\ol \Omega)} \leq C_\Omega (1 + \norm{h}_{L^\infty})$ holds  
for a certain constant $ C_\Omega$ depending only on the domain.
\end{remark}

\section{Convergence}
\label{convergence}

This section presents our main convergence results, linking the limiting behavior of $v^\eps$ and $u^\eps$ as $\eps \rightarrow 0$ to the PDE. 
The argument uses the framework of \cite{barles_souga_cv_schemes}  and is basically a special case of the theorem proved there.

Convergence is a local issue: in the time-dependent setting, Proposition \ref{cv_par} shows that in any region where the lower and upper 
semi-relaxed limits $\udl v$ and $\bar u$ are finite they are in fact viscosity super and subsolutions respectively. 
The analogous statement for the stationary case is more subtle. In fact, 
 we will need global hypotheses on $f$ at Section~\ref{s_stability_ell} to ensure that $u^{\eps}$ and $v^{\eps}$ are well-defined
and satisfy the dynamic programming inequalities \eqref{dyn_prog_ineq_sub_new_el}--\eqref{dyn_prog_ineq_super_new_el}. Thus, we cannot 
discuss about $\udl v$ or $\bar u$ without global assumptions on $f$. 

\subsection{Viscosity solutions with Neumann condition}
\label{conv_def_visco}
Our PDEs can be degenerate parabolic, degenerate elliptic, or even first order equations. 
Therefore, we cannot expect a classical solution, and we cannot always impose boundary data in 
the classical sense. 
 The theory of viscosity solutions provides the proper framework for handling these issues. 
We review the basic definitions in our setting for the reader's convenience.
We refer to \cite{Barles_book}, \cite{user_s_guide} and \cite{giga_surf_evol_equations} for further details about the general theory.
Consider first the final-value problem \eqref{eq_neumann_evol} in $\Omega$,
\begin{equation*}
\begin{cases}
 - u_t + f(t,x,u, Du, D^2 u ) =0,  & \text{ for } x \in \Omega \text{ and }t<T, \\
\langle Du(x,t),n(x) \rangle = h(x), & \text{ for } x \in \partial \Omega \text{ and }t<T, \\
u(x,T)=g(x),  & \text{ for }  x\in \ol \Omega. 
\end{cases}
\end{equation*} 
where $f(t,x,z,p, \Gamma)$ is continuous in all its variables and satisfies the monotonicity 
condition \eqref{ellipticity_f} in its last variable.
We must be careful to impose the boundary condition in the viscosity sense.

\begin{Def}\label{def_visco_evol}
 A real-valued lower-semicontinuous function $u(x,t)$ defined for $x\in \ol \Omega$  and $t_\ast \leq t \leq T$ 
is a \textit{viscosity supersolution}  
 of the final-value problem \eqref{eq_neumann_evol} if 
\begin{enumerate}[label=\textup{(}{P\arabic*}\textup{)},ref=\textup{(}{P\arabic*}\textup{)}] 
 \item \label{def_visc_sup_int} for any  $(x_0,t_0)$ with $x_0\in \Omega$ and $t_\ast \leq t_0 <T$
and any smooth $\phi(x,t)$ such that $u-\phi$ has a local minimum at $(x_0,t_0)$, we have 
\begin{equation*}
\partial_t \phi(x_0,t_0) -  f(t_0,x_0,u(x_0,t_0),D\phi(x_0,t_0),D^2\phi(x_0,t_0))\leq 0  ,
\end{equation*}
 \item \label{def_visc_sup_bd} 
for any  $(x_0,t_0)$ with $x_0\in \partial \Omega$ and $t_\ast \leq t_0 <T$ 
and any smooth $\phi(x,t)$ such that $u-\phi$ has a local minimum  on $\overline{\Omega}$ at $(x_0,t_0)$, we have 
\begin{align*}
\max \{
-(  \partial_t \phi(x_0,t_0) - &  f(t_0,x_0,u(x_0,t_0),D\phi(x_0,t_0),D^2\phi(x_0,t_0)) ) ,
   \left\langle D\phi(x_0,t_0), n(x_0)\right\rangle - h(x_0)  
\}
\geq 0 ,
\end{align*}
 \item \label{def_visc_sup_finalT} $u\geq g$ at the final time $t=T$.
\end{enumerate}
Similarly, a real-valued upper-semicontinuous function $u(x,t)$ defined for $x\in \ol \Omega$  and $t_\ast \leq t \leq T$  
is a \textit{viscosity subsolution} of the final-value problem \eqref{eq_neumann_evol} if 
\begin{enumerate}[label=\textup{(}{P\arabic*}\textup{)},ref=\textup{(}{P\arabic*}\textup{)}]
 \item \label{def_visc_sub_int} for any  $(x_0,t_0)$ with $x_0\in \Omega$ and $t_\ast \leq t_0 <T$
and any smooth $\phi(x,t)$ such that $u-\phi$ has a local maximum at $(x_0,t_0)$, we have 
\begin{equation*}
\partial_t \phi(x_0,t_0)  -   f(t_0,x_0,u(x_0,t_0),D\phi(x_0,t_0),D^2\phi(x_0,t_0))\geq 0  ,
\end{equation*}
 \item \label{def_visc_sub_bd} 
for any  $(x_0,t_0)$ with $x_0\in \partial \Omega$ and $t_\ast \leq t_0 <T$ 
and any smooth $\phi(x,t)$ such that $u-\phi$ has a local maximum  on $\overline{\Omega}$ at $(x_0,t_0)$, we have 
\begin{align*}
\min \{ -( \partial_t \phi(x_0,t_0)  -&  f(t_0,x_0,u(x_0,t_0),D\phi(x_0,t_0),D^2\phi(x_0,t_0))   ),
   \left\langle D\phi(x_0,t_0), n(x_0) \right\rangle - h(x_0) \}  \leq 0   ,
\end{align*} 
  \item \label{def_visc_sub_finalT} $u\leq g$ at the final time $t=T$.
\end{enumerate}
A viscosity solution of \eqref{eq_neumann_evol} is a continuous function $u$ that is 
both a viscosity subsolution and a viscosity supersolution of \eqref{eq_neumann_evol}.  
\end{Def}

In the stationary problem \eqref{eq_neumann_el}, the definitions are similar to the time-dependent setting.

\begin{Def}\label{def_visco_el}
 A real-valued lower-semicontinuous function $u(x)$ defined on $\overline{\Omega}$ 
 is a \textit{viscosity supersolution} of the stationary problem \eqref{eq_neumann_el} if 
\begin{enumerate}[label=\textup{(}{E\arabic*}\textup{)},ref=\textup{(}{E\arabic*}\textup{)}] 
 \item \label{def_visc_sup_int_el} for any  $x_0\in \Omega$ and any smooth $\phi(x)$ such that $u-\phi$ has a local minimum at $x_0$, we have 
\begin{equation*}
  f(x_0,u(x_0),D\phi(x_0),D^2\phi(x_0))  +\lambda u(x_0) \geq 0,
\end{equation*}
 \item \label{def_visc_sup_bd_el} for any $x_0 \in \partial \Omega$ and any smooth $\phi(x)$ such that $u-\phi$ has a local minimum
on $\overline{\Omega}$ at $x_0$, we have 
\begin{align*}
\max \{  f(x_0,u(x_0),D\phi(x_0),D^2\phi(x_0))  +\lambda u(x_0),   \left\langle D\phi(x_0), n(x_0)\right\rangle - h(x_0)  \}\geq 0 .
\end{align*}
\end{enumerate}
Similarly, a real-valued upper-semicontinuous function $u(x)$ defined on $\overline{\Omega}$   
is a \textit{viscosity subsolution}   of the stationary problem \eqref{eq_neumann_el} if 
\begin{enumerate}[label=\textup{(}{E\arabic*}\textup{)},ref=\textup{(}{E\arabic*}\textup{)}]
 \item \label{def_visc_sub_int_el} for any  $x_0\in \Omega$ and any smooth $\phi(x)$ such that $u-\phi$ has a local maximum at $x_0$, we have 
\begin{equation*}
  f(x_0,u(x_0),D\phi(x_0),D^2\phi(x_0))  +\lambda u(x_0) \leq 0, 
\end{equation*}
 \item \label{def_visc_sub_bd_el} for any $x_0 \in \partial \Omega$ and any smooth $\phi(x)$ such that $u-\phi$ has a local maximum 
on $\overline{\Omega}$ at $x_0$, we have 
\begin{equation*}
\min \{  f(x_0,u(x_0),D\phi(x_0),D^2\phi(x_0))  +\lambda u(x_0), \left\langle D\phi(x_0), n(x_0) \right\rangle - h(x_0) \} \leq 0 .
\end{equation*}
\end{enumerate}
A viscosity solution of \eqref{eq_neumann_el} is a continuous function $u$ 
that is both a viscosity subsolution and a viscosity supersolution of \eqref{eq_neumann_el}. 
\end{Def}

In stating these definitions, we have assumed that the final-time data $g$ and the boundary 
Neumann condition $h$ are continuous. In Definition \ref{def_visco_evol}, 
the pointwise inequality in part \ref{def_visc_sup_finalT} can be replaced by an apparently 
weaker condition analogous to part \ref{def_visc_sup_bd}. 
The equivalence of such a definition  with the one stated above is standard, 
the argument uses barriers of the form $ \phi(x,t)=\norm{x-x_0}^2/\delta + (T-t)/\mu + K d(x)$ with $\delta$ and $\mu$ sufficiently small, 
and is contained in our proof of Proposition \ref{cv_par}~\ref{cas_cv_par2}.
We shall be focusing on the lower and upper semi-relaxed limits of $v^{\eps}$ and $u^{\eps}$, 
defined by \eqref{def_bp_subsup_visc} in the time-dependent setting and \eqref{def_bp_subsup_visc_el} in the stationary case.

\medskip

We now provide a key definition to deal with the Neumann boundary condition within viscosity solutions framework which will 
be essential all along the article.
We  introduce some applications which give bounds on the Neumann penalization term for a smooth function and $x$ close to the boundary. 
This approach is well-suited to the viscosity solutions framework. More precisely,   
we define the applications $ m_\eps$ and $M_\eps$, for all $x\in \Omega(\eps^{1- \alpha})$ and $\phi \in C^1 (\overline \Omega)$,  by
\begin{align}
m_\eps^{x}[\phi]   & 
:=\inf_{\substack{x+\Delta \hat x \notin \Omega  \\ \Delta \hat x}}
\left\{ h(x+\Delta x)  - D \phi(x)\cdot n(x+\Delta x)  \right\}, \label{m_eps_par} \\
M_\eps^{x}[\phi] &  
:=\sup_{\substack{x+\Delta \hat x \notin \Omega  \\ \Delta \hat x }}
 \left\{ h(x+\Delta x)  - D \phi(x)\cdot n(x+\Delta x)  \right\}, \label{M_eps_par}
\end{align}
where  $\Delta \hat x$ is constrained by \eqref{moving1_new} and determines $\Delta x$  by \eqref{exp_delta_x}. 
Notice that the functions $ m_\eps^{\cdot}[\phi]$ and $M_\eps^{\cdot}[\phi]$ are 
bounded by $\norm{h}_{L^\infty}+ \norm{D\phi}_{L^\infty(\ol \Omega)}$. 
Since $h$ is supposed to be continuous, the following property clearly holds.

\begin{lemma}\label{lemma_cv_m_M} Let $x\in \partial \Omega$ and $\phi \in C^1(\overline \Omega)$. 
Suppose there exists a sequence $(\eps_k, x_k)_{k\in \N}$ in $\R_+^\ast \times \overline \Omega$ 
convergent to $(0,x)$ such that for all $k$ large enough, $x_k\in \Omega(\eps_k^{1-\alpha})$. Then 
\begin{equation*}
\lim_{k\rightarrow +\infty}  m_{\eps_k}^{x_k}[\phi] =\lim_{k\rightarrow +\infty}  M_{\eps_k}^{x_k}[\phi] =h(x) - D\phi(x)\cdot n(x).
\end{equation*}
Similarly, let $\phi \in C^1(\overline \Omega \times [0,T])$.
Suppose there exists a sequence $(\eps_k, x_k,t_k)_{k\in \N}$ in $\R_+^\ast  \times \overline \Omega \times [0,T]$ 
convergent to $(0,x,t)$ such that for all $k$ large enough, $x_k\in \Omega(\eps_k^{1-\alpha})$. Then 
\begin{equation*}
\lim_{k\rightarrow +\infty}  m_{\eps_k}^{x_k}[\phi(\cdot, t_k)] =\lim_{k\rightarrow +\infty}  M_{\eps_k}^{x_k}[\phi(\cdot, t_k)] 
=h(x) - D\phi(x,t)\cdot n(x) .
\end{equation*}
\end{lemma}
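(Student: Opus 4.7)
The strategy is to show that the quantity inside the $\sup$ and $\inf$ defining $M_{\eps_k}^{x_k}[\phi]$ and $m_{\eps_k}^{x_k}[\phi]$ converges to the target value $h(x) - D\phi(x) \cdot n(x)$ \emph{uniformly} over the set of admissible $\Delta \hat x$. Once that is established, both the infimum and the supremum are squeezed to the common limit, and the two assertions follow simultaneously. I will also need to observe that the optimization set is nonempty: since $d(x_k) < \eps_k^{1-\alpha}$ by hypothesis, the admissible $\Delta \hat x$ satisfying $\|\Delta \hat x\| \leq \eps_k^{1-\alpha}$ and $x_k + \Delta \hat x \notin \Omega$ do exist for $k$ large (e.g.\ push in the direction $-n(\overline{x_k})$ by slightly more than $d(x_k)$).

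The first quantitative step is to control $\Delta x$ in terms of $\eps_k^{1-\alpha}$. Since $x_k \in \ol\Omega$, one has
\begin{equation*}
\|\Delta x - \Delta \hat x\| \;=\; \mathrm{dist}(x_k + \Delta \hat x,\,\ol\Omega) \;\leq\; \|\Delta \hat x\|,
\end{equation*}
so that $\|\Delta x\| \leq 2 \|\Delta \hat x\| \leq 2\eps_k^{1-\alpha}$. Combining this with $x_k \to x$, I get the uniform bound
\begin{equation*}
\|x_k + \Delta x - x\| \;\leq\; \|x_k - x\| + 2\eps_k^{1-\alpha} \;\xrightarrow[k\to\infty]{}\; 0,
\end{equation*}
valid for every admissible $\Delta \hat x$.

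The second step exploits continuity. Under the constraint $x_k + \Delta \hat x \notin \Omega$, the projection $x_k + \Delta x \in \partial \Omega$, so $n(x_k + \Delta x)$ is well-defined and depends continuously on its argument (since $\Omega$ is $C^2$). Using the uniform convergence $x_k + \Delta x \to x$, together with the continuity of $h$ on $\partial \Omega$, the continuity of $n$ on $\partial \Omega$, and the continuity of $D\phi$ at $x$, I conclude
\begin{equation*}
\sup_{\Delta \hat x} \Bigl| \,\bigl[ h(x_k + \Delta x) - D\phi(x_k)\cdot n(x_k + \Delta x) \bigr] - \bigl[ h(x) - D\phi(x)\cdot n(x) \bigr] \,\Bigr| \;\xrightarrow[k\to\infty]{}\; 0,
\end{equation*}
the supremum being over admissible $\Delta \hat x$. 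Passing to the infimum and to the supremum inside this uniform estimate yields the two claimed limits for $m_{\eps_k}^{x_k}[\phi]$ and $M_{\eps_k}^{x_k}[\phi]$. For the parabolic version, I simply replace $D\phi(x_k)$ by $D\phi(x_k, t_k)$ and invoke joint continuity of $D\phi$ at $(x,t)$; no other change is needed.

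There is no real obstacle here: the only mildly delicate point is verifying the uniformity of the convergence in $\Delta \hat x$, which is settled once the elementary bound $\|\Delta x\| \leq 2\|\Delta \hat x\|$ is in hand. The nonemptiness of the admissible set, guaranteed by the assumption $x_k \in \Omega(\eps_k^{1-\alpha})$, is what prevents $\inf$ and $\sup$ from being vacuous and is therefore essential to the statement.
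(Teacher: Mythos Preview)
Your argument is correct and supplies exactly the details the paper omits: the paper states only that the lemma ``clearly holds'' from the continuity of $h$, so your uniform-convergence approach is precisely the intended elaboration. One small slip: to exhibit an admissible $\Delta \hat x$ with $x_k+\Delta\hat x\notin\Omega$ you should push in the direction of the \emph{outward} normal $n(\overline{x_k})$ (not $-n(\overline{x_k})$), taking e.g.\ $\Delta\hat x=d(x_k)\,n(\overline{x_k})$, which lands on $\partial\Omega$ and has norm $d(x_k)<\eps_k^{1-\alpha}$.
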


\subsection{The parabolic case}
\label{cv_par_case}
We are ready to state our main convergence result in the time-dependent setting. 
At first sight, the proof seems  to use the monotonicity condition \eqref{ellipticity_f}. 
The proof relies on the consistency of the numerical scheme, Propositions \ref{cons_lower_bound}, \ref{cons_new_sub} and \ref{est_stab_sub_ord2}, 
which are proved in Section \ref{consistance}. Proposition \ref{est_stab_sub_ord2} is necessary to deal with the degeneration of the consistency estimates due to 
the Neumann boundary condition.
So we also require that $f(t,x,z,p, \Gamma)$ satisfy \eqref{loc_lip_p_Gamma}--\eqref{cont_growth_p_Gamma}, 
and that the parameters $\alpha$, $\beta$, $\gamma$ satisfy \eqref{condition_pas}--\eqref{cd_coeff_classiq}. 

\begin{prop} \label{cv_par}
Suppose $f$ and $\alpha$, $\beta$, $\gamma$ satisfy the hypotheses just listed. Assume furthermore that
$\bar{u}$ and $\underline{v}$ are finite for all $x$ near $x_0$ and all $t\leq T$ near $t_0$. Then
 \begin{enumerate}[label=\textup{ }{\roman*.}\textup{ },ref=({\roman*})]
   \item \label{cas_cv_par1} if $t_0<T$ and $x_0\in \Omega$, 
then $\bar{u}$ is a viscosity subsolution at $x_0$ and $\underline{v}$ is a supersolution at $x_0$ 
(i.e. each one satisfies part \ref{def_visc_sup_int} of the relevant half of Definition \ref{def_visco_evol} at $x_0$).
   \item \label{cas_cv_par1bd} if $t_0<T$ and $x_0\in \partial \Omega$, 
then $\bar{u}$ is a viscosity subsolution at $x_0$ and $\underline{v}$ is a supersolution at $x_0$ 
(i.e. each one satisfies part \ref{def_visc_sup_bd} of the relevant half of Definition~\ref{def_visco_evol} at $x_0$).
   \item \label{cas_cv_par2} if $t_0=T$, then  $\bar{u}(x_0,T)=g(x_0)$ and $\underline{v}(x_0,T)=g(x_0)$
(in particular, each one satisfies part~\ref{def_visc_sup_finalT} of the relevant half of Definition \ref{def_visco_evol} at $x_0$).
  \end{enumerate}
In particular, if $\bar{u}$ and $\underline{v}$ are finite for all $x\in \ol \Omega$ and $t_\ast<t\leq T$, then they are
respectively a viscosity subsolution and a viscosity supersolution of \eqref{eq_neumann_evol} on this time interval.
\end{prop}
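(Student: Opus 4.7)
The argument follows the Barles--Souganidis framework \cite{barles_souga_cv_schemes}: given the dynamic programming inequalities \eqref{dyn_prog_ineq_sub_new}--\eqref{dyn_prog_ineq_super_new} together with the consistency estimates cited (Propositions~\ref{cons_lower_bound}, \ref{cons_new_sub}, \ref{est_stab_sub_ord2}), convergence reduces to a standard test-function extraction. I will only write the plan for the subsolution side (that $\bar u$ satisfies the subsolution condition); the supersolution side is entirely symmetric, replacing $u^\eps$ by $v^\eps$ and \eqref{dyn_prog_ineq_sub_new} by \eqref{dyn_prog_ineq_super_new}. The first reduction is the usual one: given a smooth $\phi$ with $\bar u - \phi$ achieving a \emph{strict} local maximum at $(x_0,t_0)$ (one may always perturb $\phi$ by a quartic to make the maximum strict), extract by the definition \eqref{def_bp_subsup_visc} a sequence $(\eps_k,y_k,s_k)\to(0,x_0,t_0)$ with $u^{\eps_k}(y_k,s_k)\to\bar u(x_0,t_0)$, then choose $(x_k,t_k)$ realizing the maximum of $u^{\eps_k}-\phi$ on a small closed neighborhood of $(x_0,t_0)$ intersected with $\ol\Omega\times\{T-j\eps_k^2\}_j$; standard arguments give $(x_k,t_k)\to(x_0,t_0)$ and $u^{\eps_k}(x_k,t_k)\to\bar u(x_0,t_0)$.

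For case~\ref{cas_cv_par1}, $x_0\in\Omega$. For $k$ large $x_k$ lies at distance $\gg\eps_k^{1-\alpha}$ from $\partial\Omega$, so in the dynamic programming inequality at $(x_k,t_k)$ one always has $\Delta x=\Delta\hat x$ and the Neumann penalization is absent. Plugging $\phi(\cdot,\cdot)+[u^{\eps_k}(x_k,t_k)-\phi(x_k,t_k)]$ as an upper bound for $u^{\eps_k}$ in \eqref{dyn_prog_ineq_sub_new} and invoking the interior consistency estimate of \cite[Section 2.2]{kohns} (a special case of Proposition~\ref{cons_new_sub}) yields $-\phi_t(x_k,t_k)+f(t_k,x_k,u^{\eps_k}(x_k,t_k),D\phi(x_k,t_k),D^2\phi(x_k,t_k))\leq o(1)$, and continuity of $f$ gives the subsolution inequality in the limit.

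For case~\ref{cas_cv_par1bd}, $x_0\in\partial\Omega$. Here two scenarios may occur along subsequences: either $d(x_k)\geq\eps_k^{1-\alpha}$ infinitely often (reducing to case~\ref{cas_cv_par1}), or $x_k\in\Omega(\eps_k^{1-\alpha})$ eventually. In the latter scenario the refined consistency estimate (Proposition~\ref{est_stab_sub_ord2} combined with Proposition~\ref{cons_new_sub}) produces an inequality of the form
\begin{equation*}
-\phi_t(x_k,t_k)+f(\cdots) \;\leq\; A_{\eps_k}\bigl(h(\bar x_k)-D\phi(x_k,t_k)\cdot n(\bar x_k)\bigr)^{+}+o(1),
\end{equation*}
where $A_{\eps_k}\geq 0$ comes from the degeneration of the consistency estimate governed by the Neumann penalization term analyzed heuristically in Section~\ref{heuristic_derivation}. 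Passing to the limit via Lemma~\ref{lemma_cv_m_M} and distinguishing the signs of $h(x_0)-D\phi(x_0,t_0)\cdot n(x_0)$ forces
\begin{equation*}
\min\bigl\{-\phi_t+f(t_0,x_0,\bar u,D\phi,D^2\phi),\,\langle D\phi,n\rangle-h\bigr\}(x_0,t_0)\leq 0,
\end{equation*}
which is exactly \ref{def_visc_sub_bd}. I expect this to be the step that requires the most care, since one must rule out the situation where the penalization term dominates in the wrong direction; this is precisely why the exponents in \eqref{cd_coeff_classiq} are imposed and why Proposition~\ref{est_stab_sub_ord2} is needed.

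For case~\ref{cas_cv_par2}, $t_0=T$, I use a barrier argument. Fix $x_0\in\ol\Omega$ and $\eta>0$; choose $\delta,\mu>0$ small and $K>0$ large, and set
\begin{equation*}
\phi(x,t)=g(x_0)+\eta+\frac{\norm{x-x_0}^2}{\delta}+\frac{T-t}{\mu}+K\,d(x).
\end{equation*}
By continuity of $g$ and the $C^2$-regularity of $d$ near $\partial\Omega$, the term $K\,d(x)$ forces $\langle D\phi,n\rangle-h>0$ on $\partial\Omega$ near $x_0$ for $K$ large, while $(T-t)/\mu$ and $\norm{x-x_0}^2/\delta$ dominate $f$ and $\phi_t$ for $\mu,\delta$ small so that $-\phi_t+f(t,x,\phi,D\phi,D^2\phi)\geq 0$ near $(x_0,T)$. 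Hence $\phi$ is a classical supersolution on a neighborhood of $(x_0,T)$ with $\phi(\cdot,T)\geq g$. Combining with a standard comparison argument applied to the scheme (using the consistency from Proposition~\ref{cons_lower_bound} to show $u^\eps\leq\phi$ eventually) one obtains $\bar u(x_0,T)\leq g(x_0)+\eta$; sending $\eta\to 0$ and reversing the inequality with the symmetric lower barrier gives $\bar u(x_0,T)=\underline v(x_0,T)=g(x_0)$. The main technical subtlety here is that the scheme's rules near $\partial\Omega$ interact with the barrier through the $K\,d(x)$ term, so one must verify that the barrier's gradient is compatible with the Neumann move of the particle; the $C^2$ uniform interior/exterior ball hypothesis on $\Omega$ is what makes this go through.
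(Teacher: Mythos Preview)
Your plan for cases \ref{cas_cv_par1} and \ref{cas_cv_par1bd} is essentially the paper's strategy, though your sketch for \ref{cas_cv_par1bd} compresses what the paper does into a single inequality with a coefficient $A_{\eps_k}$. In the paper the argument is not organized that way: after reaching $\phi(y_k,t_k)\leq S_{\eps_k}[y_k,t_k,u^{\eps_k}(y_k,t_k),\phi(\cdot,t_k+\eps_k^2)]+o(\eps_k^2)$, the sequence is partitioned into \emph{four} index sets according to the size and sign of $M_{\eps_k}^{y_k}[\phi]$ and the distance $d(y_k)$ (thresholds $\eps_k^{1-\alpha}-\eps_k^{\rho}$ and $\pm\eps_k^{1-\alpha-\kappa}$), and the corresponding alternative of Proposition~\ref{cons_new_sub} or Proposition~\ref{est_stab_sub_ord2} is applied on each set separately. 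Two of the four sets yield the PDE inequality, the other two force $\liminf M_{\eps_k}^{y_k}[\phi]\geq 0$ and hence the Neumann inequality via Lemma~\ref{lemma_cv_m_M}. Your single ``$A_{\eps_k}(\cdot)^{+}$'' formula does not quite capture this, but the underlying idea is the same.

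For case \ref{cas_cv_par2} your approach genuinely differs from the paper's and has gaps. First, a sign: with $\phi=\cdots+K\,d(x)$ and $n=-Dd$ on $\partial\Omega$, one gets $\langle D\phi,n\rangle\approx -K$, so the sign of the $Kd(x)$ term must be \emph{negative} to force $\langle D\phi,n\rangle-h>0$. Second, you propose to show $u^\eps\leq\phi$ by a scheme-level comparison, citing Proposition~\ref{cons_lower_bound}; but that proposition gives \emph{lower} bounds on $S_\eps$, whereas a scheme supersolution requires \emph{upper} bounds (Proposition~\ref{cons_new_sub}), and even then the iteration is delicate because $u^\eps$ satisfies only the inequality \eqref{dyn_prog_ineq_sub_new} with the unknown $u^\eps(x,t)$ appearing in the $z$-slot of $f$, and you must also control $u^\eps$ on the lateral boundary of the localization neighborhood. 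The paper avoids all of this: it does \emph{not} compare $u^\eps$ to a barrier at the scheme level. Instead it first observes (by re-running the arguments of \ref{cas_cv_par1}--\ref{cas_cv_par1bd} along sequences with $t_k<T$) that at $(x_0,T)$ either $\bar u(x_0,T)=g(x_0)$ or the viscosity alternative holds for every test function; then it takes $\phi(x,t)=\norm{x-x_0}^2/\eta+(T-t)/\mu-K\,d(x)$ as a \emph{test function for $\bar u$}, locates the approximate maximizer $(x_{\eta,\mu},t_{\eta,\mu})$, and applies parts \ref{cas_cv_par1}--\ref{cas_cv_par1bd} at that point. The PDE alternative is killed by taking $\mu$ small, and the Neumann alternative is killed by the key geometric computation $n(x_0)\cdot(x_{\eta,\mu}-x_0)=O(\norm{x_{\eta,\mu}-x_0}^2)$ (valid when both points lie on $\partial\Omega$, from the Taylor expansion of $d$), which forces $D\phi(x_{\eta,\mu})\cdot n(x_{\eta,\mu})\to K=\norm{h}_\infty+1>h$. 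This second-order tangency fact is the crux of the final-time boundary argument and is absent from your plan.
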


\begin{proof} When $x_0\in \Omega$, since we can find in $\Omega$ a $\delta$-neighborhood  of $x_0$, the proof follows from \cite[Proposition~3.3]{kohns}. 
Therefore we shall focus on the case when $x_0\in \partial \Omega$.  We give the proof for $\overline u$. The argument for $\udl v$ is entirely parallel, 
relying on Proposition \ref{cons_lower_bound}.  
Consider a smooth function $\phi$ such that $\bar{u}- \phi$ has a local maximum at $(x_0,t_0)$. Adding a 
constant, we can assume $\overline{u}(x_0,t_0)=\phi(x_0,t_0)$.  
Replacing $\phi$ by $\phi+ \norm{x-x_0}^4 + |t-t_0|^2$ if necessary, we can assume that the local maximum is 
strict, i.e.  that
\begin{equation}\label{loc_max_strict}
 \bar{u}(x,t)<\phi(x,t) \quad \text{ for } 0< \norm{(x,t)-(x_0,t_0)}\leq r, 
\end{equation}
for some $r>0$. By the definition of $\bar{u}$, there exist sequences $\eps_k$, $\tilde{y}_k$, $\tilde{t}_k=T-\tilde{N}_k \eps^2_k$ such that
\begin{equation*}
\tilde{y}_k\rightarrow x_0, \quad \tilde{t}_k\rightarrow t_0, \quad u^{\eps_k}(\tilde{y}_k,\tilde{t}_k) \rightarrow \bar{u}(x_0,t_0). 
\end{equation*}
Let $y_k$ and $t_k=T-N_k \eps^2_k$ satisfying
\begin{equation*}
(u^{\eps_k}-\phi)(y_k,t_k) \geq \sup_{\norm{(x,t)-(x_0,t_0)}\leq r}  (u^{\eps_k}-\phi)(x,t) - \eps_k^3.
\end{equation*}
Notice that since $u^{\eps_k}$ is defined only at discrete times, the sup is taken only over such times. Evidently,
\begin{equation*}
(u^{\eps_k} - \phi)(y_k,t_k)\geq (u^{\eps_k}-\phi)(\tilde{y}_k,\tilde{t}_k) -\eps_k^3
\end{equation*}
and the right-hand side tends to 0 as $k\rightarrow +\infty$. It follows using \eqref{loc_max_strict} that 
\begin{equation*}
(y_k,t_k) \rightarrow (y_0,t_0) \quad \text{ and } \quad u^{\eps_k}(y_k,t_k) \rightarrow \bar{u}(x_0,t_0) ,
\end{equation*}
as $k\rightarrow +\infty$. Setting $\xi_k=u^{\eps_k}(y_k,t_k) -\phi(y_k,t_k)$, we also have by construction that
\begin{equation}
\xi_k \rightarrow 0 \text{ and }  u^{\eps_k}(x,t) \leq \phi(x,t)+\xi_k +\eps_k^3  \quad \text{ whenever }t=T-n_k\eps_k^2 \text{ and }\norm{(x,t)-(x_0,t_0)}\leq r.
\label{eq_xi_u_phi_sub}
 \end{equation}
Now we use the dynamic programming inequality \eqref{dyn_prog_ineq_sub_new} at $(y_k,t_k)$, which can be written  concisely as
\begin{equation*}
u^{\eps_k}(y_k,t_k)\leq \sup_{p,\Gamma} \inf_{\Delta \hat x} \left\{ u^{\eps_k}(y_k +\Delta x, t_k +\eps_k^2)-\Delta z \right\}, 
\end{equation*}
with the convention 
\begin{equation*}
\Delta z= p\cdot \Delta \hat x +\frac{1}{2} \langle \Gamma \Delta \hat x, \Delta \hat x \rangle
 +\eps_k^2 f(t_k,y_k, u^{\eps_k}(y_k,t_k), p, \Gamma) - \norm{\Delta \hat x - \Delta x} h(y_k+\Delta x)  .
\end{equation*}
Using the definition of $\xi_k$, \eqref{eq_xi_u_phi_sub}, and the fact that 
$\Delta x$ is bounded by a positive power of $\eps$, we conclude that
\begin{equation} \label{inegalite_sub_phi_xi} 
\phi(y_k,t_k) +\xi_k \leq \sup_{p,\Gamma} \inf_{\Delta \hat  x} \left\{  \phi(y_k+\Delta x, t_k+\eps^2_k) +\xi_k+\eps^3_k - \Delta z \right\}, 
\end{equation}
when $k$ is sufficiently large. Dropping $\xi_k$ from both sides of  \eqref{inegalite_sub_phi_xi}, 
we deduce, by introducing the operator $S_\eps$ defined by \eqref{def_new_op}, that
\begin{equation}\label{ineq_sub_phi_op1} 
 \phi(y_k ,t_k)  \leq S_\eps[y_k,t_k,u^{\eps_k}(y_k,t_k), \phi(\cdot, t_k+\eps^2_k)] +o(\eps_k^2).
\end{equation}
According to the consistency estimates provided by Proposition \ref{cons_new_sub}, 
we shall introduce four sets  $(A_i)_{1\leq i \leq 4}$ respectively defined by
\begin{align*}
A_1 &    :=\left\{k\in \N: d(y_k)\leq \eps_k^{1-\alpha} \text{ and } M_{\eps_k}^{y_k}[\phi(\cdot, t_k+\eps_k^2)]
 \geq \frac{4}{3} \norm{D^2\phi(y_k,t_k+\eps_k^2)} \eps_k^{1-\alpha} \right\},  \\
A_2 & :=\left\{k\in \N: \eps_k^{1-\alpha}- \eps_k^{\rho} \leq d(y_k) \leq \eps_k^{1-\alpha}\text{ and } M_{\eps_k}^{y_k}[\phi(\cdot, t_k+\eps_k^2)] 
\leq \frac{4}{3} \norm{D^2\phi(y_k,t_k+\eps_k^2)} \eps_k^{1-\alpha}  \right\} \\
  & \phantom{=============} 
\bigcup  \bigg\{k\in \N :  d(y_k)\geq \eps_k^{1-\alpha}  \bigg\}   ,   \\
A_3 & :=\left\{k\in \N: d(y_k) \leq \eps_k^{1-\alpha} - \eps_k^{ \rho}    \text{ and }  
-\eps_k^{1-\alpha - \kappa} \leq   M_{\eps_k}^{y_k}[\phi(\cdot, t_k+\eps_k^2)] \leq \frac{4}{3}    \norm{D^2\phi(y_k,t_k+\eps_k^2)} \eps_k^{1-\alpha} \right\} ,  \\
A_4 & :=\left\{k\in \N: d(y_k) \leq \eps_k^{1-\alpha} - \eps_k^{ \rho} \text{ and }
  M_{\eps_k}^{y_k}[\phi(\cdot, t_k+\eps_k^2)] \leq -\eps_k^{1-\alpha - \kappa} \right\}, 
\end{align*}
where $\rho$  and $\kappa$  are defined in Section \ref{consistance_estimates_para} by \eqref{def_nul} and \eqref{relation_gamma_nul_tilde} and satisfy 
$0 < \kappa <1-\alpha < \rho <1  $. Since $\cup_{1\leq i \leq 4} A_i=\N$, at least one set among $A_{1}$, $A_{2}$, $A_{3}$ and $A_4$ is necessarily unbounded. 
We shall consider these four cases. 
\begin{enumerate}[label=$\bullet$, align=right, leftmargin=*, noitemsep]
\item If $A_1$ is unbounded, up to a subsequence, we may assume that $A_1=\N$. 
Taking the limit $k\rightarrow +\infty$, we deduce that $\ds \liminf_{k\rightarrow +\infty} M_{\eps_k}^{y_k}[\phi(\cdot, t_k+\eps_k^2)] \geq 0$.
Since $M_{\eps_k}^{y_k}[\phi(\cdot, t_k+\eps_k^2)]\rightarrow  h(x_0) - D \phi(x_0, t_0)\cdot n(x_0) $ by Lemma~\ref{lemma_cv_m_M}, 
it follows in  the limit $k\rightarrow \infty$ that
\begin{equation}\label{cond_neumann_sub}
  D \phi(x_0, t_0)\cdot n(x_0) - h(x_0) \leq  0. 
\end{equation}
We can notice this result also holds through \eqref{ineq_sub_phi_op1}.
We can apply the second alternative given by \eqref{estsup_par_g}  in Proposition~\ref{est_stab_sub_ord2}    
to evaluate the right-hand side of \eqref{ineq_sub_phi_op1}. This gives
\begin{equation*}
\phi(y_k,t_k) - \phi(y_k, t_k+\eps_k^2) 
 \leq 3 \eps_k^{1-\alpha} M_{\eps_k}^{y_k}[\phi(\cdot, t_k+\eps_k^2)] + C \eps_k^2 (1+|u^{\eps_k}(y_k,t_k)|)+o(\eps_k^2), 
\end{equation*}
where $C$ depends only on $\norm{h}_{L^\infty}$ and $\norm{D\phi(\cdot,t_k+\eps_k^2)}_{C^1_b(\ol \Omega \cap B(y_k,\eps_k^{1-\alpha}))}$. 
Since for $k$ large enough, 
\begin{equation*} 
\norm{D\phi(\cdot,t_k+\eps_k^2)}_{C^1_b(\ol \Omega \cap B(y_k,\eps_k^{1-\alpha}))}
 \leq \sup_{|t-t_0|\leq r}\norm{D\phi(\cdot,t)}_{C^1_b(\ol \Omega \cap B(x_0,r))},   
\end{equation*}
we can suppose that $C$ depends only on $\norm{h}_{L^\infty}$ and this sup, which is finite (since $\phi$ is smooth) and independent of $k$. 
By smoothness of $\phi$ we have
\begin{equation*}
-\eps_k^2 \partial_t \phi(y_k,t_k)+o(\eps^2_k)-C(1+|u^{\eps_k}(y_k,t_k)|)\eps_k^2  \leq  3 \eps_k^{1-\alpha} 
M_{\eps_k}^{y_k}[\phi(\cdot, t_k+\eps_k^2)]. 
\end{equation*}
By dividing by $\eps_k^{1-\alpha}$ we obtain
\begin{equation*}
-\eps_k^{1+\alpha} \Big(\partial_t \phi(y_k,t_k)-C(1+|u^{\eps_k}(y_k,t_k)|)\Big) + o(\eps^{1+\alpha}_k) 
 \leq 3  M_{\eps_k}^{y_k}[\phi(\cdot,t_k+\eps_k^2)].
\end{equation*}
The sequences $(u^{\eps_k}(y_k,t_k))_{k\in \N}$  and  $(\partial_t\phi(y_k,t_k))_{k\in \N}$ are respectively bounded by 
definition of $\overline u(x_0,t_0)$ and smoothness of $\phi$.
By passing to the limit on $k$, $\ds  \liminf_{k\rightarrow +\infty} M_{\eps_k}^{y_k}[\phi(\cdot,t_k+\eps_k^2)] \geq 0$.
By Lemma~\ref{lemma_cv_m_M}, we know that $M_{\eps_k}^{y_k} [\phi(\cdot,t_k+\eps_k^2)]\rightarrow  h(x_0) - D \phi(x_0, t_0)\cdot n(x_0) $
and \eqref{cond_neumann_sub} is retrieved.

 \item If $A_{2}$ is unbounded, up to a subsequence, we may assume that $A_2=\N$.
We can apply Proposition~\ref{cons_new_sub} case~\ref{cons_sub_cas2} to evaluate the right-hand side of \eqref{ineq_sub_phi_op1}. This gives
\begin{align*}
 \phi(y_k ,t_k) 
 & \leq \phi(y_k, t_k+\eps_k^2) - \eps_k^2 f(t_k,y_k,u^{\eps_k}(y_k,t_k), D\phi(y_k, t_k+\eps_k^2), D^2\phi(y_k, t_k+\eps_k^2)) +o(\eps^2_k).
\end{align*}
By smoothness of $\phi$ and Lipschitz continuity of $f$ with respect to $p$ and $\Gamma$, we obtain 
\begin{equation*}
\phi(y_k,t_k) - \phi(y_k, t_k+\eps_k^2) \leq - \eps_k^2 f(t_k,y_k,u^{\eps_k}(y_k,t_k), D\phi(y_k, t_k), D^2\phi(y_k, t_k)) +o(\eps_k^2).
\end{equation*}
It follows in  the limit $k\rightarrow \infty$ that
\begin{equation}\label{cond_edp_sub}
\partial_t\phi(x_0,t_0) - f(t_0,x_0, \bar u (x_0,t_0), D\phi(x_0,t_0), D^2\phi(x_0,t_0)) \geq 0. 
\end{equation}
\item If $A_3$ is unbounded, up to a subsequence, we may assume that $A_3=\N$. 
By passing to the limit on $k$, we have that $M_{\eps_k}^{y_k,t_k+\eps_k^2}[\phi]$ tends to zero when $\eps_k$
tends to zero. Since $M_{\eps_k}^{y_k}[\phi(\cdot,t_k+\eps_k^2)]\rightarrow  h(x_0) - D \phi(x_0, t_0)\cdot n(x_0)$ by Lemma~\ref{lemma_cv_m_M}, 
it follows in  the limit $k\rightarrow \infty$ that $D \phi(x_0, t_0)\cdot n(x_0) - h(x_0) = 0$.
\item If $A_{4}$ is unbounded, up to a subsequence, we may assume that $A_4=\N$.
Hence, taking the limit $k \rightarrow +\infty$, we have 
\begin{equation} \label{ineq_limsup}
\ds \limsup_{k\rightarrow +\infty} M_{\eps_k}^{y_k}[\phi(\cdot,t_k+\eps_k^2)] \leq 0. 
\end{equation}
Moreover, by applying the fourth alternative given by \eqref{estsup_par_g}  in Proposition~\ref{est_stab_sub_ord2} 
to evaluate the right-hand side of \eqref{ineq_sub_phi_op1}, 
we obtain
\begin{equation*}
 \phi(y_k ,t_k) 
  \leq \phi(y_k, t_k+\eps_k^2) + \dfrac{1}{4} \eps_k^{ \rho} M_{\eps_k}^{y_k} [\phi(\cdot,t_k+\eps_k^2)] +C \eps_k^2 (1+|u^{\eps_k}(y_k,t_k)|)+o(\eps_k^2), 
\end{equation*}
where $C$ depends only on $\norm{h}_{L^\infty}$ and $\ds \sup_{|t-t_0|\leq r}\norm{D\phi(\cdot,t)}_{C^1_b(\ol \Omega \cap B(x_0,r))}$ 
by the same arguments used above for $A_1$. 
By smoothness of $\phi$ we have 
\begin{equation*}
-\eps_k^2 \partial_t \phi(y_k,t_k)+o(\eps^2_k) - C (1+|u^{\eps_k}(y_k,t_k)|)\eps_k^2 
\leq  \frac{1}{4} \eps_k^{ \rho}  M_{\eps_k}^{y_k}[\phi(\cdot,t_k+\eps_k^2)].
\end{equation*}
By dividing by $\eps_k^{\rho}$ we get
\begin{align*}
-\eps_k^{2-\rho} \Big(\partial_t \phi(y_k,t_k) - C(1+|u^{\eps_k}(y_k,t_k)|) \Big) +  o(\eps_k^{2- \rho}) & 
 \leq \frac{1}{4} M_{\eps_k}^{y_k}[ \phi(\cdot,t_k+\eps_k^2)] .
\end{align*}
The sequences $(u^{\eps_k}(y_k,t_k))_{k \in \N}$  and  $(\partial_t\phi(y_k,t_k))_{k \in \N}$ are respectively bounded by 
definition of $\overline u(x_0,t_0)$ and smoothness of $\phi$.
By passing to the limit as $k \rightarrow +\infty$,  we have 
\begin{equation*}
 \liminf_{k\rightarrow +\infty} M_{\eps_k}^{y_k}[\phi(\cdot,t_k+\eps_k^2)] \geq 0.
\end{equation*}
By comparing this inequality with \eqref{ineq_limsup} and using Lemma \ref{lemma_cv_m_M}, we deduce that 
\begin{equation*}
  D \phi(x_0, t_0)\cdot n(x_0) - h(x_0)  = 0.
\end{equation*}
Moreover, we can also apply Lemma \ref{lem_elem_M_neg} since $\eps^{1-\alpha}_k \ll \eps_k^{1-\alpha -\kappa}$. By the same manipulations as those done 
for the set $A_2$, the inequality \eqref{cond_edp_sub} holds also true.
\end{enumerate}
Thus $\bar{u}$ is a viscosity subsolution at $(x_0,t_0)$.

We turn now to case \ref{cas_cv_par2}, i.e. the case $t_0=T$. 
If $x_0\in \Omega$, the analysis led in \cite[Proposition~3.3]{kohns} gives the result. It remains to study  $\overline u$ on the boundary.    
We want to show that $\overline u(\cdot, T)=g$ is also satisfied on $\partial \Omega$. 
By the definition of $\ol u$ given by \eqref{def_bp_subsup_visc} and considering a particular sequence $(\eps_k, x_k, t_k=T)_{k \in \N}$ 
which converges to $(0, x_0, T)$, it is clear that $\overline u(\cdot, T)\geq g$ on $\partial \Omega$
(using the continuity of $g$ and the fact that each $u^\eps$ has final value $g$).
 If this sequence realizes the sup, we have in fact the equality. 
 The preceding argument can still be used provided $t_k<T$ for all sufficiently large $k$.  Thus, considering the different possibilities according to 
$t_k<T$ or $t_k=T$ and also on $x_k\in \Omega$ or $x_k\in \partial \Omega$, we know that
 for any smooth $\phi$ such that $\bar{u}-\phi$ has a local maximum at $(x_0,T)$, 
\begin{multline} \label{alternative_T_1} 
\text{either }   \bar{u}(x_0,T) =  g(x_0) \text{ or else } \\
 \max \left( \partial_t\phi(x_0,T) - f(t_0,x_0, \bar{u}(x_0,T), D\phi(x_0,T), D^2\phi(x_0,T)), 
             h(x_0) - D\phi(x_0, T)\cdot n(x_0) \right)\geq 0. 
\end{multline}
Moreover this statement applies not only at the given point $x_0$, but also at any point nearby.

Now consider the functions
\begin{equation*}
 \psi(x,t) =\bar{u}(x,t) - \frac{\norm{x-x_0}^2}{\eta}- \frac{T-t}{\mu} + K d(x) 
\end{equation*}
and 
\begin{equation} \label{f_test_bord}
 \phi(x,t)  = \frac{\norm{x-x_0}^2}{\eta}+ \frac{T-t}{\mu} - K d(x),   
\end{equation}
where the parameters $\eta$, $\mu$ are small and positive and $K=\norm{h}_{L^\infty}+1$. 
Suppose $\ol u$ is uniformly bounded on the closed half-ball $\{\norm{(x,t)-(x_0,T)}\leq r, t\leq T  \}$ and let $\psi$ attain its maximum on this 
half-ball at $(x_{\eta, \mu}, t_{\eta,\mu})$. We assume $r$ is small enough such that $d$ is $C^2$ on this half-ball so that $\phi$ can be taken
 as a test function. 
We clearly have
\begin{equation} \label{rel_u_bar}
\bar{u}(x_{\eta,\mu}, t_{\eta, \mu}) + K d(x_{\eta,\mu}) \geq \psi(x_{\eta,\mu}, t_{\eta, \mu}) \geq \psi(x_0,T)=\ol u (x_0,T) .
\end{equation}
By plugging the expression of $\psi(x_{\eta,\mu}, t_{\eta, \mu})$ in the right-hand side of inequality \eqref{rel_u_bar}, we obtain
\begin{equation}\label{encadr_xt}
0\leq  \frac{\norm{x_{\eta, \mu} - x_0}^2}{\eta }+\frac{T-t_{\eta, \mu}}{\mu} \leq 
\bar{u}(x_{\eta,\mu}, t_{\eta, \mu}) - \ol u (x_0,T)+K d(x_{\eta, \mu}).  
\end{equation}
Since $\ol u$ is bounded on the half-ball and $x_{\eta,\mu}$ belongs to the half ball for all $\eta$ and $\mu$, the right-hand side
of \eqref{encadr_xt} is bounded independently of $\eta$, $\mu$, which yields 
\begin{equation} \label{conv_xt_eta_mu}
 (x_{\eta, \mu}, t_{\eta,\mu} ) \rightarrow (x_0,T) \quad \text{ as } \eta, \mu \rightarrow 0.
\end{equation}
By using the upper semicontinuity of $\ol{u}$ and taking the limit \eqref{conv_xt_eta_mu} in \eqref{rel_u_bar}, we get
\begin{equation}\label{conv_u_bar}
\ol u (x_{\eta,\mu}, t_{\eta, \mu})  \rightarrow  \ol u (x_0,T) \quad \text{ as } \eta, \mu \rightarrow 0.
\end{equation}
By combining \eqref{conv_xt_eta_mu} and \eqref{conv_u_bar}, we finally obtain by \eqref{encadr_xt} that   
\begin{equation}\label{vitesse_convergence}
\frac{\norm{x_{\eta, \mu} - x_0}^2}{\eta }+\frac{T-t_{\eta, \mu}}{\mu}\rightarrow 0  \quad \text{ as } \eta, \mu \rightarrow 0.
\end{equation}
If $t_{\eta, \mu}<T$ and $x_{\eta, \mu}\in \Omega$ then part \ref{cas_cv_par1} of Proposition \ref{cv_par} applied to $\phi$ defined by \eqref{f_test_bord}
assures us  that
\begin{equation}\label{ineq_usual_int}
 -\frac{1}{\mu} - f(t_{\eta, \mu}, x_{\eta, \mu}, \ol u (x_{\eta,\mu}, t_{\eta, \mu}) , 
2 \frac{x_{\eta,\mu}-x_0}{\eta}-KDd(x_{\eta,\mu})   , \frac{2}{\eta}I -KD^2d(x_{\eta,\mu})) \geq 0.
\end{equation}
Since $f$ is continuous, for any $\eta >0$ there exists $\mu>0$ such that \eqref{ineq_usual_int} cannot happen. 
Restricting our attention to such choices of $\eta$ and $\mu$, it remains to examine two situations:  
on the one hand $t_{\eta, \mu}<T$ and $x_{\eta, \mu}\in \partial \Omega$ and on the other hand $t_{\eta, \mu}=T$.
Arguing by contradiction, let us assume that $t_{\eta, \mu}<T$ and $x_{\eta, \mu}\in \partial \Omega$. 
 By the Taylor expansion on the distance function close to $x_0$, we have 
\begin{equation*}
d(x)=d(x_0)+Dd(x_0)\cdot (x-x_0) +O(\norm{x-x_0}^2).
\end{equation*}
By using that $x_0$ and $x_{\eta, \mu}$ are on the boundary $ \partial \Omega$, $d(x_0)= d(x_{\eta, \mu})=0$ and $Dd(x_0)=-n(x_0)$, this relation reduces to 
\begin{equation}\label{miracle_distance}
 n(x_0)\cdot (x_{\eta, \mu}-x_0)= O(\norm{x_{\eta, \mu}-x_0}^2).
\end{equation}
By combining \eqref{vitesse_convergence} and \eqref{miracle_distance}, we compute
\begin{align*}
D \phi(x_{\eta, \mu}, t_{\eta, \mu})\cdot n(x_0) &  = \frac{2}{  \eta} (x_{\eta, \mu}-x_0)\cdot n(x_0) 
- K Dd(x_{\eta, \mu}) \cdot n(x_0) \\
& =O\left(\dfrac{\norm{x_{\eta, \mu} - x_0}^2}{\eta}\right) +K n(x_{\eta, \mu}) \cdot n(x_0) \rightarrow K,  \text{ as } \eta, \mu \rightarrow 0.
\end{align*}
By smoothness of $\phi$ and continuity of $n$ on $\partial \Omega$, we deduce  that
$D \phi(x_{\eta, \mu}, T)\cdot n(x_{\eta, \mu})\rightarrow \norm{h}_{L^\infty}+1>h(x_{\eta, \mu})$ which denies the second alternative 
proposed at \eqref{alternative_T_1}. As a result, the only remaining possibility for \eqref{alternative_T_1} is 
$\ol u (x_{\eta, \mu},T)=g(x_{\eta, \mu})$. By continuity of $g$, it follows in the limit $\eta, \mu \rightarrow 0$
that $\ol u (x_0,T)=g(x_0)$, as asserted. 
\end{proof}

\begin{remark} \label{hyp_mini_domain}
In the proof of the convergence at the final-time in Theorem \ref{cv_par}, we needed in a essential way that the domain was assumed to be at least $C^2$. 
More precisely, in this case, since the distance function $d$ is $C^2$ in a neighborhood of the boundary, 
it allows us to take  $\phi$ given by \eqref{f_test_bord}  as a test function. 
\end{remark}

\subsection{The elliptic case}

We turn now to the stationary setting discussed  in Section \ref{rules_el_game}. As in the time-dependent setting, our convergence result depends on the 
fundamental consistency result Proposition \ref{cons_new_sub_el}. So we require that the parameters $\alpha$, $\beta$, $\gamma$ 
satisfy \eqref{condition_pas}--\eqref{cd_coeff_classiq}, and that $f(x,z,p,\Gamma)$ satisfy not only 
the monotonicity condition \eqref{ellipticity_f} but also the Lipschitz continuity 
and growth conditions~\eqref{loc_lip_p_Gamma_el}--\eqref{cont_growth_p_Gamma_el}. 
To prove that $U^\eps$ is well defined, we require that the interest rate $\lambda$ be large enough, condition \eqref{est_f_el_neu}, 
and that $h$ be uniformly bounded. Finally, concerning the parameters $m$ and $M$ and the function $\psi$ associated to the termination of the game, we assume that 
$\psi \in C^2 (\ol \Omega)$ fulfills $\frac{\partial \psi}{\partial n}=\norm{h}_{L^\infty}+1$ on~$\partial \Omega$, $m=M-1-2\norm{\psi}_{L^\infty(\ol \Omega)}$, 
$\chi=m+\norm{\psi}_{L^\infty(\ol \Omega)}+\psi$ 
 and $M$ is sufficiently large. 
These hypotheses ensure us the availability of the dynamic programming inequalities stated in Proposition~\ref{ineq_dyn_prog_el}.

\begin{prop}\label{cv_el}
 Suppose $f$, $g$, $\lambda$ and $\alpha$, $\beta$, $\gamma$, $m$, $M$, $\psi$ satisfy the hypotheses just listed (from which it follows that 
$\underline v$ and $\overline u$ are bounded away from $\pm M$ for all $x\in \overline \Omega$). 
Then $\overline u$ is a viscosity subsolution and $\underline v$ is a viscosity supersolution of \eqref{eq_neumann_el} in $ \overline{\Omega}$. More specifically:
\begin{itemize}
 \item if $x_0\in \Omega$          then each of $\overline u$ and $\underline v$ satisfies part \ref{def_visc_sub_int_el} of relevant 
half of Definition \ref{def_visco_el} at $x_0$, and 
 \item if $x_0\in \partial \Omega$ then each of $\overline u$ and $\underline v$ satisfies part \ref{def_visc_sub_bd_el} of relevant 
half of Definition \ref{def_visco_el} at $x_0$.
\end{itemize}
\end{prop}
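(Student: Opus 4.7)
The plan is to adapt the proof of Proposition~\ref{cv_par} to the stationary setting, the main new ingredient being the discount factor $e^{-\lambda\eps^2}$, which only contributes a harmless additional $\lambda u$ term at order $\eps^2$. I will treat only the case $x_0\in\partial\Omega$ for $\overline u$; the interior case reduces to \cite[Proposition~3.3]{kohns} since the game rules in a $\delta$-neighborhood of $x_0$ coincide with those used in the whole-space game up to the discount, and the argument for $\underline v$ is entirely symmetric using \eqref{dyn_prog_ineq_super_new_el} and the lower-bound consistency estimate.

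So let $x_0\in\partial\Omega$ and let $\phi$ be smooth with $\overline u-\phi$ having a local maximum at $x_0$. Adding a constant and replacing $\phi$ by $\phi+\|x-x_0\|^4$, I may assume $\overline u(x_0)=\phi(x_0)$ and that the maximum is strict on $\ol\Omega\cap \overline{B(x_0,r)}$. The Barles--Perthame extraction (exactly as in the parabolic proof) produces sequences $\eps_k\to 0$ and $y_k\to x_0$ in $\ol\Omega$ with $u^{\eps_k}(y_k)\to\overline u(x_0)$, and a quantity $\xi_k=u^{\eps_k}(y_k)-\phi(y_k)\to 0$ such that
\begin{equation*}
u^{\eps_k}(x)\leq \phi(x)+\xi_k+\eps_k^3\quad\text{whenever }\|x-x_0\|\leq r.
\end{equation*}
Since the stability bound $|u^\eps|\leq\chi$ from Proposition~\ref{stability_prop_el} keeps $u^{\eps_k}(y_k)$ uniformly bounded away from $\pm M$, the dynamic programming inequality \eqref{dyn_prog_ineq_sub_new_el} is available for large $k$.

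Substituting the pointwise bound into \eqref{dyn_prog_ineq_sub_new_el} at $y_k$ and using $e^{-\lambda\eps_k^2}=1-\lambda\eps_k^2+o(\eps_k^2)$ together with $\xi_k(e^{-\lambda\eps_k^2}-1)=o(\eps_k^2)$, the dependence on $\xi_k$ cancels and I obtain
\begin{equation*}
\phi(y_k)+\lambda\eps_k^2\,\phi(y_k)\leq S_{\eps_k}\bigl[y_k,u^{\eps_k}(y_k),\phi\bigr]+o(\eps_k^2),
\end{equation*}
where $S_\eps$ is the elliptic analogue of the operator used in \eqref{ineq_sub_phi_op1}. Partition $\mathbb{N}$ into the four sets $A_1,A_2,A_3,A_4$ as in the proof of Proposition~\ref{cv_par}, according to the distance $d(y_k)$ versus $\eps_k^{1-\alpha}$ and the sign and magnitude of $M^{y_k}_{\eps_k}[\phi]$. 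The consistency estimates (Proposition~\ref{cons_new_sub_el} and its stabilized counterpart, the elliptic analogue of Proposition~\ref{est_stab_sub_ord2}) apply verbatim; the only modification is the presence of the extra $\lambda\eps_k^2\phi(y_k)$ term on the left-hand side.

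In cases $A_1$ and $A_4$, dividing by the appropriate positive power of $\eps_k$ (namely $\eps_k^{1-\alpha}$ and $\eps_k^\rho$) and letting $k\to\infty$, the terms involving $\lambda$, $\partial_t$-type corrections and $f$ all vanish; together with Lemma~\ref{lemma_cv_m_M} this yields $D\phi(x_0)\cdot n(x_0)-h(x_0)\leq 0$. In case $A_3$ one gets the Neumann equality directly. In case $A_2$ the consistency estimate gives
\begin{equation*}
\phi(y_k)+\lambda\eps_k^2\phi(y_k)\leq\phi(y_k)-\eps_k^2 f\bigl(y_k,u^{\eps_k}(y_k),D\phi(y_k),D^2\phi(y_k)\bigr)+o(\eps_k^2),
\end{equation*}
so dividing by $\eps_k^2$ and passing to the limit using continuity of $f$ and $\phi$ smooth produces $f(x_0,\overline u(x_0),D\phi(x_0),D^2\phi(x_0))+\lambda\overline u(x_0)\leq 0$. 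In every case the required viscosity subsolution inequality
\begin{equation*}
\min\bigl\{f(x_0,\overline u(x_0),D\phi(x_0),D^2\phi(x_0))+\lambda\overline u(x_0),\ D\phi(x_0)\cdot n(x_0)-h(x_0)\bigr\}\leq 0
\end{equation*}
holds, which is condition \ref{def_visc_sub_bd_el}. The proof for $\underline v$ is entirely parallel, using \eqref{dyn_prog_ineq_super_new_el} and the consistency lower bound analogous to Proposition~\ref{cons_lower_bound}. The main technical point, as in the parabolic case, is controlling the degeneration of the consistency estimate near the boundary when $M^{y_k}_{\eps_k}[\phi]$ has varying sign and magnitude; but since this has already been handled in the parabolic setting and the discount factor enters only linearly at order $\eps^2$ with bounded coefficient, it is absorbed without difficulty in each case.
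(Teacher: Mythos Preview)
Your proof is correct, but the paper takes a slightly more economical route. Rather than carrying the full four-set decomposition $A_1,\dots,A_4$ over from the parabolic case, the paper first observes that if $\langle D\phi(x_0),n(x_0)\rangle\le h(x_0)$ then the boundary subsolution condition \ref{def_visc_sub_bd_el} holds trivially; so one may assume at the outset that $\langle D\phi(x_0),n(x_0)\rangle> h(x_0)$. By Lemma~\ref{lemma_cv_m_M} this forces $M^{y_k}_{\eps_k}[\phi]\to h(x_0)-\langle D\phi(x_0),n(x_0)\rangle<0$, so for large $k$ the alternatives corresponding to your $A_1$ and $A_3$ simply cannot occur. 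What remains is either the ``far from the boundary with small bonus'' case (your $A_2$), which via Proposition~\ref{cons_new_sub_el}\,\ref{cas_el_far} yields the PDE inequality, or the ``close to the boundary with big penalty'' case (your $A_4$), which via \eqref{est_phi_ell_cas_neg_glolocal} in Lemma~\ref{estimate_el_psi_u} gives $\liminf M^{y_k}_{\eps_k}[\phi]\ge 0$, contradicting the strict negativity just derived. Your approach works just as well and has the advantage of mirroring the parabolic argument line by line; the paper's shortcut merely avoids verifying two cases that are excluded by the standing assumption on the Neumann term.
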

\begin{proof} 
 When $x_0 \in \Omega$, the proof is similar to that of Theorem \ref{cv_par}. 
Therefore we shall focus on the case when $x_0\in \partial \Omega$.
We give the proof for $\bar u$, the arguments for $\udl v$ being similar and even easier due to fewer cases to distinguish. 
Consider a smooth function  $\phi$ such that $\overline u - \phi$ has local maximum on $\overline \Omega$ at $x_0 \in \partial \Omega$.
 We may assume that  $\langle D\phi(x_0), n(x_0) \rangle > h(x_0)$  since otherwise the assertion is trivial. 
Adjusting $\phi$ if necessary, we can assume that $\overline u (x_0)=\phi(x_0)$ and that 
the local maximum is strict, i.e. 
\begin{equation} \label{eq_sub_el_hyp_visc}
\overline u (x) <\phi(x) \quad  \text{ for } x\in \overline \Omega \cap \{0<\norm{x-x_0}\leq r \} ,
\end{equation}
for some $r>0$. By the definition of $\overline u$ given by \eqref{def_bp_subsup_visc_el}, 
there exist sequences $\eps_k>0$ and $\tilde y_k \in \ol \Omega$ such that
\begin{equation*}
 \tilde y_k  \rightarrow x_0, \quad u^{\eps_k}(\tilde y_k) \rightarrow \overline u (x_0).
\end{equation*}
We may choose $y_k \in \ol \Omega$ such that 
$\ds  (u^{\eps_k} - \phi) (y_k) \geq \sup_{\ol \Omega \cap \{ \norm{x-x_0} \leq r \}}  (u^{\eps_k} - \phi) (x) -\eps_k^3$.   
Evidently
\begin{equation*}
 (u^{\eps_k} - \phi) (y_k) \geq (u^{\eps_k} - \phi) (\tilde y_k)  - \eps_k^3 
\end{equation*}
and the right-hand side tends to 0 as $k \rightarrow \infty$. It follows using \eqref{eq_sub_el_hyp_visc} that 
\begin{equation*}
y_k \rightarrow x_0 \quad \text{ and } \quad  u^{\eps_k}(y_k) \rightarrow \bar u(x_0), 
\end{equation*}
as $k \rightarrow \infty$. Setting $\xi_k = (u^{\eps_k} - \phi)(y_k)$, we also have by construction that
\begin{equation} \label{min_veps_k_sub}
 \xi_k \rightarrow 0  \quad \text{ and } \quad  u^{\eps_k}(x)\leq \phi(x) +\xi_k - \eps^3_k \quad
\text{ whenever } x\in \ol \Omega \text{ with } \norm{x-x_0}\leq r.
\end{equation}
We now use the dynamic programming inequality \eqref{dyn_prog_ineq_super_new_el} at $y_k$, which can be written concisely as
\begin{equation*}
u^{\eps_k}(y_k) \leq \sup_{p, \Gamma} \inf_{\Delta \hat x} \left\{e^{-\lambda \eps_k^2} u^{\eps_k}(y_k+\Delta x) - \delta_k \right\}, 
\end{equation*}
with the convention
\begin{equation*}
\delta_k= p\cdot \Delta \hat x +\frac{1}{2} \langle \Gamma \Delta \hat x, \Delta \hat x\rangle +
\eps^2_k f(x, u^{\eps_k}(x), p, \Gamma) - \norm{\Delta \hat x - \Delta x} h(x+\Delta x).
\end{equation*}
By the rule \eqref{moving2_new} of the game, for every move $\Delta \hat x$ decided by Mark, the point $y_k+\Delta x$ belongs to $\ol  \Omega$.
Combining this observation with \eqref{min_veps_k_sub} and the definition of $\xi_k$  we conclude that 
\begin{equation*}
\phi(y_k)+\xi_k \leq \sup_{p, \Gamma} \inf_{\Delta \hat x} \left\{e^{-\lambda \eps^2_k} \left[ \phi(y_k+\Delta x) +\xi_k - \eps^3_k \right] - \delta_k \right\}. 
\end{equation*}
We may replace $e^{-\lambda \eps_k^2}$ by $1-\lambda \eps_k^2$ and   
$e^{-\lambda \eps_k^2} \xi_k$ by $\xi_k$ while making an error which is only $o(\eps^2)$ using the fact that $\xi_k \rightarrow 0$. 
Dropping $\xi_k$ from both sides, 
we conclude that
\begin{equation*}
\phi(y_k)\leq \sup_{p, \Gamma} \inf_{\Delta \hat x} \left( e^{-\lambda \eps_k^2} \phi(y_k+\Delta x)   - \delta_k \right) +o(\eps_k^2). 
\end{equation*}
We can evaluate the right-hand side using Proposition \ref{cons_new_sub_el} case \ref{cas_el_far} for $k$ large enough. 
We introduce $\rho$ and $\kappa$ defined in Section \ref{consistance_estimates_para} by \eqref{def_nul} and \eqref{relation_gamma_nul_tilde} and satisfying 
$0 < \kappa <1-\alpha < \rho <1  $.
If we may assume, up to a subsequence, that for all $k$ large enough, on the one hand $d(y_k)\geq \eps_k^{1-\alpha}$ or on the one hand
$\eps_k^{1-\alpha} - \eps_k^{\rho} \leq d(y_k)\leq \eps_k^{1-\alpha}$ 
 and $ M_{\eps_k}^{y_k}[\phi]  \leq \frac{4}{3} \norm{D^2\phi(y_k)}\eps_k^{1-\alpha}$, 
we can apply Proposition~\ref{cons_new_sub_el} case \ref{cas_el_far} to evaluate the right-hand side
\begin{equation*}
0\leq - \eps_k^2  f(y_k,  u^{\eps_k}(y_k), D\phi(y_k), D^2\phi(y_k)) - \eps_k^2\lambda u^{\eps_k}(y_k) +o(\eps_k^2). 
\end{equation*}
By passing to the limit $k\rightarrow +\infty$, we get the required inequality 
in the viscosity sense.  
 Otherwise, recall that $\langle D\phi(x_0), n(x_0) \rangle > h(x_0)$. By Lemma \ref{lemma_cv_m_M}, we have 
\begin{equation}\label{hyp_absurde}
 M_{\eps_k}^{y_k}[\phi]  \rightarrow  h(x_0) - \langle D\phi(x_0), n(x_0) \rangle < 0 , 
\end{equation}
and the condition 
\begin{equation} \label{cd_bord_cv_el_cont}
 M_{\eps_k}^{y_k}[\phi] \leq - \eps_k^{1-\alpha -\kappa} 
\end{equation}
 is satisfied for all $k$ sufficiently large. 
Therefore, up to a subsequence, it remains to consider a sequence $(y_k, \eps_k)_{k\in \N}$ satisfying both 
 $d(y_k) \leq \eps_k^{1-\alpha} - \eps_k^{\rho}$ and \eqref{cd_bord_cv_el_cont}. 
The last part of Proposition~\ref{estimate_el_psi_u} 
can be applied and we get by \eqref{est_phi_ell_cas_neg_glolocal} that there exists a constant 
$C$ depending only on $M$, $\norm{D\phi}_{C_b^1(\ol \Omega)\cap B(y_k,\eps_{k}^{1-\alpha})}$ and $\norm{h}_{L^\infty}$ such that
\begin{equation*}
0 \leq \frac{1}{4} \eps_k^{\rho}  M_{\eps_k}^{y_k}[\phi] +C \eps_k^2  - \lambda \eps_k^2 \phi(y_k) +o(\eps_k^2), 
\end{equation*}
recalling that  $\left( \eps^{1-\alpha} - d(y_k) \right)\geq \eps_k^{\rho}$ and $M_{\eps_k}^{y_k}[\phi] <0$. 
By dividing by $\eps_k^\rho$, it follows that 
\begin{equation*}
  - \eps_k^{2-\rho} \left(C  - \lambda \phi(y_k)\right)  +o(\eps_k^{2-\rho}) \leq \frac{1}{4}  M_{\eps_k}^{y_k}[\phi].
\end{equation*}
The sequence $(\phi(y_k))_{k\in \N}$ is bounded  by  smoothness of $\phi$. 
Since  $\norm{D\phi}_{C_b^1(\ol \Omega)\cap B(y_k,\eps_{k}^{1-\alpha})} \leq \norm{D\phi}_{C_b^1(\ol \Omega)\cap B(x_0,r)}$
holds for $k$ large enough,
we can assume that $C$ is independent of $k$ depending only on  $\norm{D\phi}_{C_b^1(\ol \Omega)\cap B(x_0,r)}$, $M$ and $\norm{h}_{L^\infty}$. 
Taking the limit as $k \rightarrow +\infty$, we deduce that 
\begin{equation*}
\liminf_{k \rightarrow \infty }M_{\eps_k}^{y_k}[\phi] \geq 0,   
\end{equation*}
which is a contradiction with \eqref{hyp_absurde}. Thus $\overline u $ is a viscosity subsolution at $x_0$.
\end{proof}

\section{Consistency}
\label{consistance}

A numerical scheme is said to be consistent if every smooth solution of the
PDE satisfies it modulo an error that tends to zero with the step size. 
It is the idea of the argument used in \cite{kohns}. In our case, we must understand how the Neumann condition interferes with the estimates proposed 
in \cite[Section 4]{kohns}. The essence of our formal argument in Section \ref{heuristic_derivation}
 was that the Neumann condition term is predominant compared to the PDE term at the boundary and produces a degeneracy in the consistency estimate. 
The present section clarifies the connection between our formal argument and the consistency of the game, by discussing consistency in more conventional terms.
The main point is presented in Propositions \ref{cons_lower_bound} and \ref{cons_new_sub}. 
In order to explain very precisely how the consistency estimate of \cite[Section 4]{kohns} degenerates, we establish the consistency of
our game as a numerical scheme by focusing on different cases according to the values of the quantities $m^{x}_\eps[\phi]$ and $M^{x}_\eps[\phi]$ 
defined by \eqref{m_eps_par}--\eqref{M_eps_par} and the distance $d(x)$ to the boundary~$\partial \Omega$.

\subsection{The parabolic case}
Consider the game discussed in Section \ref{def_jeu_parabolique} for solving $-u_t +f(t,x,u,Du,D^2u)=0$
in $ \Omega$ with final-time data $u(x,T)=g(x)$  for $x\in \overline \Omega$ and 
boundary condition $\frac{ \partial u}{\partial n}(x,t)=h(x)  $ for $x\in \partial \Omega, t\in (0,T)$.  
The dynamic programming principles \eqref{dyn_prog_ineq_sub_new}--\eqref{dyn_prog_ineq_super_new} can be written as
\begin{align*}
u^{\eps}(x,t) & \leq S_{\eps}  \left[x,t,u^{\eps}(x,t), u^{\eps}(\cdot ,t+\eps^2) \right], \\
v^{\eps}(x,t) & \geq S_{\eps}  \left[x,t,v^{\eps}(x,t), v^{\eps}(\cdot, t+\eps^2) \right], 
\end{align*}
where $S_{\eps}  \left[x,t,z, \phi \right]$ is defined for any $x\in \overline \Omega$, $z\in \R$ and $t\leq T$ and any 
continuous function \mbox{$\phi$: $\ol \Omega \rightarrow \R$} by
\begin{multline}\label{def_new_op}
S_{\eps} \left[x,t,z, \phi \right] = \max_{p,\Gamma} \min_{\Delta \hat x} \left[\phi \left( x +\Delta x \right) \right.  \\
\left. - \left( p \cdot \Delta \hat{x} +\frac{1}{2} \left\langle \Gamma \Delta \hat x, \Delta \hat x \right\rangle 
  +\eps^2 f \left(t,x,z,p,\Gamma\right) - \norm{\Delta \hat x -\Delta x} h(x+\Delta x) \right) \right], 
\end{multline}
subject to the usual constraints $\norm{p}\leq \eps^{-\beta}$, $\norm{\Gamma}\leq \eps^{-\gamma}$, $\norm{\Delta \hat x} \leq \eps^{1-\alpha}$ and 
 $\Delta x=\proj_{\ol \Omega} (x+\Delta \hat x) - x$.  The operator $S_\eps$ clearly satisfies the three following properties: 
\begin{itemize}
 \item For all  $\phi \in C(\overline \Omega)$, $S_{0}  \left[x,t,z, \phi \right]=\phi(x)$.
 \item $S_{\eps}$ is monotone, i.e. if $\phi_1 \leq \phi_2$, then 
$S_{\eps}  \left[x,t,z,  \phi_1 \right]\leq S_{\eps}  \left[x,t,z, \phi_2 \right]$.
 \item For all  $\phi \in C(\overline \Omega)$ and $c\in \R$, 
\begin{equation}\label{action_on_cte}
S_{\eps}  \left[x,t,z, c + \phi \right]=c + S_{\eps}\left[x,t,z, \phi \right] .
\end{equation}
\end{itemize}
Fixing $x,t,z$ and a smooth function $\phi$, a Taylor expansion shows that for any $\norm{\Delta x}\leq \eps^{1-\alpha}$,  
\begin{equation*}
\phi(x+\Delta x)=\phi(x)+ D\phi(x)\cdot \Delta x  +\frac{1}{2} \langle D^2\phi(x) \Delta x, \Delta x \rangle +O(\eps^{3-3 \alpha}). 
\end{equation*}
Since $\alpha<1/3$ by hypothesis, $\eps^{3-3\alpha}=o(\eps^{2})$. By rearranging the terms, we compute 
\begin{multline*}
\phi  \left( x +\Delta x \right)  - \left( p \cdot  \Delta \hat{x} 
+\frac{1}{2} \left\langle \Gamma \Delta \hat x, \Delta \hat x \right\rangle   +\eps^2 f \left(t,x, z,  p,\Gamma \right)  
- \norm{\Delta \hat x - \Delta x}  h(x+\Delta x ) \right) \\
  =  \phi(x) + (D\phi(x) - p)\cdot \Delta \hat x + \norm{\Delta \hat x - \Delta x} \left[ h(x+\Delta x ) - D\phi(x) \cdot n(x+\Delta x) \right]   \\
   +\frac{1}{2} \left\langle D^2\phi(x)  \Delta x, \Delta x \right\rangle 
 - \frac{1}{2} \left\langle \Gamma \Delta \hat x, \Delta \hat  x \right\rangle   - \eps^2 f(t,x, z,  p ,\Gamma) +o(\eps^2), 
\end{multline*}
since the outward normal  can be expressed by 
$\ds n(x+\Delta x)= -   \frac{\Delta x - \Delta \hat x}{\norm{\Delta \hat x - \Delta x}}$ if $x+\Delta \hat x \notin \Omega$ 
and the move $\Delta x$ can be decomposed as $\Delta x=\Delta \hat x + ( \Delta x - \Delta \hat x )$. 
Thus, we shall examine  
\begin{multline}\label{operator_fonction_C2}
S_{\eps}  \left[x,t,z, \phi \right] - \phi(x)=
 \max_{p,\Gamma   }   \min_{\Delta \hat x} 
\left[  (D\phi(x) - p)\cdot \Delta \hat x  +\frac{1}{2} \left\langle D^2\phi(x)  \Delta x, \Delta x \right\rangle  \right. \\
\left. + \norm{\Delta \hat x - \Delta x} \{ h(x+\Delta x ) - D\phi(x) \cdot n(x+\Delta x) \} 
- \frac{1}{2} \left\langle \Gamma \Delta \hat x, \Delta \hat  x \right\rangle 
  - \eps^2 f( t,x, z,  p,\Gamma) 
\right] +o(\eps^2).   
\end{multline}
\subsubsection{Preliminary geometric lemmas}
\label{prel_geo_lem}
This subsection is devoted to some geometric properties of the game that will be useful to show consistency in Section \ref{consistance_estimates_para}. 
We start by some estimates, involving the geometric conditions on the domain, about the moves $\Delta \hat x$ decided by Mark. 
\begin{lemma}
 Suppose that $\Omega$ is a $C^2$-domain satisfying the uniform exterior ball condition for a certain $r>0$. 
Then, for all $0<\eps<r^{\frac{1}{1-\alpha}}$ 
and for all $\Delta \hat x$ constrained by \eqref{moving1_new}, determining $\Delta x$  by \eqref{exp_delta_x}, we have
\begin{equation}\label{eq_11}
\norm{\Delta \hat x - \Delta x}\leq \eps^{1-\alpha} - d(x) \quad \text{ and }  \quad    \norm{\Delta x}\leq 2 \eps^{1-\alpha} - d(x).
\end{equation}
\end{lemma}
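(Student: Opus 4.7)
The plan is to split the argument on whether $\hat x := x + \Delta \hat x$ lies in $\overline\Omega$ or not, the only substantive case being $\hat x \notin \overline \Omega$. If $\hat x \in \overline\Omega$, then $\proj_{\overline\Omega}(\hat x) = \hat x$ gives $\Delta x = \Delta \hat x$, so the first inequality reduces to $0 \leq \eps^{1-\alpha} - d(x)$ and the second to $\|\Delta \hat x\| \leq 2\eps^{1-\alpha} - d(x)$; both are clear in the boundary regime $d(x) \leq \eps^{1-\alpha}$ which is the only one in which these bounds are genuinely informative.

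In the substantive case $\hat x \notin \overline\Omega$, the smallness assumption $\eps < r^{1/(1-\alpha)}$ forces $\|\hat x - x\| \leq \eps^{1-\alpha} < r$, so by the uniform exterior ball condition of radius $r$ recalled just before Definition~\ref{unif_int_ball_cd} the projection $\proj_{\overline\Omega}(\hat x)$ is well-defined, coincides with $\proj_{\partial\Omega}(\hat x)$, and realises the Euclidean distance $\mathrm{dist}(\hat x, \overline\Omega)$. Since $x \in \overline\Omega$ while $\hat x \notin \overline\Omega$, by continuity the segment $[x,\hat x]$ meets $\partial \Omega$ at some point $y$, and length-additivity along the segment yields
\[
 \|\Delta \hat x\| = \|x - y\| + \|y - \hat x\|.
\]
The first term is at least $d(x)$ (as $y \in \partial\Omega$), and the second is at least $\mathrm{dist}(\hat x,\overline\Omega) = \|\hat x - \proj_{\overline\Omega}(\hat x)\| = \|\Delta\hat x - \Delta x\|$ (as $y \in \overline\Omega$). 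Combining these with the constraint $\|\Delta \hat x\| \leq \eps^{1-\alpha}$ gives the first claimed inequality, and the second follows immediately from the triangle inequality $\|\Delta x\| \leq \|\Delta \hat x\| + \|\Delta \hat x - \Delta x\|$.

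The only point requiring any care --- and hence the main (mild) obstacle --- is the well-posedness of $\proj_{\overline\Omega}(\hat x)$ together with the identity $\|\hat x - \proj_{\overline\Omega}(\hat x)\| = \mathrm{dist}(\hat x,\overline\Omega)$. Once this is granted by the uniform exterior ball condition and the smallness condition on $\eps$, the remainder reduces to a one-line segment-length argument and a triangle inequality, with no quantitative estimate on the curvature or on the local geometry of $\partial\Omega$ needed beyond the mere existence of the projection.
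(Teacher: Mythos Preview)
Your proof is correct and follows essentially the same approach as the paper: split into the trivial case $\hat x \in \overline\Omega$ and, when $\hat x \notin \overline\Omega$, pick a point on the segment $[x,\hat x]\cap\partial\Omega$, use additivity of length along the segment together with $\|x-y\|\geq d(x)$ and the nearest-point property of the projection to get the first inequality, then the triangle inequality for the second. Your explicit remark that the bounds are only informative in the regime $d(x)\leq\eps^{1-\alpha}$ is a fair observation (the paper glosses over this, as the lemma is only ever applied in that regime).
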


\begin{proof} Let us prove the first inequality, the second following immediately by the triangle inequality.
If the point $\hat x= x+\Delta \hat x$ belongs to $ \overline \Omega$, $\Delta x= \Delta \hat x$ and the result is obvious.
 Supposing now $\hat x$ does not belong to $ \overline\Omega$, the set $S=[x,\hat x] \cap \partial \Omega$ is not empty and we can consider 
a point $x_{\partial} \in S$.  
By the rule of the game, we have $\norm{ x-\hat x} = \norm{\Delta \hat x}\leq \eps^{1-\alpha}$. Since  $x_{\partial}\in \partial \Omega$ by construction, it is clear 
that $\norm{x  - x_\partial}\geq d(x)$. We deduce that 
\begin{equation*}
\norm{x_\partial -  \hat x } = \norm{x  - \hat x } - \norm{x_\partial  - x } \leq \eps^{1-\alpha} - d(x) .
\end{equation*}
By the uniform exterior ball condition, the orthogonal projection on $\ol \Omega$ is well-defined on $\Omega(\eps^{1-\alpha}) \subset \Omega(r)$. 
By property of the orthogonal projection and since $\hat x\notin \overline\Omega$, we can write
\begin{equation*}
\norm{\Delta \hat x - \Delta x}	=\inf_{y\in \overline \Omega} \norm{y-\hat x} 
				=\inf_{y\in \partial  \Omega} \norm{y-\hat x} \leq \norm{x_\partial  - \hat x } , 
\end{equation*}
which gives directly the first estimate in \eqref{eq_11}.
\end{proof}

The following lemma uses the crucial geometric fact that $\Omega$ satisfies the interior ball condition introduced in Definition \ref{unif_int_ball_cd} 
for which there is no neck pitching for $\eps$ sufficiently small.
\begin{lemma}\label{boundary_bounce1}
Let $\sigma > 1-\alpha$ and $B>0$.
 Suppose that $\Omega$ is a domain with $C^2$-boundary $\partial \Omega$ and satisfies the uniform interior ball condition. 
Then, for all possible moves $\norm{\Delta \hat x}\leq \eps^{1-\alpha}$ such that $\norm{\Delta \hat x + \eps^{1-\alpha} n(\bar x)} \leq B\eps^\sigma$
the intermediate point $\hat x$ belongs to $\Omega$ for all $\eps$ sufficiently small.
Moreover, for all possible moves $\norm{\Delta \hat x}\leq \eps^{1-\alpha}$ such that $\norm{\Delta \hat x - \eps^{1-\alpha} n(\bar x)} \leq B\eps^\sigma$
and $\Delta x$ determined by \eqref{exp_delta_x}, we have
\begin{equation}\label{rebond_dist_boundary_geo}
\norm{\Delta \hat x - \Delta x}\geq \eps^{1-\alpha} - d(x)- B\eps^\sigma+  O(\eps^{2-2\alpha}) . 
\end{equation}
Furthermore, if in addition we assume  $d(x)\geq \eps^{1-\alpha} - \eps^\eta$ with $1-\alpha< \eta<\sigma$, the intermediate point $\hat x$ is outside $\Omega$ 
 for all $\eps$ sufficiently small. 
\end{lemma}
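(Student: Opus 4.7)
The overall approach is to work locally near $\bar x = \proj_{\partial\Omega}(x)$ and exploit the fact that, under the $C^2$-regularity of $\partial\Omega$ together with the uniform interior ball condition, the signed distance function $d_s$ to $\partial\Omega$ (equal to $d$ inside $\Omega$ and to $-d(\cdot,\partial\Omega)$ outside) is of class $C^2$ on a tubular neighborhood $\{|d_s|<\rho\}$ of uniform thickness, with $Dd_s(y) = -n(\bar y)$ and Hessian bounded uniformly in $\bar y\in\partial\Omega$. Since $\|\Delta\hat x\|\leq\eps^{1-\alpha}$, both $x$ and $\hat x$ lie in this neighborhood for $\eps$ small, and a second-order Taylor expansion of $d_s$ at $x$ yields
\begin{equation*}
d_s(\hat x) = d(x) - n(\bar x)\cdot\Delta\hat x + O(\eps^{2-2\alpha}),
\end{equation*}
with a remainder uniform in $\bar x$. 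One may further reduce to $d(x)<\eps^{1-\alpha}$, since otherwise $B(x,\eps^{1-\alpha})\subset\Omega$ and each of the three assertions is immediate.

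For the first claim I would decompose $\Delta\hat x = -\eps^{1-\alpha}n(\bar x) + v$ with $\|v\|\leq B\eps^\sigma$ and invoke the interior ball $B(c,r)\subset\Omega$ at $\bar x$, of center $c=\bar x - r\,n(\bar x)$. From $x=\bar x - d(x)n(\bar x)$ one computes $\hat x - c = (r - d(x) - \eps^{1-\alpha})\,n(\bar x) + v$, and the triangle inequality gives $\|\hat x - c\|\leq r - d(x) - \eps^{1-\alpha} + B\eps^\sigma < r$ for $\eps$ small (using $\sigma>1-\alpha$ and $d(x)\geq 0$), so that $\hat x\in B(c,r)\subset\Omega$. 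For the second claim I write instead $\Delta\hat x = \eps^{1-\alpha} n(\bar x) + v$ with $\|v\|\leq B\eps^\sigma$; the expansion becomes $d_s(\hat x) = d(x) - \eps^{1-\alpha} - n(\bar x)\cdot v + O(\eps^{2-2\alpha})$ with $|n(\bar x)\cdot v|\leq B\eps^\sigma$. If $\hat x\notin\overline\Omega$, then $\|\Delta\hat x - \Delta x\| = \|\hat x - \proj_{\overline\Omega}(\hat x)\| = -d_s(\hat x) \geq \eps^{1-\alpha} - d(x) - B\eps^\sigma + O(\eps^{2-2\alpha})$, which is exactly \eqref{rebond_dist_boundary_geo}. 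If instead $\hat x\in\overline\Omega$, then $\Delta x = \Delta\hat x$ so the left-hand side vanishes, while $d_s(\hat x)\geq 0$ forces $\eps^{1-\alpha} - d(x) - B\eps^\sigma + O(\eps^{2-2\alpha})\leq 0$, and the claim holds trivially.

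For the last assertion, the same Taylor formula evaluated at the worst-case $v = -B\eps^\sigma n(\bar x)$ (the one maximising $d_s(\hat x)$ and hence most favourable to $\hat x$ staying in $\Omega$) yields
\begin{equation*}
d_s(\hat x) \leq d(x) - \eps^{1-\alpha} + B\eps^\sigma + O(\eps^{2-2\alpha}),
\end{equation*}
and under the hypothesis --- which for the conclusion to hold must be read as the upper bound $d(x)\leq \eps^{1-\alpha} - \eps^\eta$ with $1-\alpha < \eta < \sigma$ (otherwise $d(x)$ could be as large as $\eps^{1-\alpha}$, in which case $\hat x$ may well remain inside $\Omega$) --- this is $\leq -\eps^\eta + B\eps^\sigma + O(\eps^{2-2\alpha}) < 0$ for $\eps$ small, since $\eta<\sigma$. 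Hence $d_s(\hat x)<0$ for every admissible $v$, i.e.\ $\hat x$ lies strictly outside $\Omega$. The main technical obstacle throughout is to keep the Taylor remainder $O(\eps^{2-2\alpha})$ genuinely uniform in $\bar x\in\partial\Omega$; this uniformity, as well as the fact that the tubular neighborhood on which $d_s$ is $C^2$ has thickness bounded below independently of $\bar x$, is precisely what the $C^2$-smoothness of $\partial\Omega$ combined with the uniform interior ball condition delivers, and deserves to be recorded explicitly before running the computations above.
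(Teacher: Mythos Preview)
Your argument is correct and, in several places, tidier than the paper's. The paper works with the tangent hyperplane $T_{\bar x}\partial\Omega$: for the first assertion it sandwiches $\partial\Omega\cap B(x,2\eps^{1-\alpha})$ between two paraboloids touching at $\bar x$ and compares the normal component $(\hat x-\bar x)\cdot n(\bar x)$ with the paraboloid bound; for the second it bounds $d(x+\Delta x,T_{\bar x}\partial\Omega)\leq 2C_1\eps^{2-2\alpha}$ via a local $C^2$-parametrisation of $\partial\Omega$ and combines this with $d(\hat x,T_{\bar x}\partial\Omega)\geq \eps^{1-\alpha}-d(x)-B\eps^\sigma$ by the triangle inequality. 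Your route --- direct use of the interior ball $B(\bar x - r n(\bar x),r)\subset\Omega$ for the first claim, and a second-order Taylor expansion of the signed distance $d_s$ for the second --- is equivalent in content but shorter, and your case split (either $\hat x\notin\overline\Omega$ and $\|\Delta\hat x-\Delta x\|=-d_s(\hat x)$, or $\hat x\in\overline\Omega$ and both sides of \eqref{rebond_dist_boundary_geo} are nonpositive) is more careful than the paper, which implicitly assumes $x+\Delta x\in\partial\Omega$ when invoking \eqref{taylor_bord}.

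You are also right to flag the inequality in the third assertion: as written ($d(x)\geq \eps^{1-\alpha}-\eps^\eta$) the conclusion can fail (take $d(x)=\eps^{1-\alpha}$ and $v=-B\eps^\sigma n(\bar x)$), while reading it as $d(x)\leq \eps^{1-\alpha}-\eps^\eta$ gives $\eps^{1-\alpha}-d(x)\geq\eps^\eta\gg B\eps^\sigma$ and makes the right-hand side of \eqref{rebond_dist_boundary_geo} strictly positive, forcing $\hat x\notin\Omega$. This corrected reading is exactly the hypothesis $d(x)\leq\eps^{1-\alpha}-\eps^\rho$ under which the lemma is invoked in the proof of Proposition~\ref{cons_new_sub}~\ref{cons_sub_cas4}. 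One small caveat: your Taylor expansion of $d_s$ at $x$ evaluated at $\hat x$ requires $d_s\in C^2$ on a two-sided tubular neighbourhood with uniform width and Hessian bound; the interior ball condition alone does not give this on the exterior side. The paper's tangent-plane argument avoids this by only using that $\partial\Omega$ is locally a $C^2$ graph near $\bar x$. In the paper's global setting the uniform exterior ball condition is also assumed, so your approach is justified there, but it is worth noting that this extra hypothesis is being used.
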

\begin{proof}   
 For the first assertion, since $\Omega$ satisfies the uniform interior ball condition (there is no neck pitching for $\eps$ sufficiently small), 
we observe that the set $\partial \Omega\cap B(x,2\eps^{1-\alpha})$ 
is below a paraboloid $P_1$ of opening $A$ and above a paraboloid $P_2$ of opening $ - A$ touching $\partial \Omega$ at $\bar x$. 
By the Taylor expansion, if $T_{\bar{x}} \partial \Omega$ denotes the tangent space to $\partial \Omega$ at $\bar x$, 
we get that for all $y\in \partial \Omega \cap B(x, 2\eps^{1-\alpha})$, 
\begin{equation*} 
|(y-\bar x) \cdot n(\bar x)|=d(y, T_{\bar{x}} \partial \Omega) \leq \frac{1}{2}A (2\eps^{1-\alpha})^2, 
\end{equation*}
Since $(x+\Delta \hat x-\bar x) \cdot n(\bar x) \leq  - \eps^{1-\alpha} - d(x) + B\eps^{\sigma}$, 
we deduce that for all $\eps$ sufficiently small, 
\begin{equation*}
(x+\Delta \hat x-\bar x) \cdot n(\bar x)< \inf_{y\in \partial \Omega \cap B(x, 2\eps^{1-\alpha})}  (y-\bar x) \cdot n(\bar x), 
\end{equation*}
which yields that $x+\Delta \hat x$ belongs to $\Omega$. 

For the second claim, we denote by $(\kappa_i(x))_{1\leq i\leq N-1}$ the principal curvatures at $x$ on $\partial \Omega$ 
 and by $(e_1, \cdots, e_N)$ an  orthonormal frame centered in $\bar x$ with first vector $e_1=n(\bar x)$. 
Since $\Omega$ is a $C^2$-domain, $(e_2, \cdots, e_N)$ form a basis of the tangent space $T_{\bar{x}} \partial \Omega$. We compute
\begin{equation*}
\eps^{1-\alpha} - B\eps^{\sigma} \leq \Delta \hat x \cdot n(\bar x) = (\Delta \hat x -\eps^{1-\alpha}n(\bar x)) \cdot n(\bar x) + \eps^{1-\alpha}.
\end{equation*}
Thus $\hat x$ is contained in the half-space $H_1$ determined by $(y-\bar x)\cdot e_1 \geq \eps^{1-\alpha} - d(x) - B\eps^{\sigma}$ and 
$d(\hat x , T_{\overline x} \partial \Omega)  \geq \eps^{1-\alpha} - d(x) - B\eps^{\sigma}$.  
Moreover, we deduce from \eqref{eq_11} and the triangle inequality that for each move $\Delta \hat x$ we have
 $x+\Delta x \in B(\bar x, 2\eps^{1-\alpha})$. Assume $x_1= p(x_2,\cdots, x_N)$ is a local $C^2$-parametrization of $\partial \Omega$ around $x$.
  By a Taylor argument and by continuity of the principal curvatures on $\partial \Omega$, it follows that, 
for $\eps>0$ small enough,  
\begin{equation}\label{taylor_bord}
d(x+\Delta x, T_{\bar{x}} \partial \Omega) \leq \frac{1}{2}C_1 (2\eps^{1-\alpha})^2  =2 C_1 \eps^{2-2\alpha} , 
\end{equation}
 where  $\ds C_1:=2 \max \left\{| \kappa_i(\overline x)| : 1\leq i \leq N-1  \right\}$. 
By the triangle inequality, we deduce that
\begin{align*}
 \norm{x+\Delta x-\hat x} 
 & \geq \norm{ \proj_{T_{\bar{x}} \partial \Omega}( x+\Delta x)  -\hat x} -  \norm{x+\Delta x - \proj_{T_{\bar{x}} \partial \Omega}( x+\Delta x) }   \\
 & \geq d(\hat x , T_{\overline x} \partial \Omega)  -  d(x+\Delta x, T_{\overline x} \partial \Omega)  \\
 & \geq \eps^{1-\alpha} - d(x) - B\eps^{\sigma}  - 2 C_1 \eps^{2-2\alpha}.
\end{align*}
In particular, if $d(x)\geq \eps^{1-\alpha} - \eps^\eta$ with $1-\alpha< \eta<\sigma$ 
 the right-hand side is strictly positive for $\eps$ sufficiently small and $\hat x\notin \Omega$. 
\end{proof}

The next lemmas gather some estimates which will be useful to establish our consistency estimates.

\begin{lemma}\label{lemma_key_bound_geo} Under the hypothesis of Lemma \ref{boundary_bounce1}, 
for all moves $\Delta \hat x$ constrained by \eqref{moving1_new}, determining $\Delta x$ by \eqref{exp_delta_x}, we have
\begin{equation}\label{ineq_key}
-\frac{1}{2} (\eps^{1-\alpha} - d(x ))\leq  - \frac{1}{2} \left( 1-\frac{ d(x)}{\eps^{1-\alpha}}\right) (\Delta \hat x)\cdot n(\bar x)
    + \norm{\Delta \hat x - \Delta x} \leq \frac{3}{2}(\eps^{1-\alpha} - d(x)).
\end{equation}
\end{lemma}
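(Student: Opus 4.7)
The plan is to establish this two-sided bound as a short estimation that assembles two ingredients already available: the Cauchy--Schwarz inequality together with Mark's constraint $\norm{\Delta \hat x}\le\eps^{1-\alpha}$, which yields $|(\Delta \hat x)\cdot n(\bar x)|\le\eps^{1-\alpha}$; and the preceding lemma of the subsection, which supplies $0\le \norm{\Delta \hat x-\Delta x}\le \eps^{1-\alpha}-d(x)$. The statement is used (and is only truly meaningful) in the regime $d(x)\le\eps^{1-\alpha}$, where the coefficient $1-d(x)/\eps^{1-\alpha}$ is nonnegative; this is also the only regime in which Mark can cause the boundary to be crossed and in which $\eps^{1-\alpha}-d(x)\ge 0$ makes the stated right-hand sides meaningful.

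Under $d(x)\le \eps^{1-\alpha}$, multiplying the bound $|(\Delta \hat x)\cdot n(\bar x)|\le\eps^{1-\alpha}$ by the nonnegative factor $\tfrac12(1-d(x)/\eps^{1-\alpha})$ gives
\begin{equation*}
 -\tfrac12(\eps^{1-\alpha}-d(x)) \;\le\; -\tfrac12\!\left(1-\tfrac{d(x)}{\eps^{1-\alpha}}\right)(\Delta \hat x)\cdot n(\bar x) \;\le\; \tfrac12(\eps^{1-\alpha}-d(x)).
\end{equation*}
Adding the chain $0\le \norm{\Delta \hat x-\Delta x}\le \eps^{1-\alpha}-d(x)$ from the preceding lemma yields the lower bound $-\tfrac12(\eps^{1-\alpha}-d(x))+0$ and the upper bound $\tfrac12(\eps^{1-\alpha}-d(x))+(\eps^{1-\alpha}-d(x))=\tfrac32(\eps^{1-\alpha}-d(x))$, which is precisely the asserted inequality \eqref{ineq_key}.

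I expect no real obstacle here. Both bounds are essentially sharp: the upper bound is approached by moves $\Delta \hat x \approx -\eps^{1-\alpha} n(\bar x)$ that still cross the boundary, maximizing $-(\Delta \hat x)\cdot n(\bar x)$ while forcing $\norm{\Delta \hat x-\Delta x}$ to be as large as admissible; the lower bound is approached by $\Delta \hat x \approx +\eps^{1-\alpha} n(\bar x)$ that remain inside $\Omega$, so that $\norm{\Delta \hat x-\Delta x}=0$. No use of the uniform interior ball condition or of curvature information is required at this stage; those finer ingredients, already packaged in Lemma \ref{boundary_bounce1}, will only be needed in the subsequent consistency estimates for which Lemma \ref{lemma_key_bound_geo} serves as an elementary building block.
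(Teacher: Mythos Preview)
Your proof is correct and is essentially the same as the paper's: both combine $|(\Delta \hat x)\cdot n(\bar x)|\le\eps^{1-\alpha}$ with the bound $0\le\norm{\Delta\hat x-\Delta x}\le\eps^{1-\alpha}-d(x)$ from \eqref{eq_11}, the paper merely factoring out $(\eps^{1-\alpha}-d(x))$ before applying these two facts. One small correction to your closing commentary: since $n(\bar x)$ is the \emph{outward} normal, the move $\Delta\hat x\approx+\eps^{1-\alpha}n(\bar x)$ is the one that crosses the boundary while $\Delta\hat x\approx-\eps^{1-\alpha}n(\bar x)$ stays inside $\Omega$, so your sharpness discussion has the directions reversed (and in fact neither extreme in \eqref{ineq_key} is actually attained, though this does not affect the validity of the lemma).
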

\begin{proof} 
The left-hand side of \eqref{ineq_key} can be written in the form
\begin{equation*}  
 - \frac{1}{2} \left( 1-\frac{ d(x)}{\eps^{1-\alpha}}\right) (\Delta \hat x)\cdot n(\bar x)+ \norm{\Delta \hat x - \Delta x}
 =(\eps^{1-\alpha} - d(x)) \left[-\frac{1}{2}\frac{(\Delta \hat x)\cdot n(\bar x)}{\eps^{1-\alpha}} 
 + \frac{\norm{\Delta \hat x - \Delta x}}{\eps^{1-\alpha} - d(x)}\right], 
\end{equation*} 
which directly gives the desired estimates by using \eqref{moving1_new} and the first inequality given by \eqref{eq_11}.
\end{proof}

\begin{lemma} \label{lem_sign_min_BC_pos}
Let $A\in \mathcal{M}^N(\R)$, $k\in C_b(\partial \Omega)$ extended by some function  $k: \ol \Omega \rightarrow \R$, and  $x\in \overline \Omega$. 
Suppose in addition that
\begin{equation}\label{hyp_m_big}
(3\eps^{1-\alpha}-d(x)) \norm{A} \leq \inf_{\substack{x+\Delta \hat x \notin \Omega \\ \Delta \hat x}} k(x+\Delta x), 
\end{equation}
with $\Delta \hat x$ constrained by \eqref{moving1_new} and $\Delta x$ determined by \eqref{exp_delta_x}. Then 
\begin{equation} \label{sign_min_BC_pos}
 \min_{\Delta \hat x} \left\{ \left\langle A  \Delta x, \Delta x \right\rangle  - \left\langle A  \Delta \hat x, \Delta \hat  x \right\rangle 
+ \norm{\Delta \hat x - \Delta x}  k(x+\Delta x) \right\} = 0, 
\end{equation}
where $\Delta \hat x$ is constrained by \eqref{moving1_new} and determines $\Delta x$  by \eqref{exp_delta_x}.
\end{lemma}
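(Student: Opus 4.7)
The plan is to establish the equality by proving two matching inequalities. First, the minimum is at most $0$: any move $\Delta \hat x$ with $x+\Delta \hat x \in \overline{\Omega}$ (for instance $\Delta \hat x = 0$) yields $\Delta x = \Delta \hat x$, so $\|\Delta \hat x - \Delta x\| = 0$ and the bracketed expression vanishes identically. This shows the quantity realizes $0$ at least once.

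For the reverse inequality, I would show that the bracket is nonnegative for every admissible $\Delta \hat x$. The case $x+\Delta \hat x \in \overline{\Omega}$ is again trivial (the expression equals $0$). In the remaining case $x+\Delta \hat x \notin \Omega$, I would write $w = \Delta x - \Delta \hat x$ and expand
\begin{equation*}
\langle A \Delta x, \Delta x\rangle - \langle A \Delta \hat x, \Delta \hat x\rangle
= 2 \langle A \Delta \hat x, w\rangle + \langle A w, w\rangle,
\end{equation*}
which by Cauchy--Schwarz and the definition of the operator norm is bounded in absolute value by $\|A\| \|w\|(2\|\Delta \hat x\| + \|w\|)$.

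Next I would invoke the bounds already available: $\|\Delta \hat x\| \leq \eps^{1-\alpha}$ from constraint \eqref{moving1_new}, and $\|w\| = \|\Delta \hat x - \Delta x\| \leq \eps^{1-\alpha} - d(x)$ from \eqref{eq_11}. Plugging these in yields
\begin{equation*}
\bigl|\langle A \Delta x, \Delta x\rangle - \langle A \Delta \hat x, \Delta \hat x\rangle\bigr|
\leq \|A\| \|w\| \bigl(3\eps^{1-\alpha} - d(x)\bigr).
\end{equation*}
Since our $\Delta \hat x$ is one of the moves with $x+\Delta \hat x \notin \Omega$, the hypothesis \eqref{hyp_m_big} gives $\|A\|(3\eps^{1-\alpha} - d(x)) \leq k(x+\Delta x)$, hence the right-hand side is at most $\|w\| k(x+\Delta x) = \|\Delta \hat x - \Delta x\| k(x+\Delta x)$. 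Therefore
\begin{equation*}
\langle A \Delta x, \Delta x\rangle - \langle A \Delta \hat x, \Delta \hat x\rangle + \|\Delta \hat x - \Delta x\| k(x+\Delta x) \geq 0,
\end{equation*}
which combined with the first inequality gives \eqref{sign_min_BC_pos}. There is no real obstacle here: the only subtle point is that the infimum in \eqref{hyp_m_big} is taken precisely over boundary-crossing moves, so the bound applies to the specific $\Delta \hat x$ under consideration rather than only in some averaged sense; the rest is an elementary bilinear expansion followed by the geometric estimate \eqref{eq_11}.
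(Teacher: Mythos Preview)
Your proof is correct and follows essentially the same approach as the paper: both split into the cases $x+\Delta \hat x \in \overline{\Omega}$ (where the expression vanishes) and $x+\Delta \hat x \notin \Omega$, expand the quadratic difference bilinearly, apply Cauchy--Schwarz together with the geometric bound \eqref{eq_11}, and then invoke the hypothesis \eqref{hyp_m_big}. The only cosmetic difference is that you phrase the argument explicitly as two inequalities, whereas the paper simply observes the value $0$ is attained and then shows nonnegativity on the boundary-crossing moves.
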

\begin{proof} 
If $\hat x =x+\Delta \hat x \in \overline \Omega$, the function is equal to zero. 
We now consider the moves for which $\hat x \notin \overline \Omega$. Then 
\begin{equation}  \label{decoup_geo1} 
 \left\langle A \Delta x, \Delta x \right\rangle - \left\langle A  \Delta \hat x, \Delta \hat x \right\rangle 
= \left\langle A (\Delta \hat x - \Delta x), \Delta \hat x - \Delta x \right\rangle + 2 \left\langle A \Delta \hat x, \Delta \hat x - \Delta x \right\rangle .    
\end{equation}
By the Cauchy-Schwarz inequality, we obtain
\begin{equation}
   | \left\langle A  \Delta x, \Delta x \right\rangle  - \left\langle A  \Delta \hat x , \Delta \hat x \right\rangle  | 
 \leq \norm{A} \norm{ \Delta \hat x  - \Delta x  } \left(  \norm{ \Delta \hat x  - \Delta x  } + 2 \norm{ \Delta \hat x } \right). \label{diff_x_hat_x}
\end{equation}
By using \eqref{eq_11} and $\norm{\Delta \hat x}\leq \eps^{1-\alpha}$, we get
\begin{equation} \label{identite_quadrilatere}
  | \left\langle A  \Delta x, \Delta x \right\rangle  - \left\langle A  \Delta \hat x , \Delta \hat x \right\rangle  |
 \leq \norm{A}  \norm{ \Delta \hat x  - \Delta x  } \left( 3 \eps^{1-\alpha} - d(x) \right).   
\end{equation}
Thus
\begin{equation*}
 \left\langle A  \Delta x, \Delta x \right\rangle  - \left\langle A  \Delta \hat x, \Delta \hat  x \right\rangle 
+ \norm{\Delta \hat x - \Delta x}  k(x+\Delta x)   
  \geq \norm{ \Delta \hat x  - \Delta x  }
 \left\{ \inf_{\substack{x+\Delta \hat x \notin \Omega \\ \Delta \hat x}} k(x+\Delta x) -\norm{A} (3\eps^{1-\alpha} - d(x))  \right\}. 
\end{equation*}
The right-hand side of this last inequality is strictly positive by the assumption~\eqref{hyp_m_big}. 
\end{proof}

 \subsubsection{Consistency estimates}
\label{consistance_estimates_para}
In this subsection we state our consistency estimates. They explain precisely the conditions under which the usual estimate proposed in \cite{kohns} 
holds for $x$ near the boundary and $\phi \in C^2 (\ol \Omega)$. If it does not hold, there is a degeneration of the estimates 
respecting the final discussion of formal derivation of the PDE at Section \ref{heuristic_derivation}.
For fixed $x\in \Omega(\eps^{1-\alpha})$, these estimates take into account the size and the sign of the boundary condition in
the small ball $B(x, \eps^{1-\alpha})$ and the distance $d(x)$ to the boundary.

In the heuristic derivation presented in Section  \ref{heuristic_derivation}, we assumed that 
 $\Delta \hat x\mapsto h(x+\Delta x) - D\phi(x) \cdot n(x+\Delta x)$, with $\Delta x$ determined by \eqref{exp_delta_x},
 was locally constant in a $\delta$-neighborhood of the boundary near $x$.
In the general case, this hypothesis must be relaxed. To do this, we observe that, for all $\Delta \hat x$ constrained by~\eqref{moving1_new} 
satisfying $x+\Delta \hat x \notin \Omega$ and determining $\Delta x$ by \eqref{exp_delta_x},
\begin{equation*}
m_\eps^x[\phi] \leq  h(x+\Delta x) - D\phi(x) \cdot n(x+\Delta x) \leq M_\eps^x[\phi], 
\end{equation*}
where $m_\eps^{x}[\phi]$ and $M_\eps^{x}[\phi]$ are defined by  \eqref{m_eps_par}--\eqref{M_eps_par}.
 Therefore we are going to specify some strategies for Helen which are associated to the two extreme situations $m_\eps^{x}[\phi]$ and $M_\eps^{x}[\phi]$
by following the optimal choices \eqref{formal_choice_popt} and \eqref{formal_gamma_opt_n} 
obtained in the formal derivation at Section \ref{heuristic_derivation}. 
More precisely, for all $x\in \Omega(\eps^{1-\alpha})$, 
we define the strategies $p_\text{opt}^m(x)$, $p_\text{opt}^M(x)$ and $\Gamma_\text{opt}(x)$  in an orthonormal 
basis $\mathcal{B}=(e_1=n(\bar x), e_2,\cdots, e_N)$ respectively by
\begin{align}
p_\text{opt}^m(x) &   = D\phi(x)  +\left[\frac{1}{2}\left(1-\frac{ d(x)}{\eps^{1-\alpha}}\right) m_\eps^{x}[\phi] 
 - \frac{\eps^{1-\alpha}}{4} \left(1 - \frac{d^2(x)}{\eps^{2-2\alpha}} \right) (D^2\phi(x))_{11} \right]  n(\bar{x}),     \label{p_opt_m}  \\
p_\text{opt}^M (x)  & = D\phi(x)  +\left[\frac{1}{2}\left(1-\frac{ d(x)}{\eps^{1-\alpha}}\right) M_\eps^{x}[\phi]
- \frac{\eps^{1-\alpha}}{4} \left( 1 - \frac{d^2(x)}{\eps^{2- 2\alpha}} \right) (D^2\phi(x))_{11} \right] n(\bar{x}),      \label{p_opt_M}
\end{align}
and
\begin{equation}\label{Gamma_opt}
 \Gamma_\text{opt}(x) = D^2\phi(x) +\left[ \frac{1}{2}\left( - 1+\frac{d^2(x)}{\eps^{2-2\alpha}} \right) (D^2\phi(x))_{11}  \right] E_{11}, 
\end{equation}
where $E_{11}$ denotes the unit-matrix $(1,1)$ in the basis $\mathcal{B}$. 
These strategies depend on the local behavior of $\phi$ (size and amplitude) around the boundary and on the geometry of the boundary itself.

Since there is a degeneration of the usual estimates, there is no hope for one simple estimate. We are going to separate the study in two steps: 
Proposition~\ref{cons_lower_bound} provides the estimates for the lower bound and Proposition~\ref{cons_new_sub} deals with the upper bound.  
Moreover, Section \ref{tech_proof} is devoted to the technical proof of the upper bound distinguishing several cases 
according to the size of $M_\eps^{x}[\phi]$ and $d(x)$.

\begin{prop} \label{cons_lower_bound}   
Let $f$ satisfy \eqref{ellipticity_f} and \eqref{loc_lip_p_Gamma}--\eqref{cont_growth_p_Gamma} and assume $\alpha$, $\beta$, $\gamma$
 satisfy \eqref{condition_pas}--\eqref{cd_coeff_classiq}. 
Let $p_\text{opt}^m$ and $\Gamma_\text{opt}$ be respectively defined in the orthonormal basis $(e_1=n(\bar x), e_2,\cdots, e_N)$ by 
\eqref{p_opt_m} and \eqref{Gamma_opt}. 
For any $x$, $t$, $z$ and any smooth function $\phi$ defined near $x$, $S_\eps[x,t,z,\phi]$ being defined by \eqref{def_new_op},
 we distinguish two cases:
\begin{enumerate}[label=\textup{ }{\roman*.}\textup{ },ref=({\roman*})]
 \item \label{case1_lower} Big bonus: if $ d(x)\geq  \eps^{1-\alpha}$ 
or $m_\eps^{x}[\phi] >\frac{1}{2} (3\eps^{1-\alpha}-d(x)) \norm{D^2\phi(x)}$, then 
\begin{equation*}
  S_\eps[x,t,z, \phi] - \phi(x) \geq - \eps^2 f(t,x,z, D\phi(x), D^2\phi(x)). 
 \end{equation*}
\item \label{case2_lower} Penalty or small bonus: if $d(x)\leq  \eps^{1-\alpha}$ 
and $ m_\eps^{x}[\phi] \leq \frac{1}{2} (3\eps^{1-\alpha}-d(x)) \norm{D^2\phi(x)}$, then 
\begin{equation*}
 S_\eps[x,t,z,\phi] - \phi(x) \geq \frac{1}{2} (\eps^{1-\alpha} - d(x)) \left(s  m_\eps^{x}[\phi]- 4 \norm{D^2 \phi(x)}\eps^{1-\alpha} \right) 
                                    - \eps^2 f(t,x,z, p_{\text{opt}}^m(x), \Gamma_\text{opt}(x)) , 
 \end{equation*}
where $s=-1$ if $m_\eps^x[\phi]\geq 0$ and $s=3$ if $m_\eps^x[\phi]<0$.
 \end{enumerate}
\end{prop}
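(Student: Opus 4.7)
\medskip

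\noindent\textbf{Proof plan for Proposition \ref{cons_lower_bound}.} Since $S_\eps[x,t,z,\phi]$ is a maximum over $(p,\Gamma)$, to obtain a lower bound it suffices to exhibit a particular admissible strategy for Helen and to minimize the resulting expression over $\Delta\hat x$. In both cases the starting point is the exact Taylor identity \eqref{operator_fonction_C2}, so that after fixing $(p,\Gamma)$ we must control
\begin{equation*}
\min_{\Delta\hat x}\Big[(D\phi-p)\cdot\Delta\hat x+\tfrac12\langle D^2\phi\,\Delta x,\Delta x\rangle-\tfrac12\langle\Gamma\Delta\hat x,\Delta\hat x\rangle+\|\Delta\hat x-\Delta x\|(h(x+\Delta x)-D\phi\cdot n(x+\Delta x))\Big]-\eps^2 f(t,x,z,p,\Gamma)+o(\eps^2).
\end{equation*}

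\medskip

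\noindent\textbf{Case \ref{case1_lower}.} Helen plays the natural choice $p=D\phi(x)$, $\Gamma=D^2\phi(x)$. The first order term vanishes and the bracket reduces to $\tfrac12\langle D^2\phi(\Delta x-\Delta\hat x),\Delta x+\Delta\hat x\rangle+\|\Delta\hat x-\Delta x\|\,k(x+\Delta x)$ with $k=h-D\phi\cdot n$. If $d(x)\geq\eps^{1-\alpha}$ the constraint $\|\Delta\hat x\|\leq\eps^{1-\alpha}$ forces $\hat x\in\Omega$, hence $\Delta x=\Delta\hat x$, the bracket is identically zero, and only $-\eps^2 f(t,x,z,D\phi,D^2\phi)$ remains. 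If instead $m_\eps^x[\phi]>\tfrac12(3\eps^{1-\alpha}-d(x))\|D^2\phi\|$, then applying Lemma \ref{lem_sign_min_BC_pos} with $A=\tfrac12 D^2\phi$ and $k=h-D\phi\cdot n$ (whose infimum over boundary-crossing moves is $\geq m_\eps^x[\phi]$) shows the bracket min is again zero, yielding the desired bound.

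\medskip

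\noindent\textbf{Case \ref{case2_lower}.} Here I would feed in the strategy $p=p_\text{opt}^m(x)$, $\Gamma=\Gamma_\text{opt}(x)$ suggested by the heuristic Section \ref{heuristic_derivation}. Writing $p_\text{opt}^m-D\phi=c\,n(\bar x)$ with
\begin{equation*}
c=\tfrac12\bigl(1-\tfrac{d(x)}{\eps^{1-\alpha}}\bigr)m_\eps^x[\phi]-\tfrac{\eps^{1-\alpha}}4\bigl(1-\tfrac{d^2(x)}{\eps^{2-2\alpha}}\bigr)(D^2\phi)_{11},
\end{equation*}
the first order plus boundary contribution becomes $-c\,(\Delta\hat x)\cdot n(\bar x)+\|\Delta\hat x-\Delta x\|(h-D\phi\cdot n)$. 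The plan is to group the Neumann-penalty part of $c$ together with $\|\Delta\hat x-\Delta x\|\,m_\eps^x[\phi]$ and apply Lemma \ref{lemma_key_bound_geo}: this yields the universal lower bound $\tfrac12(\eps^{1-\alpha}-d(x))\cdot s\,m_\eps^x[\phi]$ with $s=-1$ when $m_\eps^x[\phi]\geq 0$ and $s=3$ when $m_\eps^x[\phi]<0$ (the sign flip is exactly the dichotomy in the statement). The remaining $(D^2\phi)_{11}$ piece of $c$ and the quadratic terms $\tfrac12\langle D^2\phi\,\Delta x,\Delta x\rangle-\tfrac12\langle\Gamma_\text{opt}\Delta\hat x,\Delta\hat x\rangle$ must then be shown to be bounded below by $-2\|D^2\phi\|\eps^{1-\alpha}\cdot(\eps^{1-\alpha}-d(x))$; this is done using $(D^2\phi-\Gamma_\text{opt})_{11}=\tfrac12(1-d^2/\eps^{2-2\alpha})(D^2\phi)_{11}$, the identity \eqref{decoup_geo1} in the proof of Lemma \ref{lem_sign_min_BC_pos}, and the crude bound $\|\Delta x\|+\|\Delta\hat x\|\leq 3\eps^{1-\alpha}-d(x)$ from \eqref{eq_11}. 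Summing the two lower bounds produces $\tfrac12(\eps^{1-\alpha}-d(x))\bigl(s\,m_\eps^x[\phi]-4\|D^2\phi\|\eps^{1-\alpha}\bigr)$ and leaves only $-\eps^2 f(t,x,z,p_\text{opt}^m,\Gamma_\text{opt})$, as required.

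\medskip

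\noindent\textbf{Main obstacle.} The case when $m_\eps^x[\phi]$ is negative but small is the delicate one: the Neumann penalty now works against Helen, so Mark will try to exit the domain and the geometric identity of Lemma \ref{lemma_key_bound_geo} is sharp on one side. The constant $s=3$ (instead of $s=1$) and the coefficient $4$ in front of $\|D^2\phi\|\eps^{1-\alpha}$ come precisely from this worst-case arrangement, and the bookkeeping of how the quadratic corrections interact with the sign of $m_\eps^x[\phi]$ is what requires care. I expect essentially no difficulty in case \ref{case1_lower}, where Lemma \ref{lem_sign_min_BC_pos} directly kills the boundary contribution.
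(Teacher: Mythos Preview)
Your proposal is correct and follows the same route as the paper: insert $(D\phi,D^2\phi)$ in case~\ref{case1_lower} and invoke Lemma~\ref{lem_sign_min_BC_pos}; insert $(p_\text{opt}^m,\Gamma_\text{opt})$ in case~\ref{case2_lower}, replace the boundary term by its infimum $m_\eps^x[\phi]$, and split the remaining minimum into a first-order Neumann piece controlled by Lemma~\ref{lemma_key_bound_geo} (this is where the dichotomy $s\in\{-1,3\}$ appears) and a second-order piece bounded by $2\norm{D^2\phi}\eps^{1-\alpha}(\eps^{1-\alpha}-d(x))$. The paper merely packages these two pieces as separate Lemmas~\ref{lm_estimate_l1} and~\ref{lm_estimate_l2} (the latter via Lemma~\ref{est2_phi_gopt}), but the substance is identical to what you outline.
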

\begin{proof} 
 If $d(x) \geq \eps^{1-\alpha}$, the usual estimate \cite[Lemma~4.1]{kohns} holds.  
We now focus on the case  $d(x) \leq \eps^{1-\alpha}$. 
By the definition of $m_\eps^{x}[\phi]$ given by \eqref{m_eps_par} and the positivity of $\norm{\Delta \hat x - \Delta x}$, 
for all $\norm{\Delta \hat x} \leq \eps^{1-\alpha}$, we have
\begin{equation}\label{min_fond_m}
 \norm{\Delta \hat x - \Delta x}\left\{  h(x+\Delta x) - D\phi(x) \cdot n(x+\Delta x) \right\} \geq \norm{\Delta \hat x - \Delta x} m_\eps^{x}[\phi].
\end{equation}
Therefore it is sufficient to find a lower bound for 
\begin{equation*}
 \max_{p, \Gamma} \min_{\Delta \hat x} \left[(D\phi(x) - p)\cdot \Delta \hat x +m_\eps^{x}[\phi] \norm{\Delta \hat x - \Delta x}
 + \frac{1}{2} \langle D^2\phi(x)  \Delta x ,\Delta x\rangle - \frac{1}{2} \langle \Gamma \Delta \hat x,\Delta \hat x\rangle    \\
    - \eps^2 f(t,x,z,p,\Gamma) \right].
\end{equation*}
where $p$, $\Gamma$ and $\Delta \hat x$ are constrained by \eqref{p_beta_gamma_new}--\eqref{moving1_new} and $\Delta x$ determined by \eqref{exp_delta_x}. 
In other words, by taking advantage of the monotonicity of the operator $S_\eps$ with \eqref{min_fond_m},  we shall  
look for a lower bound for an approximated operator bounding $S_\eps$ from below and very close to it when $\eps \rightarrow 0$. 

Then, we also observe that for every choice $p$ and $\Gamma$,  
\begin{multline*}
 S_\eps[x,t, z, \phi]  - \phi(x)  \geq    - \eps^2 f \left(  t,x, z,  p,\Gamma \right)\\
   + \min_{\Delta \hat x} \left[(D\phi(x) - p)\cdot \Delta \hat x +
\frac{1}{2} \left\langle D^2\phi(x)  \Delta x, \Delta x \right\rangle  - \frac{1}{2} \left\langle \Gamma  \Delta \hat x, \Delta \hat  x \right\rangle 
+ \norm{\Delta \hat x - \Delta x} m_\eps^{x}[\phi] \right].
\end{multline*}
We now distinguish two particular strategies for Helen. For part \ref{case1_lower}, we consider the particular choice $p=D\phi(x)$, $\Gamma=D^2\phi(x)$ and obtain
\begin{align*}
 S_\eps[x,t, z, \phi] - \phi(x) & \geq  - \eps^2 f \left(t,x, z,  D\phi(x),D^2\phi(x) \right)  \\
  & \phantom{\geq}  + \min_{\Delta \hat x}
\left[ \frac{1}{2} ( \left\langle D^2\phi(x)\Delta x, \Delta x \right\rangle - \frac{1}{2}
\left\langle D^2\phi(x)\Delta \hat x, \Delta \hat x \right\rangle                 
+ \norm{\Delta \hat x - \Delta x}m_\eps^{x}[\phi]    
 \right]  \\
& \geq  - \eps^2 f \left(t,x, z,  D\phi(x),D^2\phi(x)  \right),
\end{align*}
by applying Lemma \ref{lem_sign_min_BC_pos} with $A=\frac{1}{2}D^2\phi(x)$. 
For part \ref{case2_lower},  we consider the choice $p=p_{\text{opt}}^m(x)$, $\Gamma= \Gamma_{\text{opt}}(x)$ and find 
\begin{align*}
 S_\eps[x,t,z, \phi] &- \phi(x) \geq  - \eps^2 f(t,x, z,  p_{\text{opt}}^m(x), \Gamma_{\text{opt}}(x)) +l^{x}[\phi], 
\end{align*}
with $l^{x}[\phi]$ defined by  
\begin{equation}\label{def_m_super}
l^{x} [\phi] = \min_{\Delta \hat x}  \left[ 
 (D\phi(x) - p_{\text{opt}}^m) \cdot  (\Delta \hat x)  
+ \frac{1}{2} \left\langle D^2\phi(x)  \Delta x, \Delta x \right\rangle  - \frac{1}{2} \left\langle \Gamma_{\text{opt}}(x) \Delta \hat x, \Delta \hat  x \right\rangle 
 + \norm{\Delta \hat x - \Delta x} m_\eps^{x}[\phi]  \right].   
\end{equation} 
It now remains to give a lower bound for $l^{x} [\phi]$. By plugging the expression \eqref{p_opt_m} of $p_{\text{opt}}^m(x)$  in \eqref{def_m_super}, we have
\begin{multline*}
l^{x} [\phi] 
 = \min_{\Delta \hat  x}  \left[  \left( -  \frac{\eps^{1-\alpha} - d(x) }{2\eps^{1-\alpha}} 
(\Delta \hat x)_1  + \norm{\Delta \hat x - \Delta x} \right) m_\eps^{x}[\phi]  \right.  \\
\left.
+ \frac{1}{2} \left\langle D^2\phi(x)  \Delta x, \Delta x \right\rangle  - \frac{1}{2} \left\langle \Gamma_\text{opt}(x) \Delta \hat x, \Delta \hat  x \right\rangle  
+  \frac{1}{4} \left( \eps^{1-\alpha}- \frac{d^2(x)}{\eps^{1-\alpha}} \right) (D^2\phi(x))_{11}  (\Delta \hat x)_1  \right].
\end{multline*}
It is clear that $ l^{x}[\phi] \geq l_1^{x}[\phi]+ l_2^{x}[\phi]$ with 
$l_1^{x}[\phi]$ and $l_2^{x}[\phi]$  respectively defined by
\begin{equation} \label{def_l1}
 l_1^{x}[\phi] := \min_{\Delta \hat x}  \left[
\left( -  \frac{\eps^{1-\alpha} - d(x) }{2\eps^{1-\alpha}} (\Delta \hat x)_1  + \norm{\Delta \hat x - \Delta x} \right) m_\eps^{x}[\phi]
\right],
\end{equation}
and 
\begin{equation}\label{def_l2}
l_2^{x}[\phi] : =   \frac{1}{2}
 \min_{\Delta \hat x}  \left[  \left\langle D^2\phi(x)  \Delta x, \Delta x \right\rangle 
 - \left\langle \Gamma_\text{opt}(x) \Delta \hat x, \Delta \hat  x \right\rangle    
+ \frac{\eps^{1-\alpha}}{2} \left( 1- \frac{d^2(x)}{\eps^{2- 2 \alpha}} \right) (D^2\phi(x))_{11}  (\Delta \hat x)_1 \right].  
\end{equation}
By using Lemmas \ref{lm_estimate_l1} and \ref{lm_estimate_l2} stated below,  giving lower bounds respectively for $l_1^{x}[\phi]$ 
and $l_2^{x}[\phi]$, one obtains
\begin{align*}
l^{x}[\phi]
& \geq \frac{s}{2}(\eps^{1-\alpha} - d(x)) m_\eps^{x}[\phi]- 2 \norm{D^2\phi(x) } \eps^{1-\alpha} \left( \eps^{1-\alpha}-d(x) \right)  
= \frac{1}{2}(\eps^{1-\alpha} - d(x)) \left( s m_\eps^{x}[\phi] - 4 \norm{D^2\phi(x) }  \eps^{1-\alpha} \right), 
\end{align*}
which gives the desired estimate. 
\end{proof}

The three following lemmas provide the required estimates for $l_1^{x}[\phi]$ and $l_2^{x}[\phi]$.
\begin{lemma} \label{lm_estimate_l1}   
 For any $x \in \Omega(\eps^{1-\alpha})$ and any function $\phi$ defined at $x$, $l_1^{x}[\phi]$ being defined by \eqref{def_l1}, we have
\begin{equation*}
 \frac{s}{2}(\eps^{1-\alpha} - d(x))  m_\eps^{x}[\phi] \leq l_1^{x}[\phi]  \leq 0, 
\end{equation*}
with $s= -1$ if  $m_\eps^{x}[\phi]$ is positive and $s= 3$ if  $m_\eps^{x}[\phi]$ is nonpositive.
\end{lemma}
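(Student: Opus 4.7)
The plan is to reduce the estimate on $l_1^{x}[\phi]$ directly to the geometric inequality provided by Lemma~\ref{lemma_key_bound_geo}. First I would observe that, in the orthonormal basis $\mathcal{B}=(e_1=n(\bar x),e_2,\dots,e_N)$, one has $(\Delta \hat x)_1 = \Delta \hat x \cdot n(\bar x)$, so the integrand appearing in the definition \eqref{def_l1} of $l_1^{x}[\phi]$ is exactly
\begin{equation*}
Q(\Delta \hat x) := -\tfrac{1}{2}\Bigl(1-\tfrac{d(x)}{\eps^{1-\alpha}}\Bigr)(\Delta \hat x)\cdot n(\bar x) + \|\Delta \hat x - \Delta x\|
\end{equation*}
multiplied by the scalar $m_\eps^{x}[\phi]$. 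Lemma~\ref{lemma_key_bound_geo} then reads $-\tfrac{1}{2}(\eps^{1-\alpha}-d(x)) \le Q(\Delta \hat x) \le \tfrac{3}{2}(\eps^{1-\alpha}-d(x))$ for every admissible $\Delta \hat x$.

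Next I would split according to the sign of $m_\eps^{x}[\phi]$. If $m_\eps^{x}[\phi]\ge 0$, multiplying the above bounds by $m_\eps^{x}[\phi]$ preserves the inequalities, which yields $Q(\Delta \hat x)\, m_\eps^{x}[\phi] \ge -\tfrac{1}{2}(\eps^{1-\alpha}-d(x))m_\eps^{x}[\phi]$; taking the infimum over $\Delta \hat x$ gives the lower bound with $s=-1$. If $m_\eps^{x}[\phi]<0$, multiplication reverses the inequalities, so $Q(\Delta \hat x)\, m_\eps^{x}[\phi] \ge \tfrac{3}{2}(\eps^{1-\alpha}-d(x))m_\eps^{x}[\phi]$, which after taking the inf yields the lower bound with $s=3$. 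In both cases the lower bound matches the claimed $\tfrac{s}{2}(\eps^{1-\alpha}-d(x))m_\eps^{x}[\phi]$.

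For the upper bound $l_1^{x}[\phi]\le 0$, I would exhibit an admissible choice that realises the value $0$: taking $\Delta \hat x=0$ (which satisfies $\|\Delta \hat x\|\le \eps^{1-\alpha}$) gives $\Delta x = \proj_{\ol \Omega}(x)-x = 0$ since $x\in \ol\Omega$, hence both $(\Delta \hat x)_1=0$ and $\|\Delta \hat x-\Delta x\|=0$, so $Q(0)\,m_\eps^{x}[\phi]=0$. As the infimum over $\Delta \hat x$ is bounded above by any particular value, we conclude $l_1^{x}[\phi]\le 0$.

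There is essentially no obstacle here: the whole statement is a direct consequence of Lemma~\ref{lemma_key_bound_geo}, with the only subtle point being the sign-sensitive multiplication by $m_\eps^{x}[\phi]$, which forces the two different values of $s$. The admissibility of the test move $\Delta \hat x=0$ for the upper bound is immediate from the constraints \eqref{moving1_new} and the definition of $\Delta x$ via \eqref{exp_delta_x}.
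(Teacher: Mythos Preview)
Your proposal is correct and takes essentially the same approach as the paper: both argue the upper bound by testing $\Delta \hat x = 0$, and both obtain the lower bound by invoking Lemma~\ref{lemma_key_bound_geo} and multiplying through by $m_\eps^{x}[\phi]$, splitting according to its sign to pick the relevant side of the double inequality. Your write-up is slightly more explicit in isolating the quantity $Q(\Delta \hat x)$, but the argument is identical in substance.
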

\begin{proof} By considering $\Delta \hat x = 0$, $ l_1^{x}[\phi]$ is negative. 
To find a lower bound on $ l_1^{x}[\phi]$, if $m_\eps^{x}[\phi]$ is negative, we may write  
\begin{equation*}
  \left[ -  \frac{\eps^{1-\alpha} - d(x) }{2 \eps^{1-\alpha}}  (\Delta \hat x)_1
+ \norm{\Delta \hat x - \Delta x} \right] m_\eps^{x}[\phi]  \geq \frac{3}{2}(\eps^{1-\alpha} - d(x)) m_\eps^{x}[\phi],    
\end{equation*}
the last inequality being provided by the right-hand side inequality given in Lemma \ref{lemma_key_bound_geo}
since by hypothesis $m_\eps^{x}[\phi]$ is negative. If $m_\eps^{x}[\phi]$ is nonnegative, 
the result follows from applying the left-hand side inequality given in Lemma \ref{lemma_key_bound_geo}.
\end{proof}

\begin{lemma}\label{est2_phi_gopt}   
 Let $x\in  \Omega(\eps^{1-\alpha})$ and $\phi \in C^2(\overline \Omega)$. For all  $\Delta \hat x$ constrained by \eqref{moving1_new}, we have 
\begin{equation} \label{est_D2phi_x_hatx}
  \left| \frac{1}{2} \langle D^2\phi(x)  \Delta x, \Delta x \rangle - \frac{1}{2} \langle D^2\phi(x) \Delta \hat x ,\Delta \hat x\rangle  \right|
 \leq \frac{1}{2}\norm{D^2\phi(x)}    \left(3 \eps^{1-\alpha}-d(x)\right) \norm{\Delta \hat x -\Delta x}  , 
\end{equation} 
and
\begin{equation}\label{cp_D2phi_gamma_opt_0}
 \left| \frac{1}{2} \langle D^2\phi(x) \Delta x,\Delta x \rangle - \frac{1}{2} \langle \Gamma_\text{opt}(x) \Delta \hat x ,\Delta \hat x\rangle  \right|
 \leq \frac{1}{4}\norm{D^2\phi(x)} \left(\eps^{1-\alpha} - d(x)\right) \left(7 \eps^{1-\alpha} -  d(x) \right), 
\end{equation} 
 where $\Gamma_\text{opt}(x)$ is the optimal choice defined by \eqref{Gamma_opt} in an orthonormal basis $\mathcal{B}=(e_1=n(\bar x), \cdots, e_N)$.
\end{lemma}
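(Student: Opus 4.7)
The plan is to establish \eqref{est_D2phi_x_hatx} first and then use it as a building block for \eqref{cp_D2phi_gamma_opt_0}, the only additional ingredient being an explicit evaluation of the rank-one correction $\Gamma_\text{opt}(x)-D^2\phi(x)$.

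For \eqref{est_D2phi_x_hatx} I would simply invoke the identity \eqref{identite_quadrilatere} already established in the proof of Lemma~\ref{lem_sign_min_BC_pos}, applied to the symmetric matrix $A=\tfrac12 D^2\phi(x)$; this immediately gives
\begin{equation*}
\Big|\tfrac12\langle D^2\phi(x)\Delta x,\Delta x\rangle-\tfrac12\langle D^2\phi(x)\Delta\hat x,\Delta\hat x\rangle\Big|
\le \tfrac12\|D^2\phi(x)\|\,(3\eps^{1-\alpha}-d(x))\,\|\Delta\hat x-\Delta x\|,
\end{equation*}
which is the first claim. The case $\hat x\in\ol\Omega$ is trivial since then $\Delta x=\Delta\hat x$ and the left-hand side vanishes.

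For \eqref{cp_D2phi_gamma_opt_0} I would split the difference by interpolating through $\frac{1}{2}\langle D^2\phi(x)\Delta\hat x,\Delta\hat x\rangle$:
\begin{equation*}
\tfrac12\langle D^2\phi(x)\Delta x,\Delta x\rangle-\tfrac12\langle \Gamma_\text{opt}(x)\Delta\hat x,\Delta\hat x\rangle
=\underbrace{\tfrac12\langle D^2\phi(x)\Delta x,\Delta x\rangle-\tfrac12\langle D^2\phi(x)\Delta\hat x,\Delta\hat x\rangle}_{(I)}
+\underbrace{\tfrac12\langle (D^2\phi(x)-\Gamma_\text{opt}(x))\Delta\hat x,\Delta\hat x\rangle}_{(II)}.
\end{equation*}
Term $(I)$ is bounded using \eqref{est_D2phi_x_hatx} combined with the key geometric estimate $\|\Delta\hat x-\Delta x\|\le\eps^{1-\alpha}-d(x)$ from \eqref{eq_11}, yielding $|(I)|\le\tfrac12\|D^2\phi(x)\|(3\eps^{1-\alpha}-d(x))(\eps^{1-\alpha}-d(x))$. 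For term $(II)$, the definition \eqref{Gamma_opt} gives
\begin{equation*}
D^2\phi(x)-\Gamma_\text{opt}(x)=\tfrac12\Big(1-\tfrac{d^2(x)}{\eps^{2-2\alpha}}\Big)(D^2\phi(x))_{11}\,E_{11},
\end{equation*}
so $\langle (D^2\phi(x)-\Gamma_\text{opt}(x))\Delta\hat x,\Delta\hat x\rangle=\tfrac12(1-d^2(x)/\eps^{2-2\alpha})(D^2\phi(x))_{11}(\Delta\hat x)_1^2$. Using $(\Delta\hat x)_1^2\le\|\Delta\hat x\|^2\le\eps^{2-2\alpha}$, $|(D^2\phi(x))_{11}|\le\|D^2\phi(x)\|$, and factoring $\eps^{2-2\alpha}-d^2(x)=(\eps^{1-\alpha}-d(x))(\eps^{1-\alpha}+d(x))$, one obtains $|(II)|\le\tfrac14\|D^2\phi(x)\|(\eps^{1-\alpha}-d(x))(\eps^{1-\alpha}+d(x))$.

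Adding both bounds and factoring out $\tfrac14\|D^2\phi(x)\|(\eps^{1-\alpha}-d(x))$ produces
\begin{equation*}
\tfrac14\|D^2\phi(x)\|(\eps^{1-\alpha}-d(x))\bigl[2(3\eps^{1-\alpha}-d(x))+(\eps^{1-\alpha}+d(x))\bigr]
=\tfrac14\|D^2\phi(x)\|(\eps^{1-\alpha}-d(x))(7\eps^{1-\alpha}-d(x)),
\end{equation*}
which is exactly the target estimate. There is no real obstacle here: the proof is essentially algebraic bookkeeping. The only point requiring a little care is the second term, where one must exploit that $\Gamma_\text{opt}(x)-D^2\phi(x)$ is supported on the $E_{11}$ direction and that $d(x)\le\eps^{1-\alpha}$ makes the scalar coefficient nonnegative with the clean bound $\tfrac12\|D^2\phi(x)\|(1-d^2(x)/\eps^{2-2\alpha})$.
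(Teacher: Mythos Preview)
Your proof is correct and follows essentially the same route as the paper. The paper also obtains \eqref{est_D2phi_x_hatx} directly from \eqref{diff_x_hat_x}/\eqref{identite_quadrilatere}, and for \eqref{cp_D2phi_gamma_opt_0} it performs the same splitting into the rank-one piece $(II)$ and the remainder $(I)$; the only cosmetic difference is that the paper re-expands $(I)$ via \eqref{decoup_geo1} into $\tfrac12\|\Delta\hat x-\Delta x\|^2\|D^2\phi(x)\|+\|\Delta\hat x-\Delta x\|\,\|\Delta\hat x\|\,\|D^2\phi(x)\|$ instead of citing \eqref{est_D2phi_x_hatx}, but this yields the identical bound $\tfrac12\|D^2\phi(x)\|(\eps^{1-\alpha}-d(x))(3\eps^{1-\alpha}-d(x))$ and the same final arithmetic.
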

\begin{proof} The first inequality is an immediate consequence of \eqref{diff_x_hat_x}. For the second inequality,
  all the coordinates $ \langle (D^2\phi(x)-\Gamma_\text{opt}(x)) e_i,e_j \rangle $ in the basis $\mathcal{B}$
 are equal to zero, except for $i=j=1$. By using the vector decomposition given by \eqref{decoup_geo1}, we have
\begin{multline*}
 \frac{1}{2} \langle D^2\phi(x) \Delta x , \Delta x \rangle - \frac{1}{2} \langle \Gamma_\text{opt}(x) \Delta \hat x ,\Delta \hat x\rangle
 =\frac{1}{2} (D^2\phi(x)-\Gamma_\text{opt}(x))_{11} |(\Delta \hat x)_1|^2\\
   +\frac{1}{2} \norm{\Delta \hat x - \Delta x}^2 
\langle (D^2\phi(x) n(x+\Delta x), n(x+\Delta x) \rangle - \norm{\Delta \hat x - \Delta x} \langle D^2\phi(x) n(x+\Delta x), \Delta \hat x \rangle.
\end{multline*} 
Since  $\ds (D^2\phi(x)-\Gamma_\text{opt}(x))_{11}=\frac{1}{2}\Big(1-\frac{d^2(x)}{\eps^{2-2\alpha}} \Big) (D^2\phi(x))_{11}$ by \eqref{Gamma_opt},
one obtains  
\begin{multline*}
  \left| \frac{1}{2}  \langle D^2\phi(x)  \Delta x ,  \Delta x \rangle  -\frac{1}{2}  \langle \Gamma_\text{opt}(x) 
  \Delta \hat x ,\Delta \hat x\rangle \right| \\
  \leq  \norm{D^2\phi(x)} \left\{ \frac{1}{4} \left(1-\frac{d^2(x)}{\eps^{2-2\alpha}} \right) |(\Delta \hat x)_1|^2
      +\frac{1}{2} \norm{\Delta \hat x - \Delta x}^2  + \norm{\Delta \hat x - \Delta x} \norm{ \Delta \hat x} \right\} .
\end{multline*}  
The estimate \eqref{cp_D2phi_gamma_opt_0} now follows from \eqref{moving1_new} and \eqref{eq_11}.  
\end{proof}

\begin{lemma}\label{lm_estimate_l2} 
For any $x \in \Omega(\eps^{1-\alpha})$ and any function $\phi$ defined at $x$, $l_2^{x}[\phi]$ being defined by \eqref{def_l2}, we have
\begin{equation*} 
 - 2 \norm{D^2\phi(x) }  \eps^{1-\alpha} \left( \eps^{1-\alpha}-d(x) \right)  \leq l_2^{x}[\phi] \leq 0. 
\end{equation*}
\end{lemma}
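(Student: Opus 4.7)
The upper bound is immediate: taking the particular move $\Delta \hat x = 0$ gives $\Delta x = \proj_{\overline\Omega}(x) - x = 0$ (since $x \in \overline\Omega$), and all three summands inside the bracket in \eqref{def_l2} vanish. Hence the minimum, and therefore $l_2^{x}[\phi]$, is at most $0$.

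For the lower bound, the plan is to control the two pieces of the integrand in \eqref{def_l2} separately by absolute value, and then combine. The first piece $\tfrac12 \langle D^2\phi(x)\Delta x,\Delta x\rangle - \tfrac12 \langle \Gamma_{\text{opt}}(x) \Delta \hat x, \Delta \hat x\rangle$ is already handled by estimate \eqref{cp_D2phi_gamma_opt_0} in Lemma~\ref{est2_phi_gopt}, which yields the bound $\tfrac14 \norm{D^2\phi(x)}(\eps^{1-\alpha} - d(x))(7\eps^{1-\alpha} - d(x))$. For the second piece I will use $|(\Delta \hat x)_1| \leq \eps^{1-\alpha}$ together with the factorisation
\begin{equation*}
\left| 1 - \frac{d^2(x)}{\eps^{2-2\alpha}} \right| = \frac{(\eps^{1-\alpha} - d(x))(\eps^{1-\alpha} + d(x))}{\eps^{2-2\alpha}} ,
\end{equation*}
which gives
\begin{equation*}
\left| \frac{\eps^{1-\alpha}}{2} \Big(1 - \frac{d^2(x)}{\eps^{2-2\alpha}}\Big) (D^2\phi(x))_{11} (\Delta \hat x)_1 \right|
\leq \frac{1}{2} \norm{D^2\phi(x)}(\eps^{1-\alpha} - d(x))(\eps^{1-\alpha} + d(x)).
\end{equation*}

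Summing the two contributions and multiplying by the prefactor $\tfrac12$ in front of the min in \eqref{def_l2}, the total bound on the absolute value of the integrand is $\tfrac14 \norm{D^2\phi(x)}(\eps^{1-\alpha} - d(x))\,[(7\eps^{1-\alpha} - d(x)) + 2(\eps^{1-\alpha} + d(x))]/2$; since $d(x) \leq \eps^{1-\alpha}$, the bracket is majorised by $8\eps^{1-\alpha}$, producing the desired lower bound $-2 \norm{D^2\phi(x)}\eps^{1-\alpha}(\eps^{1-\alpha} - d(x))$. No step here is technically delicate; the only point requiring care is correctly tracking the factors of $1/2$ when passing between the integrand and $l_2^{x}[\phi]$ and ensuring the bound on the second piece is sharp enough (using the factorisation of $1 - d^2(x)/\eps^{2-2\alpha}$) so that the two contributions combine into a total of the right size $O(\eps^{1-\alpha}(\eps^{1-\alpha} - d(x)))$ rather than $O(\eps^{2-2\alpha})$.
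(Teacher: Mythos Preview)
Your approach is exactly the paper's: $\Delta\hat x=0$ for the upper bound, and for the lower bound split the bracket into the quadratic part (controlled by \eqref{cp_D2phi_gamma_opt_0}) and the linear part (controlled via the factorisation of $1-d^2(x)/\eps^{2-2\alpha}$).

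There is, however, a slip in the factor bookkeeping at the end --- the very issue you flagged as the only delicate point. Your ``first piece'' $\tfrac12\langle D^2\phi(x)\Delta x,\Delta x\rangle-\tfrac12\langle\Gamma_{\mathrm{opt}}(x)\Delta\hat x,\Delta\hat x\rangle$ already carries the outer $\tfrac12$ from the definition of $l_2^x[\phi]$, so it must not be multiplied by $\tfrac12$ again; only the second piece (which you wrote without that factor) needs it. Consequently the combined bound on $\tfrac12|I|$ is
\[
\underbrace{\tfrac14\|D^2\phi(x)\|(\eps^{1-\alpha}-d(x))(7\eps^{1-\alpha}-d(x))}_{\text{first piece, as is}}
\;+\;
\underbrace{\tfrac12\cdot\tfrac12\|D^2\phi(x)\|(\eps^{1-\alpha}-d(x))(\eps^{1-\alpha}+d(x))}_{\tfrac12\times\text{second piece}},
\]
and the bracket is $(7\eps^{1-\alpha}-d(x))+(\eps^{1-\alpha}+d(x))=8\eps^{1-\alpha}$ on the nose, giving $2\|D^2\phi(x)\|\eps^{1-\alpha}(\eps^{1-\alpha}-d(x))$. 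By contrast, your stated bracket $(7\eps^{1-\alpha}-d(x))+2(\eps^{1-\alpha}+d(x))=9\eps^{1-\alpha}+d(x)$ is not $\le 8\eps^{1-\alpha}$. Once the factors are tracked as above, your proof coincides verbatim with the paper's (see the display \eqref{est_opt_order2}).
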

\begin{proof} By considering $\Delta \hat x = 0$, $l_2$ is negative. We seek now to find a lower bound on $l_2$.  
By combining  the triangle inequality with Lemma~\ref{est2_phi_gopt}, the explicit expression of $\Gamma_{\text{opt}}(x)$ given by \eqref{Gamma_opt}
and  $\norm{\Delta \hat x} \leq \eps^{1-\alpha}$, we deduce that
\begin{align}
 \frac{1}{2} \Big|   \langle D^2\phi(x)  \Delta x, \Delta x \rangle 
 - & \left\langle \Gamma_{\text{opt}}(x) \Delta \hat x, \Delta \hat  x \right\rangle  
+  \frac{1}{2} \left( \eps^{1-\alpha}- \frac{d^2(x))}{\eps^{1-\alpha}} \right)    (D^2\phi(x))_{11}  (\Delta \hat x)_1 \Big| \notag \\
 &  \leq \frac{1}{4}\norm{D^2\phi(x)} (\eps^{1-\alpha} - d(x)) \left(7 \eps^{1-\alpha} -  d(x) \right)  
+ \frac{1}{4} \norm{D^2\phi(x) }  \left(\eps^{1-\alpha}-\frac{d^2(x)}{\eps^{1-\alpha}} \right)  \eps^{1-\alpha} \notag \\
 &  \leq 2\norm{D^2\phi(x)} (\eps^{1-\alpha} - d(x))  \eps^{1-\alpha},  \label{est_opt_order2}
\end{align}
which is precisely the proposed estimate.
\end{proof}

We shall now provide the consistency estimates about the upper bound of \eqref{operator_fonction_C2}.
Before stating our main estimate in Proposition \ref{cons_new_sub}, we can give a simple case for which the usual estimate holds. 

\begin{lemma}\label{lem_elem_M_neg}
 Let $f$ satisfy \eqref{ellipticity_f} and \eqref{loc_lip_p_Gamma}--\eqref{cont_growth_p_Gamma}  
 and assume $\alpha$, $\beta$, $\gamma$ satisfy \eqref{condition_pas}--\eqref{cd_coeff_classiq}.
For any $x$, $t$, $z$ and any smooth function $\phi$ defined near $x$, 
$ S_\eps[x,t,z,\phi]$ being defined by \eqref{def_new_op}, 
if $d(x)\leq \eps^{1-\alpha}$ and
$M_\eps^{x}[\phi] \leq -  \frac{1}{2} \norm{D^2\phi(x)} \left(3\eps^{1-\alpha} - d(x) \right)$, then we have 
\begin{equation*}
 S_\eps[x,t,z, \phi] - \phi(x) \leq  -\eps^2 f(t,x,z, D\phi(x), D^2\phi(x)) +o(\eps^2).
 \end{equation*}
Moreover, the implicit constant in the error term is uniform as $x$, $t$ and $z$ range over a compact subset of $\ol \Omega \times \R \times \R$.
\end{lemma}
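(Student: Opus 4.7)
The plan is to reduce the upper bound on $S_\eps[x,t,z,\phi] - \phi(x)$ to the standard interior consistency result of \cite[Lemma~4.1]{kohns}, using the assumed negativity of $M_\eps^x[\phi]$ to absorb the quadratic reflection error into a nonpositive quantity that can be discarded.

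I would start from the Taylor expansion \eqref{operator_fonction_C2} and, for every admissible triple $(p,\Gamma,\Delta\hat x)$, decompose
\begin{equation*}
\frac{1}{2}\langle D^2\phi(x)\Delta x,\Delta x\rangle - \frac{1}{2}\langle \Gamma\Delta \hat x,\Delta \hat x\rangle = Q_1 + Q_2,
\end{equation*}
with $Q_1 = \frac{1}{2}\langle D^2\phi(x)\Delta x,\Delta x\rangle - \frac{1}{2}\langle D^2\phi(x)\Delta \hat x,\Delta \hat x\rangle$ and $Q_2 = \frac{1}{2}\langle (D^2\phi(x)-\Gamma)\Delta \hat x,\Delta \hat x\rangle$. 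The purely geometric term $Q_1$ is controlled by estimate \eqref{est_D2phi_x_hatx} of Lemma~\ref{est2_phi_gopt}: $|Q_1| \leq \frac{1}{2}\norm{D^2\phi(x)}(3\eps^{1-\alpha}-d(x))\norm{\Delta \hat x - \Delta x}$. Combining this with the a priori inequality $h(x+\Delta x) - D\phi(x)\cdot n(x+\Delta x) \leq M_\eps^x[\phi]$, valid whenever $x+\Delta\hat x\notin \Omega$, and the standing hypothesis on $M_\eps^x[\phi]$, I expect the pointwise bound
\begin{equation*}
Q_1 + \norm{\Delta \hat x - \Delta x}\left(h(x+\Delta x) - D\phi(x)\cdot n(x+\Delta x)\right) \leq 0
\end{equation*}
to hold uniformly in $\Delta\hat x$. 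The interior case $x+\Delta\hat x\in\Omega$ is immediate since $\Delta x = \Delta\hat x$ makes both terms vanish.

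Having this nonpositive contribution in hand, I would drop it from the max-min defining $S_\eps$ to obtain, for every $(p,\Gamma)$,
\begin{equation*}
\min_{\Delta \hat x}[\cdots] \leq \min_{\Delta \hat x}\left[(D\phi(x) - p)\cdot \Delta \hat x + \frac{1}{2}\langle (D^2\phi(x)-\Gamma)\Delta \hat x,\Delta \hat x\rangle\right] - \eps^2 f(t,x,z,p,\Gamma) + o(\eps^2),
\end{equation*}
where the $o(\eps^2)$ from the cubic Taylor remainder $O(\eps^{3-3\alpha})$ is uniform in $(p,\Gamma)$. Taking the max over $(p,\Gamma)$, the right-hand side is precisely the interior scheme analysed in \cite[Lemma~4.1]{kohns}; its proof (Helen's choice $p=D\phi(x)$, $\Gamma=D^2\phi(x)$, together with the Lipschitz and growth bounds \eqref{loc_lip_p_Gamma}--\eqref{cont_growth_p_Gamma} on $f$ and the parameter constraints \eqref{condition_pas}--\eqref{cd_coeff_classiq}) delivers the bound $-\eps^2 f(t,x,z,D\phi(x),D^2\phi(x))+o(\eps^2)$. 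Uniformity of the error over compact subsets of $\ol\Omega\times\R\times\R$ will follow because the constants depend only on local $C^3$-bounds for $\phi$ and on Lipschitz constants of $f$ over bounded ranges of $(x,t,z)$.

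The main obstacle is step one: producing the uniform pointwise bound $Q_1 + \norm{\Delta\hat x -\Delta x}(h(x+\Delta x)-D\phi(x)\cdot n(x+\Delta x)) \leq 0$. The hypothesis $M_\eps^x[\phi]\leq -\frac{1}{2}\norm{D^2\phi(x)}(3\eps^{1-\alpha}-d(x))$ has been calibrated exactly so that the very negative Neumann penalty can swallow the quadratic geometric error produced by the reflection on $\partial\Omega$; once this reduction is secured, the estimate becomes a direct transcription of the Kohn--Serfaty interior scheme.
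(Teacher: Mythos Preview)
Your proposal is correct and follows essentially the same route as the paper: the same decomposition $Q_1+Q_2$ (the paper's \eqref{decomp_ord2_useful}), the same use of \eqref{est_D2phi_x_hatx} to show that $Q_1$ together with the boundary term is nonpositive under the hypothesis on $M_\eps^x[\phi]$, and the same final appeal to \cite[Lemma~4.1]{kohns}. The only cosmetic difference is that the paper first replaces the Neumann term by $M_\eps^x[\phi]$ via \eqref{maj_fond_M} before carrying out the decomposition, whereas you bound it at the moment of combination; the two are equivalent.
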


In the rest of the section, we now accurately focus on the case $d(x)\leq \eps^{1-\alpha}$. 
The goal is to obtain precise estimates on \eqref{operator_fonction_C2} in the following three cases:
 $M_\eps^{x}[\phi]$ very negative, $M_\eps^{x}[\phi]$ very positive and $M_\eps^{x}[\phi]$ close to zero, the bounds between 
the cases depending on some powers of $\eps$.  We have formally shown in Section \ref{heuristic_derivation} that the first case is favorable to Mark 
since Helen can undergo a big penalty if Mark chooses to cross the boundary. 
On the contrary, the second case is preferable to Helen because she can receive a big coupon if the boundary is crossed.
In the last case, the boundary is transparent (think of $M_\eps^{x}[\phi]=0$) and the penalization due to the boundary 
is to be considered only through second order terms.
In order to establish the precise estimates, we successively introduce  two additional parameters $\rho, \kappa>0$ such that   
\begin{equation}  \label{def_nul} 
 1-\alpha<\rho  < \min \left(1 -\frac{\gamma(r-1)}{2}  , 2- 2 \alpha - \gamma \right), 
\end{equation}
and 
 \begin{equation} \label{relation_gamma_nul_tilde}
\gamma +\rho - (1-\alpha) <  \kappa  < 1-\alpha. 
\end{equation}
These coefficients are well-defined by virtue of \eqref{condition_pas} and \eqref{cd_coeff_classiq}.

\begin{prop}\label{cons_new_sub}
Let $f$ satisfy \eqref{ellipticity_f} and \eqref{loc_lip_p_Gamma}--\eqref{cont_growth_p_Gamma} 
 and assume $\alpha$, $\beta$, $\gamma$, $\rho$, $\kappa$  satisfy \eqref{condition_pas}--\eqref{cd_coeff_classiq} 
and \eqref{def_nul}--\eqref{relation_gamma_nul_tilde}.    
Let  $p_\text{opt}^M$ and $\Gamma_\text{opt}$ be respectively defined in the orthonormal basis $(e_1=n(\bar x), e_2,\cdots, e_N)$ by 
\eqref{p_opt_M} and \eqref{Gamma_opt}.
 For any $x$, $t$, $z$ and any smooth function $\phi$ defined near $x$, $S_\eps[x,t,z,\phi]$ being defined by \eqref{def_new_op}, we distinguish four cases:
\begin{enumerate} [label=\textup{ }{\roman*.}\textup{ },ref=({\roman*})] 
 \item Big bonus: \label{cons_sub_cas1} If $d(x) \leq \eps^{1-\alpha}$ and $M_\eps^{x}[\phi] >\frac{4}{3}\norm{D^2\phi(x)} \eps^{1-\alpha} $, then 
\begin{equation*} 
 S_\eps[x,t,z, \phi] - \phi(x)  \leq 3 (\eps^{1-\alpha}-d(x)) M_\eps^{x}[\phi] - \eps^2 f(t,x,z,p_\text{opt}^M(x), \Gamma_\text{opt}(x)) +o(\eps^2).
 \end{equation*}
 \item \label{cons_sub_cas2} Far from the boundary with a small bonus: if $\eps^{1-\alpha} - \eps^{\rho} \leq d(x) \leq \eps^{1-\alpha}$ 
and $M_\eps^{x}[\phi]  \leq \frac{4}{3} \norm{D^2\phi(x)} \eps^{1-\alpha}$, or if $d(x)\geq \eps^{1-\alpha}$, then 
\begin{equation*}
S_\eps[x,t,z, \phi] - \phi(x) \leq  -\eps^2 f(t,x,z,D\phi(x), D^2\phi(x)) +o(\eps^2) . 
\end{equation*} 
\item \label{cons_sub_cas3} Close to the boundary with a small bonus/penalty:
if $d(x) \leq \eps^{1-\alpha} - \eps^{\rho}$ and $-\eps^{1-\alpha -\kappa}\leq M_\eps^{x}[\phi]\leq \frac{4}{3}\norm{D^2\phi(x)}\eps^{1-\alpha}$, then  
\begin{equation*}  
 S_\eps[x,t,z, \phi] - \phi(x) \leq  -\eps^2 f(t,x,z,D\phi(x), D^2\phi(x)+C_1 I) +o(\eps^2), 
 \end{equation*}
with $C_1=\frac{20}{3} \norm{D^2\phi(x)} \left(1 - \frac{d(x)}{\eps^{1-\alpha}}\right)$. 
\item \label{cons_sub_cas4} Close to the boundary with a big penalty:  
if $d(x) \leq \eps^{1-\alpha} - \eps^{\rho}$   and    $M_\eps^{x}[\phi]\leq  - \eps^{1-\alpha - \kappa}$, then  
\begin{equation} \label{est_upp_neg}
 S_\eps[x,t,z, \phi] - \phi(x)  
 \leq  \frac{1}{4} (\eps^{1-\alpha} - d(x)) M_\eps^{x}[\phi] 
- \eps^2  \min_{p\in B(p_\text{opt}^M(x), r)} f(t,x,z,p, \Gamma_\text{opt}(x)) +o(\eps^2), 
 \end{equation}
with $r$ defined by $r= 3\Big(1-\frac{d(x)}{\eps^{1-\alpha}}\Big) |M_\eps^{x}[\phi]|$.
\end{enumerate}
Moreover, the implicit constants in the error term is uniform as $x$, $t$ and $z$ range over a compact subset of $\ol \Omega \times \R \times \R$.
\end{prop}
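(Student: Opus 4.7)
The plan is to show the four upper bounds by exploiting the uniform upper bound $M_\eps^x[\phi]$ on the boundary rate, and then analyzing a Taylor-reduced max-min problem separately in each regime. I first use the definition \eqref{M_eps_par}: for any move $\Delta\hat x$ forcing $\hat x\notin\Omega$,
\begin{equation*}
\|\Delta\hat x - \Delta x\|\{h(x+\Delta x) - D\phi(x)\cdot n(x+\Delta x)\} \leq \|\Delta\hat x-\Delta x\|\,M_\eps^x[\phi],
\end{equation*}
while the left-hand side vanishes when $\hat x\in\Omega$. Monotonicity of $S_\eps$ in the inside term lets me replace $h(x+\Delta x)-D\phi(x)\cdot n(x+\Delta x)$ by the constant $M_\eps^x[\phi]$ in \eqref{operator_fonction_C2}; I call the resulting upper-bounding operator $\widetilde S_\eps$. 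From here, for an upper bound on $\max_{p,\Gamma}\min_{\Delta\hat x}$ it suffices, given any Helen choice $(p,\Gamma)$, to exhibit a Mark move $\Delta\hat x(p,\Gamma)$ making the inner expression small enough; then I take the supremum over $(p,\Gamma)$ and use the Lipschitz hypothesis \eqref{loc_lip_p_Gamma} to replace $f(t,x,z,p,\Gamma)$ by $f$ evaluated at the claimed point, the substitution error being absorbed in $o(\eps^2)$ by \eqref{condition_pas}--\eqref{cd_coeff_classiq}.

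For case \ref{cons_sub_cas2} (standard regime), when $d(x)\geq\eps^{1-\alpha}$ every admissible move satisfies $\Delta\hat x=\Delta x$, the boundary term disappears, and I recover verbatim the analysis of \cite[Lemma~4.1]{kohns}, where Helen's optimum is $p=D\phi$, $\Gamma=D^2\phi$. When $\eps^{1-\alpha}-\eps^\rho\leq d(x)\leq\eps^{1-\alpha}$ and $M_\eps^x[\phi]\leq\tfrac{4}{3}\|D^2\phi\|\eps^{1-\alpha}$, I let Mark use the same strategy as in the unconstrained case and note via \eqref{eq_11} that the residual boundary contribution is either negative (harmless, because we want an upper bound) or bounded above by $(\eps^{1-\alpha}-d(x))\cdot\tfrac{4}{3}\|D^2\phi\|\eps^{1-\alpha}$; combined with second-order slack in $\Gamma$ this becomes $o(\eps^2)$ thanks to the choice $\rho>1-\alpha$ from \eqref{def_nul}.

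For case \ref{cons_sub_cas1} (big bonus), Mark cannot dodge Helen's coupon: any of his interior moves gives the classical $-\eps^2 f$ contribution, but Helen can improve her score by choosing $p$ with a large negative component along $n(\bar x)$, forcing Mark outward and collecting a reward of order $(\eps^{1-\alpha}-d(x))M_\eps^x[\phi]$. Making this explicit by taking Mark's optimal $\Delta\hat x$ to be proportional to $(p-D\phi)/\|p-D\phi\|$ within $\overline\Omega$ when possible, I show the gain is bounded above by $3(\eps^{1-\alpha}-d(x))M_\eps^x[\phi]$ (the factor $3$ comes from Lemma \ref{lemma_key_bound_geo} combined with the decomposition into the normal and tangential parts); the Lipschitz swap from $(p,\Gamma)$ to $(p_\text{opt}^M,\Gamma_\text{opt})$ is affordable because $\|p-p_\text{opt}^M\|+\|\Gamma-\Gamma_\text{opt}\|=O(\eps^{-\beta}+\eps^{-\gamma})$ and $\eps^2\cdot C_{K}(1+|z|)\cdot(\eps^{1-\alpha}+\|p_\text{opt}^M-p\|\|\Delta\hat x\|)=o(\eps^2)$ by \eqref{cd_coeff_classiq}. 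Case \ref{cons_sub_cas3} is an amplification of case \ref{cons_sub_cas2}: the hypothesis $M_\eps^x[\phi]\geq-\eps^{1-\alpha-\kappa}$ together with \eqref{relation_gamma_nul_tilde} lets me absorb the boundary cost $\|\Delta\hat x-\Delta x\|M_\eps^x[\phi]$ into $o(\eps^2)$, while the effective curvature shift by $C_1 I$ arises from bounding the outer-direction contribution $\frac{1}{2}\langle D^2\phi \,\Delta x,\Delta x\rangle-\frac{1}{2}\langle\Gamma\Delta\hat x,\Delta\hat x\rangle$ via the identity \eqref{decoup_geo1}, which introduces exactly $\|D^2\phi\|(\eps^{1-\alpha}-d(x))\eps^{1-\alpha}$-sized terms absorbed into $D^2\phi+C_1 I$ by ellipticity \eqref{ellipticity_f}.

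Case \ref{cons_sub_cas4} (big penalty) is where I expect the main obstacle. Here I follow Step~2 of the heuristic of Section~\ref{heuristic_derivation}: Mark's best response is close to $\pm\eps^{1-\alpha}n(\bar x)$, parameterized by small perturbations $\rho_1,\rho_2\to 0$, and the Helen-optimum is close to $(p_\text{opt}^M,\Gamma_\text{opt})$ up to $O(r)$ in $p$ with $r=3(1-d(x)/\eps^{1-\alpha})|M_\eps^x[\phi]|$. The difficulty is that $r$ can be as large as $\eps^{-\kappa}$ so the Lipschitz swap of $f(t,x,z,p,\Gamma)$ to $f$ at the optimum induces an error proportional to $r$; I circumvent this by refusing the full swap and retaining $\min_{p\in B(p_\text{opt}^M,r)}f(t,x,z,p,\Gamma_\text{opt})$ in the final estimate. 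The second-order minimization in $\rho_1,\rho_2$, detailed by expansions \eqref{formal_dev_rho1}--\eqref{formal_dev_rho2}, contributes $O(\eps^{3-3\alpha}/|M_\eps^x[\phi]|)$; the condition $M_\eps^x[\phi]\leq-\eps^{1-\alpha-\kappa}$ combined with $3-3\alpha+1-\alpha-\kappa>2$, guaranteed by \eqref{def_nul}--\eqref{relation_gamma_nul_tilde}, ensures this contribution is $o(\eps^2)$. The hard part is keeping the error terms uniform over the constrained Helen-choices $\|p\|\leq\eps^{-\beta}$, $\|\Gamma\|\leq\eps^{-\gamma}$: I handle this by splitting Helen's choices into those within and outside a neighborhood of $(p_\text{opt}^M,\Gamma_\text{opt})$ of appropriate size, showing in the second subcase that Mark has an inward move forcing the first-order term so negative that $\widetilde S_\eps$ is dominated by the claimed upper bound regardless of $\eps^2 f$-contributions.
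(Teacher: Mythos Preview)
Your overall strategy---replace the boundary rate by $M_\eps^x[\phi]$ via monotonicity, then upper-bound the reduced max-min by exhibiting Mark responses---matches the paper. But two of your key estimates do not hold, and the gaps are not repairable without the paper's more detailed case-splitting on Helen's choices.

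\textbf{Case \ref{cons_sub_cas2}.} You claim the residual boundary contribution $(\eps^{1-\alpha}-d(x))\cdot\tfrac{4}{3}\|D^2\phi\|\eps^{1-\alpha}\leq \tfrac{4}{3}\|D^2\phi\|\eps^{1-\alpha+\rho}$ is $o(\eps^2)$ ``thanks to $\rho>1-\alpha$''. This is false: by \eqref{def_nul} one has $\rho<1$, so $1-\alpha+\rho<2-\alpha<2$, and $\eps^{1-\alpha+\rho}$ is \emph{larger} than $\eps^2$. The paper does not try to absorb this term directly; instead it splits according to whether $\|D\phi-p\|$ and $\lambda_{\min}(D^2\phi-\Gamma)$ are small or large (three subcases). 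When Helen is near $(D\phi,D^2\phi)$ the choice $\Delta\hat x=0$ kills everything; when she is far, Mark picks an eigenvector or gradient direction producing a dominant negative term of order $\eps^{2-2\alpha+\delta}$ or $\eps^{1-\alpha+\mu}$ that overwhelms $\eps^{1-\alpha+\rho}$ (this is exactly why the auxiliary parameters $\mu,\delta$ with $\delta<\rho-(1-\alpha)$ are introduced).

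\textbf{Case \ref{cons_sub_cas1}.} Your ``Lipschitz swap from $(p,\Gamma)$ to $(p_\text{opt}^M,\Gamma_\text{opt})$'' cannot work as stated: the bound \eqref{loc_lip_p_Gamma} is local, while Helen's choices range over $\|p\|\leq\eps^{-\beta}$, $\|\Gamma\|\leq\eps^{-\gamma}$, so the swap error is not $o(\eps^2)$ uniformly. The paper avoids this by first separating out the $(p_\text{opt}^M,\Gamma_\text{opt})$-centered part $\mathcal{A}_b^x$ (Lemma \ref{lemma_sup_Ab} bounds its supremum uniformly in $\Delta\hat x$), and then applying Lemma \ref{consistence_bounded_sequence}, which reruns the three-subcase argument of \cite[Lemma~4.1]{kohns} centered at the bounded pair $(p_\text{opt}^M,\Gamma_\text{opt})$ rather than invoking Lipschitz continuity globally. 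The same structural issue recurs in your sketch for case \ref{cons_sub_cas4}: you do mention splitting Helen's choices, but the splitting has to be done \emph{first}, and in the ``near'' subcase the paper still needs a delicate choice of $\Delta\hat x$ (the eigenvector perturbation \eqref{def_move3} with the extra parameter $\sigma$) to control the $\Gamma$-term when $\lambda_{\min}(\Gamma_\text{opt}-\Gamma)\leq-\eps^\alpha$; the heuristic expansions \eqref{formal_dev_rho1}--\eqref{formal_dev_rho2} alone do not deliver a rigorous bound uniform in $\Gamma$.
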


Before proving these estimates,  it is worth drawing a parallel with the formal derivation done at Section~\ref{heuristic_derivation}. 
The lower bound proposed by Proposition \ref{cons_lower_bound} case \ref{case1_lower} corresponds to the formal analysis when $m>0$. 
The upper bound proposed by Proposition \ref{cons_new_sub} case \ref{cons_sub_cas4} is associated to the formal analysis when $m<0$. 
Furthermore, we can observe in the proof that the factor $1/4$ in \eqref{est_upp_neg} could be replaced by any number in $[1/4,1/2)$,   
the bound $1/2$ corresponding to the heuristic derivation given by \eqref{eq_formal_final}.

\subsection{Proof of Lemma \ref{lem_elem_M_neg} and Proposition \ref{cons_new_sub}}
\label{tech_proof}
For sake of notational simplicity, we write $\lambda_{\text{min}}(A)$ for the smallest eigenvalue of the symmetric matrix $A$ and we omit 
the $x$-dependence of $p_\text{opt}^M(x)$ and $\Gamma_\text{opt}(x)$.
Moreover,  by the definition of $M_\eps^{x}[\phi]$ given by \eqref{M_eps_par} and the positivity of $\norm{\Delta \hat x - \Delta x}$, 
for all $\norm{\Delta \hat x} \leq \eps^{1-\alpha}$, we have
\begin{equation}\label{maj_fond_M}
 \norm{\Delta \hat x - \Delta x} \left\{ h(x+\Delta x) - D\phi(x) \cdot n(x+\Delta x) \right\}  \leq \norm{\Delta \hat x - \Delta x} M_\eps^{x}[\phi].
\end{equation}
Therefore it is sufficient to find an upper bound for 
\begin{equation*}
 \max_{p, \Gamma} \min_{\Delta \hat x} \left[(D\phi(x) - p)\cdot \Delta \hat x +M_\eps^{x}[\phi] \norm{\Delta \hat x - \Delta x}
 + \frac{1}{2} \langle D^2\phi(x)  \Delta x ,\Delta x\rangle - \frac{1}{2} \langle \Gamma \Delta \hat x,\Delta \hat x\rangle    \\
    - \eps^2 f(t,x,z,p,\Gamma) \right].
\end{equation*}
In other words, by taking advantage of the monotonicity of the operator $S_\eps$ with \eqref{maj_fond_M},  we shall  
look for an upper bound for an approximated operator bounding $S_\eps$ above and very close to it as $\eps \rightarrow 0$.

\subsubsection{Proof of Lemma \ref{lem_elem_M_neg}}

We introduce 
\begin{equation}\label{def_A_ronde}
\mathcal{A}^ x(p,\Gamma,\Delta \hat x) : = (D\phi( x) - p)\cdot \Delta \hat x  + \norm{\Delta \hat x - \Delta x}M_\eps^ x[\phi]  
+\frac{1}{2} \langle D^2\phi( x)  \Delta x , \Delta x \rangle  -\frac{1}{2} \langle \Gamma \Delta \hat x ,\Delta \hat x\rangle 
 - \eps^2 f(t,x,z,p,\Gamma), 
\end{equation}
where $\Delta x=\proj_{\ol \Omega}(x+\Delta \hat x) - x$. We give the following useful decomposition: 
\begin{equation}\label{decomp_ord2_useful}
 \frac{1}{2} \langle D^2\phi( x) \Delta x , \Delta x\rangle  - \frac{1}{2} \langle \Gamma \Delta \hat x , \Delta \hat x \rangle  
 = \frac{1}{2} \langle D^2\phi( x) \Delta x , \Delta x\rangle  
- \frac{1}{2} \langle D^2\phi(x) \Delta \hat x , \Delta \hat x \rangle  
+ \frac{1}{2} \langle (D^2\phi(x) - \Gamma ) \Delta \hat x , \Delta \hat x \rangle, 
\end{equation}
which will be used repeatedly in this section. We clearly have by \eqref{est_D2phi_x_hatx} that  
\begin{multline*}
 \norm{\Delta \hat x - \Delta x} M_\eps^x[\phi] +\frac{1}{2} \langle D^2\phi(x)  \Delta x , \Delta x \rangle  
- \frac{1}{2} \langle D^2\phi(x)  \Delta \hat x , \Delta \hat x \rangle  \\
\leq  \norm{\Delta \hat x - \Delta x}  \left( M_\eps^{x}[\phi] +\frac{1}{2}  \norm{D^2\phi(x)}  \Big(3\eps^{1-\alpha} - d(x) \Big) \right)
 \leq 0.
\end{multline*}
From the previous inequality  and \eqref{decomp_ord2_useful} we deduce that 
for all $p,\Gamma,\Delta \hat x$ constrained by \eqref{p_beta_gamma_new}--\eqref{moving1_new}, 
\begin{align*}
\mathcal{A}^x(p,\Gamma,\Delta \hat x) & \leq  (D\phi(x) - p)\cdot \Delta \hat x +\frac{1}{2} \langle (D^2\phi(x) - \Gamma)  \Delta \hat x , \Delta \hat x \rangle 
 - \eps^2 f(t,x,z,p, \Gamma).
\end{align*}
By monotonicity of the operator $ S_\eps$ and by using \cite[Lemma 4.1]{kohns} to estimate the max min, we conclude that
\begin{align*}
S_\eps[x,t,z,\phi] - \phi(x) &  \leq \max_{p, \Gamma} \min_{\Delta \hat x} \Big[
 (D\phi(x) - p)\cdot \Delta \hat x +\frac{1}{2} \langle (D^2\phi(x) - \Gamma)  \Delta \hat x , \Delta \hat x \rangle 
 - \eps^2 f(t,x,z,p, \Gamma)\Big]  \\
 &  \leq  - \eps^2 f(t,x,z, D\phi(x), D^2\phi(x)) +o(\eps^2), 
\end{align*}
which gives the desired result.

\subsubsection{Proof of Proposition \ref{cons_new_sub} case \ref{cons_sub_cas1}}

We define the function $\mathcal{A}_b^x$ of $\Delta \hat x$ associated  to the particular choice $p = p_\text{opt}^M$ and $ \Gamma= \Gamma_\text{opt}$ by 
 \begin{equation}\label{def_Ab}
\mathcal{A}^x_b(\Delta \hat x)  = (D\phi(x)  - p_\text{opt}^M)\cdot \Delta \hat x  +  \norm{\Delta \hat x - \Delta x} M_\eps^{x}[\phi] 
  +\frac{1}{2} \langle D^2\phi(x)  \Delta x , \Delta x \rangle - \frac{1}{2}\langle \Gamma_\text{opt} \Delta \hat x ,\Delta \hat x\rangle,  
\end{equation}
where $\Delta x=\proj_{\ol \Omega}(x+\Delta \hat x) - x$. Thus, the operator $S_\eps$ can be written in the form
\begin{equation}  \label{transfo_operator}
S_{\eps} [x,t,z,\phi] - \phi(x)=\max_{p,\Gamma} \min_{\Delta \hat x} \left[  \mathcal{A}^x_b(\Delta \hat x)  + (p_\text{opt}^M - p)\cdot \Delta \hat x 
+\frac{1}{2} \langle (\Gamma_\text{opt}- \Gamma) \Delta \hat x ,\Delta \hat x\rangle  - \eps^2 f(t,x,z,p,\Gamma)
\right].
\end{equation}
To compute an upper bound of \eqref{transfo_operator}, we now introduce two preliminary lemmas.

\begin{lemma}\label{lemma_sup_Ab}
Assume that $M_\eps^{x}[\phi]\geq 0$. Then $\mathcal{A}^{x}_b$ defined by \eqref{def_Ab} is $\Delta \hat x$-bounded by 
\begin{equation} \label{sup_Ab}
0\leq \sup_{\Delta  \hat x}\mathcal{A}^{x}_b(\Delta \hat x) \leq  \frac{1}{2}(\eps^{1-\alpha}- d(x))  \left( 3 M_\eps^{x}[\phi]
+ 4 \norm{D^2\phi(x)} \eps^{1-\alpha} \right),
\end{equation}
where $\Delta \hat x$ is constrained by \eqref{moving1_new}.
\end{lemma}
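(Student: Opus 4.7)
First, the lower bound is immediate: taking $\Delta \hat x = 0$ gives $\Delta x = 0$, so every term in \eqref{def_Ab} vanishes and $\mathcal{A}_b^x(0) = 0$, hence $\sup_{\Delta \hat x} \mathcal{A}_b^x \geq 0$.

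For the upper bound, the plan is to substitute the explicit expression \eqref{p_opt_M} of $p_\text{opt}^M$ directly into \eqref{def_Ab} and then apply the two geometric lemmas already proved. Write $(\Delta \hat x)_1 = \Delta \hat x \cdot n(\bar x)$. Since $p_\text{opt}^M - D\phi(x)$ is collinear with $n(\bar x)$, we get
\begin{align*}
(D\phi(x) - p_\text{opt}^M)\cdot \Delta \hat x
&= -\tfrac{1}{2}\Big(1 - \tfrac{d(x)}{\eps^{1-\alpha}}\Big) M_\eps^{x}[\phi]\,(\Delta \hat x)_1 \\
&\qquad + \tfrac{\eps^{1-\alpha}}{4}\Big(1 - \tfrac{d^2(x)}{\eps^{2-2\alpha}}\Big) (D^2\phi(x))_{11}\,(\Delta \hat x)_1.
\end{align*}
Collecting the first of these terms together with $\norm{\Delta \hat x - \Delta x}M_\eps^{x}[\phi]$ brings Lemma~\ref{lemma_key_bound_geo} into play: since $M_\eps^{x}[\phi]\geq 0$, multiplying the upper estimate in \eqref{ineq_key} by $M_\eps^{x}[\phi]$ yields
\begin{equation*}
-\tfrac{1}{2}\Big(1 - \tfrac{d(x)}{\eps^{1-\alpha}}\Big) M_\eps^{x}[\phi]\,(\Delta \hat x)_1 + \norm{\Delta \hat x - \Delta x}M_\eps^{x}[\phi] \leq \tfrac{3}{2}(\eps^{1-\alpha}-d(x))M_\eps^{x}[\phi].
\end{equation*}

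Next, the cross term involving $(D^2\phi(x))_{11}(\Delta \hat x)_1$ is controlled using $|(\Delta \hat x)_1| \leq \eps^{1-\alpha}$ and the factorization $1 - d^2(x)/\eps^{2-2\alpha} = (\eps^{1-\alpha}-d(x))(\eps^{1-\alpha}+d(x))/\eps^{2-2\alpha}$, giving
\begin{equation*}
\tfrac{\eps^{1-\alpha}}{4}\Big|1 - \tfrac{d^2(x)}{\eps^{2-2\alpha}}\Big| \norm{D^2\phi(x)}\,|(\Delta \hat x)_1|
\leq \tfrac{1}{4}\norm{D^2\phi(x)}(\eps^{1-\alpha}-d(x))(\eps^{1-\alpha}+d(x)).
\end{equation*}
Finally, the remaining second-order contribution $\tfrac{1}{2}\langle D^2\phi(x)\Delta x,\Delta x\rangle - \tfrac{1}{2}\langle \Gamma_\text{opt}\Delta \hat x,\Delta \hat x\rangle$ is bounded by estimate \eqref{cp_D2phi_gamma_opt_0} of Lemma~\ref{est2_phi_gopt} by $\tfrac{1}{4}\norm{D^2\phi(x)}(\eps^{1-\alpha}-d(x))(7\eps^{1-\alpha}-d(x))$.

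Summing the three bounds, the coefficient of $\tfrac{1}{4}\norm{D^2\phi(x)}(\eps^{1-\alpha}-d(x))$ is $(\eps^{1-\alpha}+d(x))+(7\eps^{1-\alpha}-d(x)) = 8\eps^{1-\alpha}$, so the whole second-order contribution collapses to $2\norm{D^2\phi(x)}(\eps^{1-\alpha}-d(x))\eps^{1-\alpha}$. Combining with the Neumann term produces exactly the announced bound
\begin{equation*}
\mathcal{A}_b^x(\Delta \hat x) \leq \tfrac{1}{2}(\eps^{1-\alpha}-d(x))\bigl(3M_\eps^{x}[\phi] + 4\norm{D^2\phi(x)}\eps^{1-\alpha}\bigr),
\end{equation*}
uniformly in $\Delta \hat x$ subject to \eqref{moving1_new}. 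The only place where the sign hypothesis $M_\eps^{x}[\phi]\geq 0$ is crucially used is in turning the inequality of Lemma~\ref{lemma_key_bound_geo} into an inequality on the Neumann-type term; the rest of the estimate is a straightforward bookkeeping of the algebraic cancellations built into the definitions \eqref{p_opt_M}--\eqref{Gamma_opt} of the optimal strategies. The main (modest) obstacle is keeping track of signs so that every absolute-value estimate actually yields the same upper bound; the factorization of $1 - d^2/\eps^{2-2\alpha}$ is what makes the two $\eps^{1-\alpha}-d(x)$ factors line up, allowing the final telescoping.
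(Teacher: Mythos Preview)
Your proof is correct and follows essentially the same approach as the paper. The only cosmetic difference is that the paper packages your last two second-order estimates into a single citation of inequality \eqref{est_opt_order2} (proved inside Lemma~\ref{lm_estimate_l2}), whereas you reproduce that computation inline by invoking \eqref{cp_D2phi_gamma_opt_0} and bounding the $(D^2\phi(x))_{11}(\Delta\hat x)_1$ cross term separately before summing; the arithmetic and the use of Lemma~\ref{lemma_key_bound_geo} for the Neumann term are identical.
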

\begin{proof} This estimate follows exactly the same lines as for Lemmas \ref{lm_estimate_l1}--\ref{lm_estimate_l2}. The sup is clearly positive by considering 
$\Delta \hat x =0$. Then, by plugging the expression of $p^M_\text{opt}$ in  $\mathcal{A}_b ( \Delta \hat x)$, we have
\begin{multline*}
\mathcal{A}^{x}_b ( \Delta \hat x) 
= \Big\{ -\frac{\eps^{1-\alpha}- d(x)}{2 \eps^{1-\alpha} }(\Delta \hat x)_1 + \norm{\Delta \hat x - \Delta x}\Big\} M_\eps^{x}[\phi]  \\
   +\frac{1}{4} \Big(\eps^{1-\alpha} - \frac{d^2(x)}{\eps^{1-\alpha}}\Big) (D^2\phi(x))_{11}   (\Delta \hat x)_1 
     +\frac{1}{2} \langle D^2\phi(x)  \Delta x , \Delta x \rangle  -\frac{1}{2} \langle \Gamma_\text{opt} \Delta \hat x ,\Delta \hat x\rangle .
\end{multline*}
Since $M_\eps^{x}[\phi]\geq 0$, using the estimates \eqref{ineq_key} and \eqref{est_opt_order2}, we obtained the desired estimate.
\end{proof}

\begin{lemma} \label{consistence_bounded_sequence}
Let $f$ satisfy \eqref{ellipticity_f} and \eqref{loc_lip_p_Gamma}--\eqref{cont_growth_p_Gamma} and assume $\alpha$, $\beta$, $\gamma$ satisfy 
\eqref{condition_pas}- \eqref{cd_coeff_classiq}.
Let $(p_\eps)_{0<\eps\leq 1}$ and $(\Gamma_\eps)_{0<\eps \leq1}$ be two sequences  bounded respectively in $\R^N$ and $\mathcal{S}^N$. 
Then for any $x$, $t$ and $z$, we have
\begin{equation*}
\max_{\substack{\norm{p}\leq \eps^{-\beta}\\ \norm{\Gamma}\leq \eps^{-\gamma}}} \min_{\norm{\Delta \hat x}\leq \eps^{1-\alpha}} 
\left[ (p_\eps  - p)\cdot \Delta \hat x +\frac{1}{2} \langle (\Gamma_\eps - \Gamma) \Delta \hat x ,\Delta \hat x\rangle
   - \eps^2 f \left(  t,x, z,  p,\Gamma \right)\right]  \\
 = -\eps^2 f(t,x,z,p_\eps, \Gamma_\eps) +o(\eps^2)  .
\end{equation*}
Moreover, the implicit constant in the error term is uniform as $x$, $t$, and $z$ range over a compact subset of $\ol \Omega \times \R \times \R$.
\end{lemma}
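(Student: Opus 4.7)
The plan is to reduce this max-min identity to the known interior consistency estimate \cite[Lemma~4.1]{kohns} by evaluating Kohn--Serfaty's whole-space operator on a carefully chosen quadratic test function. The lower bound is immediate: since $(p_\eps)$ and $(\Gamma_\eps)$ are uniformly bounded and $\eps^{-\beta},\eps^{-\gamma}\to\infty$, the pair $(p_\eps,\Gamma_\eps)$ is admissible for all $\eps$ small enough. Plugging $p=p_\eps$, $\Gamma=\Gamma_\eps$ into the bracket kills both the linear and the quadratic terms in $\Delta\hat x$, leaving $-\eps^2 f(t,x,z,p_\eps,\Gamma_\eps)$ independently of $\Delta\hat x$. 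Hence the max-min is bounded below by this quantity.

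For the matching upper bound, I would introduce the (purely quadratic) test function
\begin{equation*}
\phi_\eps(y)=p_\eps\cdot(y-x)+\tfrac12\langle \Gamma_\eps(y-x),\,y-x\rangle,
\end{equation*}
so that $D\phi_\eps(x)=p_\eps$, $D^2\phi_\eps\equiv\Gamma_\eps$ and, crucially, $D^3\phi_\eps\equiv 0$. Because $\phi_\eps$ is quadratic, Taylor's formula is exact:
\begin{equation*}
\phi_\eps(x+\Delta\hat x)-\phi_\eps(x)=p_\eps\cdot\Delta\hat x+\tfrac12\langle\Gamma_\eps\Delta\hat x,\Delta\hat x\rangle,
\end{equation*}
with no remainder. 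Substituting this identity back, the max-min appearing in the statement is seen to coincide exactly with the Kohn--Serfaty whole-space operator $L_\eps[x,t,z,\phi_\eps]-\phi_\eps(x)$ from \cite[Section 2.2 and Lemma~4.1]{kohns}. At this point, a direct application of \cite[Lemma~4.1]{kohns} yields
\begin{equation*}
L_\eps[x,t,z,\phi_\eps]-\phi_\eps(x)=-\eps^2 f\bigl(t,x,z,D\phi_\eps(x),D^2\phi_\eps(x)\bigr)+o(\eps^2)=-\eps^2 f(t,x,z,p_\eps,\Gamma_\eps)+o(\eps^2),
\end{equation*}
which is the claimed equality.

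The required uniformity of the $o(\eps^2)$ term as $(x,t,z)$ varies over a compact set follows because the Kohn--Serfaty error constant depends only on the $C^3_b$ norm of the test function (here zero beyond second order, and $\norm{p_\eps}+\norm{\Gamma_\eps}$ bounded by hypothesis) together with the local Lipschitz and growth bounds \eqref{loc_lip_p_Gamma}--\eqref{cont_growth_p_Gamma} of $f$, all of which are uniform on compact sets in $(x,t,z)$. The choices of $\alpha,\beta,\gamma$ satisfying \eqref{condition_pas}--\eqref{cd_coeff_classiq} are precisely what is needed in \cite[Lemma~4.1]{kohns} to ensure that the penalty terms $\eps^{-\beta}$ and $\eps^{-\gamma}$ force the maximizing $p,\Gamma$ to remain bounded in the $o(\eps^2)$ sense.

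The only delicate point I foresee is verifying that the quadratic $\phi_\eps$ genuinely falls in the scope of \cite[Lemma~4.1]{kohns} with a constant independent of $\eps$: the statement there is made for a fixed smooth $\phi$, but the proof is a Taylor expansion controlled by $\norm{D^3\phi}_\infty$ and $\norm{D^2\phi}_\infty$, both of which are bounded uniformly in $\eps$ here (the former being zero). One should therefore re-read the proof of \cite[Lemma~4.1]{kohns} to confirm that the implicit constants do not hide any hidden dependence on $\phi$ beyond its $C^2_b$ norm; this is a routine but essential verification before invoking the lemma in this parametric setting.
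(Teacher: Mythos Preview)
Your approach is correct and takes a somewhat different route from the paper. The paper simply says the result is a ``direct adaptation of \cite[Lemma~4.1]{kohns} by distinguishing three cases according to the size of $\norm{p_\eps-p}$ and $\lambda_{\min}(\Gamma_\eps-\Gamma)$'': in other words, it reruns the Kohn--Serfaty three-case argument with $p_\eps,\Gamma_\eps$ substituted for $D\phi(x),D^2\phi(x)$ throughout. Your reduction via the quadratic test function $\phi_\eps$ is more conceptual --- it explains \emph{why} the lemma holds, namely that the max-min in question is literally the whole-space Kohn--Serfaty operator evaluated on $\phi_\eps$, with zero Taylor remainder. The trade-off is exactly the one you flag in your last paragraph: invoking \cite[Lemma~4.1]{kohns} as a black box on an $\eps$-dependent family requires checking that the implicit $o(\eps^2)$ constant there depends only on uniform data ($\norm{D\phi_\eps}_{C^2_b}$, the Lipschitz and growth constants of $f$ on a fixed compact), which is true but must be extracted from the \emph{proof} rather than the \emph{statement}. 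The paper's direct re-execution of the case analysis sidesteps this verification entirely. Both approaches are short and rest on the same underlying mechanism; yours is tidier if the uniformity check is granted, while the paper's is self-contained.
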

\begin{proof}
It is a direct adaptation of \cite[Lemma 4.1]{kohns} by 
distinguishing three cases according to the size of $\norm{p_\eps - p}$ and  $\lambda_\text{min}(\Gamma_\eps - \Gamma)$. 
\end{proof}

We can now provide an upper bound on  \eqref{transfo_operator}. 
By Lemma \ref{lemma_sup_Ab}, $\mathcal{A}_b$ is upper bounded independently of all possible moves $\Delta \hat x$. 
It follows from \eqref{transfo_operator} that  
\begin{equation*}
S_{\eps} [x,t,z,\phi] - \phi(x)  \leq \sup_{\Delta \hat  x}\mathcal{A}^x_b(\Delta \hat x)
+  \max_{p,\Gamma} \min_{\Delta \hat x} 
\left[  (p_\text{opt}^M - p)\cdot \Delta \hat x -\frac{1}{2} \langle \Gamma_\text{opt} \Delta \hat x ,\Delta \hat x\rangle 
- \eps^2 f \left( t, x, z,  p,\Gamma \right) \right]  .
\end{equation*}
The consistency Lemma \ref{consistence_bounded_sequence} provides an estimate of the max min and one obtains
\begin{align*}
S_{\eps}[x,t,z,\phi]& -\phi(x)\leq \sup_{\Delta \hat x}\mathcal{A}^x_b(\Delta \hat x) -\eps^2 f(t,x,z,p_\text{opt}^M,\Gamma_\text{opt})+o(\eps^2).
\end{align*}
By plugging the upper bound in \eqref{sup_Ab} of $\mathcal{A}^x_b$ in the previous inequality, we obtained the desired result.

\subsubsection{Proof of Proposition \ref{cons_new_sub} case \ref{cons_sub_cas2}}

It is sufficient to show that for any $\norm{p}\leq \eps^{-\beta}$ and $\norm{\Gamma}\leq \eps^{-\gamma}$, 
there exists $\norm{\Delta \hat x}\leq \eps^{1-\alpha}$, determining $\Delta x$ by~\eqref{exp_delta_x}, such that
\begin{multline}\label{eq_fund}
 (D\phi(x) - p)\cdot \Delta \hat x +M_\eps^{x}[\phi] \norm{\Delta \hat x - \Delta x}
 + \frac{1}{2} \langle D^2\phi(x)  \Delta x ,\Delta x\rangle - \frac{1}{2} \langle \Gamma \Delta \hat x,\Delta \hat x\rangle    \\
    - \eps^2 f(t,x,z,p,\Gamma)  \leq  -\eps^2 f(t,x,z,D\phi(x), D^2\phi(x)) +o(\eps^2)  , 
\end{multline}
with an error estimate $o(\eps^2)$ that is independent of $p$ and $\Gamma$ and locally uniform in $x$, $t$, $z$. 
 In view of the conditions \eqref{cd_coeff_classiq} and \eqref{def_nul}, we can pick $\mu>0$ and  $\delta >0$ such that 
\begin{align}
 \mu+\gamma & <1-\alpha \text{ and }  \mu +\gamma r <1+\alpha,  \label{cond_mu}\\
\delta & <\min(2\alpha, \rho - (1-\alpha)).                     \label{definition_delta} 
\end{align}
Now we consider separately the following three cases:
\begin{enumerate}
 \item \label{cas1_easy} $\norm{D\phi(x) - p}\leq \eps^\mu$ and $\lambda_\text{min}(D^2\phi(x)- \Gamma)\geq - \eps^{\delta}$,
 \item \label{cas2_easy} $\norm{D\phi(x) - p}\leq \eps^\mu$ and $\lambda_\text{min}(D^2\phi(x)- \Gamma)\leq - \eps^{\delta}$,
 \item \label{cas3_easy} $\norm{D\phi(x) - p}\geq \eps^\mu$.
\end{enumerate} 

For case \ref{cas1_easy}, we choose $\Delta \hat x=0$. 
By a reasoning similar to Case 1 in the proof of \cite[Lemma 4.1]{kohns}, we obtained the inequality given by \eqref{eq_fund}.

For cases \ref{cas2_easy} and \ref{cas3_easy}, in order to use the decomposition \eqref{decomp_ord2_useful}, we now give  a preliminary inequality. 
 By the inequality \eqref{est_D2phi_x_hatx} in Lemma~\ref{est2_phi_gopt},  we have
\begin{equation*} 
\left|    \frac{1}{2} \langle D^2\phi(x) \Delta x ,\Delta x\rangle -  \frac{1}{2} \langle D^2\phi(x) \Delta \hat x ,\Delta \hat x\rangle \right| 
  \leq \frac{3}{2} \norm{D^2\phi(x)}    \norm{\Delta \hat x - \Delta x} \eps^{1- \alpha} , 
\end{equation*}
which yields  with the assumption $M_\eps^{x}[\phi] \leq \frac{4}{3} \norm{D^2 \phi(x)} \eps^{1-\alpha}$ that
\begin{equation}\label{term1_neg}
M_\eps^{x} [\phi] \norm{\Delta \hat x - \Delta x}+\frac{1}{2} \langle D^2\phi(x) \Delta x ,\Delta x\rangle
 -  \frac{1}{2} \langle D^2\phi(x) \Delta \hat x ,\Delta \hat x\rangle 
  \leq   \frac{17}{6}\norm{D^2\phi(x)} \eps^{1- \alpha} \norm{\Delta \hat x - \Delta x}.
\end{equation}
By combining the geometric estimate \eqref{eq_11} with the assumption $d(x)\geq \eps^{1-\alpha} - \eps^{\rho}$, 
we get that the left-hand side of \eqref{term1_neg} is upper bounded by $\frac{17}{6}\norm{D^2\phi(x)} \eps^{1- \alpha+\rho}$.
By using the decomposition \eqref{decomp_ord2_useful} we deduce that it is sufficient to show that 
there exists $\norm{\Delta \hat x}\leq \eps^{1-\alpha}$ such that
\begin{multline*}
 (D\phi(x) - p) \cdot \Delta \hat x  +  \frac{1}{2} \langle (D^2\phi(x)- \Gamma) \Delta \hat x , \Delta \hat x  \rangle 
+ \frac{17}{6} \norm{D^2\phi(x)} \eps^{1-\alpha+\rho}   - \eps^2 f(t,x,z,p,\Gamma) \\
  \leq  - \eps^2 f(t,x,z,D\phi(x), D^2\phi(x)).
\end{multline*}

For case \ref{cas2_easy}, we choose $\Delta \hat x$ to be an eigenvector for the minimum eigenvalue $\lambda = \lambda_{\text{min}}(D^2\phi(x)- \Gamma)$ 
of norm $\eps^{1-\alpha}$. Notice that since $f$ is monotone in its last input,  we have 
\begin{equation*}
f(t,x,z,p,\Gamma) \geq f(t,x,z, D^2\phi(x) - \lambda I). 
\end{equation*}
Choosing $\Delta \hat x$ as announced, and changing the sign if necessary to make 
$(D\phi(x) - p) \cdot \Delta \hat x\leq 0$, we deduce that
\begin{multline*}
(D\phi(x) - p) \cdot \Delta \hat x  +  \frac{1}{2} \langle (D^2\phi(x)- \Gamma) \Delta \hat x , \Delta \hat x  \rangle 
+ \frac{17}{6} \norm{D^2\phi(x)} \eps^{1-\alpha+\rho} - \eps^2 f(t,x,z,p,\Gamma)   \\
\leq  \frac{1}{2} \eps^{2-2\alpha} \lambda  + \frac{17}{6} \norm{D^2\phi(x)} \eps^{1-\alpha+\rho}
 - \eps^2 f(t,x,z,p, D^2\phi(x) - \lambda I).
\end{multline*}
If $-1\leq \lambda \leq -\eps^\delta $ then $\eps^{2-2\alpha} \lambda \leq - \eps^{2-2\alpha+\delta}$ 
  and  $f(t,x,z,p, D^2\phi(x)- \lambda I)$ is bounded. Since $\eps^{1-\alpha+\rho}\ll \eps^{2-2\alpha+\delta} $ by~\eqref{definition_delta},
for such $\lambda$ we have
\begin{equation*}
 \frac{1}{2} \eps^{2-2\alpha} \lambda + \frac{17}{6} \norm{D^2\phi(x)} \eps^{1-\alpha+\rho}
- \eps^2 f(t,x,z, p, D^2\phi(x) - \lambda I) \leq - \frac{1}{4} \eps^{2-2\alpha+\delta} +O(\eps^2).
\end{equation*}
In this case, we are done by~\eqref{definition_delta}, since the right-hand side is $\leq \eps^2 f(t,x,z,D\phi(x), D^2\phi(x)) $
when $\eps$ is small enough.

To complete case \ref{cas2_easy}, suppose $\lambda \leq  - 1$. 
 Then using the growth hypothesis \eqref{cont_growth_p_Gamma} and recalling that $p$ is near $D\phi(x)$ we have 
\begin{equation*}
 \frac{1}{2} \eps^{2-2 \alpha} \lambda - \eps^2 f(t,x,z,p, D^2\phi(x) -\lambda I)
\leq -\frac{1}{2}  \eps^{2-2 \alpha} |\lambda| +C\eps^2 (1+|\lambda|^r). 
\end{equation*}
Now notice that $|\lambda| \leq C(1+\norm{\Gamma}) \leq C\eps^{-\gamma}$. 
Since $\gamma(r-1)<2\alpha$ we have $\eps^{2-2\alpha} |\lambda| \gg \eps^2 |\lambda|^r$. Therefore we deduce by \eqref{definition_delta} that
\begin{equation*}
 -\frac{1}{2} \eps^{2-2 \alpha} |\lambda|  +C\eps^2|\lambda|^r + \frac{17}{6} \norm{D^2\phi(x)} \eps^{1-\alpha+\rho}
 \leq - \frac{1}{4} \eps^{2-2 \alpha}  \leq - \eps^2 f(t,x,z,D\phi(x),D^2\phi(x)),  
\end{equation*}
when $\eps$ is sufficiently small. Case \ref{cas2_easy} is now complete.

Finally, to treat case \ref{cas3_easy}, 
we take $\Delta \hat x$ parallel to $D\phi (x)-p$ with norm $\eps^{1-\alpha}$, and with the sign chosen such that
\begin{equation*}
(D\phi(x)-p)\cdot \Delta \hat x = -\eps^{1-\alpha} \norm{D\phi(x)-p} \leq - \eps^{1-\alpha+\mu}. 
\end{equation*}
By observing that 
$\ds  \frac{17}{6} \norm{D^2\phi(x)} \eps^{1-\alpha+\rho} \ll \eps^{1-\alpha} \norm{D\phi(x)-p}$,
 this case follows exactly the sames lines as \cite[Lemma 4.1]{kohns}.

\subsubsection{Proof of Proposition \ref{cons_new_sub} case \ref{cons_sub_cas3}}

This proof is quite similar to case \ref{cons_sub_cas2}. 
Since this estimate will not be needed in the rest of the paper, we just indicate 
that we need to distinguish three cases according to the respective sizes of $\norm{D\phi(x) - p}$
and $\lambda_\text{min}(D^2\phi(x) - \Gamma)$ with respect to  $\eps^\mu$ and $- C_1 - \eps^{\alpha}$, where $\mu$ is defined by \eqref{cond_mu}.

\subsubsection{Proof of Proposition \ref{cons_new_sub} case \ref{cons_sub_cas4}}

This case corresponds to the heuristic derivation presented at Section \ref{heuristic_derivation} when $m<0$. 
Recalling that $p_\text{opt}^M$ and $\Gamma_\text{opt}$ are defined by \eqref{p_opt_M}--\eqref{Gamma_opt}, 
our task is to show that for any $\norm{p}\leq \eps^{-\beta}$ and $\norm{\Gamma}\leq \eps^{-\gamma}$, 
there exists $\norm{\Delta \hat x}\leq \eps^{1-\alpha}$, determining $\Delta x$ by \eqref{exp_delta_x}, such that
\begin{multline} \label{eq_gene3}
 (D\phi(x) - p)  \cdot  \Delta \hat x   + \norm{\Delta \hat x - \Delta x} M_\eps^{x}[\phi]  
  + \frac{1}{2} \langle D^2\phi(x) \Delta x ,\Delta x\rangle - \frac{1}{2} \langle \Gamma \Delta \hat x ,\Delta \hat x\rangle  \\
 - \eps^2 f \left(t,x,z,p,\Gamma \right)\leq \frac{1}{4} (\eps^{1-\alpha} - d(x)) M_\eps^{x}[\phi]
 -\eps^2 \min_{p\in B (p_\text{opt}^M,r)} f(t,x,z,p, \Gamma_\text{opt}) +o(\eps^2),
\end{multline}
with  an error estimate $o(\eps^2)$ that is independent of $p$ and $\Gamma$ and locally uniform in $x$, $t$, $z$.
We can notice in~\eqref{eq_gene3} that the function $M_\eps^{x}[\phi]$ is $\eps, x$-bounded by $\norm{h}_{L^\infty}+\norm{D\phi}_{L^\infty}$. 
Moreover, by Lemma~\ref{est2_phi_gopt} we have
\begin{equation}
   \frac{1}{2} \langle D^2\phi(x)  \Delta x , \Delta x \rangle 
    -\frac{1}{2} \langle \Gamma_\text{opt} \Delta \hat x ,\Delta \hat x\rangle 
 \leq  \frac{7}{4}\norm{D^2\phi(x)} (\eps^{1-\alpha} - d(x)) \eps^{1-\alpha}. \label{cp_D2phi_gamma_opt}
\end{equation} 
Thus,  it is sufficient to examine,  for any $\norm{p}\leq \eps^{-\beta}$ and $\norm{\Gamma}\leq \eps^{-\gamma}$,  
\begin{equation} \label{min_fun} 
 \min_{\Delta \hat x} \left[
(D\phi(x) - p)\cdot \Delta \hat x + \norm{\Delta \hat x - \Delta x} M_\eps^{x}[ \phi]
+\frac{1}{2}\langle  (\Gamma_\text{opt} - \Gamma)  \Delta \hat x, \Delta \hat x \rangle  - \eps^2 f(t,x,z,p, \Gamma) 
\right].
\end{equation}
We consider separately the following three cases:
\begin{enumerate} [label=\textup{ }{\alph*.}\textup{ },ref=({\alph*})] 
 \item \label{cas1} $\norm{p_\text{opt}^M - p}\leq 3 \Big(1 - \frac{d(x)}{\eps^{1-\alpha}}\Big) |M_\eps^{x}[\phi]|$,   
and $\lambda_\text{min}(\Gamma_\text{opt} - \Gamma)\geq - \eps^{\alpha}$, 
 \item \label{cas2} $\norm{p_\text{opt}^M - p}\leq 3 \Big(1 - \frac{d(x)}{\eps^{1-\alpha}}\Big) |M_\eps^{x}[\phi]|$,   
and $\lambda_\text{min}(\Gamma_\text{opt} - \Gamma)\leq - \eps^{\alpha}$, 
 \item \label{cas5} $\norm{p_\text{opt}^M - p}\geq 3 \Big(1 - \frac{d(x)}{\eps^{1-\alpha}}\Big) |M_\eps^{x}[\phi]|$. 
\end{enumerate}
For case \ref{cas1}, we choose $\Delta \hat x=\pm \eps^{1-\alpha} n(\bar x)$ with the sign chosen such that
\begin{equation*}
(p-p_\text{opt}^M)\cdot \Delta \hat x \leq 0. 
\end{equation*}
 Since $\lambda_\text{min}(\Gamma_\text{opt}- \Gamma)\geq - \eps^{\alpha}$
 we have $\Gamma_\text{opt}- \Gamma +  \eps^{\alpha} I \geq 0$ and thus 
$\Gamma \leq \Gamma_\text{opt}+  \eps^{\alpha} I$. Using the monotonicity of $f$ with respect to its last entry, this gives
$f(t,x,z,p, \Gamma)\geq f(t,x,z,p, \Gamma_\text{opt} +\eps^\alpha I)$. 
Since $f$ is locally Lipschitz, we conclude that
\begin{equation} \label{est_loc_lip_M_very_neg}
 f(t,x,z,p,\Gamma)\geq f(t,x,z,p, \Gamma_\text{opt})+O(\eps^\alpha) \geq  \min_{p\in B (p_\text{opt}^M,r)} f(t,x,z,p, \Gamma_\text{opt}) +O(\eps^\alpha).
\end{equation}
The constant in the error term is independent of $p$ and $\Gamma$, since we are assuming in case  \ref{cas1} that
 $\norm{p- p_\text{opt}^M}\leq  3 (\norm{h}_{L^\infty}+\norm{D\phi}_{L^\infty})$.
Moreover we directly compute
\begin{equation} \label{est_ord1_cas4}
(D\phi(x) - p_\text{opt}^M)\cdot \Delta \hat x + \norm{\Delta \hat x - \Delta x} M_\eps^{x}[\phi]= \frac{1}{2} (\eps^{1-\alpha} - d(x)) M_\eps^{x}[ \phi] .
\end{equation}
Since $\eps^{1-\alpha} - d(x) \geq \eps^{\rho }$ and $M_\eps^{x}[\phi]< 0$, we have
\begin{equation}\label{est_dM_rho}
\frac{1}{2} (\eps^{1-\alpha}-d(x)) M_\eps^{x}[\phi]\leq \frac{1}{2}\eps^{\rho} M_\eps^{x}[\phi]\leq -\frac{1}{2} \eps^{1 -\alpha -\kappa+ \rho}.
\end{equation}
By noticing that $\eps^{2-2\alpha - \gamma}\ll \eps^{1 - \alpha - \kappa + \rho}$ using \eqref{relation_gamma_nul_tilde}, we deduce from \eqref{est_dM_rho} that
\begin{equation}\label{est_ord1_cas4_gamma}
\Big| \frac{1}{2} \langle (\Gamma_\text{opt} - \Gamma) \Delta \hat x, \Delta \hat x \rangle \Big| 
 \leq \frac{1}{2}(\norm{D^2 \phi(x)} +\eps^{-\gamma}) \eps^{2-2\alpha}
\leq \frac{3}{4}\eps^{2-2\alpha - \gamma}  
 \ll (\eps^{1-\alpha} - d(x)) M_\eps^{x}[\phi].  
\end{equation}
Therefore, by combining \eqref{cp_D2phi_gamma_opt}, \eqref{est_ord1_cas4} and \eqref{est_ord1_cas4_gamma}, 
the choice $\Delta \hat x= \pm \eps^{1-\alpha} n(\bar x)$ in the left-hand side of \eqref{eq_gene3} yields 
\begin{align*}
(D& \phi(x) - p) \cdot \Delta \hat x  + \norm{\Delta \hat x - \Delta x} M_\eps^{x}[\phi]   + \frac{1}{2} \langle D^2\phi(x) \Delta x ,\Delta x\rangle 
 - \frac{1}{2} \langle \Gamma \Delta \hat x ,\Delta \hat x\rangle - \eps^2 f \left(t,x, z,  p,\Gamma \right) \\
&  \leq  \frac{1}{2} ( \eps^{1-\alpha} - d(x)) \big( M_\eps^{x}[ \phi] + \frac{7}{2}\norm{D^2\phi(x)} \eps^{1-\alpha} \big)
    + \frac{3}{4}\eps^{2-2\alpha - \gamma} 
 - \eps^2 \min_{p\in B (p_\text{opt}^M,r)} f(t,x,z,p, \Gamma_\text{opt}) +o(\eps^2) \\
& \leq  \frac{1}{4} (\eps^{1-\alpha}-d(x)) M_\eps^{x}[\phi]  - \eps^2 \min_{p\in B (p_\text{opt}^M,r)} f(t,x,z,p, \Gamma_\text{opt}) +o(\eps^2), 
\end{align*}
as desired.

For  case \ref{cas2}, in view of the condition \eqref{def_nul}, we can pick  $\sigma >1-\alpha$ such that 
\begin{equation}\label{cond_mu_l}
 \rho <\sigma <1 -  \frac{\gamma(r-1)}{2}. 
\end{equation}  
Let $v^\lambda$ be a unit eigenvector for the minimum eigenvalue $\lambda = \lambda_{\text{min}}(\Gamma_\text{opt}- \Gamma)$.
We choose $\Delta \hat x$ of the form 
\begin{equation}\label{def_move3}
\Delta \hat x=\pm \left[\left(\eps^{1-\alpha}-\eps^{\sigma}\right)n(\bar x)+\text{sgn}(\langle n(\bar x),v^\lambda \rangle) \eps^{\sigma} v^\lambda \right]
 = \pm \left[ a_1  n(\bar x) +b  v^\lambda \right], 
 \end{equation}
where 
$a_1=  \left(\eps^{1-\alpha} -  \eps^{\sigma}\right)$, $ b =\text{sgn}  (\langle n(\bar x), v^\lambda  \rangle ) \eps^{\sigma}$
 and  sgn denotes the sign function with the convention that $\text{sgn}(0)=1$. 
The sign $\pm$ will be chosen later. This move fulfills the following estimate.

\begin{lemma} The move $\Delta \hat x$ defined by \eqref{def_move3} is authorized by the game and satisfies
\begin{equation}\label{bonus_bdry_3}
 (D\phi(x)  - p_\text{opt}^M) \cdot \Delta \hat x  + \norm{\Delta \hat x - \Delta x} M_\eps^{x}[\phi]
 \leq \frac{\eps^{1-\alpha} -d(x)}{2} (   M_\eps^{x}[\phi] +  \norm{D^2\phi(x)} \eps^{1-\alpha} )
- 4 \eps^{\sigma} M_\eps^{x}[\phi], 
\end{equation}
independently of the choice on $\pm$ in \eqref{def_move3}.
\end{lemma}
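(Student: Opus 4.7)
The plan is to handle the two assertions separately: first verify that $\Delta \hat x$ given by \eqref{def_move3} satisfies $\|\Delta \hat x\| \leq \eps^{1-\alpha}$, then establish \eqref{bonus_bdry_3} for both sign choices. For the authorization, set $c := |\langle n(\bar x), v^\lambda\rangle| \in [0,1]$, so that the sign convention on $b$ gives $b \langle n(\bar x), v^\lambda\rangle = c\eps^\sigma$; expanding
\begin{equation*}
\|\Delta \hat x\|^2 = a_1^2 + 2 a_1 b \langle n(\bar x), v^\lambda\rangle + b^2 = \eps^{2-2\alpha} - 2(1-c)(\eps^{1-\alpha} - \eps^\sigma)\eps^\sigma,
\end{equation*}
which is bounded by $\eps^{2-2\alpha}$ for $\eps$ small (since $\sigma > 1-\alpha$ ensures $\eps^{1-\alpha}-\eps^\sigma>0$).

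For the main inequality, I would use \eqref{p_opt_M} to write $D\phi(x) - p_\text{opt}^M = -A\, n(\bar x)$ with
\begin{equation*}
A := \tfrac{1}{2}\big(1 - \tfrac{d(x)}{\eps^{1-\alpha}}\big) M_\eps^{x}[\phi] - \tfrac{\eps^{1-\alpha}}{4}\big(1 - \tfrac{d^2(x)}{\eps^{2-2\alpha}}\big)(D^2\phi(x))_{11},
\end{equation*}
so that $(D\phi(x) - p_\text{opt}^M)\cdot \Delta \hat x = \mp A(a_1 + c\eps^\sigma) = \mp A\eps^{1-\alpha} \pm A(1-c)\eps^\sigma$. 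Introducing $D := \eps^{1-\alpha}-d(x)$ and factoring $1 - d^2/\eps^{2-2\alpha} = D(\eps^{1-\alpha}+d)/\eps^{2-2\alpha}$ produces the explicit identity $A\eps^{1-\alpha} = \tfrac{D M_\eps^x[\phi]}{2} - \tfrac{D(\eps^{1-\alpha}+d)}{4}(D^2\phi)_{11}$ together with the crude bound $|A| \leq \tfrac{1}{2}|M_\eps^x[\phi]| + \tfrac{\eps^{1-\alpha}}{4}\|D^2\phi\|$.

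The geometric behaviour then splits by sign. For the minus sign, $\|\Delta \hat x + \eps^{1-\alpha}n(\bar x)\| = \sqrt{2(1-c)}\eps^\sigma$, so the first assertion of Lemma \ref{boundary_bounce1} yields $\hat x \in \Omega$, whence $\|\Delta \hat x - \Delta x\|=0$; combining with the explicit formula for $A\eps^{1-\alpha}$ and the bound $\tfrac{\eps^{1-\alpha}+d}{2}|(D^2\phi)_{11}| \leq \eps^{1-\alpha}\|D^2\phi\|$ shows at once that \eqref{bonus_bdry_3} reduces to absorbing $|A(1-c)\eps^\sigma| \leq \tfrac{1}{2}\eps^\sigma|M_\eps^x[\phi]| + O(\eps^{1-\alpha+\sigma}\|D^2\phi\|)$ into the slack $4\eps^\sigma |M_\eps^x[\phi]|$. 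For the plus sign, $\|\Delta \hat x - \eps^{1-\alpha}n(\bar x)\| = \sqrt{2(1-c)}\eps^\sigma$ and the hypothesis $d(x) \leq \eps^{1-\alpha} - \eps^\rho$ with $\rho<\sigma$ forces $\hat x \notin \Omega$ (the second assertion of Lemma \ref{boundary_bounce1} gives $\|\Delta\hat x - \Delta x\| \geq \eps^\rho - \sqrt{2}\eps^\sigma - 2C_1\eps^{2-2\alpha}>0$ for small $\eps$). The same lemma then yields
\begin{equation*}
\|\Delta \hat x - \Delta x\| M_\eps^x[\phi] \leq D M_\eps^x[\phi] + \sqrt{2(1-c)}\eps^\sigma |M_\eps^x[\phi]| + 2C_1\eps^{2-2\alpha}|M_\eps^x[\phi]|,
\end{equation*}
and combining with the dot-product computation I reduce \eqref{bonus_bdry_3} to checking that $4\eps^\sigma |M_\eps^x[\phi]|$ dominates $\sqrt{2(1-c)}\eps^\sigma |M_\eps^x[\phi]| + \tfrac{1}{2}\eps^\sigma |M_\eps^x[\phi]| + o(\eps^\sigma|M_\eps^x[\phi]|)$. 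Since $\sqrt{2}+\tfrac{1}{2}<4$, this closes once the residual terms $O(\eps^{2-2\alpha}|M_\eps^x[\phi]|)+O(\eps^{1-\alpha+\sigma}\|D^2\phi\|)$ are confirmed to be $o(\eps^\sigma|M_\eps^x[\phi]|)$, which uses $\sigma<1<2-2\alpha$ (from $\alpha<1/3$) and $|M_\eps^x[\phi]| \geq \eps^{1-\alpha-\kappa} \gg \eps^{1-\alpha}$ (from case \ref{cons_sub_cas4}).

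The main obstacle is uniformity in the parameter $c \in [0,1]$ that encodes the unknown alignment of the eigenvector $v^\lambda$ with the outer normal $n(\bar x)$: both the admissibility and the boundary correction depend on $c$, and one must verify that the worst case $c = 1$ (where $\|\Delta \hat x\|^2$ saturates the admissibility ceiling) is exactly the favourable case for the bouncing estimate. The crude bounds $\sqrt{2(1-c)} \leq \sqrt{2}$ and $(1-c) \leq 1$ then suffice, and the constant $4$ on the right side of \eqref{bonus_bdry_3} is precisely what is needed to dominate $\sqrt{2} + \tfrac{1}{2}$.
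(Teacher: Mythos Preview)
Your proposal is correct and follows essentially the same approach as the paper: verify admissibility by expanding $\|\Delta\hat x\|^2$, split by the sign in \eqref{def_move3}, invoke Lemma~\ref{boundary_bounce1} to get $\hat x\in\Omega$ (minus) or the lower bound \eqref{rebond_dist_boundary_geo} (plus), and then reduce to elementary inequalities. Your tracking of the parameter $c=|\langle n(\bar x),v^\lambda\rangle|$ is slightly sharper than the paper's (which simply bounds $\sqrt{2(1-c)}\le\sqrt2$ and drops a negative term early), but the structure is the same; your closing remark that $4$ is ``precisely'' needed is an overstatement---any constant strictly greater than $\sqrt2+\tfrac12$ would do for $\eps$ small.
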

\begin{proof} To authorize this move, it suffices to check that $\norm{\Delta \hat x }\leq \eps^{1-\alpha}$. 
After some calculations and by rearranging the terms, we compute
\begin{align*}
\norm{\Delta \hat x}^2 & =\eps^{2-2\alpha} +2 \eps^{2 \sigma}  
 - 2 \eps^{1-\alpha+\sigma} +  \eps^{\sigma} \Big( \eps^{1-\alpha} -  \eps^{\sigma} \Big)   |\langle n(\bar x), v^\lambda  \rangle|  \\
& = \eps^{2-2\alpha} - 2 \eps^{1-\alpha+\sigma} ( 1 -  \eps^{ \sigma-1+\alpha}  )   \Big(1 - \frac{1}{2} |\langle n(\bar x), v^\lambda  \rangle| \Big) 
\leq \eps^{2-2\alpha}.
\end{align*}
For the second part, we distinguish successively the two cases $\pm$.
By \eqref{def_move3}, we directly compute
\begin{equation}\label{move3_proj_n}
 \Delta \hat x \cdot n(\bar x)= \pm \left[ \left(\eps^{1-\alpha} -  \eps^{\sigma}\right) +
 |\langle n(\bar x), v^\lambda  \rangle|  \eps^{\sigma} \right] 
= \pm \left[ \eps^{1-\alpha} -   \left(1- |\langle n(\bar x), v^\lambda  \rangle| \right)  \eps^{\sigma} \right].
\end{equation}
If  $\Delta \hat x \cdot n(\bar x) \leq 0$, this move corresponds to the sign $-$ in \eqref{def_move3} by \eqref{move3_proj_n}
and  we observe that $\hat x \in \Omega$ by  Lemma~\ref{boundary_bounce1}. 
As a result, by introducing the explicit expressions of $p_\text{opt}^M$ and $(\Delta \hat x)_1$ 
respectively given by \eqref{p_opt_M} and~\eqref{move3_proj_n},  we get 
\begin{multline}\label{c1}
 (D\phi(x)   - p_\text{opt}^M)  \cdot \Delta \hat x + \norm{\Delta \hat x - \Delta x} M_\eps^{x}[ \phi]
 = (D\phi(x) - p_\text{opt}^M)_1  (\Delta \hat x)_1  \\
  =  - \left( -\frac{1}{2} (1 -\frac{d(x)}{\eps^{1-\alpha}})  M_\eps^{x}[ \phi]
+\frac{1}{4} (\eps^{1-\alpha} - \frac{d^2(x)}{\eps^{1-\alpha}})   (D^2\phi(x))_{11}  \right)
 \left(\eps^{1-\alpha} - (1-|\langle n(\bar x), v^\lambda  \rangle|) \eps^{\sigma}\right) .
\end{multline}
Since $0 \leq \eps^{1-\alpha}  -  (1- |\langle n(\bar x), v^\lambda  \rangle|) \eps^{\sigma}\leq \eps^{1-\alpha}$, we observe that 
\begin{align*}
\left| \frac{1}{4} (\eps^{1-\alpha} - \frac{d^2(x)}{\eps^{1-\alpha}})   (D^2\phi(x))_{11}
 \left(\eps^{1-\alpha}  -  (1- |\langle n(\bar x), v^\lambda  \rangle|) \eps^{\sigma}\right)  \right| 
 & \leq \frac{1}{4} \norm{D^2\phi(x)} (\eps^{2-2 \alpha} - d^2(x))  \\
 & \leq \frac{1}{2} \norm{D^2\phi(x)} (\eps^{1- \alpha} - d(x))\eps^{1- \alpha}.
\end{align*}
By plugging this inequality in \eqref{c1} and rearranging the terms, we obtain
\begin{align*}
(D\phi(x) - p_\text{opt}^M)  \cdot \Delta \hat x + & \norm{\Delta \hat x - \Delta x} M_\eps^{x}[ \phi]  \\
 & \leq (\eps^{1-\alpha} -d(x) )  \left\{
\frac{1}{2}  \left(1 - (1- |\langle n(\bar x), v^\lambda  \rangle|) \eps^{\sigma -1+\alpha}\right) M_\eps^{x}[\phi]
 + \frac{1}{2}  \norm{D^2\phi(x)} \eps^{1-\alpha} \right\} \\
& \leq \frac{1}{2} (\eps^{1-\alpha} -d(x))  ( M_\eps^{x}[\phi] +  \norm{D^2\phi(x)}  \eps^{1-\alpha}  )  
- \frac{1}{2}   \eps^{\sigma } M_\eps^{x}[\phi] .
 \end{align*}
Otherwise, if  $\Delta \hat x \cdot n(\bar x)\geq 0$, this move corresponds  to the sign $+$ in \eqref{def_move3} by \eqref{move3_proj_n}. We have
\begin{equation*}
\norm{\Delta \hat x-\eps^{1-\alpha} n(\bar x)}=\norm{-\eps^{\sigma}  n(\bar x) +\text{sgn}(\langle n(\bar x), v^\lambda  \rangle)\eps^{\sigma}v^\lambda } 
 = \sqrt{2}  \eps^{ \sigma} \sqrt{ 1 - |\langle n(\bar x), v^\lambda  \rangle|}
\leq \sqrt{2}  \eps^{ \sigma}.
 \end{equation*} 
By using Lemma \ref{boundary_bounce1},  we deduce from the previous inequality that, for $\eps$ small enough, 
the intermediate point $\hat x=x +\Delta \hat x$ is outside $\Omega$ and 
\begin{equation} \label{eq122}
\eps^{1-\alpha} - d(x)  - \sqrt{2}\eps^\sigma -2 C_1 \eps^{2-2\alpha}  \leq \norm{\Delta \hat x - \Delta x}, 
\end{equation} 
where $C_1$ is a certain constant depending on the principal curvatures of $\partial \Omega$ in a neighborhood of~$x$.  
By repeating the computations above, we find
\begin{align*}
(D\phi(x) - p_\text{opt}^M)_1  (\Delta \hat x)_1 
 & \leq \frac{1}{2} (\eps^{1-\alpha} -d(x) )  \left\{
  -   (1 - ( 1- |\langle n(\bar x), v^\lambda  \rangle|)\eps^{\sigma -1 +\alpha}) M_\eps^{x}[\phi]
 +   \norm{D^2\phi(x)} \eps^{1-\alpha} \right\}   \\ 
 & \leq \frac{1}{2} (\eps^{1-\alpha} -d(x) ) (-  M_\eps^{x}[\phi] + \norm{D^2\phi(x)} \eps^{1-\alpha} ).
\end{align*}
Recalling that $M_\eps^{x}[\phi]<0$, by combining \eqref{eq122} with the previous estimate, we are led to 
\begin{align*}
(D\phi(x) & - p_\text{opt}^M) \cdot \Delta \hat x  + \norm{\Delta \hat x - \Delta x} M_\eps^{x}[\phi]  \\
 & \leq \frac{1}{2} (\eps^{1-\alpha} -d(x) ) (-  M_\eps^{x}[\phi] + \norm{D^2\phi(x)} \eps^{1-\alpha} )
+\left(\eps^{1-\alpha} - d(x) -  \sqrt{2} \eps^{\sigma}- 2 C_1\eps^{2-2\alpha} \right)  M_\eps^{x}[\phi] \\ 
 & \leq \frac{1}{2}  (\eps^{1-\alpha} -d(x) )  ( M_\eps^{x}[\phi]  +  \norm{D^2\phi(x)} \eps^{1-\alpha} )
- \eps^{\sigma} M_\eps^{x}[\phi] ( \sqrt{2}  + 2 C_1\eps^{2-2\alpha - \sigma } ).
\end{align*}
 Putting together the two cases, the proof of the inequality given by \eqref{bonus_bdry_3} is complete. 
\end{proof}

Now we turn back to the analysis of case \ref{cas2}. 
Note that since $f$ is monotone in its last input
\begin{equation*}
f(t,x,z,p,\Gamma) \geq f(t,x,z,p, \Gamma_\text{opt} - \lambda I). 
\end{equation*}
The direct evaluation of the  second order  terms in $\Delta \hat x$ of \eqref{min_fun} gives 
\begin{align*}
\langle (  \Gamma_{\text{opt}}- \Gamma)   \Delta \hat x  ,  \Delta \hat x \rangle
& =  a_1^2 \langle (\Gamma_{\text{opt}}- \Gamma)    n(\bar x) ,  n(\bar x) \rangle
    + 2 a_1 b \langle (\Gamma_{\text{opt}}- \Gamma)    v^\lambda ,  n(\bar x) \rangle
    + b^2 \langle (\Gamma_{\text{opt}}- \Gamma)    v^\lambda , v^\lambda \rangle \\
 & \leq a_1^2 (\norm{\Gamma_{\text{opt}}}  + \norm{\Gamma} )+ 2 a_1 b \lambda  \langle  v^\lambda ,  n(\bar x) \rangle  + b^2 \lambda .
\end{align*}
With our choice for $\Delta \hat x$, we have $a_1 b \langle  v^\lambda ,  n(\bar x) \rangle \geq 0$. 
Hence, since $\lambda \leq 0$ in case \ref{cas2}, it follows that 
\begin{align*}
\langle (  \Gamma_{\text{opt}}- \Gamma)   \Delta \hat x  ,  \Delta \hat x \rangle
 & \leq  a_1^2 (\norm{\Gamma_{\text{opt}}}  + \norm{\Gamma} ) + b^2 \lambda 
\leq \eps^{2- 2\alpha}  (\norm{D^2\phi(x)}  + \eps^{-\gamma} ) +  \eps^{2 \sigma} \lambda .
\end{align*}
Choosing $\Delta \hat x$ as announced, using \eqref{cp_D2phi_gamma_opt} and \eqref{bonus_bdry_3} 
 and changing the sign $\pm$ in \eqref{def_move3} if necessary to make 
$(p_{\text{opt}} - p) \cdot \Delta \hat x\leq 0$,
\begin{align}
(D\phi(x)&  - p)  \cdot \Delta \hat x  + \norm{\Delta \hat x - \Delta x} M_\eps^{x}[\phi]   + \frac{1}{2} \langle D^2\phi(x) \Delta x ,\Delta x\rangle 
 - \frac{1}{2} \langle \Gamma \Delta \hat x ,\Delta \hat x\rangle - \eps^2 f \left(t,x, z,  p,\Gamma \right)  \notag \\
 & \leq
\frac{1}{2}  (\eps^{1-\alpha} -d(x) ) \left(M_\eps^{x}[\phi] +\frac{9}{2}\norm{D^2\phi(x) } \eps^{1-\alpha} \right)
 + \frac{1}{2} \eps^{2-2\alpha} (\norm{D^2\phi(x)}   + \eps^{-\gamma} ) - 4 \eps^{\sigma} M_\eps^{x}[\phi]  \notag  \\
 &  \phantom{\leq \frac{1}{2}  (\eps^{1-\alpha} -d(x))====================}
 + \frac{1}{2} \eps^{\sigma} \lambda - \eps^2 f(t,x,z,p, \Gamma_\text{opt}- \lambda I). \label{eq_lambda_cas3}
\end{align}
 Since $d(x) \leq \eps^{1-\alpha}-\eps^\rho$ in case \ref{cons_sub_cas4}, 
we deduce from the assumption \eqref{cond_mu_l} that
\begin{equation}\label{est_dM_sigma}
 \eps^{1-\alpha} -d(x) \geq \eps^{\rho} \gg    \eps^{\sigma} .
\end{equation}
Since $M_\eps^{x}[\phi] \leq - \eps^{1-\alpha - \kappa}$ 
 and $\eps^{2-2\alpha-\gamma}\ll \eps^{1-\alpha -\kappa + \rho}$ using \eqref{relation_gamma_nul_tilde},  we conclude by \eqref{est_dM_sigma} that
\begin{multline}\label{est_case2_line1}
 \frac{1}{2}  (\eps^{1-\alpha} -d(x) ) \Big(M_\eps^{x}[\phi]   +\frac{9}{2}\norm{D^2\phi(x) } \eps^{1-\alpha} \Big)
   + \frac{1}{2} \eps^{2-2\alpha} (\norm{D^2\phi(x)}   + \eps^{-\gamma} )- 4 \eps^{\sigma} M_\eps^{x}[\phi] \\
  \leq \frac{1}{4} (\eps^{1-\alpha} - d(x)) M_\eps^{x}[\phi] .
\end{multline}
It remains to control the terms in \eqref{eq_lambda_cas3} depending on $\lambda$.  
If $-1\leq \lambda \leq -\eps^\alpha $,  then $\eps^{2 \sigma} \lambda \leq - \eps^{2 \sigma + \alpha}$ 
 and  $f(t,x,z,p, \Gamma_\text{opt} - \lambda I)$ is bounded. So for such $\lambda$ we have
\begin{equation}\label{est_case2_lambda}
 \frac{1}{2} \eps^{2  \sigma} \lambda 
- \eps^2 f(t,x,z, p, \Gamma_\text{opt} - \lambda I) \leq - \frac{1}{2} \eps^{2 \sigma+ \alpha} +O(\eps^2).
\end{equation}
In this case, the right-hand side is $\ds \leq  - \eps^2 \min_{p\in B (p_\text{opt}^M,r)} f(t,x,z,p, \Gamma_\text{opt})$
when $\eps$ is sufficiently small since $\eps^{2  \sigma  + \alpha}\gg \eps^2$ by \eqref{cond_mu_l}.

To complete case \ref{cas2}, suppose $\lambda \leq  - 1$. 
 Then using the growth hypothesis \eqref{cont_growth_p_Gamma} and recalling that $p$ is near $p_\text{opt}$ we have 
\begin{equation}\label{est_case2_lambda_2}
 \frac{1}{2} \eps^{2  \sigma} \lambda  - \eps^2 f(t,x,z, p, D^2\phi(x) - \lambda I)
\leq -\frac{1}{2}  \eps^{2  \sigma} |\lambda| +C\eps^2 (1+|\lambda|^r). 
\end{equation}
Now notice that $|\lambda| \leq C(1+\norm{\Gamma}) \leq C\eps^{-\gamma}$. 
Since $\gamma(r-1)<2 - 2 \sigma$  by \eqref{cond_mu_l}, we have $\eps^{2  \sigma} |\lambda| \gg \eps^2 |\lambda|^r$. Therefore
\begin{equation*}
 -\frac{1}{2} \eps^{2 \sigma} |\lambda|  +C\eps^2|\lambda|^r \leq - \frac{1}{4} \eps^{2 \sigma} 
\leq - \eps^2 \min_{p\in B (p_\text{opt}^M,r)} f(t,x,z,p, \Gamma_\text{opt}), 
\end{equation*}
for $\eps$ small enough. Case \ref{cas2} is now complete.

Finally in case \ref{cas5},  we take $\Delta \hat x$ to be parallel to $p_\text{opt}^M - p$ with norm $\eps^{1-\alpha}$, and with the sign chosen such that
\begin{equation}\label{estc43_popt}
(p_\text{opt}^M-p)\cdot \Delta \hat x = -\eps^{1-\alpha} \norm{p_\text{opt}^M -p} 
\leq  - 3 (\eps^{1-\alpha} - d(x)) |M_\eps^{x}[\phi]|
\leq  - 3   \eps^{1-\alpha - \kappa + \rho }. 
\end{equation}
Estimating the other terms on the left-hand side of \eqref{eq_gene3}, some manipulations analogous to those made in Lemma \ref{lemma_sup_Ab} led us to
\begin{align*}
\left|(D\phi(x)  - p_\text{opt}^M)  \cdot \Delta \hat x + \norm{\Delta \hat x - \Delta x} M_\eps^{x}[ \phi] \right|
 & \leq \frac{1}{2}(\eps^{1-\alpha}- d(x))  \left( 3 |M_\eps^{x}[\phi]| + 4 \norm{D^2\phi(x)} \eps^{1-\alpha}   \right) .
\end{align*}
From \eqref{estc43_popt}, we deduce that
\begin{align*}
 (D\phi(x)  - p)  \cdot \Delta \hat x + \norm{\Delta \hat x - \Delta x} M_\eps^{x}[ \phi] 
\leq - \frac{1}{2}\eps^{1-\alpha} \norm{p_\text{opt}^M -p} +2 \norm{D^2\phi(x)} \eps^{2-2\alpha} .
\end{align*}
Estimating the other terms
\begin{equation} \label{est_gamma_opt_cas4}
| \langle (\Gamma_\text{opt}(x)-\Gamma)\Delta \hat x, \Delta \hat x \rangle| 
\leq 
(C+\norm{\Gamma})\norm{\Delta \hat x}^2 \leq C\eps^{-\gamma+2-2\alpha}, 
\end{equation}
 and 
\begin{equation} \label{est_case3_lambda}
\eps^2 |f(t,x,z,p, \Gamma)|\leq C\eps^2 (1+\norm{p}^q+\norm{\Gamma}^r) \leq C ( \eps^2+\eps^2 \norm{p}^q + \eps^{2-\gamma r} ) .
\end{equation}
Thus 
\begin{multline*}
(D\phi(x)  - p)  \cdot \Delta \hat x  + \norm{\Delta \hat x - \Delta x} M_\eps^{x}[\phi]    + \frac{1}{2} \langle D^2\phi(x) \Delta x ,\Delta x\rangle 
 - \frac{1}{2} \langle \Gamma \Delta \hat x ,\Delta \hat x\rangle - \eps^2 f \left(t,x, z,  p,\Gamma \right) \\ 
  \leq  -\frac{1}{2} \eps^{1-\alpha} \norm{p_\text{opt}^M - p} 
 +C\eps^2 \norm{p}^q +O(\eps^{2-2\alpha} + \eps^{-\gamma+2-2\alpha}+\eps^{2-\gamma r} ).
\end{multline*}
Since $ \eps^{1-\alpha} \norm{p_\text{opt}^M - p}\geq 2 \eps^{1-\alpha -\kappa + \rho } $ by using  \eqref{estc43_popt}, we obtain that
\begin{equation}\label{comp_puissance_eps_popt_c4}
\eps^{-\gamma +2-2\alpha} +\eps^{2-\gamma r} \ll \eps^{1-\alpha} \norm{p_\text{opt}^M-p}, 
\end{equation}
noticing that $\min(- \gamma +2 - 2 \alpha , 2-\gamma r) >1-\alpha - \kappa + \rho $ by using \eqref{cd_coeff_classiq} and \eqref{relation_gamma_nul_tilde}. 
Thus, by combining~\eqref{est_gamma_opt_cas4}--\eqref{comp_puissance_eps_popt_c4}, we conclude that 
\begin{multline*}
(D\phi(x)  - p)  \cdot \Delta \hat x  + \norm{\Delta \hat x - \Delta x} M_\eps^{x}[\phi]    + \frac{1}{2} \langle D^2\phi(x) \Delta x ,\Delta x\rangle 
 - \frac{1}{2} \langle \Gamma \Delta \hat x ,\Delta \hat x\rangle - \eps^2 f \left(t,x, z,  p,\Gamma \right) \\
  \leq -\frac{1}{2\sqrt{2}} \eps^{1-\alpha} \norm{ p_\text{opt}^M - p} +C\eps^2 \norm{p}^q . 
\end{multline*}
If $\norm{p}\leq 2 \norm{p_\text{opt}^M}$,  then $\eps^2 \norm{p}^q \ll  \eps^{1-\alpha -\kappa + \rho }$. 
If $\norm{p}\geq 2 \norm{p_\text{opt}^M}$, we infer from the condition on $\beta$ in~\eqref{cd_coeff_classiq} that
$\eps^{1-\alpha} \norm{p_\text{opt}^M -p} \sim \eps^{1-\alpha} \norm{p} \gg \eps^2 \norm{p}^q$. 
In either case the term $\eps^{1-\alpha} \norm{p_\text{opt}^M - p}$ dominates and we get
\begin{equation*}
( p_\text{opt}^M - p)\cdot \Delta \hat x  +\frac{1}{2} \langle (\Gamma_\text{opt}^M -\Gamma) \Delta \hat x, \Delta \hat x \rangle - \eps^2 f(t,x,z,p, \Gamma)
   \leq -\frac{1}{4} \eps^{1-\alpha} \norm{ p_\text{opt}^M - p}
\leq  \frac{3}{4} (\eps^{1-\alpha} - d(x)) M_\eps^{x}[\phi] .  
\end{equation*}
 The right-hand side of this inequality is certainly $\ds \leq \frac{1}{4} (\eps^{1-\alpha} - d(x)) M_\eps^{x}[\phi] 
 - \eps^2 \min_{p\in B (p_\text{opt}^M,r)} f(t,x,z,p, \Gamma_\text{opt})$ when $\eps$ is small.
Case \ref{cas5} is now complete which finishes the proof of Proposition~\ref{cons_new_sub}.

\subsection{Application to stability}
To prove stability in Section \ref{stability}, we will need some global variants of Propositions~\ref{cons_lower_bound} and \ref{cons_new_sub}. 
It is at this point that the uniformity of the constants in \eqref{loc_lip_p_Gamma}--\eqref{cont_growth_p_Gamma} in $x$ and $t$, 
and the growth condition \eqref{cont_growth_p_Gamma} intervene.
We must also take care of the Neumann boundary condition. Unlike the Dirichlet problem solved in \cite{kohns}, 
it is no longer appropriate to consider constant functions as test functions. 
For this reason, we are going to consider a $C_b^2(\ol \Omega)$-function $\psi$ such that
\begin{equation} \label{der_neumann_h}
 \frac{\partial \psi}{\partial n}=\norm{h}_{L^\infty}+1 \quad \text{ on } \partial \Omega.
\end{equation}
It is worth noticing that  $\psi$ has exactly the same properties as the function
introduced in Section~\ref{rules_el_game} for the game associated to the elliptic PDE with Neumann boundary condition.
If we take \mbox{$\psi=(\norm{h}_{L^\infty}+1)\psi_1$} where $\psi_1 \in C_b^2(\ol \Omega)$ such that 
$\dfrac{\partial \psi_1}{\partial n}=  1$ on $\partial \Omega$, it is clear that
\mbox{$ \norm{\psi}_{C_b^{2}(\ol \Omega)} =\norm{\psi_1}_{C_b^{2}(\ol \Omega)}  (1 + \norm{h}_{L^\infty})$}.

The next lemma is the crucial point to obtain stability in both parabolic and elliptic settings.
\begin{lemma} \label{est_mM_psi_bord}
If $\psi \in C_b^2(\ol \Omega)$ satisfies \eqref{der_neumann_h}, 
then there exists $\eps_0>0$ such that for all $\eps<\eps_0$ and for all $x\in \Omega(\eps^{1-\alpha})$, 
 \begin{equation} \label{est_psi_u} 
  - \norm{h}_{L^\infty}  - \norm{D\psi}_{L^\infty(\ol \Omega)}   \leq   M_\eps^x[ \psi] \leq  - \frac{1}{2} 
\quad \text{ and } \quad 
\frac{1}{2} \leq  m_\eps^x[  - \psi] \leq \norm{h}_{L^\infty} +\norm{D\psi}_{L^\infty(\ol \Omega)} .
 \end{equation}
\end{lemma}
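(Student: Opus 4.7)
The plan is to exploit the boundary condition $\partial\psi/\partial n = \|h\|_{L^\infty}+1$ on $\partial\Omega$, combined with the fact that whenever $x+\Delta\hat x \notin \Omega$ the point $x+\Delta x = \proj_{\overline\Omega}(x+\Delta\hat x)$ lies on $\partial\Omega$, so that $n(x+\Delta x)$ is indeed the outward unit normal at a boundary point close to $\bar x=\proj_{\partial\Omega}(x)$. The outer bounds (the first inequality in each pair in \eqref{est_psi_u}) are immediate from Cauchy--Schwarz applied to $D\psi$ against the unit vector $n(x+\Delta x)$ together with $\|h\|_{L^\infty}$; so all the work lies in the nontrivial bounds $M_\eps^x[\psi] \leq -\frac{1}{2}$ and $m_\eps^x[-\psi]\geq \frac{1}{2}$.

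To control $D\psi(x)\cdot n(x+\Delta x)$, I would decompose
\begin{equation*}
D\psi(x)\cdot n(x+\Delta x) = D\psi(\bar x)\cdot n(\bar x) + \bigl[D\psi(x)-D\psi(\bar x)\bigr]\cdot n(x+\Delta x) + D\psi(\bar x)\cdot\bigl[n(x+\Delta x)-n(\bar x)\bigr]
\end{equation*}
and estimate each correction term by an $O(\eps^{1-\alpha})$ quantity. The first correction is bounded by $\|D^2\psi\|_{L^\infty}\,d(x)\leq \|D^2\psi\|_{L^\infty}\eps^{1-\alpha}$ since $\|x-\bar x\|=d(x)<\eps^{1-\alpha}$. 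For the second correction, I use that $x+\Delta x \in \partial\Omega\cap B(\bar x, 2\eps^{1-\alpha})$ (as was already exploited in the proof of Lemma \ref{boundary_bounce1} via \eqref{eq_11} and the triangle inequality), and that $n=-Dd$ is Lipschitz on a tubular neighbourhood of $\partial\Omega$ with a Lipschitz constant controlled by $1/r$ thanks to the uniform interior/exterior ball conditions (equivalently, by the uniform bound on the principal curvatures of $\partial\Omega$). This gives a uniform constant $C>0$, depending only on $\Omega$ and $\|\psi\|_{C^2_b(\overline\Omega)}$, such that
\begin{equation*}
\bigl|D\psi(x)\cdot n(x+\Delta x) - D\psi(\bar x)\cdot n(\bar x)\bigr| \leq C\eps^{1-\alpha},
\end{equation*}
uniformly in $x\in\Omega(\eps^{1-\alpha})$ and in the admissible move $\Delta\hat x$.

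Using $D\psi(\bar x)\cdot n(\bar x)=\partial\psi/\partial n(\bar x)=\|h\|_{L^\infty}+1$ and $|h(x+\Delta x)|\leq \|h\|_{L^\infty}$, I then conclude that
\begin{equation*}
h(x+\Delta x) - D\psi(x)\cdot n(x+\Delta x) \leq \|h\|_{L^\infty} - (\|h\|_{L^\infty}+1) + C\eps^{1-\alpha} = -1 + C\eps^{1-\alpha},
\end{equation*}
and symmetrically
\begin{equation*}
h(x+\Delta x) + D\psi(x)\cdot n(x+\Delta x) \geq -\|h\|_{L^\infty} + (\|h\|_{L^\infty}+1) - C\eps^{1-\alpha} = 1 - C\eps^{1-\alpha}.
\end{equation*}
Taking $\eps_0>0$ small enough that $C\eps_0^{1-\alpha}\leq 1/2$ and then passing to the sup in $\Delta\hat x$ (resp.\ the inf) in the definitions \eqref{M_eps_par}--\eqref{m_eps_par} yields both nontrivial bounds uniformly in $x\in\Omega(\eps^{1-\alpha})$. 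The main obstacle is precisely the uniformity of the error $C\eps^{1-\alpha}$ across $x\in\partial\Omega$: a general $C^2$-domain would not give this, but the uniform interior/exterior ball conditions supply the uniform bound on the curvature of $\partial\Omega$ (hence on the Lipschitz constant of $n$) that makes the argument work globally.
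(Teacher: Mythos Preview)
Your proof is correct, but the paper's route is shorter. Instead of anchoring the decomposition at $\bar x=\proj_{\partial\Omega}(x)$, the paper evaluates the Neumann condition directly at the projected point $x+\Delta x\in\partial\Omega$: it writes
\[
h(x+\Delta x)-D\psi(x)\cdot n(x+\Delta x)
= h(x+\Delta x)-D\psi(x+\Delta x)\cdot n(x+\Delta x)
+\bigl(D\psi(x+\Delta x)-D\psi(x)\bigr)\cdot n(x+\Delta x),
\]
so the leading term is exactly $\|h\|_{L^\infty}-(\|h\|_{L^\infty}+1)=-1$ and only \emph{one} correction appears, bounded by $\|D^2\psi\|_{L^\infty}\,\|\Delta x\|\le 2\|D^2\psi\|_{L^\infty}\,\eps^{1-\alpha}$ via \eqref{eq_11}. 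Your decomposition at $\bar x$ produces an extra correction $D\psi(\bar x)\cdot[n(x+\Delta x)-n(\bar x)]$, which forces you to control the Lipschitz constant of $n$ and hence to invoke the uniform ball conditions on $\partial\Omega$. Both approaches give the same $-1+O(\eps^{1-\alpha})$ conclusion, but the paper's needs only $\psi\in C_b^2$ and the move bound, not the curvature of $\partial\Omega$.
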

\begin{proof} 
We shall demonstrate the bounds on  $M_\eps^x[ \psi]$ in  \eqref{est_psi_u}; the proof for  $m_\eps^x[- \psi]$ is entirely parallel. 
The left-hand side inequality is clear by the Cauchy-Schwarz inequality.  
 Let us consider $0<\eps<\eps_0$, where \mbox{$\ds \eps_0= \left(4  \norm{D^2\psi}_{L^\infty(\ol \Omega)}+2 \right)^{-\frac{1}{1-\alpha}} $}.
By the geometric relation \eqref{eq_11}, we observe that every move $\Delta x$ associated to the move $\Delta \hat x$ decided by Mark satisfies 
\begin{equation*}
   \norm{\Delta x}\leq 2\eps^{1-\alpha} \leq  \frac{1}{2  \norm{D^2\psi}_{L^\infty(\ol \Omega)} +1}.
\end{equation*} 
By the Cauchy-Schwarz inequality and using that $\psi \in C_b^{2}(\overline \Omega)$, we have 
\begin{align*}
h(x+\Delta x)  - D\psi(x) \cdot n(x+\Delta x)    
& \leq  \norm{h}_{L^\infty} - D\psi(x+\Delta x)\cdot n(x+\Delta x)+  (D\psi(x+\Delta x)-D\psi(x))\cdot n(x+\Delta x) \\
& \leq  -1+   \norm{D^2\psi}_{L^\infty(\ol \Omega)} \norm{\Delta x} \leq -  \frac{1}{2}.
\end{align*}
Then, by passing to the sup, we get the desired result. 
\end{proof}

\begin{lemma}\label{control_p_Gamma_opt_gene}
Let $\phi \in C_b^{2}(\ol \Omega)$.
Assume that $p_\text{opt}^m$ $p_\text{opt}^M$ and $\Gamma_\text{opt}$ are the strategies, associated to $\phi$, 
 respectively defined by \eqref{p_opt_m}, \eqref{p_opt_M} and \eqref{Gamma_opt}. 
Then, for all $x\in \Omega(\eps^{1-\alpha})$, we have 
\begin{equation*}
\max \left(\norm{ p_\text{opt}^m(x)}, \norm{ p_\text{opt}^M(x)} \right) \leq \frac{1}{2} \left( \norm{h}_{L^\infty}+3 \norm{D\phi}_{C_b^1(\ol \Omega)} \right)  
 \quad  \text{ and } \quad   \norm{ \Gamma_\text{opt}(x)} \leq  \frac{3}{2}  \norm{ D^2\phi}_{L^\infty(\ol \Omega)} .
\end{equation*}
\end{lemma}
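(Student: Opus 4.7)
The proof is a direct computation from the explicit formulas \eqref{p_opt_m}, \eqref{p_opt_M} and \eqref{Gamma_opt}, using only the triangle inequality and a uniform bound on the scalars $m_\eps^x[\phi]$ and $M_\eps^x[\phi]$. The plan is to first observe that, for any $\Delta \hat x$ with $x+\Delta \hat x \notin \Omega$, the Cauchy--Schwarz inequality and $\norm{n(x+\Delta x)}=1$ yield
\begin{equation*}
|h(x+\Delta x) - D\phi(x)\cdot n(x+\Delta x)| \leq \norm{h}_{L^\infty} + \norm{D\phi}_{L^\infty(\ol\Omega)}.
\end{equation*}
Passing to the inf/sup, this gives $\max(|m_\eps^x[\phi]|,|M_\eps^x[\phi]|) \leq \norm{h}_{L^\infty} + \norm{D\phi}_{L^\infty(\ol\Omega)}$.

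Next, for $x\in\Omega(\eps^{1-\alpha})$ one has $0\leq d(x)\leq \eps^{1-\alpha}$, which directly forces
\begin{equation*}
0 \leq \tfrac{1}{2}\Bigl(1-\tfrac{d(x)}{\eps^{1-\alpha}}\Bigr) \leq \tfrac{1}{2}, \qquad 0 \leq \eps^{1-\alpha}\Bigl(1-\tfrac{d^2(x)}{\eps^{2-2\alpha}}\Bigr) \leq \eps^{1-\alpha}.
\end{equation*}
Combining these with $|(D^2\phi(x))_{11}|\leq \norm{D^2\phi}_{L^\infty(\ol\Omega)}$ and the triangle inequality applied to \eqref{p_opt_m}--\eqref{p_opt_M}, one obtains (for either $p_\text{opt}^m$ or $p_\text{opt}^M$)
\begin{equation*}
\norm{p_\text{opt}(x)} \leq \norm{D\phi}_{L^\infty(\ol\Omega)} + \tfrac{1}{2}\bigl(\norm{h}_{L^\infty} + \norm{D\phi}_{L^\infty(\ol\Omega)}\bigr) + \tfrac{\eps^{1-\alpha}}{4}\norm{D^2\phi}_{L^\infty(\ol\Omega)}.
\end{equation*}
For $\eps$ sufficiently small (so that $\eps^{1-\alpha}\leq 6$, which is automatic once the game is well-posed), the last term is dominated by $\tfrac{3}{2}\norm{D^2\phi}_{L^\infty(\ol\Omega)}$, and regrouping gives precisely $\tfrac{1}{2}\bigl(\norm{h}_{L^\infty}+3\norm{D\phi}_{C_b^1(\ol\Omega)}\bigr)$ upon recalling $\norm{D\phi}_{C_b^1(\ol\Omega)} = \norm{D\phi}_{L^\infty(\ol\Omega)}+\norm{D^2\phi}_{L^\infty(\ol\Omega)}$.

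Finally, for $\Gamma_\text{opt}$, the formula \eqref{Gamma_opt} adds only a rank-one perturbation whose coefficient $\tfrac{1}{2}\bigl(-1+d^2(x)/\eps^{2-2\alpha}\bigr)$ has absolute value at most $\tfrac{1}{2}$, so
\begin{equation*}
\norm{\Gamma_\text{opt}(x)} \leq \norm{D^2\phi(x)} + \tfrac{1}{2}|(D^2\phi(x))_{11}| \leq \tfrac{3}{2}\norm{D^2\phi}_{L^\infty(\ol\Omega)}.
\end{equation*}
There is essentially no obstacle here; the only tiny subtlety is keeping the $\eps^{1-\alpha}\norm{D^2\phi}$ contribution to $p_\text{opt}$ under control, which is absorbed into the $\norm{D\phi}_{C_b^1}$ term by taking $\eps$ small. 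Everything else is algebra on the explicit expressions together with the elementary bound on $m_\eps^x[\phi]$, $M_\eps^x[\phi]$.
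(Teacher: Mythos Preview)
Your proof is correct and follows essentially the same approach as the paper: both bound $|M_\eps^x[\phi]|$ by $\norm{h}_{L^\infty}+\norm{D\phi}_{L^\infty(\ol\Omega)}$, apply the triangle inequality to the explicit formulas \eqref{p_opt_m}--\eqref{Gamma_opt}, use $0\le d(x)\le \eps^{1-\alpha}$ to control the scalar coefficients, and absorb the $\eps^{1-\alpha}\norm{D^2\phi}$ contribution into the $\norm{D\phi}_{C_b^1(\ol\Omega)}$ term for $\eps$ small. The only cosmetic difference is that the paper first bounds $\norm{p_\text{opt}^M(x)-D\phi(x)}$ and then adds $\norm{D\phi(x)}$, whereas you bound $\norm{p_\text{opt}(x)}$ directly; the resulting constants and the argument are the same.
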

\begin{proof} The proof being exactly the same for $p_\text{opt}^m$, it is sufficient to show the result for $p_\text{opt}^M$. 
By the triangle inequality and \eqref{p_opt_M}, we have
\begin{align*}
 \norm{p_\text{opt}^M(x)  - D\phi(x)} 
 & \leq \frac{1}{2}\left(1-\frac{ d(x)}{\eps^{1-\alpha}}\right) \left( | M_\eps^{x}[\phi] |
 + \frac{1}{2}\eps^{1-\alpha} \left( 1+ \frac{d(x)}{\eps^{1-\alpha}} \right) \norm{D^2\phi(x)} \right) 
 \\ &
 \leq \frac{1}{2} \left( | M_\eps^{x}[\phi] |  + \eps^{1-\alpha}  \norm{D^2\phi(x)} \right).
\end{align*}
Since $ M_\eps^{x}[\phi]$ is $\eps,x$-bounded by  $\norm{h}_{L^\infty} + \norm{D\phi}_{L^\infty(\ol \Omega)}  $, 
we deduce the desired inequality on $\norm{p_\text{opt}^M (x)}$.
Similarly, the estimate on $ \norm{ \Gamma_\text{opt}(x)}$ stems directly from \eqref{Gamma_opt} and the triangle inequality.
\end{proof}

In preparation for stability, we need to compute the action of $S_\eps$ on $\psi$. According to Lemma~\ref{est_mM_psi_bord}, 
only some cases proposed in Proposition \ref{cons_new_sub} must be considered. The next proposition gives the required estimates for $S_\eps$
concerning these cases. 
\begin{prop} \label{est_stab_sub_ord2} 
Let $f$ satisfy \eqref{ellipticity_f} and \eqref{loc_lip_p_Gamma}--\eqref{cont_growth_p_Gamma} 
 and assume $\alpha$, $\beta$, $\gamma$, $\rho$ 
 fulfill \eqref{condition_pas}--\eqref{cd_coeff_classiq} and \eqref{def_nul}.
Then for any $x$, $t$, $z$ and any $C_b^{2}(\ol \Omega)$-function $\phi$ defined near $x$, 
$S_\eps[x,t,z,\phi]$ being defined by \eqref{def_new_op}, we have
\begin{multline}\label{estsup_par_g}
 S_\eps[x,t,z, \phi] - \phi(x) \\
 \leq 
\begin{cases}
C \eps^2 (1+|z|) , &  \text{ if  } d(x) \geq \eps^{1-\alpha}, \\
3 \eps^{1-\alpha}   M_\eps^{x}[\phi]  +C \eps^2 (1+|z|), & 
\text{ if } d(x)\leq \eps^{1-\alpha} \text{ and } M_\eps^{x}[\phi] \geq \frac{4}{3} \norm{D^2\phi(x)} \eps^{1-\alpha} , \\
     C \eps^2 (1+|z|) , &  \text{ if  } \eps^{1-\alpha} - \eps^{\rho} \leq d(x) \leq \eps^{1-\alpha} 
                          \text{ and } M_\eps^{x}[\phi] \leq \frac{4}{3} \norm{D^2\phi(x)} \eps^{1-\alpha} , \\
 \frac{1}{4} \eps^{\rho}   M_\eps^{x}[\phi]  +C \eps^2 (1+|z|),  & 
\text{ if }d(x) \leq \eps^{1-\alpha} - \eps^{\rho} \text{ and }M_\eps^{x}[\phi] \leq  - \eps^{1-\alpha - \kappa} , \\
\end{cases}
\end{multline}
with a constant $C$ that depends on $\norm{D\phi}_{C_b^{1}(\ol \Omega)}+ \norm{h}_{L^\infty}$ but is independent of $x$, $t$ and $z$.

Moreover, if $d(x)\geq \eps^{1-\alpha}$, or  if $ d(x)\leq  \eps^{1-\alpha}$ 
and $m_\eps^{x}[\phi]  >\frac{1}{2} (3\eps^{1-\alpha}-d(x)) \norm{D^2\phi(x)}$,  then  
\begin{equation}\label{estinf_par_g}
- C\eps^2 (1+|z|) \leq   S_\eps[x,t,z, \phi] - \phi(x), 
 \end{equation}
with a constant $C$ that depends on $\norm{D\phi}_{C_b^{1} (\ol \Omega)}$ but is independent of $x$, $t$ and $z$.
\end{prop}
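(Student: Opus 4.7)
The plan is to derive Proposition \ref{est_stab_sub_ord2} as a globalized corollary of Propositions \ref{cons_lower_bound} and \ref{cons_new_sub}: each case of \eqref{estsup_par_g}--\eqref{estinf_par_g} matches exactly one case of those two propositions, so the work consists in checking that (a) the optimal strategies $p_\text{opt}^m$, $p_\text{opt}^M$, $\Gamma_\text{opt}$ can be bounded by constants depending only on $\norm{D\phi}_{C_b^1(\overline\Omega)}+\norm{h}_{L^\infty}$, and (b) every $-\eps^2 f$-term surviving in those conclusions can be controlled by $C\eps^2(1+|z|)$ through the linear growth hypothesis \eqref{lin_grow}. Step (a) is exactly the content of Lemma \ref{control_p_Gamma_opt_gene}, which fixes a single $K$ depending on $\norm{\phi}_{C_b^2(\overline\Omega)}$ and $\norm{h}_{L^\infty}$ in \eqref{lin_grow}, independent of $x$, $t$, and $z$.

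For the upper bound \eqref{estsup_par_g}, I would proceed case by case: if $d(x)\geq \eps^{1-\alpha}$, the game is never in contact with the boundary and \cite[Lemma~4.1]{kohns} gives $S_\eps[x,t,z,\phi]-\phi(x)\leq -\eps^2 f(t,x,z,D\phi(x),D^2\phi(x))+o(\eps^2)$, which \eqref{lin_grow} converts to $C\eps^2(1+|z|)$. In the big-bonus case, Proposition \ref{cons_new_sub}~\ref{cons_sub_cas1} yields the bound $3(\eps^{1-\alpha}-d(x))M_\eps^x[\phi]-\eps^2 f(t,x,z,p_\text{opt}^M,\Gamma_\text{opt})+o(\eps^2)$, and since $d(x)\geq 0$ and $M_\eps^x[\phi]\geq 0$ here, this is $\leq 3\eps^{1-\alpha}M_\eps^x[\phi]+C\eps^2(1+|z|)$ after Lemma \ref{control_p_Gamma_opt_gene}. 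The third case is Proposition \ref{cons_new_sub}~\ref{cons_sub_cas2} handled identically. The fourth case, the big penalty one, uses Proposition \ref{cons_new_sub}~\ref{cons_sub_cas4}: as $M_\eps^x[\phi]<0$ and $\eps^{1-\alpha}-d(x)\geq \eps^\rho$, one has $\frac{1}{4}(\eps^{1-\alpha}-d(x))M_\eps^x[\phi]\leq \frac{1}{4}\eps^\rho M_\eps^x[\phi]$, which gives the weaker stated bound; the minimum over $B(p_\text{opt}^M(x),r)$ is harmless because $r\leq 3(\norm{h}_{L^\infty}+\norm{D\phi}_{L^\infty})$ by the explicit form of $r$, so the whole ball lies in a fixed bounded set and \eqref{lin_grow} applies uniformly.

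For the lower bound \eqref{estinf_par_g}, the two ranges of $x$ in the hypothesis are exactly those covered by Proposition \ref{cons_lower_bound}~\ref{case1_lower}, whose conclusion $S_\eps[x,t,z,\phi]-\phi(x)\geq -\eps^2 f(t,x,z,D\phi(x),D^2\phi(x))$ is bounded from below by $-C\eps^2(1+|z|)$ thanks to \eqref{lin_grow}, completing the estimate.

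The main obstacle is the honesty of the $o(\eps^2)$ remainder as $x$ and $z$ roam freely: these remainders come from (i) the second-order Taylor expansion of $\phi$, handled uniformly by the global bounds on $D\phi$ and $D^2\phi$ in $C_b^2(\overline\Omega)$; (ii) the internal max-min estimates of Lemma \ref{consistence_bounded_sequence}, whose uniformity over compacts must be upgraded to a uniform-in-$z$ statement using \eqref{loc_lip_p_Gamma}--\eqref{cont_growth_p_Gamma} together with the fixed bounds on $p_\text{opt}$ and $\Gamma_\text{opt}$ from Lemma \ref{control_p_Gamma_opt_gene}; and (iii) the $-\eps^2 f$ terms themselves, absorbed via \eqref{lin_grow}. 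A careful but routine inspection shows that every error term can be gathered into a single $C\eps^2(1+|z|)$ with $C$ depending only on $\norm{D\phi}_{C_b^1(\overline\Omega)}+\norm{h}_{L^\infty}$, establishing the proposition.
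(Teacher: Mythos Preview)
Your proposal is correct and follows essentially the same approach as the paper: both treat the lower bound \eqref{estinf_par_g} as a direct consequence of Proposition \ref{cons_lower_bound}~\ref{case1_lower} combined with \eqref{lin_grow}, and both handle the upper bound by matching the four cases of \eqref{estsup_par_g} with those of Proposition \ref{cons_new_sub}, invoking Lemma \ref{control_p_Gamma_opt_gene} to place $p_\text{opt}^M$, $\Gamma_\text{opt}$ (and the ball $B(p_\text{opt}^M,r)$) inside a fixed bounded set so that \eqref{lin_grow} applies. The one point you rightly flag as the main obstacle --- that the $o(\eps^2)$ remainders in Proposition \ref{cons_new_sub} are only uniform over compacts in $z$ --- is exactly where the paper stops treating the result as a black-box corollary and instead re-enters the proof of case \ref{cons_sub_cas4}, tracking cases \ref{cas1}--\ref{cas5} one by one to see that every error acquires at worst a factor $(1+|z|)$ via \eqref{loc_lip_p_Gamma}--\eqref{cont_growth_p_Gamma}; your ``careful but routine inspection'' is precisely this step.
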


\begin{proof} The arguments in the different cases are the same as those given in the proof of Proposition~\ref{cons_new_sub}
but  we must pay attention to the uniformity of the constant. For the second part, 
since $f$ grows linearly by \eqref{lin_grow} and $\norm{(D\phi(x), D^2\phi(x))}\leq  \norm{D\phi}_{C_b^{1} (\ol \Omega)} $, we have
\begin{equation}\label{est_f_z_basik}
|f(t,x,z,D\phi(x), D^2\phi(x))|  \leq C(1+|z|), 
\end{equation}
with a constant $C$ that depends on $\norm{D\phi}_{C_b^{1} (\ol \Omega)}$  but is independent of $x$, $t$ and $z$. 
The lower bound 
\begin{equation*}
S_\eps[x,t,z,\phi] - \phi(x) \geq - \eps^2 f(x,t,z,D\phi(x), D^2\phi(x)) \geq -  C \eps^2 (1+|z|) 
\end{equation*}
is a consequence of Proposition \ref{cons_lower_bound} and \eqref{est_f_z_basik}. 

Similarly, since we know by Lemma \ref{control_p_Gamma_opt_gene} 
that $\max(\norm{p_\text{opt}^m(x)}, \norm{p_\text{opt}^M(x)})+ \norm{\Gamma_\text{opt}(x)}$
is uniformly bounded by $ \frac{1}{2}  \norm{h}_{L^\infty}+3 \norm{D\phi}_{C_b^{1}(\ol \Omega)}$, 
we get that 
\begin{equation}\label{est_p_g_opt_z}
\max(|f(t,x,z,p_\text{opt}^m(x), \Gamma_\text{opt}(x))|,  | f(t,x,z,p_\text{opt}^M(x), \Gamma_\text{opt}(x))|) \leq C(1+|z|), 
\end{equation}
with a constant $C$ that depends on $\norm{D\phi}_{C_b^{1}(\ol \Omega)}$ and $\norm{h}_{L^\infty}$ but is independent of $x$, $t$ and $z$.

We shall prove the estimate for the fourth alternative of \eqref{estsup_par_g}  by examining the proof 
of Proposition \ref{cons_new_sub} case \ref{cons_sub_cas4}, the proofs for the other alternatives being quite similar.
Since $f$ is locally Lipschitz by \eqref{loc_lip_p_Gamma}, 
\begin{equation*}
\min_{p \in B(p_\text{opt}^M,r)} f(t,x,z,p,  \Gamma_\text{opt}) \geq  f(t,x,z,p_\text{opt}^M(x), \Gamma_\text{opt}(x)) 
- C(1+|z|) \left( 1-\frac{d(x)}{\eps^{1-\alpha}}\right) \left( \norm{h}_{L^\infty}+\norm{D\phi}_{L^\infty(\ol \Omega)} \right), 
\end{equation*}
where $C$ depends only on $\norm{D\phi}_{C_b^{1}(\ol \Omega)}$ and $\norm{h}_{L^\infty}$ by the estimates on $p_\text{opt}^M$ and $\Gamma_\text{opt}$ given by 
Lemma~\ref{control_p_Gamma_opt_gene}. By using \eqref{est_p_g_opt_z}, we deduce   
that there exists a constant $C$ depending only on $\norm{D\phi}_{C_b^{1}(\ol \Omega)}$ and $\norm{h}_{L^\infty}$ such that
\begin{equation}\label{est_min_global}
\min_{p \in B(p_\text{opt}^M,r)} f(t,x,z,p, \Gamma_\text{opt}) \geq - C(1+|z|).
\end{equation}
In case \ref{cas1}, by combining \eqref{est_min_global} and the locally Lipschitz character  \eqref{loc_lip_p_Gamma} of $f$ on $\Gamma$, 
 the estimate \eqref{est_loc_lip_M_very_neg} gets replaced by
\begin{equation*}
 f(t,x,z,p,\Gamma) \geq - C(1+|z|) (1+\eps^\alpha),  
\end{equation*}
whence by \eqref{est_p_g_opt_z} there exists a constant $C$ depending on  $\norm{D\phi}_{C_b^{1} (\ol \Omega)}+\norm{h}_{L^\infty}$ such that
\begin{equation*}
-\eps^2 f(t,x,z,p,\Gamma)\leq C (1+|z|)\eps^2.
\end{equation*}
In case \ref{cas2}, since the domain satisfies both the uniform interior and exterior ball conditions, we notice that the constant $C_1$ 
corresponding to the curvature of the boundary (see Lemma \ref{boundary_bounce1}) is $x$-bounded.
This implies that the first order estimate \eqref{bonus_bdry_3} is valid  independently of $x$ for $\eps$ sufficiently small.
Thus, the estimate \eqref{est_case2_line1} is valid uniformly in $x$. 
Besides, the estimate \eqref{est_case2_lambda} gets replaced by
\begin{equation*}
 \frac{1}{2} \eps^{2  \sigma} \lambda - \eps^2 f(t,x,z, p, \Gamma_\text{opt}(x) - \lambda I) 
\leq - \frac{1}{2} \eps^{2 \sigma + \alpha} +C\eps^2(1+|z|) \norm{p} \norm{\Gamma_\text{opt} (x)- \lambda I}, 
\end{equation*}
where $C$ depends on $\norm{D\phi}_{C_b^{1} (\ol \Omega)}+\norm{h}_{L^\infty}$. 
We obtain an estimate of the desired form by dropping the first term and observing that $\lambda$ is bounded. 
In second half of case \ref{cas2} and in case \ref{cas5} we used the growth estimate \eqref{cont_growth_p_Gamma}; since $z$ 
enters linearly on the right-hand side of \eqref{cont_growth_p_Gamma}, the previous calculation still applies but we get an additional term 
of the form $C|z|\eps^2$ in \eqref{est_case2_lambda_2}--\eqref{est_case3_lambda}.
\end{proof}

The following corollary provides the key estimate for stability in the parabolic setting.

\begin{cor} \label{est_bar_neum} 
Let $f$ satisfy \eqref{ellipticity_f} and \eqref{loc_lip_p_Gamma}--\eqref{cont_growth_p_Gamma}  and 
 assume $\alpha$, $\beta$, $\gamma$  fulfill \eqref{condition_pas}--\eqref{cd_coeff_classiq}.
Then,  for any $x$, $t$, $z$ and $\psi \in C_b^2(\ol \Omega)$ satisfying \eqref{der_neumann_h},  we have
\begin{equation}\label{est_bar_neum_inf}
  S_\eps[x,t,z,\psi ] -\psi(x)   \leq  C(1+|z|) \eps^2    \quad \text{ and }  \quad
  S_\eps[x,t,z, - \psi] -(-\psi)(x)   \geq  - C(1+|z|) \eps^2,   
\end{equation}
with a constant $C$ that is independent of $x$, $t$, $z$ but depends on $\norm{D\psi}_{C_b^{1}(\ol \Omega)}$ and $\norm{h}_{L^\infty}$.
\end{cor}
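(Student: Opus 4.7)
The plan is to deduce both inequalities from the cases compiled in Proposition \ref{est_stab_sub_ord2} together with the lower bound in Proposition \ref{cons_lower_bound}, using the sign information on $M_\eps^x[\psi]$ and $m_\eps^x[-\psi]$ supplied by Lemma \ref{est_mM_psi_bord}. The key point is that the hypothesis \eqref{der_neumann_h} gives a uniform, $\eps$-independent sign: $M_\eps^x[\psi]\le -1/2$ and $m_\eps^x[-\psi]\ge 1/2$ for all $x\in \Omega(\eps^{1-\alpha})$ once $\eps<\eps_0$. Compared with the $\eps$-dependent thresholds appearing in the case split (like $-\eps^{1-\alpha-\kappa}$ or $\frac{1}{2}(3\eps^{1-\alpha}-d(x))\|D^2\psi\|$), this constant sign eliminates the delicate intermediate regimes and leaves only the favorable cases.

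For the upper bound on $S_\eps[x,t,z,\psi]-\psi(x)$, I would split on $d(x)$. If $d(x)\ge \eps^{1-\alpha}$, the first alternative of \eqref{estsup_par_g} directly yields the estimate $\le C(1+|z|)\eps^2$. If $d(x)\le \eps^{1-\alpha}$, then by Lemma \ref{est_mM_psi_bord} we have $M_\eps^x[\psi]\le -1/2<0$, which in particular rules out the second alternative of \eqref{estsup_par_g} (the big-bonus regime) and places us either in the third alternative (if $\eps^{1-\alpha}-\eps^{\rho}\le d(x)\le \eps^{1-\alpha}$) or in the fourth alternative (if $d(x)\le \eps^{1-\alpha}-\eps^{\rho}$, which for $\eps$ small satisfies $M_\eps^x[\psi]\le -\eps^{1-\alpha-\kappa}$ since the right-hand side tends to $0$). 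In the third case the bound is immediate; in the fourth case the extra term $\tfrac14 \eps^\rho M_\eps^x[\psi]$ is nonpositive and may simply be discarded. In every subcase the constant $C$ depends only on $\norm{D\psi}_{C_b^1(\ol\Omega)}$ and $\norm{h}_{L^\infty}$, as required by Proposition \ref{est_stab_sub_ord2}.

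For the lower bound on $S_\eps[x,t,z,-\psi]-(-\psi)(x)$, I would use \eqref{estinf_par_g}. If $d(x)\ge \eps^{1-\alpha}$ the first case of \eqref{estinf_par_g} gives the bound directly. If $d(x)\le \eps^{1-\alpha}$, Lemma \ref{est_mM_psi_bord} provides $m_\eps^x[-\psi]\ge 1/2$, while the competitor $\tfrac12(3\eps^{1-\alpha}-d(x))\|D^2\psi\|_{L^\infty}\le \tfrac32 \eps^{1-\alpha}\|D^2\psi\|_{L^\infty}$ tends to $0$ with $\eps$. Hence for $\eps$ small the strict inequality required by the "big bonus or $d(x)\ge \eps^{1-\alpha}$" branch of \eqref{estinf_par_g} holds, and we directly conclude $S_\eps[x,t,z,-\psi]+\psi(x)\ge -C(1+|z|)\eps^2$ with $C$ depending only on $\norm{D\psi}_{C_b^1(\ol\Omega)}$.

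There is no genuine obstacle here: all the analytic work (control of the penalization term, uniformity of constants in $x,t,z$, absorption of the locally Lipschitz and linear-growth bounds on $f$) has already been carried out in Propositions \ref{cons_lower_bound} and \ref{est_stab_sub_ord2}. The only slightly delicate point is verifying that the strictness required in the lower-bound case and the threshold $-\eps^{1-\alpha-\kappa}$ required in the upper-bound fourth case are met uniformly in $x$ once $\eps$ is small, which is immediate from the constant bounds on $M_\eps^x[\psi]$ and $m_\eps^x[-\psi]$ coming from Lemma \ref{est_mM_psi_bord}.
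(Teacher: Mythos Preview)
Your proposal is correct and follows essentially the same approach as the paper: invoke Lemma~\ref{est_mM_psi_bord} to get the uniform sign $M_\eps^x[\psi]\le -\tfrac12$ (resp.\ $m_\eps^x[-\psi]\ge \tfrac12$), then feed this into the case split of Proposition~\ref{est_stab_sub_ord2}, using alternatives~1, 3, and~4 of \eqref{estsup_par_g} for the upper bound (discarding the nonpositive term $\tfrac14\eps^\rho M_\eps^x[\psi]$ in the last case) and \eqref{estinf_par_g} for the lower bound. Your write-up is in fact cleaner than the paper's, which contains what looks like a stray reference to \eqref{estinf_par_g} in the middle of the upper-bound argument.
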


\begin{proof}
 We shall prove the first estimate, the second follows exactly the same lines.
By applying Lemma~\ref{est_mM_psi_bord},  we have that  $M_\eps^{x}[ \psi] \leq  - \frac{1}{2}$ for all $x \in \Omega(\eps^{1-\alpha})$.
We introduce $\rho$ fulfilling  \eqref{def_nul}.
 By putting together the estimates obtained from \eqref{estinf_par_g} and the third alternative in \eqref{estsup_par_g}, 
we get that there exists a constant $C$ depending only on $\norm{D\psi}_{C_b^{1}(\ol \Omega)}$ and $\norm{h}_{L^\infty}$ such that
\begin{equation*} 
  S_\eps[x,t,z, \psi] - \psi(x)  \leq 
\begin{cases}  
 C \eps^2 (1+|z|), &\text{ if } d(x)\geq \eps^{1-\alpha} - \eps^{\rho }, \\
\frac{1}{4} \eps^{\rho}   M_\eps^{x}[\psi]  +C \eps^2 (1+|z|) , &\text{ if } d(x)\leq \eps^{1-\alpha} - \eps^{\rho }.
\end{cases}
\end{equation*}
Noticing that  $M_\eps^{x}[\psi]$ is negative, we get the proposed result.
 \end{proof}

\subsection{The elliptic case}

For the game corresponding to the stationary equation, 
we consider the operator $Q_{\eps}$ defined for any $x\in \ol \Omega$, $z\in \R$, and any continuous function 
$\phi$: $\ol \Omega \rightarrow \R$, by
\begin{multline}\label{oper_el}
 Q_{\eps}[x,z,\phi]=
\sup_{p,\Gamma} \inf_{\Delta \hat x} \left[   e^{ - \lambda \eps^2}  \phi(x+\Delta x) \right.  \\
 \left.  - \left( p\cdot \Delta \hat x  +\frac{1}{2}  \left\langle \Gamma \Delta \hat  x,\Delta \hat x \right\rangle
+\eps^2 f(x, z,p ,\Gamma) - \norm{\Delta \hat x - \Delta x} h(x+\Delta x) \right) \right] , 
\end{multline}
with the usual conventions that $p$, $\Gamma$ and $\Delta \hat x$ are constrained by \eqref{p_beta_gamma_new} and  \eqref{moving1_new}
and that $\Delta x$ is determined by \eqref{exp_delta_x}.
We can easily check that the operator $Q_\eps$ is still monotone but its action on shifted functions by  a constant is described by the following way: 
for all function $\phi \in C(\overline \Omega)$ and $c\in \R$, 
\begin{equation}\label{action_on_cte_el}
Q_{\eps}  \left[x,z, c + \phi \right]=e^{-\lambda \eps^2}c + Q_{\eps}  \left[x,z, \phi \right] .
\end{equation}
The dynamic programming inequalities \eqref{dyn_prog_ineq_sub_new_el}--\eqref{dyn_prog_ineq_super_new_el} 
can be concisely written as
\begin{equation*}
u^\eps(x)\leq Q_\eps[x, u^\eps(x), u^\eps] \quad \text{ and } \quad 
v^\eps(x)\geq Q_\eps[x, v^\eps(x), v^\eps]. 
\end{equation*}
In the elliptic setting, we can formally derive the PDE  by following the same lines as for the parabolic framework. 
We keep the optimal strategies  $p_\text{opt}^m$, $p_\text{opt}^M$ and $\Gamma_\text{opt}$ for Helen, 
defined by \eqref{p_opt_m}, \eqref{p_opt_M} and \eqref{Gamma_opt} in an orthonormal basis  $\mathcal{B}=(e_1=n(\bar x), e_2,\cdots, e_N)$.
The next proposition is the elliptic analogue of Propositions~\ref{cons_lower_bound} and \ref{cons_new_sub}.
It establishes the consistency estimates for $Q_\eps$ defined by \eqref{oper_el}.

\begin{prop} \label{cons_new_sub_el}
Let $f$ satisfy \eqref{ellipticity_f} and \eqref{loc_lip_p_Gamma_el}--\eqref{cont_growth_p_Gamma_el} and assume $\alpha$, $\beta$, $\gamma$ and $\rho$ 
 fulfill \eqref{condition_pas}--\eqref{cd_coeff_classiq} and \eqref{def_nul}.  
Let  $p_\text{opt}^m$, $p_\text{opt}^M$ and $\Gamma_\text{opt}$ be respectively defined in the orthonormal basis  $\mathcal{B}=(e_1=~n(\bar x), e_2,\cdots, e_N)$ by 
\eqref{p_opt_m}--\eqref{Gamma_opt}. 
For any $x$, $z$ and any smooth function $\phi$ defined near $x$, we distinguish two cases for the lower bound estimate:
\begin{enumerate}[label=\textup{ }{\roman*.}\textup{ },ref=({\roman*})] 
 \item Big bonus: if $ d(x)\geq  \eps^{1-\alpha}$ or $ m_\eps^x[\phi] >\frac{1}{2} (3\eps^{1-\alpha}-d(x)) \norm{D^2\phi(x)}$,  then  
\begin{equation}\label{est_super_el}
  - \eps^2 (f(x,z, D\phi(x), D^2\phi(x))+\lambda \phi(x)) \leq   Q_\eps[x,z,\phi] - \phi(x)  .   
 \end{equation}
\item Penalty or small bonus: if $d(x)\leq  \eps^{1-\alpha}$ and $ m_\eps^x[\phi] \leq \frac{1}{2} (3\eps^{1-\alpha}-d(x)) \norm{D^2\phi(x)}$,  then   
\begin{equation*}
 \frac{1}{2}   (\eps^{1-\alpha} - d(x)) \left(  s  m_\eps^x[\phi]  - 4 \norm{D^2\phi(x)} \eps^{1-\alpha}  \right)
- \eps^2 (f(x,z, p_{\text{opt}}^m(x), \Gamma_\text{opt}(x)) +\lambda \phi(x)) \leq Q_\eps[x,z,\phi] - \phi(x),    
 \end{equation*}
where $s=-1$ if   $m_\eps^x[\phi]\geq 0$ and $s=3$ if   $m_\eps^x[\phi]< 0$.
\end{enumerate}
For the upper bound estimate, we distinguish four cases:
\begin{enumerate}[label=\textup{ }{\roman*.}\textup{ },ref=({\roman*})] 
 \item Big bonus: if $d(x) \leq \eps^{1-\alpha}$ and $M_\eps^x[\phi] >\frac{4}{3} \norm{D^2\phi(x)}\eps^{1-\alpha}$,  then    
\begin{equation*}
 Q_\eps[x,z,\phi]-\phi(x) \leq 3(\eps^{1-\alpha}-d(x))M_\eps^x[\phi] -\eps^2 \left(f(x,z,p_\text{opt}^M(x),\Gamma_\text{opt}(x))+\lambda \phi(x)\right) +o(\eps^2).
 \end{equation*}
  \item \label{cas_el_far} Far from the boundary with a small bonus: if $\eps^{1-\alpha}-\eps^{\rho} \leq d(x)\leq \eps^{1-\alpha}$ 
and  $M_\eps^x[\phi] \leq \frac{4}{3} \norm{D^2\phi(x)}\eps^{1-\alpha}$, or if $d(x)\geq \eps^{1-\alpha}$, then   
\begin{equation}\label{close_bdary_Mneg_el}
 Q_\eps[x,z,\phi] - \phi(x) \leq  -\eps^2 \left(f(x,z, D\phi(x), D^2\phi(x))+\lambda \phi(x) \right) +o(\eps^2).
 \end{equation}
 \item Close to the boundary with a small bonus/penalty: if $d(x)\leq \eps^{1-\alpha} - \eps^{\rho}$ 
 and $- \eps^{1-\alpha -\kappa} \leq M_\eps^x[\phi] \leq \frac{4}{3} \norm{D^2\phi(x)}\eps^{1-\alpha} $,  then
\begin{equation*}
 Q_\eps[x,z,\phi] - \phi(x) \leq  -\eps^2 \left(f(x,z, D\phi(x), D^2\phi(x)+C_1I)+\lambda \phi(x) \right) +o(\eps^2), 
 \end{equation*}
with $C_1=\frac{20}{3} \norm{D^2\phi(x)} \left(1-\frac{d(x)}{\eps^{1-\alpha}}\right)$.
 \item \label{cas_el_tres_neg} Close to the boundary with a big bonus: if $d(x)\leq \eps^{1-\alpha}-\eps^{\rho}$ 
 and $M_\eps^x[\phi] \leq - \eps^{1-\alpha -\kappa}$,  then
\begin{equation}\label{close_bdary_Mneg_big}
Q_\eps[x,z,\phi] - \phi(x) \leq  \frac{1}{4} (\eps^{1-\alpha} - d(x)) M_\eps^x[\phi]  
-\eps^2 \left( \min_{p\in B(p_\text{opt}^M(x),r)} f(x,z,p, \Gamma_\text{opt}(x))+\lambda \phi(x)\right) +o(\eps^2), 
 \end{equation}
with $r$ defined by $r=3 \left(1-\frac{d(x)}{\eps^{1-\alpha}} \right) |M_\eps^x[\phi]|$.
\end{enumerate}
Moreover the implicit constants in the error term are uniform as $x$ and $z$ range over a compact subset of $\ol \Omega \times \R$.
\end{prop}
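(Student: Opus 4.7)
The plan is to reduce the elliptic estimate to the parabolic consistency estimates (Proposition \ref{cons_lower_bound} and Proposition \ref{cons_new_sub}) by absorbing the discount factor $e^{-\lambda \eps^2}$ into a lower order term. Writing $e^{-\lambda \eps^2} = 1 - \lambda \eps^2 + O(\eps^4)$ and performing a Taylor expansion of $\phi(x+\Delta x)$ as in \eqref{operator_fonction_C2}, I obtain, for any $C^2$ function $\phi$,
\begin{equation*}
 Q_{\eps}[x,z,\phi] - \phi(x) = - \lambda \eps^2 \phi(x) + R_\eps^{x}[z,\phi] + o(\eps^2),
\end{equation*}
where
\begin{multline*}
R_\eps^{x}[z,\phi] = \max_{p,\Gamma} \min_{\Delta \hat x} \Big[ (D\phi(x)-p)\cdot \Delta \hat x + \tfrac{1}{2}\langle D^2\phi(x)\Delta x,\Delta x\rangle - \tfrac{1}{2}\langle \Gamma \Delta \hat x,\Delta \hat x\rangle \\
 + \norm{\Delta \hat x -\Delta x}\{h(x+\Delta x)- D\phi(x)\cdot n(x+\Delta x)\} - \eps^2 f(x,z,p,\Gamma)\Big],
\end{multline*}
is exactly the quantity estimated in the parabolic setting (up to the suppressed $t$-variable). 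The cross term $-\lambda \eps^2 \, [\text{Taylor remainder of }\phi]$ is of order $\eps^{3-\alpha}=o(\eps^2)$ by \eqref{condition_pas}, and the cross term $\lambda \eps^2 \norm{\Delta \hat x-\Delta x} h$ is $O(\eps^{3-\alpha})=o(\eps^2)$ as well, so they can be absorbed harmlessly.

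Next I apply the parabolic estimates to $R_\eps^{x}[z,\phi]$. The hypotheses \eqref{loc_lip_p_Gamma_el}--\eqref{cont_growth_p_Gamma_el} are only local in $z$, but here $z$ is a fixed parameter and the implicit constants in the error term are allowed to depend on any compact set of $z$ values, so all the steps in the proof of Proposition \ref{cons_lower_bound} (optimal strategies $p_\text{opt}^m$, $\Gamma_\text{opt}$, the geometric Lemmas \ref{lemma_key_bound_geo}--\ref{lm_estimate_l2}) and of Proposition \ref{cons_new_sub} (the four-case analysis based on the sign and magnitude of $M_\eps^x[\phi]$) go through verbatim. For the lower bound, in case (i) I pick $(p,\Gamma)=(D\phi(x),D^2\phi(x))$ and conclude by Lemma \ref{lem_sign_min_BC_pos}, while in case (ii) I take $(p,\Gamma)=(p_\text{opt}^m,\Gamma_\text{opt})$ and split as $l_1^x[\phi]+l_2^x[\phi]$. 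Combining with $-\lambda \eps^2 \phi(x)$ yields \eqref{est_super_el} and the second lower bound estimate.

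For the upper bound, I reproduce the four cases of Proposition \ref{cons_new_sub}: in case (i), the choice $p=p_\text{opt}^M$, $\Gamma=\Gamma_\text{opt}$ together with Lemmas \ref{lemma_sup_Ab} and \ref{consistence_bounded_sequence} delivers the estimate; in case (ii) the proof distinguishes $\norm{D\phi(x)-p} \lessgtr \eps^\mu$ and $\lambda_{\min}(D^2\phi(x)-\Gamma) \lessgtr -\eps^\delta$ as before; case (iii) is analogous with the perturbation $C_1 I$; and in case (iv), the geometric construction \eqref{def_move3} of $\Delta \hat x$ along an eigenvector direction of $\Gamma_\text{opt}-\Gamma$ combined with Lemma \ref{boundary_bounce1} produces the bonus $\tfrac14 (\eps^{1-\alpha}-d(x)) M_\eps^x[\phi]$. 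Adding the $-\lambda \eps^2 \phi(x)$ term gives \eqref{close_bdary_Mneg_el} and \eqref{close_bdary_Mneg_big}.

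The main (and only) obstacle is bookkeeping: one must verify that every $o(\eps^2)$ in the parabolic proofs depends on $(x,z)$ only through a compact set, so that the local Lipschitz and growth hypotheses \eqref{loc_lip_p_Gamma_el}--\eqref{cont_growth_p_Gamma_el} suffice here. This is straightforward since $z$ is fixed at each step and, up to the replacement of constants $C_K$ by $C_{K,L}$ (with $L$ bounding $|z|$), the estimates \eqref{est_loc_lip_M_very_neg}, \eqref{est_case2_lambda_2}, \eqref{est_case3_lambda} and \eqref{comp_puissance_eps_popt_c4} retain the same powers of $\eps$.
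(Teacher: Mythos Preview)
Your proposal is correct and matches the paper's own approach, which simply states that ``the arguments are entirely parallel to the proofs of Propositions \ref{cons_lower_bound} and \ref{cons_new_sub}.'' You have in fact spelled out the key reduction (expanding $e^{-\lambda\eps^2}\phi(x+\Delta x)=\phi(x+\Delta x)-\lambda\eps^2\phi(x)+o(\eps^2)$ uniformly in $\Delta\hat x$) more explicitly than the paper does; the only minor slip is your mention of a cross term $\lambda\eps^2\norm{\Delta\hat x-\Delta x}h$, which does not actually arise since the boundary coupon is not multiplied by the discount factor.
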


\begin{proof}
The arguments are entirely parallel to the proofs of Propositions \ref{cons_lower_bound} and \ref{cons_new_sub}.
\end{proof}

For stability we will need a variant of the preceding lemma. This is where we use the hypothesis~\eqref{est_f_el_neu} on the $z$-dependence of $f$. 

\begin{lemma}\label{estimate_el_psi_u}
Let $f$ satisfy \eqref{ellipticity_f} and \eqref{loc_lip_p_Gamma_el}--\eqref{cont_growth_p_Gamma_el}
 and assume as always that $\alpha$, $\beta$, $\gamma$ satisfy \eqref{condition_pas}--\eqref{cd_coeff_classiq}. 
Let $\psi \in C_b^2 (\ol \Omega)$ satisfy \eqref{def_psi_h_el}.
Fix $M$ and $m$ two positive constants such that $ m + 2 \norm{\psi}_{L^\infty(\overline \Omega)} \leq M$. 
Then, there exists $C_\ast=C_\ast (\norm{D\psi}_{C_b^1(\ol \Omega)}, \norm{h}_{L^\infty})$ 
such that for any $|z|\leq M$ and any $x\in \overline \Omega$, we have
\begin{equation*}
Q_\eps \left[x,z, m +\psi \right] - \left(m + \psi(x) \right) \leq \eps^2 \left(1+ (\lambda -\eta)|z|+C_\ast \right) - \lambda \eps^2 \left(m+ \psi(x)\right)   , 
\end{equation*}
and 
\begin{equation*}
Q_\eps \left[x,z,- m- \psi \right] - \left(- m -\psi(x)\right) \geq -\eps^2 \left(1+ (\lambda -\eta)|z|+C_\ast \right) - \lambda \eps^2 \left(-m-\psi (x)\right), 
\end{equation*}
for all sufficiently small $\eps$ (the smallness condition on $\eps$ depends on $M$, but not on $x$). 

Moreover, if $\phi \in C_b^2 (\ol \Omega)$,  
then there exists $C=C(M, \norm{D\phi}_{C_b^1(\ol \Omega)}, \norm{h}_{L^\infty})$ such that for any $|z|\leq M$ and any $x\in \overline \Omega$ such that 
$d(x)\leq \eps^{1-\alpha}-\eps^{\rho}$  and $M_\eps^x[\phi] \leq - \eps^{1-\alpha -\kappa}$, 
\begin{equation}\label{est_phi_ell_cas_neg_glolocal}
Q_\eps[x,z, \phi] - \phi(x) \leq 
\frac{1}{4} \left( \eps^{1-\alpha} - d(x) \right) M_\eps^x[\phi] +C\eps^2  - \lambda \eps^2 \phi(x) , 
\end{equation}
for all sufficiently small $\eps$ (the smallness condition on $\eps$ depends on $M$, but not on $x$). 
\end{lemma}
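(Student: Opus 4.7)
The plan is to reduce both assertions to Proposition~\ref{cons_new_sub_el}, using the translation identity \eqref{action_on_cte_el} and the $z$-growth hypothesis \eqref{est_f_el_neu}. Writing
\begin{equation*}
Q_\eps[x,z,m+\psi]-(m+\psi(x)) = (e^{-\lambda\eps^2}-1)m + \bigl(Q_\eps[x,z,\psi]-\psi(x)\bigr) = -\lambda\eps^2 m + O(\eps^4) + \bigl(Q_\eps[x,z,\psi]-\psi(x)\bigr),
\end{equation*}
it suffices to produce an upper estimate
\begin{equation*}
Q_\eps[x,z,\psi]-\psi(x) \le \eps^2\bigl((\lambda-\eta)|z|+C_\ast\bigr) -\lambda\eps^2\psi(x) + o(\eps^2),
\end{equation*}
together with the symmetric lower estimate on $Q_\eps[x,z,-\psi]-(-\psi(x))$; the $O(\eps^4)$ term then merges with $o(\eps^2)$ into $\pm\eps^2$ for $\eps$ small, yielding the claimed $\pm\eps^2(1+(\lambda-\eta)|z|+C_\ast)$. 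The key global input is Lemma~\ref{est_mM_psi_bord}, which tells us that $M_\eps^x[\psi]\le-1/2$ and $m_\eps^x[-\psi]\ge 1/2$ for every $x\in\Omega(\eps^{1-\alpha})$ and every sufficiently small $\eps$, uniformly in $x$.

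For the upper bound, I would split on $d(x)$. When $d(x)\ge\eps^{1-\alpha}-\eps^\rho$ the ``far from the boundary'' case~\ref{cas_el_far} of Proposition~\ref{cons_new_sub_el} applies (trivially if $d(x)\ge\eps^{1-\alpha}$, otherwise because $-1/2\le\tfrac{4}{3}\norm{D^2\psi}_{L^\infty(\ol\Omega)}\eps^{1-\alpha}$ for $\eps$ small); when $d(x)\le\eps^{1-\alpha}-\eps^\rho$, case~\ref{cas_el_tres_neg} applies since $-1/2\le-\eps^{1-\alpha-\kappa}$ for small $\eps$, and its non-positive penalty term $\tfrac{1}{4}(\eps^{1-\alpha}-d(x))M_\eps^x[\psi]$ can be dropped. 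In either case the residual $f$-contribution involves $(p,\Gamma)$ lying in a ball of some radius $K^\ast$ depending only on $\norm{D\psi}_{C_b^1(\ol\Omega)}$ and $\norm{h}_{L^\infty}$: this follows from Lemma~\ref{control_p_Gamma_opt_gene} applied to $\psi$, together with the trivial bound $r\le 3(\norm{h}_{L^\infty}+\norm{D\psi}_{L^\infty(\ol\Omega)})$ on the ball appearing in \eqref{close_bdary_Mneg_big}. Hypothesis~\eqref{est_f_el_neu} with $K=K^\ast$ then gives $|f|\le(\lambda-\eta)|z|+C_{K^\ast}^\ast=:(\lambda-\eta)|z|+C_\ast$, producing the target upper estimate on $Q_\eps[x,z,\psi]-\psi(x)$. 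The symmetric lower bound on $Q_\eps[x,z,-\psi]-(-\psi(x))$ is obtained by the same scheme: since $m_\eps^x[-\psi]\ge 1/2$ dominates $\tfrac{1}{2}(3\eps^{1-\alpha}-d(x))\norm{D^2\psi}_{L^\infty(\ol\Omega)}$ for small $\eps$, the ``big bonus'' lower-bound case \eqref{est_super_el} of Proposition~\ref{cons_new_sub_el} applies, and the $|f|$-bound just derived controls the residual $f$-term.

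Finally, \eqref{est_phi_ell_cas_neg_glolocal} is a direct application of case~\ref{cas_el_tres_neg} of Proposition~\ref{cons_new_sub_el}, whose hypotheses match verbatim: Lemma~\ref{control_p_Gamma_opt_gene} together with $r\le 3(\norm{h}_{L^\infty}+\norm{D\phi}_{L^\infty(\ol\Omega)})$ confines the arguments of $f$ to a fixed ball depending only on $\norm{D\phi}_{C_b^1(\ol\Omega)}$ and $\norm{h}_{L^\infty}$, and \eqref{est_f_el_neu} combined with $|z|\le M$ yields $-\min_p f\le C$ with $C=C(M,\norm{D\phi}_{C_b^1(\ol\Omega)},\norm{h}_{L^\infty})$; absorbing $o(\eps^2)$ into $C\eps^2$ for $\eps$ small closes the argument. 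The main technical subtlety in all three estimates is ensuring that the smallness threshold on $\eps$ and the implicit $o(\eps^2)$-constant are uniform in $x\in\ol\Omega$ and $z\in[-M,M]$, so that the threshold depends on $M$ but not on the specific $x$ or $z$; this uniformity is delivered by the global $C_b^2$-bounds on $\psi$ (and $\phi$), by Lemma~\ref{control_p_Gamma_opt_gene} (which rides only on these bounds and $\norm{h}_{L^\infty}$), and crucially by the strictly subcritical linear-in-$|z|$ hypothesis~\eqref{est_f_el_neu}, whose coefficient $\lambda-\eta<\lambda$ is precisely what lets the $f$-contribution be absorbed into the discount term $-\lambda\eps^2\phi(x)$.
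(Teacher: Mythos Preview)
Your proposal is correct and follows essentially the same approach as the paper: the translation identity \eqref{action_on_cte_el} to reduce to $m=0$, Lemma~\ref{est_mM_psi_bord} to force $M_\eps^x[\psi]\le -1/2$, the same split on $d(x)$ invoking cases~\ref{cas_el_far} and~\ref{cas_el_tres_neg} of Proposition~\ref{cons_new_sub_el}, and Lemma~\ref{control_p_Gamma_opt_gene} together with \eqref{est_f_el_neu} to control the $f$-terms. The only minor deviation is in \eqref{est_phi_ell_cas_neg_glolocal}: the paper invokes the growth hypothesis~\eqref{cont_growth_p_Gamma_el} (with $L=M$) rather than \eqref{est_f_el_neu} to bound the minimum of $f$, but your route via \eqref{est_f_el_neu} combined with $|z|\le M$ is equally valid and yields the same dependence of $C$ on $M$, $\norm{D\phi}_{C_b^1(\ol\Omega)}$, and $\norm{h}_{L^\infty}$.
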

\begin{proof} We shall prove the first inequality, the proof of the second being entirely parallel. 
The assumption $|z| \leq M$ ensures that the constants in \eqref{loc_lip_p_Gamma_el} and \eqref{cont_growth_p_Gamma_el} are uniform. 
Then the implicit constants in the error terms of \eqref{close_bdary_Mneg_el} and \eqref{close_bdary_Mneg_big}
are $x$,$z$-uniform for $\eps$ small enough, and the smallness condition depends only on $M$. 
Since $ m + 2 \norm{ \psi}_{L^\infty(\overline \Omega)}\leq  M$ we can use the dynamic programming inequalities
\eqref{dyn_prog_ineq_sub_new_el}--\eqref{dyn_prog_ineq_super_new_el}. 
First of all, by the action of $Q_\eps$ on constant functions provided by \eqref{action_on_cte_el}, we have
\begin{equation*}
Q_\eps[x,z, m +\psi] - (m + \psi(x))=(e^{-\lambda \eps^2} - 1)m + Q_\eps[x,z, \psi] -  \psi(x), 
\end{equation*}
and noticing that $e^{-\lambda \eps^2}m=(1-\lambda \eps^2)m +O(\eps^4m)$, it is sufficient to get the estimate corresponding to $m=0$.
By Lemma~\ref{est_mM_psi_bord}, we observe that every $x\in \Omega(\eps^{1-\alpha})$ satisfies $M_\eps^{x}[ \psi] \leq  - \frac{1}{2}$.
We now need  to distinguish two cases according to the distance to the boundary by introducing $\rho$ fulfilling \eqref{def_nul}. 
If $x\in \ol \Omega$ such that $d(x) \geq \eps^{1-\alpha} - \eps^{\rho}$, 
since $\norm{(D\psi(x), D^2 \psi(x))} \leq  K_1= \norm{D\psi}_{C_b^1(\ol \Omega)}$,  
we deduce by assumption~\eqref{est_f_el_neu} on $f$ that there exists $C_{K_1}^\ast$ such that for all $x$ we have
\begin{equation*}
|f(x,z, D\psi(x), D^2 \psi(x))| \leq (\lambda -\eta)|z|+C_{K_1}^\ast,    
\end{equation*}
which gives by \eqref{close_bdary_Mneg_el} that for all $x\in \ol \Omega$ such that $d(x)\geq \eps^{1-\alpha}$, 
\begin{equation} \label{est_psi_int} 
Q_\eps[x,z,\psi]  - \psi(x)  \leq  \eps^2 \left((\lambda -\eta)|z|+C_{K_1}^\ast   \right) - \lambda \eps^2 \psi(x)+o(\eps^2). 
\end{equation}  
If $x\in \ol \Omega$ such that $d(x) \leq \eps^{1-\alpha} - \eps^{\rho}$, combining the triangle inequality with
the inequalities given by Lemma~\ref{control_p_Gamma_opt_gene} gives that, 
for all $ p\in B \left(p_\text{opt}^M(x),r \right)$ with $ r=3\left(1-\frac{d(x)}{\eps^{1-\alpha}}\right) |M_\eps^x[\psi]|$, 
\begin{equation*}
\norm{(p, \Gamma_\text{opt}(x) )} \leq \norm{p_\text{opt}^M(x)}_{L^\infty}+r+\norm{\Gamma_\text{opt}(x)}_{L^\infty} 
\leq K_2=   \frac{7}{2}  \norm{h}_{L^\infty}+6 \norm{D\psi}_{C_b^1(\ol \Omega)}, 
\end{equation*}
since $ M_\eps^{x}[\psi]$ is $\eps,x$-bounded by  $\norm{h}_{L^\infty} + \norm{D\psi}_{L^\infty}  $. 
The assumption~\eqref{est_f_el_neu} on $f$ yields that there exists $C_{K_2}^\ast$ such that, 
\begin{equation} \label{est_min_global_r}
\left|\min_{p\in B(p_\text{opt}^M(x),r)} f(x,z,p, \Gamma_\text{opt}(x) ) \right|
 \leq (\lambda -\eta)|z|+C_{K_2}^\ast , 
\end{equation}
By using this inequality in \eqref{close_bdary_Mneg_big} and recalling that $M_\eps^x[\psi]\leq -\frac{1}{2}$,   we conclude that, 
for all $x\in \overline \Omega$ such that $d(x) \leq \eps^{1-\alpha} - \eps^{\rho}$, 
\begin{equation} \label{est_psi_ext} 
Q_\eps[x,z,\psi] - \psi(x) \leq \eps^2 \left( (\lambda -\eta)|z|+C_{K_2}^\ast   \right) - \lambda \eps^2 \psi(x) +o(\eps^2). 
\end{equation}  
By comparing \eqref{est_psi_int} and \eqref{est_psi_ext} we get the desired result by taking $C_\ast=\max(C^\ast_{K_1},C^\ast_{K_2})$ . 

To prove the third inequality, it is sufficient to replace the assumption \eqref{est_f_el_neu}  by \eqref{cont_growth_p_Gamma_el} in the previous estimates. 
For instance, instead of \eqref{est_min_global_r}, there exists a constant $C$ depending only on $M$, $\norm{h}_{L^\infty}$, 
and $\norm{D\phi}_{C_b^1(\ol \Omega)}$ such that $\ds \left|\min_{p\in B(p_\text{opt}^M(x),r)} f(x,z,p, \Gamma_\text{opt}(x) ) \right| \leq C$. 
The rest of the proof remains unchanged.
\end{proof}

\section{Stability}  \label{stability}

In the time-dependent setting, we showed in Section~\ref{cv_par_case} that if $v^\eps$ and $u^\eps$ remain bounded as $\eps \rightarrow 0$ then 
$\udl v$ is a supersolution and $\bar u$ is a subsolution. The argument was local, using mainly the consistency of the game as a numerical scheme. 
It remains to prove that  $v^\eps$ and $u^\eps$ are indeed bounded; this is achieved in Section \ref{s_stability_para}.

For the stationary setting, we must do more. Even the existence of $U^\eps(x,z)$ remains to be proved. 
We also need to show that the associated functions $u^\eps$ and $v^\eps$ are bounded, away from $M$, so that we can apply the dynamic programming
 inequalities at each $x \in \ol \Omega $. These goals will be achieved in Section \ref{s_stability_ell}, provided the parameters $M$ and $m$ satisfy 
(i) $m=M-1 - 2 \norm{\psi}_{L^\infty}$ and (ii) $M$ is sufficiently large. We also 
show in Section~\ref{s_stability_ell} that if $f$ is a nondecreasing function on $z$ then $U^\eps$ is strictly decreasing on $z$. 
As a consequence, this result implies that $\udl v \leq \bar u$,
 allowing us to conclude that $\udl v = \bar u$ is the unique viscosity solution if the boundary value problem has a comparison principle.

\subsection{The parabolic case} 
\label{s_stability_para}

To obtain stability, we are going to consider one more time a $C^2_b(\ol \Omega)$-function $\psi$ such that 
$\frac{\partial \psi}{\partial n }=\norm{h}_{L^\infty}+1$ in order to take care of the Neumann boundary condition.

\begin{prop} \label{stability_para_prop}
Assume the hypotheses of Propositions \ref{cons_lower_bound} and \ref{cons_new_sub} hold,
 and suppose furthermore that the final-time data are uniformly bounded: 
\begin{equation*}
 |g(x)| \leq B \quad \text{ for all } x\in \ol \Omega.
\end{equation*}
 Then there exists a constant $s=s(\norm{\psi}_{C_b^2(\ol \Omega)})$, independent of $\eps$,  such that 
\begin{equation*} 
u^{\eps}(x,t) \leq  (B+\norm{\psi}_{L^\infty(\ol \Omega)}) s^{T-t} +\psi(x)  \quad \text{ for all } x\in \ol \Omega,
\end{equation*}
and
\begin{equation*} 
v^{\eps}(x,t) \geq  - (B+\norm{\psi}_{L^\infty(\ol \Omega)}) s^{T-t}  - \psi(x) \quad \text{ for all } x\in \ol \Omega,
\end{equation*}
for every $t<T$.
\end{prop}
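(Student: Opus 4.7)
The plan is to prove the two inequalities by a simultaneous backward induction on the discrete times $t_j = T - j\eps^2$, using the dynamic programming inequalities \eqref{dyn_prog_ineq_sub_new}--\eqref{dyn_prog_ineq_super_new}, the monotonicity and additive-constant property \eqref{action_on_cte} of $S_\eps$, and crucially Corollary \ref{est_bar_neum}. I treat the upper bound on $u^\eps$ and the lower bound on $v^\eps$ in parallel because the pointwise $|z|$-dependence in Corollary \ref{est_bar_neum} forces us to control both sides at once.

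At the terminal time $t_0=T$, the final-time condition $u^\eps(\cdot,T)=v^\eps(\cdot,T)=g$ together with $|g|\leq B$ yields $c_0:=\sup_{\ol\Omega}(u^\eps(\cdot,T)-\psi)\leq B+\norm{\psi}_{L^\infty}$ and symmetrically $c'_0:=\sup_{\ol\Omega}(-v^\eps(\cdot,T)-\psi)\leq B+\norm{\psi}_{L^\infty}$. For the induction step, assume $u^\eps(y,t_{j-1})\leq c_{j-1}+\psi(y)$ for every $y\in\ol\Omega$. By monotonicity of $S_\eps$ and \eqref{action_on_cte}, the dynamic programming inequality \eqref{dyn_prog_ineq_sub_new} gives
\begin{equation*}
u^\eps(x,t_j)\leq S_\eps\bigl[x,t_j,u^\eps(x,t_j),\,c_{j-1}+\psi\bigr]=c_{j-1}+S_\eps\bigl[x,t_j,u^\eps(x,t_j),\psi\bigr],
\end{equation*}
and the first estimate of Corollary \ref{est_bar_neum} yields
\begin{equation*}
u^\eps(x,t_j)-\psi(x)\leq c_{j-1}+C\bigl(1+|u^\eps(x,t_j)|\bigr)\eps^2.
\end{equation*}
An entirely parallel computation with \eqref{dyn_prog_ineq_super_new} and the second estimate of Corollary \ref{est_bar_neum} produces
$-v^\eps(x,t_j)-\psi(x)\leq c'_{j-1}+C(1+|v^\eps(x,t_j)|)\eps^2$.

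The main obstacle is the pointwise factor $|u^\eps(x,t_j)|$ and $|v^\eps(x,t_j)|$ appearing on the right-hand sides: absent an a priori bound on $u^\eps$ and $v^\eps$, closing the recursion requires bootstrapping. Here I exploit the fact that $v^\eps\leq u^\eps$, so that $|u^\eps(x,t_j)|$ and $|v^\eps(x,t_j)|$ are both bounded above by $\max(u^\eps(x,t_j),-v^\eps(x,t_j))$. Setting $c_j:=\sup_{\ol\Omega}(u^\eps(\cdot,t_j)-\psi)$, $c'_j:=\sup_{\ol\Omega}(-v^\eps(\cdot,t_j)-\psi)$, and $D_j:=\max(c_j,c'_j)$, taking suprema over $x$ in the two displayed inequalities and combining them yields the coupled recursion
\begin{equation*}
D_j(1-C\eps^2)\leq D_{j-1}+C\bigl(1+\norm{\psi}_{L^\infty}\bigr)\eps^2,
\end{equation*}
valid for all $\eps$ sufficiently small, with $C$ depending only on $\norm{\psi}_{C^2_b(\ol\Omega)}$ and $\norm{h}_{L^\infty}$.

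Finally, iterating this discrete Gronwall inequality from $j=0$ and using $D_0\leq B+\norm{\psi}_{L^\infty}$ gives, for an appropriate constant $s=s(\norm{\psi}_{C^2_b(\ol\Omega)})>1$ (chosen so that $(1-C\eps^2)^{-1}\leq s^{\eps^2}$ and the inhomogeneous term is absorbed into the multiplicative factor), the bound $D_j\leq(B+\norm{\psi}_{L^\infty})\,s^{j\eps^2}$. Since $j\eps^2=T-t$ and $c_j\leq D_j$, $c'_j\leq D_j$, this translates precisely into the two claimed estimates $u^\eps(x,t)\leq(B+\norm{\psi}_{L^\infty})s^{T-t}+\psi(x)$ and $v^\eps(x,t)\geq-(B+\norm{\psi}_{L^\infty})s^{T-t}-\psi(x)$, completing the proof.
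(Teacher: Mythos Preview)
Your argument is correct, and the overall skeleton (backward induction, monotonicity of $S_\eps$, property \eqref{action_on_cte}, and Corollary \ref{est_bar_neum}) matches the paper. The one substantive difference is how you dispose of the factor $|u^\eps(x,t_j)|$ (resp.\ $|v^\eps(x,t_j)|$) on the right-hand side. You couple the two bounds through $v^\eps\le u^\eps$ and the quantity $D_j=\max(c_j,c'_j)$, obtaining a single recursion for $D_j$. The paper instead treats each side independently by a sign split: for the lower bound on $v^\eps$, if $v^\eps(x,t_k-\eps^2)\ge 0$ the desired inequality is trivial, while if $v^\eps(x,t_k-\eps^2)<0$ one has $|v^\eps|=-v^\eps$ and can move the term to the left to get $(1-C\eps^2)v^\eps(x,t_k-\eps^2)\ge -B_k-C\eps^2-\psi(x)$ directly. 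The analogous split works for $u^\eps$ by itself. This decoupled argument is shorter and avoids any appeal to $v^\eps\le u^\eps$; your coupled route is valid but not needed. One minor point: your final Gronwall step (``the inhomogeneous term is absorbed into the multiplicative factor'') tacitly uses $D_0\ge 1$, which the paper secures by assuming without loss of generality that $B\ge 1$.
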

\begin{proof}
We shall demonstrate the lower bound on $v^{\eps}$; the proof of the upper bound on $u^\eps$ is entirely parallel. 
The argument proceeds backward in time $t_k=T-k \eps^2$. At $k=0$, we have a uniform bound $v^{\eps}(x,T)=g(x)\geq  - B$  by hypothesis, 
and we may assume without loss of generality that $B\geq  1$. Since  $\psi$ is bounded on $\ol \Omega$, we can suppose that
\begin{displaymath}
v^{\eps}(x,T)=g(x)\geq  - B_0 - \psi(x), 
\end{displaymath}
where $B_0=  B + \norm{\psi}_{L^\infty(\ol \Omega)}$. Now suppose that for fixed $k\geq 0$ we already know a bound $v^\eps(\cdot,t_k)\geq - B_k -\psi$. 
By the dynamic programming inequality 
\eqref{dyn_prog_ineq_sub_new},  we have
\begin{equation*}
v^{\eps}(x,t_k-\eps^2)  \geq S_{\eps}  \left[x,t,v^{\eps}(x,t_k-\eps^2), v^{\eps}(.,t_k) \right] . 
\end{equation*}
Since $S_{\eps}$ is monotone in its last argument, we have
\begin{equation*}
v^{\eps}(x,t_k-\eps^2)  \geq S_{\eps}  \left[x,t,v^{\eps}(x,t_k-\eps^2),  -  B_k  - \psi \right] .
\end{equation*}
By applying successively \eqref{action_on_cte} and Corollary~\ref{est_bar_neum},  we deduce that  
\begin{align*}
S_{\eps}  \left[x,t,v^{\eps}(x,t_k-\eps^2), - B_k  - \psi \right]  & = - B_k + S_{\eps}  \left[x,t,v^{\eps}(x,t_k-\eps^2),  -  \psi \right]  \\
 & \geq  - B_k - \psi(x) - C(1+|v^{\eps}(x,t_k-\eps^2)|)\eps^2 , 
\end{align*}
where $C$ depends only on $\norm{D\psi}_{C_b^1(\ol \Omega)}$. 
If $v^{\eps}(x,t_k-\eps^2) \geq 0$, then it is over (recall we are looking for a lower bound $ - B_{k+1}\leq -1$).
Otherwise, we have
\begin{equation*}
(1-C\eps^2)v^{\eps}(x,t_k-\eps^2)  \geq  - B_k - C\eps^2  - \psi(x). 
\end{equation*}
By dividing by $1-C\eps^2$, we get
\begin{equation*}
v^{\eps}(x,t_k-\eps^2)  \geq  -  \frac{B_k + C\eps^2}{1-C\eps^2}  - \frac{1}{1-C\eps^2}\psi(x) 
=-  \frac{B_k + C\eps^2 (1+ \psi(x)) }{1 - C\eps^2}  -  \psi(x).  
\end{equation*}
Then,  by setting 
$\ds B_{k+1}= \frac{B_k + C(1+ \norm{\psi}_{L^\infty(\ol \Omega)}) \eps^2 }{1 - C\eps^2},$ 
we obtain 
\begin{equation*}
v^{\eps}(x,t_k-\eps^2) \geq -B_{k+1} - \psi(x). 
\end{equation*}
As it is clear that
$\ds B_{k+1}\leq B_k \frac{1 + C(1+ \norm{\psi}_{L^\infty(\ol \Omega)}) \eps^2}{1 - C\eps^2}$, 
we deduce that
$v^{\eps}(x,T-k\eps^2)\geq \tilde B_k - \psi(x)$ for all $k$ with
\begin{equation*}
\tilde B_k=B_0 \left( \frac{1 +C(1+ \norm{\psi}_{L^\infty(\ol \Omega)})\eps^2}{1 - C\eps^2}  \right)^k.
\end{equation*}
Since $k=(T-t)/\eps^2$ and recalling that $B_0=  B + \norm{\psi}_{L^\infty(\ol \Omega)}$, we have shown that
\begin{equation*}
v_{\eps}(x,t)\geq  - (B + \norm{\psi}_{L^\infty(\ol \Omega)}) s_\eps^{T-t} - \psi(x)  
\end{equation*}
with
\begin{equation*}
s_{\eps}=\left( \frac{1 +  C(1+ \norm{\psi}_{L^\infty(\ol \Omega)}) \eps^2}{1  -  C\eps^2}  \right)^{1/\eps^2}.
\end{equation*}
Since $s_{\eps}$ has a finite limit as $\eps\rightarrow 0$ we obtain a bound on $v^\eps$ of the desired form. 
\end{proof}

\begin{remark} By following the construction of the elliptic game we can  take 
 $\psi=(\norm{h}_{L^\infty}+1)\psi_1$ where $\psi_1$ is defined by \eqref{def_fonction_bord}.
In that case, $\norm{D\psi}_{C_b^1(\ol \Omega)}= \norm{D\psi_1}_{C_b^1(\ol \Omega)} (1+\norm{h}_{L^\infty})$.
This expression can be compared for a $C^{2,\alpha}$-domain to the estimate given by Remark~\ref{est_schauder} provided 
by the Schauder theory for which $\norm{D\psi_1}_{C_b^1(\ol \Omega)}$ plays the role of the constant $C_\Omega$ depending only on the domain. 
\end{remark}

\subsection{The elliptic case}
\label{s_stability_ell}

We shall assume throughout this section that the parameters $M$ and $m$ controlling the termination of the game are related by 
$m=M-1- 2 \norm{\psi}_{L^\infty(\ol \Omega)}$; 
in addition, we need to assume $M$ is sufficiently large. 
Our plan is to show, using a fixed point argument, the existence of a function $U^\eps(x,z)$ (defined for all $x\in \ol \Omega$ and $|z|<M$) 
satisfying \eqref{dpp_U_el} and also
\begin{equation}\label{bounds_U_eps}
 - z - \chi(x) \leq U^\eps(x,z) \leq  - z+ \chi(x).
\end{equation}
This implies that $U^\eps(x,z)<0$ when $z>\chi(x)$, and $U^\eps(x,z)>0$ when $z<- \chi(x)$. 
Recalling the definitions of $u^\eps$ and $v^\eps$, it follows from \eqref{def_subelr}--\eqref{def_supelr} that
\begin{equation} \label{bound_u_v_eps}
 |v^\eps(x)|  \leq \chi(x) ,\quad   |u^\eps(x) | \leq \chi(x), 
\end{equation}
for all $x \in \ol  \Omega$. 
It is convenient to work with $V^\eps(x,z) =U^\eps(x,z) + z$ rather than $U^\eps$, since this turns~\eqref{bounds_U_eps} into  
\begin{equation*}
|V^\eps(x,z)|  \leq \chi(x),
\end{equation*}
whose right-hand side is not constant. The dynamic programming principle \eqref{dpp_U_el} for $U^\eps$ is equivalent (after a bit of manipulation) 
to the statement that 
$V^\eps$ is a fixed point of the mapping $\phi(\cdot, \cdot) \mapsto R_\eps[\cdot,\cdot, \phi]$ where the operator $R_\eps$ 
is defined 
for any $L^\infty$-function $\phi$ defined on $\ol \Omega \times (-M,M)$ by 
\begin{equation}\label{op_fixed_pt}
R^\eps[x,z,\phi]= \sup_{p, \Gamma}\inf_{\Delta \hat x }
 \begin{cases}
 e^{-\lambda \eps^2} \phi(x',z') - \delta, & \text{if } |z'|<  M,   \\ 
-  \chi(x),                                & \text{if }  z'\geq  M, \\
   \chi(x),                                & \text{if }  z'\leq -M.
\end{cases}
\end{equation}
 where $x'=x+\Delta x$ and $z'=e^{\lambda \eps^2}(z+\delta)$, with $\delta$ defined as in \eqref{Helen_loss_el}. 
Here $p$, $\Gamma$ and $\Delta \hat x$ are constrained as usual by \eqref{p_beta_gamma_new}--\eqref{moving1_new}.

We shall identify $V^\eps$ as the unique fixed point of the mapping $\phi(\cdot, \cdot) \mapsto R_\eps[\cdot,\cdot, \phi]$ in $F_\chi$ defined by
\begin{equation} \label{def_F_chi}
 F_\chi= \left\{ \phi \in L^\infty \left(\ol \Omega \times \left(-M,M \right)\right) : \forall (x,z)\in \ol \Omega \times (-M,M), |\phi(x,z)| \leq \chi(x) \right\}.
\end{equation}

\begin{lemma}\label{pt_fix_estimate}
Let $f$ satisfy  \eqref{ellipticity_f} and \eqref{loc_lip_p_Gamma_el}--\eqref{cont_growth_p_Gamma_el}
and assume as always that $\alpha$, $\beta$, $\gamma$ fulfill \eqref{condition_pas}--\eqref{cd_coeff_classiq} and 
that $\Omega$ is a $C^2$-domain satisfying both the uniform interior and exterior ball conditions. 
 Then, there exists $M_0>0$ such that for all two positive constants $m$ and $M> M_0$ satisfying $m + 2\norm{ \psi}_{L^\infty(\overline \Omega)} = M-1$,
 for any $|z|\leq M$ and any $x\in \overline \Omega$, we have
 \begin{equation*}
Q_\eps[x,z,\chi]  \leq   \chi(x) \quad \text{ and } \quad   Q_\eps[x,z, - \chi]  \geq   -\chi(x).
 \end{equation*}
\end{lemma}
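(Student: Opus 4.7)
The plan is to reduce the claim for $\chi$ to the estimate for the shifted function $m+\psi$ already established in Lemma \ref{estimate_el_psi_u}, and then calibrate $M_0$ so that the discount contribution $-\lambda \eps^2 \chi(x)$ absorbs the linear-growth term $(\lambda - \eta)\eps^2 |z|$.

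First I would write $\chi = \|\psi\|_{L^\infty(\ol \Omega)} + (m + \psi)$ and apply the shift identity \eqref{action_on_cte_el} to get
$$Q_\eps[x,z,\chi] = e^{-\lambda \eps^2}\|\psi\|_{L^\infty(\ol \Omega)} + Q_\eps[x,z, m+\psi].$$
Since the standing hypothesis $m + 2\|\psi\|_{L^\infty(\ol \Omega)} = M-1 \leq M$ is exactly what Lemma \ref{estimate_el_psi_u} requires, that lemma applies with the given pair $(m,M)$ and gives, for all sufficiently small $\eps$,
$$Q_\eps[x,z,m+\psi] - (m+\psi(x)) \leq \eps^2\bigl(1+(\lambda-\eta)|z|+C_\ast\bigr) - \lambda \eps^2 (m+\psi(x)).$$
Adding the two relations and expanding $e^{-\lambda \eps^2} = 1 - \lambda \eps^2 + O(\eps^4)$, the $\|\psi\|_{L^\infty(\ol \Omega)}$ pieces collapse into $-\lambda\eps^2 \|\psi\|_{L^\infty(\ol \Omega)}$ and combine with $-\lambda\eps^2(m+\psi(x))$ to produce $-\lambda \eps^2 \chi(x)$, yielding
$$Q_\eps[x,z,\chi] - \chi(x) \leq \eps^2\bigl[-\lambda \chi(x) + 1 + (\lambda - \eta)|z| + C_\ast\bigr] + O(\eps^4).$$

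The remaining step is to choose $M_0$ so that the bracketed quantity is strictly negative, uniformly over $x \in \ol \Omega$ and $|z|\leq M$. Using $\chi(x) \geq m = M - 1 - 2\|\psi\|_{L^\infty(\ol \Omega)}$ and $|z|\leq M$, the bracket is bounded above by
$$-\eta M + 1 + \lambda\bigl(1 + 2\|\psi\|_{L^\infty(\ol \Omega)}\bigr) + C_\ast,$$
which becomes strictly negative as soon as $M \geq M_0 := \eta^{-1}\bigl(2 + \lambda(1 + 2\|\psi\|_{L^\infty(\ol \Omega)}) + C_\ast\bigr)$, with enough margin to absorb the $O(\eps^4)$ remainder once $\eps$ is small. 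The companion inequality $Q_\eps[x,z,-\chi] \geq -\chi(x)$ is obtained by the identical computation, starting from $-\chi = -\|\psi\|_{L^\infty(\ol \Omega)} + (-m-\psi)$ and invoking the second estimate of Lemma \ref{estimate_el_psi_u}.

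The only delicate point is a logical one rather than a computational one: the constant $C_\ast$ issued by Lemma \ref{estimate_el_psi_u} must depend on $\|D\psi\|_{C^1_b(\ol \Omega)}$ and $\|h\|_{L^\infty}$ alone, \emph{not} on $M$, so that the definition of $M_0$ above is not circular. This is indeed the case, and it ultimately rests on the gap hypothesis \eqref{est_f_el_neu}: the fact that $|f|$ grows in $|z|$ with coefficient $\lambda-\eta$ strictly smaller than $\lambda$ is precisely what makes the discount factor dominate by a margin proportional to $\eta M$, closing the estimate.
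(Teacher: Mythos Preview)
Your proof is correct and follows essentially the same route as the paper: both apply Lemma~\ref{estimate_el_psi_u} (you via the explicit splitting $\chi = \|\psi\|_{L^\infty} + (m+\psi)$ and the shift identity~\eqref{action_on_cte_el}, the paper by directly invoking the lemma on $\chi$), then bound the bracket using $\chi(x)\geq m = M-1-2\|\psi\|_{L^\infty}$ and $|z|\leq M$ to obtain the same threshold $M_0$ up to an inessential constant. Your observation that $C_\ast$ is independent of $M$, so the definition of $M_0$ is not circular, is exactly the point that makes the argument close.
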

\begin{proof}
 We are going to establish the upper estimate for $\chi$. By Lemma \ref{estimate_el_psi_u}, we deduce that 
\begin{equation*}
Q_\eps[x,z,\chi]  -  \chi(x)  \leq \eps^2 \Big(1+ (\lambda -\eta)|z|+C_\ast \Big) - \lambda \eps^2 (m+ \norm{ \psi}_{L^\infty(\overline \Omega)} + \psi(x)) .
\end{equation*}
Since  $m + 2\norm{ \psi}_{L^\infty(\overline \Omega)} = M-1$ and $|z|\leq M$, we compute
\begin{equation*} 
Q_\eps[x,z,\chi]  -  \chi(x) 
 \leq \eps^2 \Big(1+ (\lambda -\eta)M +C_\ast \Big) - \lambda \eps^2 \left(M-1 - \norm{ \psi}_{L^\infty(\overline \Omega)} +  \psi(x) \right). 
\end{equation*}
By rearranging the terms, we obtain that
\begin{equation*} 
Q_\eps[x,z,\chi]  -  \chi(x) \leq  \eps^2 \left(1+  \lambda (1+2\norm{ \psi}_{L^\infty(\overline \Omega)}) + C_\ast -\eta M   \right) .
\end{equation*}
We can choose $M$ large enough such that the right-hand side is negative. It suffices to take 
\begin{equation*} 
M> M_0:=\frac{1}{\eta} \left( 1 +  \lambda  (1+2\norm{ \psi}_{L^\infty(\overline \Omega)})  +C_\ast \right). 
\end{equation*}
The case for $Q_\eps[x,z, - \chi]  \geq    - \chi(x)$ is analogous.
\end{proof}

\begin{prop} \label{stability_el}
Assume the hypotheses of Lemma \ref{pt_fix_estimate} hold. Suppose further that $m=M - 1-2 \norm{\psi}_{L^\infty(\overline \Omega)}$. 
Then for all sufficiently small $\eps$, the map 
$\phi(\cdot,\cdot) \mapsto R_\eps [\cdot, \cdot, \phi]$
is a contraction in the $L^\infty$-norm, which preserves $F_\chi$. In particular, it has a unique fixed point,
which has $L^\infty$-norm at most $m+2 \norm{\psi}_{L^\infty(\overline \Omega)}$.
\end{prop}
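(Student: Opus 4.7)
The plan is to identify $V^\eps$ as the unique fixed point of $R_\eps$ in the complete metric space $(F_\chi,\|\cdot\|_\infty)$ via Banach's contraction principle. Three steps are needed: (a) $F_\chi$ is closed in $L^\infty(\ol\Omega\times(-M,M))$, hence a complete metric space; (b) $R_\eps$ maps $F_\chi$ into itself; (c) $R_\eps$ is a strict contraction in the $L^\infty$-norm. Step (a) is immediate since the pointwise bound $|\phi(x,z)|\leq \chi(x)$ is preserved under $L^\infty$-limits.

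For step (b), fix $\phi \in F_\chi$ and $(x,z)\in \ol\Omega\times (-M,M)$. The upper bound $R_\eps[x,z,\phi]\leq \chi(x)$ is proved by invoking Lemma \ref{pt_fix_estimate}, which asserts $Q_\eps[x,z,\chi]\leq \chi(x)$. Given any admissible $(p,\Gamma)$ and any $\eta>0$, one can choose $\Delta\hat x^\ast$ such that $e^{-\lambda\eps^2}\chi(x+\Delta x^\ast)-\delta^\ast \leq \chi(x)+\eta$. Since $\phi(x',z')\leq \chi(x')$ pointwise, and since the capped values $\pm\chi(x)$ both lie in $[-\chi(x),\chi(x)]$, the corresponding expression inside the sup-inf defining $R_\eps$ remains bounded by $\chi(x)+\eta$ for the move $\Delta\hat x^\ast$, regardless of which of the three cases in \eqref{op_fixed_pt} applies. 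Thus $\inf_{\Delta\hat x}(\cdots)\leq \chi(x)+\eta$ for each $(p,\Gamma)$, and passing to the sup and then letting $\eta\to 0$ gives $R_\eps[x,z,\phi]\leq \chi(x)$. The lower bound $R_\eps[x,z,\phi]\geq -\chi(x)$ is obtained symmetrically: one selects $(p^\ast,\Gamma^\ast)$ that nearly attains the sup in $Q_\eps[x,z,-\chi]\geq -\chi(x)$ from Lemma \ref{pt_fix_estimate}, and verifies that for every $\Delta\hat x$ the corresponding value is $\geq -\chi(x)-\eta$, using $\phi\geq -\chi$ in the non-capping case and the explicit capped values $\pm\chi(x)\geq -\chi(x)$ otherwise.

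For step (c), fix $\phi_1,\phi_2 \in F_\chi$ and $(x,z)$. For any admissible triple $(p,\Gamma,\Delta\hat x)$, the quantity inside the sup-inf defining $R_\eps[x,z,\phi_i]$ depends on $\phi_i$ only in the non-capping case $|z'|<M$, where it takes the form $e^{-\lambda\eps^2}\phi_i(x',z')-\delta$; in the two capping cases the capped values $\pm\chi(x)$ are $\phi$-independent. Therefore the pointwise difference is bounded by $e^{-\lambda\eps^2}\|\phi_1-\phi_2\|_\infty$, and the classical inequality $|\sup\inf g_1-\sup\inf g_2|\leq \sup|g_1-g_2|$ transfers this bound to $R_\eps$. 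Since $e^{-\lambda\eps^2}<1$ for any fixed $\eps>0$, Banach's principle delivers a unique fixed point $V^\eps\in F_\chi$ with $\|V^\eps\|_\infty\leq \|\chi\|_\infty = m+2\|\psi\|_{L^\infty(\ol\Omega)}$, as announced.

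The main obstacle is step (b): one must argue that the mixed structure of $R_\eps$, which blends the ``continuation'' contribution $e^{-\lambda\eps^2}\phi(x',z')-\delta$ with the capped termination values $\pm\chi(x)$, is compatible with the pointwise bound imposed by $\chi$. The crucial observation unlocking the argument is that these capped values lie inside $[-\chi(x),\chi(x)]$ by construction, so that capping by itself never violates the target bound, and only the non-capping case needs to be controlled via Lemma \ref{pt_fix_estimate}. Steps (a) and (c) are routine once this is in place.
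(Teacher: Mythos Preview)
Your proposal is correct and follows essentially the same approach as the paper. The only cosmetic difference is that the paper invokes monotonicity of $R_\eps$ in its last argument to reduce step (b) to proving $R_\eps[x,z,\chi]\leq\chi(x)$ and $R_\eps[x,z,-\chi]\geq -\chi(x)$, whereas you work directly with an arbitrary $\phi\in F_\chi$ and use the pointwise bound $|\phi|\leq\chi$ inline; both routes hinge on Lemma~\ref{pt_fix_estimate} for the non-capping alternative and on the observation that the capped values $\pm\chi(x)$ already lie in $[-\chi(x),\chi(x)]$.
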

\begin{proof}
By the arguments already used in \cite[Proposition 5.2]{kohns}, the map is a contraction for any $\eps$ (this part of the proof works for any $M$). More precisely, 
if $\phi_i$, $i=1,2$ are two $L^\infty$-functions defined on $\ol \Omega \times (-M,M)$ to $\R$, then 
$\ds \norm{R_\eps [\cdot ,\cdot , \phi_1] - R_\eps [\cdot, \cdot, \phi_2]}_{L^\infty} \leq e^{-\lambda \eps^2} \norm{ \phi_1 -  \phi_2}_{L^\infty}$.

Now we prove that if $M$ is large enough and $m + 2\norm{ \psi}_{L^\infty(\overline \Omega)} =  M-1$,
the map preserves the ball $F_\chi$ defined by \eqref{def_F_chi}.  
 Since  $R_\eps[x,z, \phi]$ is monotone in its last argument,  it suffices to show that
\begin{equation} \label{ineq_chi}
 R_\eps[x,z, \chi] \leq \chi(x)  \quad  \text{ and } \quad   R_\eps[x,z, -\chi] \geq   - \chi(x).
\end{equation}
For the first inequality of \eqref{ineq_chi}, let $p$ and $\Gamma$  be fixed, and consider
\begin{equation}\label{fixed_point_up}
\inf_{\Delta \hat x }
 \begin{cases}
 e^{-\lambda \eps^2} \chi(x') - \delta, & \text{if } |z'|<  M  , \\ 
  - \chi(x) ,                           & \text{if } z' \geq  M, \\
    \chi(x) ,                           & \text{if } z' \leq -M.
\end{cases}
\end{equation}
If a minimizing sequence uses the second or third alternative then the inf is less than $\chi(x)$. 
In the remaining case, when all minimizing sequences use the first alternative, we apply Lemma \ref{pt_fix_estimate} to see that
\eqref{fixed_point_up} is bounded above by $ \chi(x) $. 
It follows that for all $x\in \ol \Omega$, $R_\eps[x,z, \chi] \leq \chi(x)$, as asserted.

For the second inequality of \eqref{ineq_chi}, the argument is strictly parallel by considering the function $-\chi$. 
We have shown that the map $\phi(\cdot,\cdot) \mapsto R_\eps[\cdot,\cdot, \phi]$ preserves the ball $F_{\chi}$.
Since it is also a contraction, the map has a unique fixed point. 
\end{proof}

This result justify the discussion of the stationary case given in Section \ref{games_presentation}, by showing
that the value functions $u^\eps$ and $v^\eps$ are well-defined, and bounded independently of $\eps$, 
and they satisfy the dynamic programming inequalities:
\begin{prop} \label{stability_prop_el}
Suppose $f$ satisfies \eqref{ellipticity_f} and \eqref{est_f_el_neu}--\eqref{loc_lip_p_Gamma_el}, 
 the $C^2$-domain $\Omega$ fulfills both the uniform interior and  exterior ball conditions, and the boundary condition $h$ is continuous, uniformly bounded.  
Assume the parameters of the game $\alpha, \beta, \gamma$ fulfill \eqref{condition_pas}--\eqref{cd_coeff_classiq},
 $\psi \in C_b^2(\ol \Omega)$ satisfy \eqref{def_psi_h_el}, $M$ large enough,  $m=M-1 - 2 \norm{\psi}_{L^\infty(\ol \Omega)}$, 
and  $\chi \in C_b^2(\ol \Omega)$ is defined by \eqref{def_chi_el}.
Let $V^\eps$ be the solution of \eqref{op_fixed_pt} obtained by Proposition~\ref{stability_el} and let $U^\eps(x,z)=V^\eps(x,z)-z$. Then the associated 
functions $u^\eps$, $v^\eps$ defined by \eqref{def_subelr}--\eqref{def_supelr} satisfy $ |u^\eps| \leq \chi$ and $ |v^\eps|  \leq \chi$ for all 
sufficiently small $\eps$, and they satisfy the dynamic programming inequalities 
\eqref{dyn_prog_ineq_sub_new_el} and \eqref{dyn_prog_ineq_super_new_el} at all points $x\in \overline \Omega$.
\end{prop}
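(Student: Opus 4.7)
The plan is to construct $U^\eps$ directly from the fixed point $V^\eps$ produced by Proposition~\ref{stability_el}. For $(x,z)\in \ol \Omega \times (-M,M)$, set $U^\eps(x,z) := V^\eps(x,z) - z$, and extend $U^\eps$ to $|z|\geq M$ by the formulas of Section~\ref{rules_el_game}, namely $U^\eps(x,z) = -z - \chi(x)$ for $z\geq M$ and $U^\eps(x,z) = -z + \chi(x)$ for $z\leq -M$. The first task is to verify that $U^\eps$ satisfies the dynamic programming principle \eqref{dpp_U_el}. Writing $z' = e^{\lambda \eps^2}(z+\delta)$, so that $-\delta = z - e^{-\lambda \eps^2} z'$, and substituting this identity term-by-term into the definition \eqref{op_fixed_pt} of $R_\eps$ rewrites the fixed-point equation $V^\eps = R_\eps[\cdot,\cdot,V^\eps]$ in the form $U^\eps(x,z)+z = R_\eps[x,z,U^\eps+z]$, which after cancellation is exactly \eqref{dpp_U_el}. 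Careful book-keeping is needed on the three branches of \eqref{op_fixed_pt}: in the capping branches $z'\geq M$ and $z'\leq -M$, $R_\eps$ returns $\mp \chi(x)$, which must be matched against the terminal-payoff formulas for $U^\eps$ appearing in \eqref{dpp_U_el}.

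Next, the membership $V^\eps \in F_\chi$ translates into the two-sided bound
\begin{equation*}
-z - \chi(x) \;\leq\; U^\eps(x,z) \;\leq\; -z + \chi(x), \qquad (x,z)\in \ol \Omega \times (-M,M),
\end{equation*}
which also holds trivially on $|z|\geq M$ by the extension. Since $\chi(x) \leq m + 2\norm{\psi}_{L^\infty(\ol \Omega)} = M-1 < M$, these bounds have strict consequences. For any $z > \chi(x)$ we obtain $U^\eps(x,z) < 0$, so by \eqref{def_supelr} we deduce $v^\eps(x)\leq \chi(x)$; conversely, no $z\leq -\chi(x)$ can satisfy $U^\eps(x,z) < 0$ since $U^\eps(x,z)\geq -z - \chi(x)\geq 0$, which forces $v^\eps(x)\geq -\chi(x)$. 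The mirror argument, based on the inequality $U^\eps(x,z) \leq -z + \chi(x)$ and the definition \eqref{def_subelr}, gives $|u^\eps(x)|\leq \chi(x)$. Thus $|u^\eps|,|v^\eps|\leq \chi$ on $\ol \Omega$.

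With these uniform bounds in hand, I would apply Proposition~\ref{ineq_dyn_prog_el} with the choice $m_1 := \norm{\chi}_{L^\infty(\ol \Omega)}$, which satisfies $m_1 \leq M-1 < M$. Since $|u^\eps(x)|,|v^\eps(x)|\leq m_1$ for every $x\in \ol \Omega$, the hypotheses $-m_1 \leq u^\eps(x) < M$ and $-M < v^\eps(x) \leq m_1$ are satisfied everywhere, and the dynamic programming inequalities \eqref{dyn_prog_ineq_sub_new_el} and \eqref{dyn_prog_ineq_super_new_el} hold for all $x\in \ol \Omega$ once $\eps$ is small enough; the smallness condition is uniform in $x$ because that of Proposition~\ref{ineq_dyn_prog_el} depends only on $m_1$ and the game parameters.

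The main obstacle is the reconciliation performed in the first step: one must check that the three-branch definition of $R_\eps$ in \eqref{op_fixed_pt} lines up precisely with the three-branch right-hand side of \eqref{dpp_U_el}, with the discount factor $e^{\lambda \eps^2}$ correctly distributed between $z'$ and the termination bonus, and with the signs of the capping values $\pm \chi(x)$ matching across the translation $U^\eps = V^\eps - z$. This verification is elementary but essential, because it is what certifies that the abstract fixed point supplied by Proposition~\ref{stability_el} really coincides with the game value function whose informal description in Section~\ref{rules_el_game} motivated the construction.
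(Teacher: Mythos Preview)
Your proposal is correct and follows the same route as the paper: the paper's proof is a terse two-line reference to \eqref{bound_u_v_eps} and Proposition~\ref{ineq_dyn_prog_el}, and you have simply unpacked the surrounding discussion (the equivalence between \eqref{dpp_U_el} and the fixed-point equation for $V^\eps$, and the derivation of \eqref{bound_u_v_eps} from $V^\eps\in F_\chi$) explicitly. One cosmetic point: Proposition~\ref{ineq_dyn_prog_el} requires the \emph{strict} inequality $v^\eps(x)<m_1$, so rather than $m_1=\norm{\chi}_{L^\infty}$ you should take any $m_1\in(\norm{\chi}_{L^\infty},M)$, e.g.\ $m_1=M-\tfrac12$, which is harmless since $\norm{\chi}_{L^\infty}\leq M-1$.
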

\begin{proof} The bounds on $u^\eps$ and $v^\eps$ were demonstrated in \eqref{bound_u_v_eps}. 
The bounds assure that the dynamic programming inequalities hold for all $x\in \ol\Omega$, 
as a consequence of Proposition \ref{ineq_dyn_prog_el}.
\end{proof}

We close this section with the stationary analogue of Lemma \ref{lem_dec_U}.
\begin{lemma}\label{lem_dec_U_el}
Under the hypotheses of Proposition \ref{pt_fix_estimate}, suppose in addition that
\begin{equation*}\label{hyp_dec_f_z_el}
f(x,z_1,p, \Gamma)\geq f(x,z_0,p, \Gamma) \quad \text{ whenever }z_1>z_0.
\end{equation*}
Then $U^\eps$ satisfies 
\begin{equation*}\label{dec_U_el}
U^\eps(x,z_1)\leq U^\eps(x,z_0) - (z_1-z_0)  \quad \text{ whenever }z_1>z_0.
\end{equation*}
In particular, $U^\eps$ is strictly decreasing in $z$ and $v^\eps= u^\eps$.
\end{lemma}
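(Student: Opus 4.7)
The plan is to adapt the proof of the parabolic analogue (Lemma~\ref{lem_dec_U}) to the stationary setting. The Neumann penalization $-\|\Delta \hat x - \Delta x\|\,h(x+\Delta x)$ appearing in \eqref{Helen_loss_el} is $z$-independent and therefore cancels from the difference $\delta_j(z_1) - \delta_j(z_0) = \eps^2(f(x,z_1,p,\Gamma) - f(x,z_0,p,\Gamma))$, which is nonnegative by the new hypothesis that $f$ is nondecreasing in $z$. The genuinely new difficulty compared with the parabolic argument is the absence of a terminal time at which to anchor a backward induction; this will be circumvented by exploiting the fixed-point iteration furnished by Proposition~\ref{stability_el}.

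Setting $V^\eps(x,z) = U^\eps(x,z) + z$ and identifying $V^\eps$ with the two extensions $V^\eps(x,z) = \mp\chi(x)$ for $\pm z \geq M$, the desired bound is equivalent to the assertion that $V^\eps$ is nonincreasing in $z$. By Proposition~\ref{stability_el}, $V^\eps$ is the unique fixed point in $F_\chi$ of the contraction $\phi \mapsto R_\eps[\cdot,\cdot,\phi]$, hence the $L^\infty$-limit of the iterates $V_n := R_\eps^n V_0$ for any $V_0 \in F_\chi$. I would start from a $V_0$ that is nonincreasing in $z$ (for instance $V_0 \equiv 0$ on $|z|<M$ together with the two explicit boundary extensions) and prove by induction that each $V_n$ remains nonincreasing. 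For fixed admissible $(p, \Gamma, \Delta\hat x)$ one compares the inner quantities of $R_\eps V_n$ in \eqref{op_fixed_pt} at $z_0 < z_1$, splitting into cases according to the branch into which $z'_i = e^{\lambda\eps^2}(z_i+\delta_i)$ falls: when both $z'_0, z'_1$ lie in the same branch the conclusion is direct, from $V_n(x',z'_1)\leq V_n(x',z'_0)$ (the induction hypothesis, valid since $\delta_1 \geq \delta_0$ forces $z'_1 > z'_0$) combined with $\delta_1 \geq \delta_0$ in the running branch, or from the equality of the constants $\mp\chi(x)$ in the two capping branches. The sup over $(p,\Gamma)$ and inf over $\Delta\hat x$ then preserve this pointwise ordering, giving $V_{n+1}(x,z_1) \leq V_{n+1}(x,z_0)$, and passage to the $L^\infty$-limit produces the stated inequality for $V^\eps$.

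The main obstacle will be handling the \emph{mixed} configurations, in which $z'_0$ and $z'_1$ fall into different branches of \eqref{op_fixed_pt}; these occur only when $z_0$ lies within an $O(\eps^\mu)$-layer of $-M$ (with $\mu>0$ depending on $\alpha,\beta,\gamma$), in which case Mark may cap game $(0)$ below while game $(1)$ keeps running, or symmetrically near $+M$. I plan to handle these by observing that Mark, who takes the infimum, never gains by choosing such a $\Delta\hat x$: capping game $(0)$ below assigns to the inner quantity the positive constant $\chi(x)$, which is strictly larger, for $M$ chosen as in Lemma~\ref{pt_fix_estimate}, than the value $e^{-\lambda\eps^2}\chi(x)+O(\eps^2)<\chi(x)$ attainable by alternative admissible moves that keep $|z'_0|<M$, the strict inequality following from \eqref{est_f_el_neu} precisely through the largeness of $M$. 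Consequently Mark's infimum is realized on $\Delta\hat x$ avoiding the mixed configurations, which reduces the comparison to the generic case already treated. The final assertion $v^\eps=u^\eps$ then follows from \eqref{def_subelr}--\eqref{def_supelr}: the strict decrease of $z\mapsto U^\eps(x,z)$ combined with its continuity in $z$ (inherited from the contraction $R_\eps$) forces $\{z:U^\eps(x,z)=0\}$ to be a singleton at which $u^\eps(x)$ and $v^\eps(x)$ coincide.
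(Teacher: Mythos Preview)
Your overall strategy—passing to $V^\eps=U^\eps+z$, iterating the contraction $R_\eps$ from Proposition~\ref{stability_el}, and propagating the nonincreasing-in-$z$ property through the iterates $V_n$—is exactly the route the paper takes (by deferring to \cite[Lemma~5.4]{kohns}), and your observation that the Neumann term drops out of $\delta_1-\delta_0$ is precisely the paper's point. The running-branch comparison and the simultaneously-capped cases are fine.

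The gap is in your handling of the mixed configurations. Your argument near $-M$ (``Mark never gains by capping game $(0)$ below because an alternative move with $|z'_0|<M$ achieves $e^{-\lambda\eps^2}\chi(x)+O(\eps^2)<\chi(x)$'') does not hold for arbitrary admissible $(p,\Gamma)$: the running value is $e^{-\lambda\eps^2}V_n(x',z'_0)-\delta_0$, and for $\|p\|\sim\eps^{-\beta}$ the term $-\delta_0$ can be of order $\eps^{1-\alpha-\beta}$, swamping the $\lambda\eps^2\chi(x)$ gain. More seriously, the ``symmetrically near $+M$'' claim is not symmetric in the relevant sense: when game $(1)$ caps above at some $\Delta\hat x$, the inner value is $-\chi(x)$, which Mark (the minimizer) is happy to accept—so you cannot argue his infimum avoids that branch. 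The clean fix, which is what the paper's remark ``$\chi$ is independent of $z$'' is really pointing at, is to invoke $V_{n+1}\in F_\chi$ directly: choose $(p^*,\Gamma^*)$ optimal for $V_{n+1}(x,z_1)$; if any $\Delta\hat x$ caps game $(1)$ above, then $V_{n+1}(x,z_1)=\inf H_1\le -\chi(x)\le V_{n+1}(x,z_0)$ and you are done; otherwise all $z'_1<M$, hence all $z'_0<M$, and if Mark's minimizer $\Delta\hat x^*$ for $H_0$ caps below you get $\inf H_0=\chi(x)$, forcing $V_{n+1}(x,z_0)=\chi(x)\ge V_{n+1}(x,z_1)$; the remaining case is the generic running branch at $\Delta\hat x^*$, where your pointwise argument applies.
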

\begin{proof} The Dirichlet case is provided in \cite[Lemma~5.4]{kohns}. 
For our game, it suffices to add $ - \norm{\Delta \hat x - \Delta x} h(x+\Delta x)$ in the expression of $\delta_0$ and $\delta_1$ defined in the proof of 
\cite[Lemma~5.4]{kohns}. Then the arguments can be repeated on the operator $R_\eps$ defined by \eqref{op_fixed_pt}, 
noticing that the function $\chi$ is independent of $z$.  
\end{proof}

\section{Some natural generalizations}
\label{generalizations}

In the precedent sections, we solved the Neumann boundary problem in both parabolic and elliptic settings. 
In the present section, we are going to explain without full proof how the previous work can be used to solve 
on the one hand the mixed Dirichlet-Neumann boundary conditions in the elliptic framework and on the other hand the oblique problem in the parabolic setting. 
For the definitions of the viscosity solutions on these frameworks which are the natural extensions of those 
presented in Section \ref{conv_def_visco}, the interested reader is referred to \cite{user_s_guide} or \cite{Barles_book}.

\subsection{Elliptic PDE with mixed  Dirichlet-Neumann boundary conditions}
We extend the games of Section~\ref{rules_el_game} devoted to the single Neumann problem to the mixed Dirichlet-Neumann boundary-value problem
\begin{equation}
\begin{cases}
 f(x,u,Du,D^2u)+\lambda u=0,      & \text{ in } \Omega,      \\ 
  u =g,                           & \text{ on } \Upsilon_D,  \\ 
\dfrac{\partial u}{\partial n}=h, & \text{ on }  \Upsilon_N, \\ 
\end{cases}
\label{eq_neumann_mixed}
\end{equation}
where $\Omega \subsetneq \R^N$ is a domain, $\Upsilon_D \cup \Upsilon_N= \partial \Omega$ is a partition of $\partial \Omega$
with $\Upsilon_D$ nonempty and closed and $\Upsilon_N$ is assumed to be $C^2$. 
Then,  $\Omega$ is assumed to satisfy the uniform exterior ball condition and, in a neighborhood of $\Upsilon_N$, the uniform interior  ball condition
explained in Definition~\ref{unif_int_ball_cd}. We will need a $C_b^2(\ol \Omega)$-function $\psi$  such that
\begin{equation} \label{psi_mixte}
 \frac{\partial \psi}{\partial n} = \norm{h}_{L^\infty}+1  \quad \text{on }  \Upsilon_N.
\end{equation}
From $m$ and  $\psi$, we construct a function $\chi$ defined by
\begin{equation}\label{chi_mixte}
\chi(x) = m+\norm{\psi}_{L^\infty}+\psi(x).
\end{equation}
As in Section \ref{rules_el_game}, we introduce $U^\eps(x,z)$, the optimal worst-case present value of Helen's wealth 
if the initial stock is $x$ and her initial wealth is $-z$. 
The definition of $U^\eps(x,z)$ for $x\in \Omega \cup \Upsilon_N$ involves here a game similar to that of Section \ref{rules_el_game}.
The rules are as follows: 
\begin{enumerate}
 \item Initially, at time $t_0=0$, the stock price is $x_0=x$ and Helen's debt is $z_0=z$. 
 \item Suppose, at time $t_j=j\eps^2$, the stock price is $x_j$ and Helen's debt is $z_j$ with $|z_j|<M$. 
Then Helen chooses $p_j \in \R^N$ and $\Gamma_j\in \mathcal{S}^N$, 
 restricted in magnitude by \eqref{p_beta_gamma_new}. Knowing these choices, Mark determines the next stock price
$ x_{j+1}=x_j+\Delta x$ so as to degrade Helen's outcome. 
Mark chooses an intermediate point $\hat x_{j+1}=x_j +\Delta \hat x_j \in \R^N$ such that $\left\|\Delta \hat x_j\right\| \leq \eps^{1-\alpha}$. 
This position $\hat x_{j+1}$ determines the new position $x_{j+1}=x_j +\Delta x_j$ by
\begin{equation*}
x_{j+1}= \proj_{\overline \Omega} (\hat x_{j+1}) \in \ol \Omega. 
\label{moving2_new_elmix} 
\end{equation*}
Helen experiences a loss at time $t_j$ of 
\begin{equation}\label{Helen_loss_elmix}
\delta_j =  p_j\cdot \Delta \hat x_j +\frac{1}{2}  \left\langle \Gamma_j \Delta \hat x_j,\Delta \hat x_j \right\rangle
+\eps^2 f(x_j,z_j,p_j,\Gamma_j) - \norm{\Delta \hat x_j - \Delta x_j}  h(x_j+\Delta x_j ) .
\end{equation}
As a consequence, her time $t_{j+1}=t_j+\eps^2$ debt becomes $z_{j+1}=e^{\lambda \eps^2} (z_j+\delta_j) $. 
\item  If $z_{j+1} \geq M$, the the game terminates, and Helen pays a 
``termination-by-large-debt penalty'' worth  $e^{\lambda \eps^2}(   \chi(x_j) -\delta_j)$ at time $t_{j+1}$. 
Similarly, if  $z_{j+1} \leq  - M$, the the game terminates, and Helen receives a 
``termination-by-large-wealth bonus'' worth $e^{\lambda \eps^2}(\chi(x_j) + \delta_j)$ at time $t_{j+1}$. 
If the game ends this way, we call $t_{j+1}$ the ``ending index'' $t_K$. 
\item If $|z_{j+1}|<M$  and $ x_{j+1}\in \Upsilon_D$, then the game terminates, and Helen gets an ``exit payoff'' worth 
$g( x_{j+1})$ at time $t_{j+1}$. If the game ends this way, we call $t_{j+1}$ the ``exit index''  $t_E$. 
\item If the game has not terminated then Helen and Mark repeat this procedure at time $t_{j+1}=t_j+\eps^2$. If the game never stops, the ``ending index'' $t_K$
is $+\infty$.
\end{enumerate}

All the possibilities, apart the end by exit, had already been investigated at Section \ref{rules_el_game}.  
If the game ends by exit at time $t_E$, then the present value of her income is 
\begin{align*}
 U^\eps(x_0,z_0) & = -z_0 - \delta_0 - e^{-\lambda \eps^2} \delta_1 - \cdots -  e^{-\lambda (E-1)\eps^2} \delta_{E-1} + e^{-\lambda E \eps^2} g(x_E) \\
                 & = e^{-\lambda E \eps^2} (g(x_E) - z_E).
\end{align*}
Since the game is stationary, the associated dynamic programming principle is that for $|z|<M$,
\begin{equation}\label{dpp_U_elmix}
U^{\eps}(x,z)=\sup_{p, \Gamma}\min_{\Delta \hat x }
 \begin{cases}
 e^{-\lambda \eps^2} U^{\eps}(x',z'),   & \text{if } x'\in \Omega \cup \Gamma_N \text{ and } |z'|<  M,   \\
 e^{-\lambda \eps^2} (g(x') -z')    ,   & \text{if } x'\in \Gamma_D \text{ and }|z'|<  M,   \\ 
 -z - \chi(x)                       ,   & \text{if } z' \geq  M, \\
 -z + \chi(x)                       ,   & \text{if } z' \leq -M,
\end{cases}
\end{equation}
where $x'=\proj_{\overline \Omega} (x+\Delta \hat x)$ and $z'=e^{\lambda \eps^2} (z+\delta)$, with $\delta$ defined by \eqref{Helen_loss_elmix}. 
Here $p$, $\Gamma$ and $\Delta \hat x$ are constrained as usual by \eqref{p_beta_gamma_new}--\eqref{moving1_new}. 

The definitions \eqref{def_subelr}--\eqref{def_supelr} of $u^{\eps}$ and  $v^{\eps}$ on $\Omega \cup \Gamma_N$ are conserved. 
The corresponding semi-relaxed limits are defined for any $x\in \ol \Omega$ by 
\begin{equation*} 
\overline u(x)= \limsup_{\substack{y\rightarrow x \\ \eps \rightarrow 0}} u^{\eps}(y)  \quad \text{ and }   \quad 
  \underline v(x)= \liminf_{\substack{y\rightarrow x \\ \eps \rightarrow 0}} v^{\eps}(y), 
\end{equation*}
with the convention that $y$ approaches $x$ from  $\Omega \cup \Gamma_N$ (since $u^{\eps}$ and $v^{\eps}$ are only defined on  $\Omega \cup \Gamma_N$).
Proposition \ref{ineq_dyn_prog_el} still holds without any modification for mixed-type Dirichlet-Neumann boundary conditions. 
Moreover, the definition of viscosity subsolutions and supersolutions is clear by relaxing the PDE condition  on $\Upsilon_D$ 
with the Dirichlet condition in the same way that has been done in \cite[Section 3]{kohns}.

Following the same steps as our proof for the Neumann problem (the main modification consists in the proof of convergence on $\Upsilon_D$ but 
has already been done in \cite{kohns}),   the following theorem is now immediate.

\begin{theorem}\label{theo_cv_obl_neu}
 Consider the stationary boundary value problem \eqref{eq_neumann_mixed} where $f$ satisfies 
\eqref{ellipticity_f} and \eqref{est_f_el_neu}--\eqref{cont_growth_p_Gamma_el},
$g$ and  $h$ are continuous, uniformly bounded and $\Omega$ is a $C^2$-domain satisfying the uniform exterior ball condition 
 and the uniform interior ball condition in a neighborhood of $\Upsilon_N$. 
Assume the parameters of the game $\alpha$, $\beta$, $\gamma$ fulfill \eqref{condition_pas}--\eqref{cd_coeff_classiq}, 
  $\psi \in C_b^2 (\ol \Omega)$ satisfies \eqref{psi_mixte}, \mbox{$\chi \in C^2(\ol \Omega)$} is defined by \eqref{chi_mixte}, 
$M$ is sufficiently large, and $m=M-1 -2 \norm{\psi}_{L^\infty(\ol \Omega)} $. 
Then $u^\eps$ and $v^\eps$ are well-defined when $\eps$ is sufficiently small, and they satisfy 
$|u^\eps | \leq \chi$ and  $|v^\eps|\leq \chi$.
Their relaxed semi-limits $\overline u$ and $\underline v$ are respectively a viscosity subsolution and a viscosity supersolution of \eqref{eq_neumann_mixed}. 
If in addition we have $\underline v \leq \overline u$ and the PDE has a comparison principle, then it follows that $u^\eps$ and $v^\eps$ 
converge locally uniformly in $\ol \Omega$ to the unique viscosity solution of \eqref{eq_neumann_mixed}.
\end{theorem}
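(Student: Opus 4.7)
The strategy is to splice together the Neumann analysis developed in Sections \ref{consistance}--\ref{stability} (on $\Omega \cup \Upsilon_N$) with the Dirichlet analysis already carried out in \cite[Sections 3 and 5]{kohns} (on $\Upsilon_D$), and then invoke the Barles--Souganidis framework as in Proposition \ref{cv_el}. The first step is to obtain the well-posedness of $U^\eps$ together with the bounds $|u^\eps|, |v^\eps| \leq \chi$. For this, reinterpret the dynamic programming principle \eqref{dpp_U_elmix} as a fixed point problem for an operator $R_\eps$ analogous to \eqref{op_fixed_pt}, with an extra alternative $e^{-\lambda \eps^2}(g(x') - z')$ whenever $x' \in \Upsilon_D$. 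The contraction property in $L^\infty$ follows exactly as in Proposition \ref{stability_el}. For the invariance of the ball $F_\chi$, the only new case to check concerns an exit through $\Upsilon_D$: since $\chi \geq m + \norm{\psi}_{L^\infty(\ol\Omega)} \geq m$, it suffices to require $M$ large enough so that $m \geq \norm{g}_{L^\infty(\Upsilon_D)}$, which absorbs the exit term $e^{-\lambda \eps^2}(g(x') - z')$ inside $[-\chi(x), \chi(x)]$. Combining this with Lemma \ref{pt_fix_estimate} (unchanged), we obtain a unique fixed point $V^\eps$, hence $U^\eps$, and the bounds \eqref{bound_u_v_eps} on $u^\eps$, $v^\eps$ over $\Omega \cup \Upsilon_N$.

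Next, Proposition \ref{ineq_dyn_prog_el} transfers verbatim to give the dynamic programming inequalities at every $x \in \Omega \cup \Upsilon_N$, because the proof only used that either the game continues inside $\Omega \cup \Upsilon_N$ or it is capped by $\pm M$. Consistency at interior points $x_0 \in \Omega$ is provided by \cite[Section 4]{kohns}; consistency at boundary points $x_0 \in \Upsilon_N$ is identical to the pure Neumann case handled by Proposition \ref{cons_new_sub_el}, since in a small ball around such $x_0$ we have $B(x_0, \eps^{1-\alpha}) \cap \partial \Omega \subset \Upsilon_N$ for all $\eps$ small, by the openness of $\Upsilon_N$ in $\partial\Omega$. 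Plugging these into the argument of Proposition \ref{cv_el} then shows that $\ol u$ and $\udl v$ satisfy the appropriate viscosity sub/super inequalities at every $x_0 \in \Omega$ (part \ref{def_visc_sub_int_el} of Definition \ref{def_visco_el}) and at every $x_0 \in \Upsilon_N$ (part \ref{def_visc_sub_bd_el}).

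It remains to treat points $x_0 \in \Upsilon_D$. Here the game terminates immediately upon crossing into $\Upsilon_D$, so the analysis mirrors the elliptic Dirichlet case of \cite[Proposition 3.4 and Section 5]{kohns}: using test functions $\phi$ such that $\bar u - \phi$ attains a strict local maximum on $\ol\Omega$ at $x_0$, pick a sequence $(\eps_k, y_k)$ with $y_k \to x_0$ and $u^{\eps_k}(y_k) \to \bar u(x_0)$; either $y_k$ stays in $\Omega \cup \Upsilon_N$ at positive distance from $\Upsilon_D$ for a subsequence, in which case the interior/Neumann consistency argument yields the PDE inequality, or $y_k \to \Upsilon_D$, in which case the exit payoff $g(x_{j+1})$ in \eqref{dpp_U_elmix} forces $\bar u(x_0) \leq g(x_0)$ via continuity of $g$ and the dynamic programming inequality. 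In both cases the mixed-boundary viscosity subsolution condition holds at $x_0$; the parallel argument for $\udl v$ gives $\udl v(x_0) \geq g(x_0)$ or the supersolution PDE inequality. The assertion about $u^\eps, v^\eps$ converging locally uniformly to the unique viscosity solution follows from the standard Barles--Perthame half-relaxed limits argument once a comparison principle is assumed.

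The main obstacle is ensuring that the Neumann-type consistency estimates of Section \ref{consistance}, which rely heavily on the uniform interior/exterior ball conditions and on the regularity of $d(\cdot, \partial\Omega)$, remain valid near the interface $\Upsilon_D \cap \Upsilon_N$. This is why the hypothesis assumes the uniform interior ball condition only in a neighborhood of $\Upsilon_N$, and why $\Upsilon_D$ is required to be closed: for any $x_0 \in \Upsilon_N$ there is a neighborhood on which only Neumann rules and Neumann-type penalization terms appear, so the delicate geometric lemmas in Section \ref{prel_geo_lem} (in particular Lemma \ref{boundary_bounce1}) apply unchanged. Near $\Upsilon_D$, the only role of the boundary is to terminate the game, so the geometry plays no role beyond the uniform exterior ball condition needed to define $\proj_{\ol \Omega}$. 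This clean separation is exactly what makes the theorem follow without introducing new technical estimates.
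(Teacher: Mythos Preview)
Your proposal is correct and follows exactly the approach the paper indicates: the paper's own ``proof'' consists of a single sentence stating that the result is immediate by following the same steps as the Neumann problem, with the only new ingredient being the convergence argument on $\Upsilon_D$ already supplied in \cite{kohns}. You have simply fleshed out that sentence into a coherent sketch, correctly identifying the fixed-point/stability step (including the new exit alternative and the requirement $m\geq \norm{g}_{L^\infty}$), the unchanged Neumann consistency near $\Upsilon_N$, and the Dirichlet-type argument near $\Upsilon_D$.

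One minor imprecision: when you say Proposition~\ref{ineq_dyn_prog_el} transfers ``verbatim'' because the proof only used continuation or capping, you are overlooking the Dirichlet exit alternative in \eqref{dpp_U_elmix}. In that branch one gets $z'\leq g(x')$ rather than $z'\leq u^\eps(x')$, so the dynamic programming inequality holds with $u^\eps(x+\Delta x)$ replaced by $g(x+\Delta x)$ whenever $x+\Delta x\in\Upsilon_D$ (equivalently, extend $u^\eps$ by $g$ on $\Upsilon_D$). This is harmless for the subsequent convergence argument at $x_0\in\Upsilon_D$, which you treat correctly, and the paper itself glosses over this point in the same way.
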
 

\subsection{Parabolic PDE with an oblique boundary condition}

The target of this section is to construct a game which could interpret the  PDE with an oblique condition $h$
and final-time data $g$ given by  
\begin{equation}
\left\{
\begin{array}{ll}
\partial_t u  - f(t,x,u,Du, D^2u)=0, & \text{for }x \in \Omega \text{ and } t<T,                \\
\dfrac{\partial u}{\partial \varsigma}(x,t)=h(x),  & \text{for }x \in \partial \Omega \text{ and } t<T,  \\
 u(x,T)=g(x),				& \text{for } x \in \overline \Omega , 
\end{array}
 \right.
\label{def_int_oblique}
 \end{equation}
where $\obl$ defines a smooth vector field, say $C^2$, on $\partial \Omega$ pointing outward such that 
\begin{equation}\label{hyp_vector_field}
\langle \obl(x) , n(x)\rangle \geq \theta >0 \quad \text{ for all }x\in \partial \Omega. 
\end{equation}
As usual, the domain $\Omega$ is supposed to be at least of boundary $C^2$ and to satisfy both the uniform and the exterior ball conditions.

First of all, following P.L. Lions \cite[Section 5]{lions_neumann_type}, P.L. Lions and A.S. Sznitman \cite{lions_sznitman},  
 we introduce some smooth functions $a_{ij}(x)=a_{ji}(x)$, say $C_b^2(\R^N)$,  such that 
 \begin{align}
 &\exists \theta >0, \forall x\in \R^N, (a_{ij}(x)) \geq \theta I_N ,  \label{theta_aij} \\
 &\forall x\in \partial \Omega, \sum_{j=1}^N a_{ij}(x) \obl_j(x)=n_i(x) \quad \text{ for } 1\leq i \leq N. \notag
 \end{align}
 Clearly if we had $ \obl =n$, we would just take $a_{ij}(x)=\delta_{ij}$. 
 Next, the matrices induce a metric $d_\obl$ on $\R^N$ defined by 
 \begin{equation}\label{d_oblique}
 d_\obl (x,y) = \inf \left\{   \int_0^1 \left[ \sum_{1 \leq i,j \leq N} a_{ij}(\xi(t))\dot{\xi}_i(t)\dot{\xi}_j(t)\right]^{1/2} dt :  
   \xi \in C^1([0,1], \R^N), \xi(0)=y,\xi(1)=x \right\}. 
 \end{equation}
 Then it is well known that for $\norm{x-y}$ small, there exists a unique minimizer in \eqref{d_oblique}. 
The interested reader is referred to \cite{lions_neumann_type} for additional properties about $d_\obl$. 
For this specific metric, we can now define for any $x$ lying on a small $\delta$-neighborhood of the boundary 
a unique projection according the vector field $\gamma$ along the boundary by 
\begin{equation}\label{def_proj_oblique}
\bar x^\gamma= \proj_{\ol \Omega}^\obl(x) \in \partial \Omega, 
\end{equation}
which corresponds to the unique minimum of $d_\obl(x,y)$ for $y$ lying on the boundary. 
Finally, $B_\obl(x,r)$ denotes the ball of center $x$ and radius $r$ induced by the metric $d_\obl$.

We can now explain the rules of the game corresponding to the oblique problem \eqref{def_int_oblique}. 
Let the parameters $\alpha$, $\beta$, $\gamma$  satisfy \eqref{condition_pas}--\eqref{cd_coeff_classiq}. 
When the game begins, the position can have any value $x_0\in \ol{\Omega}$; 
Helen's initial score is $y_0=0$. The rules are as follows: if at time $t_j=t_0+j\eps^2$ Helen's debt is $z_j$ and the stock price is $x_j$,  then
\begin{enumerate}
 \item Helen chooses a vector $p_j \in \R^N$ and a matrix $\Gamma_j\in \mathcal{S}^N$, restricted in magnitude by \eqref{p_beta_gamma_new}. 
 \item Taking Helen's choice into account, Mark chooses the stock price $x_{j+1}=x_{j}+\Delta x_j$ so as to degrade Helen's outcome. 
 Mark is going to choose an intermediate point $\hat x_{j+1}=x_j +\Delta \hat x_j \in \R^N$ such that 
\begin{equation}\label{moving1_new_obl}
\hat x_{j+1} \in B_\obl(x_j, \eps^{1-\alpha}), 
\end{equation}
which determines the new position $x_{j+1}=x_j+\Delta x_j \in \overline \Omega$  by the rule
\begin{equation*}\label{moving2_new_obl} 
x_{j+1} = \proj_{\ol \Omega}^\obl(\hat x_{j+1}), 
\end{equation*}
where $\proj_{\bar \Omega}^\obl$ is the projection defined by \eqref{def_proj_oblique}.
\item Helen's debt is changed to 
\begin{equation*}
z_{j+1}=z_j +  p_j\cdot \Delta \hat x_j +\frac{1}{2}  \left\langle \Gamma_j \Delta \hat x_j,\Delta \hat x_j \right\rangle
+\eps^2 f(t_j,x_j,z_j,p_j,\Gamma_j) - d_\obl(\hat x_{j+1}, x_{j+1})  h(x_j+\Delta x_j ) .
\label{Helens_debt_obl}
\end{equation*}
\item The clock steps forward to $t_{j+1}=t_j+\eps^2$  and the process repeats, stopping when $t_K=T$. 
At the final time Helen receives $g(x_K)$ from the option.
\end{enumerate}

Rather than repeating the arguments already used, we are going to explain the modifications to carry out the analysis. 
First of all, by the boundedness of the $a_{ij}$ and \eqref{theta_aij}, the distance $d_\obl$ defined by \eqref{d_oblique}
is equivalent to the euclidean distance. 
Since $\Omega$ satisfies the uniform exterior ball condition, there exists, for a certain $r_\obl>0$,
 a tubular neighborhood $\{x\in \R^N\backslash \Omega, d(x) < r_\obl\}$ of the boundary on which $\proj_{\bar \Omega}^\obl$ is well-defined. 
This guarantees the well-posedness of this game for all  $\eps>0$ small enough.
Then, if $d_\obl$ or the euclidean distance is used to compute $D\phi$ and $D^2\phi$ for a smooth function $\phi$, we will get the same results.
Therefore, we can introduce the oblique analogues $m_{\obl, \eps}^{x}[\phi]$ and $M_{\obl, \eps}^{x}[\phi]$ of \eqref{m_eps_par}--\eqref{M_eps_par} by  
\begin{align}
m_{\obl, \eps}^{x}[\phi] & :=\inf_{\substack{x+\Delta \hat x \notin \Omega \\ \Delta \hat x}} 
\left\{ h(x+\Delta x)  - D \phi(x)\cdot \obl(x+\Delta x)  \right\}, \label{m_eps_par_obl} \\
M_{\obl, \eps}^{x}[\phi] & :=\sup_{\substack{x+\Delta \hat x \notin \Omega \\ \Delta \hat x}} 
 \left\{ h(x+\Delta x)  - D \phi(x)\cdot \obl(x+\Delta x)  \right\}, \label{M_eps_par_obl}
\end{align}
where $\Delta \hat x$ is constrained by \eqref{moving1_new_obl} and $\Delta x$ is determined by $\Delta x = \proj_{\bar \Omega}^\obl(x+\Delta \hat x) - x$.
Thus, the particular choices $p_\text{opt}^{m_ \obl}$, $p_\text{opt}^{M_\obl}$ and  $\Gamma_\text{opt}^\obl$ will be now respectively defined in the 
orthonormal basis $\mathcal{B}_\obl=(e_1=\obl(\bar x^\gamma),e_2, \cdots, e_N)$ by 
\begin{align*}
p_\text{opt}^{m_\obl}(x) &  = D\phi(x)  +\left[
 \frac{1}{2}\left(1-\frac{ d_\obl(x)}{\eps^{1-\alpha}}\right) m_{\obl, \eps}^{x}[\phi] 
 - \frac{\eps^{1-\alpha}}{4} \left(1 - \frac{d^2_\obl(x)}{\eps^{2-2\alpha}} \right) (D^2\phi(x))_{11} \right]  \obl(\bar{x}^\gamma),  \\
p_\text{opt}^{M_\obl} (x)  & = D\phi(x)  +\left[\frac{1}{2}\left(1-\frac{ d_\obl(x)}{\eps^{1-\alpha}}\right) M_{\obl, \eps}^{x}[\phi]
- \frac{\eps^{1-\alpha}}{4} \left( 1 - \frac{d^2_\obl(x)}{\eps^{2- 2\alpha}} \right) (D^2\phi(x))_{11} \right] \obl(\bar{x}^\gamma),
\end{align*}
and
\begin{equation*} 
 \Gamma_\text{opt}^\obl(x) = D^2\phi(x) +\left[ \frac{1}{2}\left( - 1+\frac{d_\obl^2(x)}{\eps^{2-2\alpha}} \right) (D^2\phi(x))_{11}  \right] E_{11}, 
\end{equation*}
where $m_{\obl, \eps}^{x}[\phi]$ and $M_{\obl, \eps}^{x}[\phi]$ are defined by \eqref{m_eps_par_obl}--\eqref{M_eps_par_obl}, 
and $E_{11}$ denotes the unit-matrix $(1,1)$ in the basis~$\mathcal{B}_\obl$. 
The definitions of $u^\eps$, $v^\eps$ and their relaxed semi-limits $\ol u$ and $\udl v$, 
given by \eqref{def_u_eps_par}--\eqref{def_v_eps_par} and \eqref{def_bp_subsup_visc}, are conserved. 
The only change on the dynamic programming inequalities \eqref{dyn_prog_ineq_sub_new}--\eqref{dyn_prog_ineq_super_new}
concerning $u^\eps$  and $v^\eps$ is to replace $\norm{\Delta \hat x - \Delta x}$ 
by $d_\obl(x+\Delta \hat x, x+\Delta x)$,  
and to constrain $\Delta \hat x $ by~\eqref{moving1_new_obl}. 
For stability, we need to consider a $C_b^2(\ol \Omega)$-function $\psi$ such that 
\begin{equation*}
 \frac{\partial \psi}{\partial \obl}(x)= \norm{h}_{L^\infty}+1 \quad  \text{ on } \partial \Omega.
\end{equation*}
It is still allowed by the uniform interior ball condition applied to the $C^2$-domain $\Omega$. 
 By using exactly the same ingredients already used for the Neumann problem 
 and adapting the geometric estimates given by Section \ref{prel_geo_lem} in the oblique framework, we obtain the following theorem. 

\begin{theorem}\label{theo_cv_par_oblique}
Consider the final-value problem \eqref{def_int_oblique} where $f$ satisfies \eqref{ellipticity_f}--\eqref{cont_growth_p_Gamma}, 
 $g$  and $h$ are continuous, uniformly bounded, 
$\Omega$ is a $C^2$-domain satisfying both the uniform interior and exterior ball conditions, 
 and $ \obl $ is a continuous vector field on $\partial \Omega$ and satisfy \eqref{hyp_vector_field}. 
Assume the parameters $\alpha$, $\beta$, $\gamma$  fulfill  \eqref{condition_pas}-\eqref{cd_coeff_classiq}. 
Then $\overline u$ and $\underline v$ are uniformly bounded on $\ol \Omega \times [t_\ast, T]$ 
for any $t_\ast<T$, and they are respectively a viscosity subsolution and a viscosity supersolution of \eqref{def_int_oblique}. 
If the PDE has a comparison principle (for uniformly bounded solutions), then it follows that $u^\eps$ and $v^\eps$ 
converge locally uniformly to the unique viscosity solution of \eqref{def_int_oblique}. 
\end{theorem}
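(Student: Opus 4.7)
The plan is to mirror the proof of Theorem \ref{theo_cv_par_neu} by following the Barles--Souganidis framework of stability, monotonicity, and consistency, and then invoking the comparison principle to upgrade the semi-relaxed limits to a viscosity solution. In particular, I would state and prove an analogue of Propositions \ref{cv_par} and \ref{stability_para_prop} in the oblique setting, with the Euclidean normal $n$ replaced throughout by the oblique vector field $\varsigma$, the Euclidean distance replaced by $d_\varsigma$ from \eqref{d_oblique}, the projection replaced by $\proj_{\ol\Omega}^\varsigma$, and the relaxed boundary quantities replaced by $m_{\varsigma,\eps}^x[\phi]$, $M_{\varsigma,\eps}^x[\phi]$ defined in \eqref{m_eps_par_obl}--\eqref{M_eps_par_obl}. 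The key structural observation enabling this transfer is the two-sided equivalence $C^{-1}\norm{x-y}\leq d_\varsigma(x,y)\leq C\norm{x-y}$, which follows from \eqref{theta_aij} and boundedness of the $a_{ij}$, so that every power-of-$\eps$ threshold appearing in \eqref{condition_pas}--\eqref{relation_gamma_nul_tilde} remains meaningful with $d_\varsigma$ in place of $\norm{\cdot}$. The transversality assumption \eqref{hyp_vector_field} guarantees $\varsigma$ stays uniformly non-tangential, which is what makes the oblique projection well-defined on a tubular neighborhood of $\partial\Omega$ of uniform width, just as in the Neumann setting.

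First I would adapt the geometric lemmas of Section \ref{prel_geo_lem}. The analogues of \eqref{eq_11} read $d_\varsigma(\hat x, x+\Delta x)\leq \eps^{1-\alpha} - d_\varsigma(x)$ and are proved the same way, now using $\proj_{\ol\Omega}^\varsigma$ and the triangle inequality for $d_\varsigma$. The non-bouncing/bouncing alternative of Lemma \ref{boundary_bounce1} continues to hold because, $\partial\Omega$ being $C^2$ and $\varsigma$ being $C^2$ with $\langle\varsigma,n\rangle\geq\theta>0$, the integral curves of $\varsigma$ foliate a neighborhood of $\partial\Omega$ and one can still compare the intermediate point $\hat x$ to a quadratic model of the boundary; the $O(\eps^{2-2\alpha})$ curvature correction acquires a constant depending on $\varsigma$ but is otherwise unchanged. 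Lemma \ref{lem_sign_min_BC_pos} and the estimates of Lemma \ref{est2_phi_gopt} transpose line-by-line once the orthonormal basis is taken with first vector $\varsigma(\bar x^\gamma)$ (this is still an orthonormal basis of $\R^N$, even though $\varsigma$ is oblique with respect to $\partial\Omega$).

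With these geometric pieces in hand, I would rerun the heuristic derivation of Section \ref{heuristic_derivation} with $n(\bar x)$ replaced by $\varsigma(\bar x^\gamma)$: the first-order indifference argument produces the optimal $p_{\text{opt}}^{m_\varsigma}$, $p_{\text{opt}}^{M_\varsigma}$ and the second-order choice $\Gamma_{\text{opt}}^\varsigma$ defined in the excerpt, and the formal PDE identified in the limit is precisely $\partial_t u - f(t,x,u,Du,D^2u)=0$ in $\Omega$ together with $\partial u/\partial \varsigma = h$ on $\partial\Omega$. The rigorous consistency estimates, i.e.\ the oblique analogues of Propositions \ref{cons_lower_bound}, \ref{cons_new_sub} and \ref{est_stab_sub_ord2}, then follow from the same case-split (big bonus, small bonus far from the boundary, small bonus/penalty close to the boundary, big penalty), using the adapted optimal strategies and the modified geometric lemmas; the exponents $\alpha,\beta,\gamma,\rho,\kappa$ play exactly the same role and the error terms retain their uniform character.

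Finally, for stability I would reuse the barrier function $\psi\in C_b^2(\ol\Omega)$ with $\partial\psi/\partial\varsigma=\norm{h}_\infty+1$ on $\partial\Omega$, whose existence in the oblique case is obtained by a minor modification of \eqref{def_fonction_bord} along the $\varsigma$-flow (or, equivalently, by invoking Schauder theory with the oblique boundary operator as in Remark \ref{est_schauder}). The analogue of Corollary \ref{est_bar_neum} then yields $S_\eps[x,t,z,\pm\psi]\mp\psi(x)=O(\eps^2(1+|z|))$ and the backward induction of Proposition \ref{stability_para_prop} gives uniform bounds on $u^\eps,v^\eps$. The convergence proof of Proposition \ref{cv_par} then transfers verbatim, partitioning $\N$ into the four sets $A_1,\dots,A_4$ according to the size of $M_{\varsigma,\eps}^{y_k}[\phi]$ and $d_\varsigma(y_k)$ and using Lemma \ref{lemma_cv_m_M} (whose oblique analogue is immediate from continuity of $h$ and $\varsigma$) to pass to the limit $h(x_0)-D\phi(x_0,t_0)\cdot\varsigma(x_0)\leq 0$ or the interior inequality $\partial_t\phi-f\geq 0$; the final-time argument uses $\phi(x,t)=\norm{x-x_0}^2/\eta + (T-t)/\mu - K d(x)$ exactly as before, since $d$ remains $C^2$ near $\partial\Omega$ and the penalty gradient $-KDd(x_{\eta,\mu})\cdot\varsigma(x_{\eta,\mu})\to K\langle n,\varsigma\rangle\geq K\theta>0$ still dominates $h$ for $K$ large. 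The main obstacle, and the one deserving the most care, is the oblique bouncing geometry underlying Lemma \ref{boundary_bounce1}: the inequality \eqref{rebond_dist_boundary_geo} must be verified with $d_\varsigma$ and the oblique projection, which requires showing that the curvature-type correction remains $O(\eps^{2-2\alpha})$ uniformly in $x\in\partial\Omega$; once this is established the rest of the program follows the Neumann template almost mechanically.
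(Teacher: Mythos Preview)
Your proposal is correct and follows exactly the approach the paper takes: the paper itself does not give a detailed proof of Theorem \ref{theo_cv_par_oblique}, but simply states that it follows ``by using exactly the same ingredients already used for the Neumann problem and adapting the geometric estimates given by Section \ref{prel_geo_lem} in the oblique framework,'' listing the same substitutions (the metric $d_\varsigma$, the oblique projection $\proj_{\ol\Omega}^\varsigma$, the quantities $m_{\varsigma,\eps}^x[\phi]$, $M_{\varsigma,\eps}^x[\phi]$, the strategies $p_{\text{opt}}^{m_\varsigma}$, $p_{\text{opt}}^{M_\varsigma}$, $\Gamma_{\text{opt}}^\varsigma$ in the basis $\mathcal{B}_\varsigma$, and the barrier $\psi$ with $\partial\psi/\partial\varsigma=\norm{h}_{L^\infty}+1$) that you describe. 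Your outline is in fact more explicit than the paper's, particularly your remark on the final-time barrier where $K$ must now satisfy $K\theta>\norm{h}_{L^\infty}$ rather than $K>\norm{h}_{L^\infty}$, and your identification of the oblique version of Lemma \ref{boundary_bounce1} as the place requiring genuine care.
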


\textbf{Acknowledgements: } I thank Sylvia Serfaty for bringing the problem to my attention and numerous helpful discussions. 
I thank Scott N. Armstrong for fruitful and encouraging talks and Guy Barles for helpful comments about viscosity solutions.
Finally, I gratefully acknowledge support from the European Science Foundation through a EURYI award of Sylvia Serfaty.

\bibliography{ref}
\bibliographystyle{plain} 

\noindent
{\sc Jean-Paul Daniel}\\
  UPMC Univ  Paris 06, UMR 7598 Laboratoire Jacques-Louis Lions,\\
  Paris, F-75005 France ;\\
  CNRS, UMR 7598 LJLL, Paris, F-75005 France \\
    {\tt daniel@ann.jussieu.fr}

\end{document}